\tikzset{node distance=3cm, auto}
\newtheorem{thm}{Theorem}[subsection]
\newtheorem{thmlet}{Theorem}
\newtheorem{cor}[thm]{Corollary}
\newtheorem{corlet}[thmlet]{Corollary}
\newtheorem{conjecture}[thm]{Conjecture}
\newtheorem{conjlet}[thmlet]{Conjecture}
\newtheorem{lemma}[thm]{Lemma}
\newtheorem{prop}[thm]{Proposition}
\newtheorem{question}[thm]{Question}
\newtheorem{definition}[thm]{Definition}
\newtheorem{construction}[thm]{Construction}
\theoremstyle{remark}
\numberwithin{equation}{subsection} 
\numberwithin{table}{subsection}
\numberwithin{figure}{subsection}
\newtheoremstyle{customremark}
{3pt}
{3pt}
{}
{}
{\bfseries}
{.}
{.5em}
{}
\theoremstyle{customremark}
\newtheorem{rmk_no_diamond}[thm]{Remark}
\newenvironment{rmk}{\begin{rmk_no_diamond} } {\hfill$\er$ \end{rmk_no_diamond}}
\newtheorem{example_no_diamond}[thm]{Example}
\newenvironment{example}{\begin{example_no_diamond} } {\hfill$\er$ \end{example_no_diamond}}
\newtheorem{extension_no_diamond}[thm]{Extension}
\newcommand{\calX}{\mathcal{X}}
\newcommand{\acc}{\op{acc}}
\newcommand{\shear}{\mathbb{S}}
\newcommand{\rk}{{\rm rk}}
\newcommand{\NI}{{\noindent}}
\newcommand{\Nn}{{\mathcal N}}
\newcommand{\al}{{\alpha}}
\newcommand{\Om}{{\Omega}}
\newcommand{\om}{{\omega}}
\newcommand{\de}{{\delta}}
\newcommand{\ga}{{\gamma}}
\newcommand{\ka}{{\kappa}}
\newcommand{\la}{{\lambda}}
\newcommand{\La}{{\Lambda}}
\newcommand{\si}{{\sigma}}
\newcommand{\Si}{{\Sigma}}
\newcommand{\MS}{{\medskip}}
\newcommand{\er}{{\Diamond}}
\newcommand{\frakp}{\mathfrak{p}}
\newcommand{\Z}{\mathbb{Z}}
\newcommand{\F}{\mathbb{F}}
\newcommand{\R}{\mathbb{R}}
\newcommand{\Q}{\mathbb{Q}}
\newcommand{\C}{\mathbb{C}}
\newcommand{\CP}{\mathbb{CP}}
\newcommand{\D}{\mathbb{D}}
\newcommand{\eps}{\varepsilon}
\newcommand{\calL}{\mathcal{L}}
\newcommand{\calW}{\mathcal{W}}
\newcommand{\calD}{\mathcal{D}}
\newcommand{\Bb}{\mathcal{B}}
\newcommand{\bdy}{\partial}
\newcommand{\calM}{\mathcal{M}}
\newcommand{\wt}{\widetilde}
\newcommand{\ovl}{\overline}
\newcommand{\ovll}[1]{\overline{\overline{#1}}}
\newcommand{\op}[1]{{\operatorname{#1}}}
\newcommand{\std}{{\op{std}}}
\newcommand{\ind}{\op{ind}}
\newcommand{\Op}{\mathcal{O}p}
\newcommand{\nil}{\varnothing}
\newcommand{\sss}{\vspace{2.5 mm}}
\newcommand{\bb}{\frak{b}}
\newcommand{\sm}{\op{sm}}
\renewcommand{\lll}{\Langle}
\newcommand{\rrr}{\Rangle}
\newcommand{\T}{\mathcal{T}}
\newcommand{\E}{\mathbb{E}}
\newcommand{\Fib}{\op{Fib}}
\newcommand{\bl}{\op{Bl}}
\newcommand{\pd}{\op{PD}}
\newcommand{\pt}{pt}
\newcommand{\hooksymp}{\overset{s}\hookrightarrow}
\newcommand{\reg}{\op{reg}}
\newcommand{\Int}{\op{Int}\,}
\newcommand{\calB}{{\mathcal{B}}}
\renewcommand{\hom}{\op{Hom}}
\newcommand{\TT}{\mathbb{T}}
\newcommand{\calN}{\mathcal{N}}
\newcommand{\ra}{\rightarrow}
\newcommand{\Ddiv}{\mathbf{D}}
\newcommand{\calA}{\mathcal{A}}
\newcommand{\db}{\frak{p}}
\newcommand{\PD}{\op{PD}}
\newcommand{\intE}{{\mathring{E}}}
\newcommand{\intX}{{\mathring{X}}}
\newcommand{\CC}{\mathcal{C}}
\newcommand{\exc}{\mathbb{E}}
\newcommand{\uvl}{\underline}
\newcommand{\nn}{\frak{n}}
\newcommand{\vol}{\op{vol}}
\newcommand{\notdiv}{\nmid}
\newcommand{\G}{\mathbb{G}}
\newcommand{\comp}{\op{comp}}
\newcommand{\symp}{\op{symp}}
\newcommand{\roots}{{\mu}}
\newcommand{\spec}{\op{Spec}}
\definecolor{darkmagenta}{rgb}{0.55, 0.0, 0.55}
\newcommand{\hl}[1] {{\boldmath\textbf{{\color{darkmagenta}#1}}}}
\newcommand{\lan}{\langle}
\newcommand{\ran}{\rangle}
\newcommand{\fan}{\Sigma}
\newcommand{\pp}{\frakp}
\newcommand{\vv}{\frak{v}}
\newcommand{\mut}{\op{Mut}}
\newcommand{\atf}{\mathbb{A}}
\newcommand{\afflen}{{\op{Len}_\op{aff}}}
\newcommand{\prim}{\op{prim}}
\newcommand{\tordeg}{{\boldsymbol{\delta}}}
\newcommand{\vecv}{{\vec{v}}}
\newcommand{\vk}{{\varkappa}}
\newcommand{\vecfrako}{\vec{\mathfrak{o}}}
\newcommand{\frako}{\mathfrak{o}}
\newcommand{\vecb}{{\vec{b}}}
\newcommand{\slant}{\op{slant}}
\newcommand{\MM}{{\mathbb{M}}}
\newcommand{\NN}{\mathbb{N}}
\newcommand{\ww}{\mathfrak{w}}
\renewcommand{\qq}{\mathfrak{q}}
\newcommand{\mon}{\mathit{mon}}
\newcommand{\gl}{\op{GL}}
\newcommand{\eig}{\mathbb{L}}
\newcommand{\aut}{\op{Aut}}
\newcommand{\nodal}{\op{nodal}}
\newcommand{\vecn}{{\vec{n}}}
\renewcommand{\vert}{\op{vert}}
\newcommand{\LL}{\mathbb{L}}
\newcommand{\aur}{\op{Aur}}
\newcommand{\height}{\mathbb{h}}
\newcommand{\Jint}{J_{\op{int}}}
\newcommand{\full}{\op{full}}
\newcommand{\uu}{{\mathfrak{u}}}
\newcommand{\penc}{\mathbb{p}}
\newcommand{\intC}{\mathring{C}}
\newcommand{\tri}{\op{tri}}
\newcommand{\ttil}{\widecheck}
\newcommand{\perf}{\op{Perf}}
\newcommand{\calH}{\mathcal{H}}
\newcommand{\perfbar}{\uvl{\perf}}
\newcommand{\empha}[1]{{\em #1}}
\renewcommand{\setminus}{\smallsetminus}
\newcommand{\calS}{\mathcal{S}}
\newcommand{\vs}{\varsigma}
\newcommand{\calU}{\mathcal{U}}
\newcommand{\dashover}[2][\mathop]{#1{\mathpalette\df@over{{\dashfill}{#2}}}}
\newcommand{\fillover}[2][\mathop]{#1{\mathpalette\df@over{{\solidfill}{#2}}}}
\newcommand{\df@over}[2]{\df@@over#1#2}
\newcommand\df@@over[3]{%
  \vbox{
    \offinterlineskip
    \ialign{##\cr
      #2{#1}\cr
      \noalign{\kern1pt}
      $\m@th#1#3$\cr
    }
  }%
}
\newcommand{\dashfill}[1]{%
  \kern-.5pt
  \xleaders\hbox{\kern.5pt\vrule height.4pt width \dash@width{#1}\kern.5pt}\hfill
  \kern-.5pt
}
\newcommand{\dash@width}[1]{%
  \ifx#1\displaystyle
    2pt
  \else
    \ifx#1\textstyle
      1.5pt
    \else
      \ifx#1\scriptstyle
        1.25pt
      \else
        \ifx#1\scriptscriptstyle
          1pt
        \fi
      \fi
    \fi
  \fi
}
\newcommand{\solidfill}[1]{\leaders\hrule\hfill}
\newcommand{\oset}[3][0ex]{%
  \mathrel{\mathop{#3}\limits^{
    \vbox to#1{\kern-2\ex@
    \hbox{$\scriptstyle#2$}\vss}}}}
\newcounter{countitems}
\newcounter{nextitemizecount}
\newcommand{\setupcountitems}{%
  \stepcounter{nextitemizecount}%
  \setcounter{countitems}{0}%
  \preto\item{\stepcounter{countitems}}%
}
\newcommand{\computecountitems}{%
  \edef\@currentlabel{\number\c@countitems}%
  \label{countitems@\number\numexpr\value{nextitemizecount}-1\relax}%
}
\newcommand{\nextitemizecount}{%
  \getrefnumber{countitems@\number\c@nextitemizecount}%
}
\newcommand{\previtemizecount}{%
  \getrefnumber{countitems@\number\numexpr\value{nextitemizecount}-1\relax}%
}
\newenvironment{AutoMultiColItemize}{%
\ifnumcomp{\nextitemizecount}{>}{3}{\begin{multicols}{2}}{}%
\setupcountitems\begin{itemize}}%
{\end{itemize}%
\unskip\computecountitems\ifnumcomp{\previtemizecount}{>}{3}{\end{multicols}}{}}
\title{Singular algebraic curves and infinite symplectic staircases}
\author{Dusa McDuff and Kyler Siegel\thanks{K.S. is partially supported by NSF grant DMS-2105578}}
\date{\today}
\begin{document}

\maketitle

\begin{abstract}
We show that the infinite staircases which arise in the ellipsoid embedding functions of rigid del Pezzo surfaces (with their monotone symplectic forms) can be entirely explained in terms of rational sesquicuspidal symplectic curves. Moreover, we show that these curves can all be realized algebraically, giving various new families of algebraic curves with one cusp singularity. Our main techniques are (i) a generalized Orevkov twist, and (ii) the interplay between algebraic $\Q$-Gorenstein smoothings and symplectic almost toric fibrations. Along the way we develop various methods for constructing singular algebraic (and 
hence symplectic) curves which may be of independent interest.
\end{abstract}

\tableofcontents

\section{Introduction}

\subsection{Brief summary}

 One starting point for this paper is the observation that the numerics of the following two mathematical objects coincide:
 \begin{enumerate}[label=(\alph*)]
   \item the family of unicuspidal rational plane curves constructed by Orevkov in \cite{orevkov2002rational} (see also \cite{kashiwara_hiroko,fernandez2006classification})
   \item the outer corners of the steps of the Fibonacci staircase for the symplectic ellipsoid embedding function $c_{\CP^2}(x)$ of the complex projective plane (see e.g. \cite{McDuff-Schlenk_embedding}).
 \end{enumerate} 
A priori these belong to rather distinct subfields: the former pertains to the classical problem of characterizing 
algebraic plane curves of given degree and genus with prescribed singularities (see e.g. \cite{greuel2018singular}), while the latter belongs to the burgeoning area of quantitative symplectic embeddings (see e.g. \cite{Schlenk_old_and_new}).
In \cite{cusps_and_ellipsoids} we showed that symplectic unicuspidal curves give (stable) symplectic embedding obstructions, and in particular that the family (a) recovers the Fibonacci staircase outer corners (b).
In this paper:
\begin{itemize}
   \item We show that the infinite staircases for rigid del Pezzo surfaces\footnote{That is, either the blowup of $\CP^2$ up to four times or the product $\CP^1\times \CP^1$, with the monotone symplectic form; see Remark~\ref{rmk:rigid_dP}.}  found in \cite{cristofaro2020infinite} can be \empha{entirely understood} in terms of genus zero sesquicuspidal symplectic curves. Here the obstructions at outer corners come from index zero curves, while the embeddings at inner corners come from higher index curves (via a version of symplectic inflation in \S\ref{sec:inflate}). As a byproduct, all of these staircases stabilize.
   \item We show that all of these curves can be realized \empha{algebraically}. In particular, this gives new families of unicuspidal algebraic curves whose existence is suggested by (and has applications to) quantitative symplectic geometry. As an application, we give a new classification theorem for algebraic unicuspidal rational curves in the first Hirzebruch surface.
 \end{itemize}

The core of this paper develops new techniques for constructing algebraic (and
hence symplectic) unicuspidal curves. First, in \S\ref{sec:twist} we give a generalization of Orevkov's twist from \cite{orevkov2002rational} which holds in any rigid del Pezzo surface. We apply this to construct algebraic curves for each of the relevant outer corners, and later in \S\ref{subsec:ghost_stairs} to produce a new sequence of algebraic plane curves responsible for the stabilized ghost stairs from \cite{Ghost}.

Then, in \S\ref{sec:ATF1} we give a perspective on $\Q$-Gorenstein smoothings of singular toric surfaces which closely parallels the theory of symplectic almost toric fibrations. Using this we establish general constructions of algebraic unicuspidal rational curves in \S\ref{sec:singI} and \S\ref{sec:singII}. These take as input tropical curves in a base polygon $Q$ and reflect a rich combinatorial theory of polygon mutations.
This approach naturally produces algebraic curves for both the inner and outer corners 
of the rigid del Pezzo infinite staircases, as well as more general curve families.

The remainder of this extended introduction is structured as follows. 
In \S\ref{subsec:context_motiv} we first provide some context and motivation for the study of unicuspidal algebraic curves, as well as symplectic ellipsoid embeddings and infinite staircases.
Then in \S\ref{subsec:main_results} we give precise formulations of our main results.

\subsection{Context and motivation}\label{subsec:context_motiv}

\subsubsection{Singular plane curves}\label{subsubsec:sing_curves}

To set the stage, let us first recall a few basics about singular algebraic curves. In this paper all algebraic curves will be defined over the complex numbers. 
By ``plane curve'' we mean a complex algebraic curve in $\CP^2$, which concretely is of the form $V(F) := \{F(x,y,z) = 0\}$ for some homogeneous polynomial $F(x,y,z)$.
A point $p_0 = [x_0:y_0:z_0] \in V(F)$ is singular if and only if we have $\bdy_x F(p_0) = \bdy_y F(p_0) = \bdy_z F(p_0) = 0$. 
The following local topological models, written in affine coordinates with the singular point at the origin, will be relevant for us:
\begin{itemize}
  \item $\{x^2 = y^2\}$ is the \hl{ordinary double point} (a.k.a. the $A_1$ singularity) 
  \item $\{x^3 = y^2\}$ is the \hl{ordinary cusp}
 \item more generally, $\{x^p = y^q\}$ for $p,q \in \Z_{\geq 1}$ coprime is the \hl{$(p,q)$ cusp}. 
  \end{itemize} 
Note that topologically the $(p,q)$ cusp is the cone over the $(p,q)$ torus knot, and if $p=1$ or $q=1$ this is just a smooth point.\footnote{Unless stated otherwise, by default we consider curve singularities up to topological (as opposed to analytic) equivalence, i.e. up to local homeomorphism of pairs. In particular, any $(p,q)$-cusp with $p>q$ has an analytic parametrization of the form $t \mapsto (t^q,t^p + {\it h.o.t.})$ for some choice of local coordinates near the singular point. We will say that the cusp is \hl{analytically standard} if the higher order terms can be removed, i.e. if it takes the form $\{z_1^p = z_2^q\}$ for some choice of local coordinates $z_1,z_2$.}

\begin{example}\label{ex:two_cusps}
  The plane curve $C = \{X^p = Y^qZ^{p-q}\} \subset \CP^2$ has two singularities: a $(p,q)$ cusp at $[0:0:1]$ and a $(p,p-q)$ cusp at $[0:1:0]$.
  Moreover, it is rational since it admits a parametrization $\CP^1 \ra C$, $[s:t] \mapsto [s^qt^{p-q}:s^p:t^p]$.
\end{example}

A (reduced and irreducible) algebraic curve\footnote
{The curves considered in this paper will be rational, that is parametrizable by $\CP^1$ (and in particular irreducible), unless explicit mention is made to the contrary. } 
is called 
\hl{$(p,q)$-unicuspidal}
if it has a single $(p,q)$ cusp  (with $\gcd(p,q)=1$) and no other singularities. More generally, it is called 
\hl{$(p,q)$-sesquicuspidal} 
if in addition it has some ordinary double points.

To anchor the discussion, let us recall the following classification result.
We denote the Fibonacci numbers by $\Fib_1 = 1, \Fib_2 = 1, \Fib_3 = 2$ and so on.
\begin{thm}[\cite{fernandez2006classification}]\label{thm:bob_et_al}
There exists a $(p,q)$-unicuspidal
rational plane curve of degree $d$ with $p > q \ge 2$ if and only if $(d,p,q)$ is one of the following: 
\begin{enumerate}[label=(\alph*)]
  \item $(p,q) = (d,d-1)$ for $d \geq 3$
  \item $(p,q) = (2d-1,d/2)$ for $d \geq 4$ even
  \item $(p,q) = (\Fib_{k+2}^2,\Fib_{k}^2)$ and $d = \Fib_{k+2}\Fib_k$ for $k \geq 3$ odd
  \item $(p,q) = (\Fib_{k+4},\Fib_{k})$ for $d = \Fib_{k+2}$ for $k \geq 3$ odd
  \item $(p,q) = (22,3)$ and $d = 8$
  \item $(p,q) = (43,6)$ for $d = 16$.
\end{enumerate}
\end{thm}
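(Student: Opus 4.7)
The plan is to establish the classification via a combination of numerical constraints (adjunction plus a semigroup/Orevkov-type inequality together with a Bogomolov-Miyaoka-Yau bound) to restrict the possible triples $(d,p,q)$, and then to exhibit an explicit algebraic curve realizing each surviving case.

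\textbf{Step 1: numerical constraints.} Let $C\subset\CP^2$ be a rational curve of degree $d$ with a single $(p,q)$ cusp and no other singularities, with $\gcd(p,q)=1$ and (WLOG) $p>q$. The adjunction/genus formula for a plane curve with a unique singular point of delta-invariant $\delta=(p-1)(q-1)/2$ gives
\begin{equation*}
(d-1)(d-2) \;=\; (p-1)(q-1).
\end{equation*}
This is already a strong Diophantine constraint but is far from sufficient (for instance it is satisfied by every Markov-like triple). The second ingredient is the semigroup condition of Orevkov: if $\Gamma = \langle p,q\rangle \subset \Z_{\geq 0}$ denotes the numerical semigroup of the $(p,q)$-cusp, then for every integer $0\leq j \leq d-2$ one has
\begin{equation*}
\#\bigl(\Gamma \cap [0,\,jd]\bigr) \;\geq\; \binom{j+2}{2},
\end{equation*}
with a matching inequality in the complementary range coming from Serre-duality-type symmetry. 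This is proved by comparing the filtrations on $H^0(\CP^2,\Oo(j))$ cut out by restriction to $C$ and by the valuation at the cusp.

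\textbf{Step 2: reduction to a finite Diophantine problem.} The third ingredient is the logarithmic Bogomolov-Miyaoka-Yau inequality (Kobayashi-Nakamura-Sakai, Langer) applied to the pair $(\CP^2, C)$, which for rational unicuspidal curves with cusps of type $(p,q)$ bounds $3\chi(\CP^2\setminus C)$ from above by an expression in the Puiseux characteristic of the cusp. After expanding $q/p$ as a continued fraction, the BMY inequality and the semigroup inequality combine to force the continued fraction of $q/p$ to have a very particular form: either it is of bounded length (giving the sporadic cases) or it consists of alternating blocks of a fixed small pattern, which forces $(p,q)$ to be given by a linear recurrence. Writing the recurrence out explicitly and matching it with the adjunction identity $(d-1)(d-2)=(p-1)(q-1)$ reduces to a generalized Markov equation whose solutions, by descent through the mutations of the Markov tree, fall into precisely the six families (a)-(f) listed.

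\textbf{Step 3: realization.} For each family one must finally exhibit an actual algebraic curve. Family (a) is realized by the nodal cuspidal cubic and its higher analogues $\{ZY^{d-1} = X^d\}$ (or a perturbation thereof), and family (b) is realized by the standard quartic cuspidal plane curves and their degenerations. The two Fibonacci families (c), (d) are produced by iterating the Cremona/Orevkov twist, a quadratic birational self-map of $\CP^2$ centred at three points of a chosen rational cuspidal curve: starting from the cuspidal cubic (or from Example~\ref{ex:two_cusps} with $(p,q)=(5,2)$), each application of the twist increases the degree and the cusp multiplicities according to the Fibonacci recursion, and a direct check shows that exactly one cusp is created while the other singularities introduced by the Cremona map are absorbed. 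The two sporadic cases (e), (f) are checked by explicit computer-algebra construction of the defining polynomial.

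\textbf{Main obstacle.} The genuinely hard part is \emph{Step 2}: matching the BMY inequality against the semigroup inequality to show that no triples outside the six families survive. The numerical book-keeping is intricate because both inequalities are sharp only on the solutions, so one cannot afford slack; the cleanest route is to translate both inequalities into statements about the continued fraction of $q/p$ and use the Stern-Brocot structure to run an induction (equivalently, a descent along Markov-type mutations). The construction side (Step 3) is conceptually easier but the sporadic degree $8$ and $16$ cases require a delicate explicit argument, and rigidity of the cusp type must be verified so that no stratum of the equisingular deformation space is missed.
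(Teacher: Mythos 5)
First, a point of reference: the paper does not prove this theorem at all --- it is quoted from \cite{fernandez2006classification}. What the paper does supply is (i) explicit realizations: family (a) from Example~\ref{ex:two_cusps}, family (b) from the equation $\{(zy-x^2)^{d/2}=xy^{d-1}\}$, family (d) from Orevkov's twist with a line and a conic as seeds (Theorem~\ref{thm:orev_orig}), families (e) and (f) from the twist applied to a flex line and a $6$-fold tangent conic (Remark~\ref{rmk:sporadic}), and family (c) by reference to the explicit equations of Kashiwara; and (ii) a genuinely new proof of completeness for family (d) \emph{only}, via perfect exceptional classes (Lemma~\ref{lem:d_is_complete}). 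So your proposal must be judged against the literature proof, and there it has a real gap: Step~2 is the entire content of the ``only if'' direction, and you assert rather than prove that BMY plus the semigroup inequality force the continued fraction of $q/p$ into the six families. Moreover the claim that the surviving triples are the solutions of a single generalized Markov equation reached by descent along the Markov tree is not correct as stated: families (a), (b), (e), (f) are not Markov-tree data, and families (c) and (d) satisfy different adjunction-type identities --- (c) has $\ind_\R = 2$ (so $p+q = 3d-1$) while (d) has $\ind_\R = 0$ (so $p+q=3d$) --- so they cannot both be generated by one recursion from one seed. The actual argument of \cite{fernandez2006classification} is a delicate case analysis on Orevkov's asymptotic BMY bounds ($p/d$ and $d/q$ close to $\tfrac{3+\sqrt5}{2}$), not a Markov descent.

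Step~3 also contains concrete errors. The Orevkov twist is not ``a quadratic birational self-map centred at three points of a chosen rational cuspidal curve'': as in Construction~\ref{constr:Orev_twist} it is built from a \emph{nodal cubic} $\calN$ by seven blowups over its node followed by seven blowdowns, and it sends a $(p,q)$-well-placed curve to a $(7p-q,p)$-well-placed one; its seeds for family (d) are the tangent line and the quintically tangent conic at the node, not the cuspidal cubic. Since well-placedness forces $c_1([C]) = [C]\cdot[\calN] = p+q$, i.e.\ index zero (Remark~\ref{rmk:well-placed_index_zero}), the twist cannot produce the index-two family (c); those curves come from a different mechanism (explicit pencils, or in this paper the almost toric construction of Theorem~\ref{thm:sesqui_main}). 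Finally, the sporadic cases (e), (f) require no computer algebra: they are the twists of curves meeting $\calN$ with contact order $3d$ at a smooth point (Remark~\ref{rmk:sporadic}). Your Step~1 inequalities (adjunction $(d-1)(d-2)=(p-1)(q-1)$ and the B\'ezout-type bound $\#(\Gamma\cap[0,jd])\ge\binom{j+2}{2}$) are correct, but they are the easy part.
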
 
Family (a) is the specialization of Example~\ref{ex:two_cusps} with $(p,q) = (d,d-1)$, while family (b) is given by $\{(zy-x^2)^{d/2} = xy^{d-1}\} \subset \CP^2$.
See Example~\ref{ex:sporadic} below for constructions of (e) and (f).
The curves in family (c) are more complicated but are described by explicit equations in \cite[\S5]{fernandez2006classification}, following \cite{kashiwara_hiroko}.
Family (d) corresponds to the aforementioned Orevkov curves \cite{orevkov2002rational}.

To make further sense of Theorem~\ref{thm:bob_et_al}, it will be helpful to introduce the following:
\begin{definition}
The (real) \hl{ index} of a $(p,q)$-sesquicuspidal
rational curve $C$ in a complex surface or symplectic four-manifold 
$M$ is 
\begin{align}
\ind_\R(C) := 2c_1(A) - 2p - 2q,
\end{align}
where $A \in H_2(M)$ denotes the homology class of $C$ and $c_1(A)$ is its first Chern number.
\end{definition}

\NI As explained in detail in \cite{cusps_and_ellipsoids}, the index corresponds to the expected (real) dimension of the space of rational curves in homology class $A$ with a $(p,q)$ cusp and satisfying a maximal order tangency constraint at the cusp. For instance, for an ordinary $(3,2)$ cusp there is a well-defined complex tangent line at the singular point, and 
the constraint corresponds to specifying both the location of the singularity and its tangent line at that point. For a general $(p,q)$ cusp the constraint also involves higher jet constraints. Equivalently, the index is the (real) Fredholm index of the normal crossing resolution (c.f. \cite[\S4.1]{cusps_and_ellipsoids} or \S\ref{subsec:toric_p_q} below).

In particular, for a $(p,q)$-unicuspidal rational plane curve $C$ of degree $d$ we have $\ind_\R(C) = 6d -2p-2q$, and the indices for the curves in Theorem~\ref{thm:bob_et_al} are as follows:
\begin{equation}
 \begin{array}{|c|c|c|c|c|c|c|}
 \hline
& (a) & (b) & (c) & (d) & (e) & (f) \\ \hline
\text{index} & 2d+2 & d+2 & 2 & 0 & -2 & -2 \\\hline
 \end{array}.
\end{equation}
For index zero curves such as those in family (d) one expects to get a finite count, and indeed these are encoded by the Gromov--Witten-type invariants $N_{\CP^2,d[L]}\lll \CC^{(p,q)}\pt \rrr$ defined in \cite[\S3]{cusps_and_ellipsoids}.
The curves in family (c) cannot quite be counted (they occur in complex $1$-parameter families), but they naturally degenerate to those in family (d) (c.f. \cite[\S5]{fernandez2006classification}, based on \cite{kashiwara_hiroko,Miyanishi-Sugie}).
Meanwhile, the sporadic cases (e) and (f) have negative index, so they should disappear for a generic almost complex structure.
In this article, families (c) and (d) (and their generalizations) will be the most significant, as they precisely correspond to the inner and outer corners respectively of the Fibonacci staircase.
Incidentally, in \S\ref{subsec:unicusp_appl} we exploit this connection with symplectic geometry to give an alternative proof that the list in part (d) of Theorem\ref{thm:bob_et_al} above is complete, and we extend 
this classification to the first Hirzebruch surface (for which the corresponding list is much more complicated).

\subsubsection{Symplectic embeddings and infinite staircases}\label{subsubsec:symp_emb}

Let us now briefly recall some notions surrounding symplectic ellipsoid embeddings and infinite staircases.
Given a four-dimensional symplectic manifold $X$, its ellipsoid embedding function is defined by 
\begin{align}
c_X(x) := \inf \{\la \in \R_{> 0}\;|\; E(\tfrac{1}{\la},\tfrac{x}{\la}) \hooksymp X\}.
\end{align}
Here the infimum is over all $\la \in \R_{>0}$ for which there exists a symplectic embedding of the ellipsoid $$
E(\tfrac{1}{\la},\tfrac{x}{\la}) := \{(z_1,z_2) \;|\; \pi |z_1|^2 \la + \pi |z_2|^2 \la/x \leq 1\} \subset \C^2
$$ (endowed with the restriction of the standard symplectic form) into $X$.
In \cite{McDuff-Schlenk_embedding}, the ellipsoid embedding function for the four-ball $B^4(1) = E(1,1)$ was explicitly worked out.
In particular, the portion for $1 \leq x \leq \tau^4 := \tfrac{3\sqrt{5}+7}{2}$ is a piecewise linear function whose graph is a zigzag, that alternately slopes up and is horizontal, with infinitely many nonsmooth points that
  accumulate at $\tau^4$ and have coordinates given by ratios of odd index Fibonacci numbers -- see \cite[Fig 1.1]{McDuff-Schlenk_embedding}. 
Subsequently, similar infinite staircases were discovered for other target spaces such as $B^2(1) \times B^2(1)$ \cite{Frenkel-Muller}, $E(1,3/2)$ \cite{cristofaro2020ehrhart}, and more (see e.g. \cite{usher2019infinite}).
More recently, the authors of \cite{cristofaro2020infinite} gave a unified description of infinite staircases for the six rigid del Pezzo surfaces with their monotone symplectic forms, namely $\CP^2(3) \#^{\times k} \ovl{\CP}^2(1)$ for $k=0,1,2,3,4$ and $\CP^1(2) \times \CP^1(2)$.\footnote{Here $\CP^2(a)$ is endowed with the Fubini--Study form normalized so that a line has area $a$. The qualifier ``rigid'' is a slight misnomer since it refers to the complex rather than symplectic structure -- see Remark~\ref{rmk:rigid_dP} below.}

\begin{thm}[\cite{cristofaro2020infinite}]\label{thm:CG_et_all_staircases} 
For each rigid del Pezzo surface $M$, the ellipsoid embedding function $c_M(x)$ has an infinite staircase with explicitly described accumulation point and step coordinates.
\end{thm}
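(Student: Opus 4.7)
The plan is to attack $c_M(x)$ from two sides at the candidate staircase coordinates and show the two bounds agree. First I would set up the combinatorial framework: for each rigid del Pezzo $M$, the symplectic volume and monotone constant determine a unique candidate accumulation point $a_M$ via the equation $c_M(a_M) = \sqrt{a_M/\vol(M)}$, namely the point where the lower volume bound becomes sharp. Around $a_M$ one expects a self-similar structure generated by a Diophantine recursion (a $2\times 2$ integer matrix whose Perron eigenvalue governs the geometric rate at which the staircase steps converge to $a_M$). I would begin by identifying this recursion case by case and writing down the explicit candidate list of outer corners $x_n^\pm$ with their heights $y_n^\pm$; in each case the recursion is essentially forced by the toric/Cremona arithmetic of $M$.

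Next I would prove the obstructions. The cleanest route is via ECH capacities: for each $M$, compute $c_k^{\op{ECH}}(M)$ from a toric model, and for the ellipsoid $E(1,x)$ recall that $c_k^{\op{ECH}}(E(1,x))$ is the $(k+1)$-st entry of the sorted sequence $\{a + bx : a,b \in \Z_{\geq 0}\}$. For each candidate outer corner $(x_n, y_n)$, one selects an integer $k_n$ (dictated by the recursion) such that the inequality
\begin{equation*}
c_{k_n}^{\op{ECH}}\bigl(E(\tfrac{1}{\la}, \tfrac{x_n}{\la})\bigr) \le c_{k_n}^{\op{ECH}}(M)
\end{equation*}
forces $\la \ge y_n$. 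The key algebraic identity here is that along the recursion the relevant weight expansion of $E(1,x_n)$ matches a specific positive integer linear combination of exceptional divisor classes on $M$, which one verifies by induction on the recursion step.

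For the upper bound I would construct symplectic embeddings realizing the horizontal steps between outer corners. The standard tool is a combination of ball packings and symplectic inflation along appropriate exceptional and nef classes. Concretely, the weight expansion of $E(1,x)$ reduces the ellipsoid embedding problem to a ball packing problem in $M$, which one solves by exhibiting the required exceptional classes with positive symplectic area (using the structure of the reduced cone in $H_2(M)$). On each horizontal segment $[x_n, x_n']$ the embedding is built from a single recursively defined configuration of blow-up classes.

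The main obstacle is the matching step: ensuring that for every $x \in [a_M, \tau_M]$ in the relevant interval, the ECH obstruction and the inflation construction agree, so that one actually obtains a staircase rather than merely isolated non-smooth points. This reduces to verifying an intricate identity between the sorted weight sequence of $E(1,x)$ and the recursively generated exceptional class data, together with a monotonicity argument showing that no further outer corners can sneak in between consecutive recursion steps. Once this is established, continuity and monotonicity of $c_M$ extend the staircase to the accumulation point $a_M$, completing the proof for each of the six rigid del Pezzo surfaces.
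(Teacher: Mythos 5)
This statement is a theorem quoted from \cite{cristofaro2020infinite}; the present paper does not reprove it by your route, but it does contain its own complete re-derivation, so a comparison of the three approaches is in order. Your obstruction half (ECH capacities of a toric model versus the sorted sequence $\{a+bx\}$ for the ellipsoid) is essentially the argument actually used in \cite{cristofaro2020infinite}, so there you coincide with the cited proof. Your embedding half, however, differs from both the cited proof and this paper: \cite{cristofaro2020infinite,casals2022full} produce the inner-corner embeddings directly from almost toric fibrations and their mutations (cf.\ Proposition~\ref{prop:vis_ell_emb} and Proposition~\ref{prop:inner_corners_vis_ell}), whereas this paper obtains them by inflating along higher-index sesquicuspidal curves (Theorems~\ref{thmlet:inflation_from_sescusp} and~\ref{thmlet:inner_corner_curves}) and obtains the obstructions from index zero unicuspidal curves (Theorem~\ref{thmlet:outer_corner_curves} with Theorem~\ref{thm:stab_obs_from_curve}) rather than from ECH. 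What your weight-expansion/ball-packing route buys is independence from almost toric technology and a uniform reduction to exceptional classes in the reduced cone; what it costs is that the "matching step" you correctly identify as the main obstacle — exhibiting, for every $x$ on every horizontal segment, the recursively defined exceptional classes dominating the weight vector of $E(1,x)$, and showing no extra corners appear — is precisely the intricate induction that the mutation and inflation approaches are designed to bypass, and your proposal names it without carrying it out. Two smaller cautions: ECH capacities are computed for the convex toric domains $X_1,\dots,X_{12}$ of Figure~\ref{fig:rational_staircase_targets} rather than for the closed surfaces $M$ directly (the equivalence $c_{X_i}=c_M$ is itself part of the content of \cite{cristofaro2020infinite}), and the staircase lives on the interval \emph{below} the accumulation point, so the interval $[a_M,\tau_M]$ in your matching step should be read as approaching $a_\acc$ from the left.
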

\NI 
By elementary scaling and monotonicity considerations, establishing these infinite staircases boils down to (a) obstructing symplectic embeddings at the outer corners and (b) constructing symplectic embeddings at the inner corners.
Embeddings corresponding to the inner corners were constructed in \cite{cristofaro2020infinite,casals2022full}
using almost toric fibrations and their mutations (see e.g. \cite{symington71four,evans2023lectures} or \S\ref{subsec:atfs_and_polys} below), and hence are also related to generalized Markov equations and exotic Lagrangian tori as in \cite{vianna2017infinitely}.
Meanwhile, obstructions corresponding to outer corners were established in \cite{cristofaro2020infinite} using embedded contact homology (ECH) capacities (see e.g. \cite{Hlect}).
In this paper our approach to the inner corners and one of our approaches to the outer corners are also based on almost toric fibrations, but used in a quite distinctive way through the lens of sesquicuspidal curves.

\begin{rmk}\label{rmk:rigid_dP}
By definition a del Pezzo surface is a smooth complex projective surface with ample anticanonical bundle. 
Up to diffeomorphism these are $\CP^2 \#^{\times k}\ovl{\CP}^2$ for $k = 0,\dots,8$ and $\CP^1 \times \CP^1$. Up to biholomorphism there is a unique del Pezzo surface having smooth type $\CP^2 \#^{\times k} \ovl{\CP}^2$ for $k=0,\dots,4$ or $\CP^1 \times \CP^1$ (these are the rigid ones), while the remaining cases appear in nontrivial moduli spaces;
see \cite[\S8]{dolgachev2012classical}

Each del Pezzo surface admits a unique monotone symplectic form up to symplectomorphism and scaling (see e.g. \cite{salamon2013uniqueness}), and  
unless explicit mention is made to the contrary we work with the monotone symplectic structure normalized to have monotonicity constant $1$, i.e. $c_1(M) = [\omega_M] \in H^2(M;\R)$ (e.g. $\CP^2(3)$). 
One should keep in mind that the moduli spaces of complex and symplectic structures on these smooth manifolds are quite distinct,\footnote{Roughly speaking, in the complex category the locations of blowup points matter, while in the symplectic category the sizes of blowups matters.} but it should be clear from the  context whether we view $M$ in the complex, symplectic, or smooth category. 
\end{rmk}

\begin{rmk}\label{rmk:dp_versus_ctd}
  The treatment in \cite{cristofaro2020infinite} emphasizes the $12$ convex toric domains $X_1,\dots,X_{12}$ pictured in Figure~\ref{fig:rational_staircase_targets} below, which includes $B^4(1),B^2(1) \times B^2(1)$, and $E(1,3/2)$ as special cases. It is shown in \cite{cristofaro2020infinite} that the ellipsoid embedding function for each $X_i$ is 
  (up to scaling) equal 
  to the ellipsoid embedding function for one of the monotone rigid del Pezzo surfaces, namely the one with the same negative weight expansion (for instance we have $c_{B^4(a)}(x) = c_{\CP^2(a)}(x)$).
Thus for simplicity of exposition we will mostly restrict our discussion to the closed target spaces (except when discussing the stable folding curve in \S\ref{subsec:folding_curve}).
\end{rmk}

\subsection{Main results}\label{subsec:main_results}

\subsubsection{Singular curves and symplectic embeddings}

We first explain how singular symplectic curves can be used both to  obstruct and to construct symplectic ellipsoid embeddings. 
For $p,q \in \Z_{\geq 1}$  coprime, a 
\hl{$(p,q)$-sesquicuspidal symplectic curve}
 in a symplectic four-manifold $M^4$ is a subset $C \subset M$ which has one point $x_0 \in C$ locally modeled on a $(p,q)$ cusp point of an algebraic curve in $\C^2$, and such that $C$ is otherwise an immersed symplectic submanifold with only positive double points (see \cite[Def. 3.5.1]{cusps_and_ellipsoids}).

The following explicit link between sesquicuspidal curves and symplectic embedding obstructions was established in \cite{cusps_and_ellipsoids}:

\begin{thm}[{Theorems A(b), D, and E in \cite{cusps_and_ellipsoids}}]\label{thm:stab_obs_from_curve}
Let $(M^4,\omega_M)$ be a four-dimensional closed symplectic manifold, and suppose there exists an index zero $(p,q)$-sesquicuspidal rational symplectic curve in $M$ in homology class $A \in H_2(M)$.
Then any symplectic embedding $E(cq,cp) \hooksymp M$ must satisfy $c \le \frac{[\omega_M] \cdot A}{pq}$. 
Moreover, the same is true for any symplectic embedding $E(cq,cp) \times \C^N \hooksymp M \times \C^N$ for $N \in \Z_{\geq 1}$, provided that $M \times \C^N$ is semipositive.\footnote{Here semipositivity is a technical condition which allows one to rule out sphere bubbling using only classical perturbations. Note that $M \times \C^N$ is automatically semipositive if $N=1$ or if $M$ is monotone.
Using e.g. \cite[Cor. 2.7.2]{cusps_and_ellipsoids}, we can also quantify the above stable symplectic embedding obstructions by replacing the domain $E(1,a) \times \C^N$ with $E(1,a,b_1,\dots,b_N)$ for suitable finite $b_1,\dots,b_N \in \R_{>0}$. A similar remark applies to all other stable obstructions which follow, although for simplicity we will formulate results without this quantification.} 
\end{thm}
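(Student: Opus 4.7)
The plan is to derive the embedding obstruction from a neck-stretching argument along the boundary of the embedded ellipsoid, using the $(p,q)$-cusp of $C$ to extract a lower area bound on one of the limit pieces.

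First I would invoke the moduli-theoretic deformation invariance for index zero $(p,q)$-sesquicuspidal rational curves through a generic point in class $A$ developed in \cite{cusps_and_ellipsoids} to guarantee, for every generic $\om_M$-compatible almost complex structure $J$, the existence of a $J$-holomorphic such curve whose cusp is constrained to a prescribed point $p_0 \in M$. Given a hypothetical symplectic embedding $\phi: E(cq,cp) \hooksymp M$, I would choose $p_0$ in the interior of $\phi(E(cq,cp))$, pick $J$ to be cylindrical near $\phi(\p E(cq,cp))$, and neck-stretch along this hypersurface. By SFT compactness, the resulting $J$-holomorphic curve degenerates into a holomorphic building with pieces distributed among the completion $\wh{E}(cq,cp)$, symplectizations $\R \times \p E(cq,cp)$, and the completion of $M \setminus \phi(\op{int}\, E(cq,cp))$.

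The core step is to identify the distinguished piece $C^{\op{top}}$ of this building containing the cusp (which by construction lies in $\wh{E}(cq,cp)$) and to bound its symplectic area from below. Topologically, the link of the $(p,q)$-cusp is a $(p,q)$-torus knot on $\p E(cq,cp)$, so the positive asymptotic Reeb orbits of $C^{\op{top}}$, counted with multiplicity, must represent this torus knot class in $H_1(\p E(cq,cp))$. A direct computation on $\p E(cq,cp)$ shows that the minimum total action over all such admissible asymptotic configurations equals $cpq$, realized for instance by the $p$-fold cover of the short simple Reeb orbit or the $q$-fold cover of the long one. The index zero hypothesis rigidifies the limit building, preventing degeneration into configurations of strictly smaller total action. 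Stokes' theorem then gives $[\om_M]\cdot A = \sum (\text{piece areas}) \geq \op{area}(C^{\op{top}}) \geq cpq$, which is exactly the bound $c \leq \tfrac{[\om_M]\cdot A}{pq}$.

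For the stabilized statement on $M \times \C^N$, one repeats the same scheme applied to the lifted curve $C \times \{0\}$ (after small perturbation to achieve genericity), with the product embedding $E(cq,cp) \times \C^N \hooksymp M \times \C^N$, and uses the quantitative truncation $E(1,cq/c,b_1,\dots,b_N)$ alluded to in the footnote to tame non-compactness in the $\C^N$ factor. The semipositivity hypothesis is precisely what guarantees that moduli-theoretic deformation invariance and the SFT compactness argument go through via classical perturbations, with no stable multiply-covered sphere bubbles contributing in the $M$ factor. The main obstacle throughout is Step~2: pinning down the topology and multiplicities of $C^{\op{top}}$'s asymptotics precisely enough to extract the estimate $cpq$, and ruling out the possibility that the cusp's contribution splits across several building components or that index defects in the limit absorb the expected rigidity. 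This relies on the fine interplay between the local normal form of the $(p,q)$-cusp, the Reeb dynamics on $\p E(cq,cp)$, and the index rigidity coming from the index zero hypothesis on $C$.
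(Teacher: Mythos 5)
First, note that this paper does not actually prove Theorem~\ref{thm:stab_obs_from_curve}: it is imported wholesale from \cite{cusps_and_ellipsoids} (Theorems A(b), D, E), and the only justification given here is the footnote sketch that one produces an SFT-type curve in the complement of the (slightly shrunk) ellipsoid whose symplectic area must be positive. Your overall strategy --- stretch the neck along $\partial E(cq,cp)$, force the component carrying the cusp to have asymptotics of total action at least $cpq$, and conclude by Stokes/energy positivity --- is indeed the strategy underlying the cited theorems, so at the level of architecture you are on the right track.

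However, the step you yourself flag as the ``core step'' contains a genuine gap, and it is precisely where all the real work in \cite{cusps_and_ellipsoids} lives. Your justification that the positive asymptotic orbits of $C^{\op{top}}$ ``must represent the $(p,q)$-torus knot class in $H_1(\partial E(cq,cp))$'' is vacuous: $\partial E(cq,cp)\cong S^3$, so $H_1=0$ and every configuration of Reeb orbits is admissible on homological grounds. The actual mechanism forcing total action $\geq cpq$ is not homological; it requires either tracking linking numbers of the asymptotic ends with the two axis disks of the ellipsoid (which are not automatically $J$-holomorphic for the stretched $J$ and must be arranged), or --- as in the cited paper --- first establishing a correspondence between the local $(p,q)$-cusp constraint and a negative end on the distinguished Reeb orbit $\mathfrak{o}_{p+q-1}$ of a thin ellipsoid $E(\eps q,\eps p)$ (of action $\eps pq$, and $cpq$ after scaling up), via explicit local models, relative adjunction/writhe bounds, and a gluing analysis. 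Relatedly, the assertion that ``the index zero hypothesis rigidifies the limit building, preventing degeneration into configurations of strictly smaller total action'' is not an argument: one must rule out, by an index count combined with transversality for the (possibly multiply covered) complement components, the scenarios in which the cusp constraint is spread over several components or carried by a ghost, or in which a low-action positive end of $C^{\op{top}}$ is compensated by negative-index covers elsewhere. You name these as obstacles but do not resolve them, and they cannot be resolved by the tools you invoke; they are exactly the content of Theorems D and E of \cite{cusps_and_ellipsoids}. The deformation-invariance input (your first step) and the reduction of the stabilized case to the four-dimensional one under semipositivity are cited appropriately, but as written the proposal does not constitute a proof of the quantitative bound $c\leq [\omega_M]\cdot A/(pq)$.
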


\NI In other words, the existence of $C$ implies $c_M(p/q) \geq \tfrac{p}{[\omega_M] \cdot [C]}$.\footnote
{
The basic reason for the existence of this obstruction is that one can construct an SFT-type curve in the complement of (a slight perturbation of) the ellipsoid that must have positive symplectic area. Equivalently, the exceptional divisor given by the normal crossing resolution of the singular curve must have positive area.}
Since any unicuspidal algebraic curve in a complex projective surface is in particular a unicuspidal symplectic curve,  applying Theorem~\ref{thm:stab_obs_from_curve} to family (d) in Theorem~\ref{thm:bob_et_al} (with $N = 0$) immediately gives the obstructive part (i.e. outer corners) of the Fibonacci staircase in $c_{\CP^2}(x)$. 
Moreover, the case $N \geq 1$ shows that these obstructions stabilize, i.e. 
we have $c_{\CP^2 \times \C^N}(x) = c_{\CP^2}(x)$ for all $1 \leq x \leq \tau^4$ (this is the main result of \cite{CGH}, originally proved using embedded contact homology), 
where we put
\begin{align}
c_{X \times \C^N}(a) := \inf \{\la \in \R_{> 0}\;|\; E(\tfrac{1}{\la},\tfrac{a}{\la}) \times \C^N \hooksymp X \times \C^N\}
\end{align}
for any symplectic four-manifold $X^4$ and $N \in \Z_{\geq 1}$.

\sss

As for constructing symplectic embeddings, the following theorem is proved in \S\ref{sec:inflate} below via the method of symplectic inflation.
Recall that any local branch of a holomorphic curve near a singularity is homeomorphic to the cone over an iterated torus knot (see \cite{eisenbud1985three}).
The cabling parameters can be read off from the Puiseux pairs $(n_1,d_1),\dots,(n_g,d_g)$, which can in turn be read off from a Puiseux series parametrization $x(t) = t^m, y(t) = \sum\limits_{k=m}^\infty a_k t^k$ -- see \S\ref{subsec:res_mult_Puis} for more details.
In particular, a $(p,q)$ cusp corresponds to a single Puiseux pair $(n_1,d_1) = (p,q)$, and our other main examples will be cusps with two Puiseux pairs $(n_1,d_1) = (p,q)$, $(n_2,d_2) = (kp+1,k)$ for some $k \in \Z_{\geq 1}$.

\begin{thmlet}\label{thmlet:inflation_from_sescusp} 
\begin{enumerate}[label=(\roman*)] 
Let $(M^4,\omega_M)$ be a four-dimensional closed symplectic manifold.

\hfill
  \item Let $C$ be a $(p,q)$-sesquicuspidal symplectic curve in $M$ whose homology class satisfies $[C] = c \,\PD[\om_M] \in H^2(M;\R)$ for some $c \in \R_{>0}$ and $[C] \cdot [C] \geq pq$.
Then there exists a symplectic embedding
$E(\tfrac{q}{c'},\tfrac{p}{c'}) \hooksymp M$ for any $c' > c$. 
In particular, if $[C] \cdot [C] = pq$ then this is a full filling, i.e the domain and target have arbitrarily close volume.\footnote{Note that if $M$ is a symplectic blowup of $\CP^2$ (and more generally) this actually implies the existence of a symplectic embedding of the open ellipsoid $\intE(\tfrac{q}{c},\tfrac{p}{c}) \hooksymp M$ that fills the entire volume of $M$ (c.f. \cite[Proof of Prop. 1.5]{cristofaro2019symplectic}).
}

  \item More generally, let $C$ be a sesquicuspidal symplectic curve in $M$ with Puiseux pairs $(p,q),(p_2,q_2),\dots,(p_g,q_g)$, whose homology class satisfies $[C] = c \PD[\omega_M] \in H^2(M;\R)$ for some $c \in \R_{>0}$ and $[C] \cdot [C] \geq k^2 pq$ 
   with $k = q_2\cdots q_g$.
  Then there exists a symplectic embedding $E(\tfrac{kq}{c'},\tfrac{kp}{c'}) \hooksymp M$ for any $c' > c$.
\end{enumerate}
\end{thmlet}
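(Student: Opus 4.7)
The plan is to combine a standard correspondence between cuspidal curves and exceptional sphere configurations in symplectic blowups with a symplectic inflation argument. I focus on part (i) and indicate the modifications for part (ii) at the end.

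\textbf{Seed ellipsoid and resolution.} First I would produce a seed ellipsoid: a symplectic local normal form for the $(p,q)$ cusp identifies a neighborhood of the cusp point $x_0$ with a neighborhood of the origin in $(\C^2, \om_{\std})$ in which $C$ appears as $\{z^p = w^q\}$, and this standard cuspidal curve sits canonically inside $E(q,p)$. Hence for all sufficiently small $\delta > 0$ we already have a symplectic embedding $E(\delta q, \delta p) \hooksymp M$; the game is to push $\delta$ all the way up to $1/c'$. After first smoothing the transverse double points of $C$ symplectically (which preserves $[C]$ and $[C]^2$), I would iteratively blow up at $x_0$ and its infinitely near base points to obtain $\pi : \tilde M \to M$ with a smooth symplectic proper transform $\tilde C$ meeting transversally a chain of symplectic exceptional spheres $E_1,\dots,E_\ell$, whose self-intersections are prescribed by the Hirzebruch--Jung continued fraction expansion of $p/q$ (equivalently, by the negative weight expansion of $E(q,p)$). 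The classical multiplicity identity $\sum m_i^2 = pq$ for a $(p,q)$ cusp yields
\begin{align*}
[\tilde C]^2 \;=\; [C]^2 - pq,
\end{align*}
so the hypothesis $[C]^2 \geq pq$ is exactly what guarantees $[\tilde C]^2 \geq 0$.

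\textbf{Inflation and blow-down.} Because $\tilde C$ is a smooth symplectic surface of non-negative self-intersection, Lalonde--McDuff symplectic inflation deforms $\omega_{\tilde M}$ through a family of symplectic forms $\omega_t$ with $[\omega_t] = [\omega_{\tilde M}] + t\,\PD[\tilde C]$ while keeping every $E_i$ symplectic. Since $[C] = c\,\PD[\omega_M]$, this family is tightly constrained in cohomology, and I would choose $t$ so that, given $c' > c$, the areas of $E_1,\dots,E_\ell$ become exactly the weights in the negative weight expansion of $E(q/c',p/c')$. Invoking the ball/ellipsoid-packing correspondence of McDuff, as refined by Buse--Hind, Cristofaro-Gardiner, and others, one then interprets this configuration as the symplectic blowup of $M$ along an embedded $E(q/c',p/c')$, yielding the desired embedding. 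In the borderline case $[C]^2 = pq$ we have $[\tilde C]^2 = 0$, which saturates the volume inequality and gives the full filling conclusion.

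\textbf{Part (ii) and the main obstacle.} For a cusp with Puiseux pairs $(p,q), (p_2,q_2),\dots,(p_g,q_g)$ the resolution tower is iterated, with each subsequent Puiseux pair contributing its own block of exceptional spheres. The multiplicity carried down to the innermost block is precisely $k = q_2 \cdots q_g$, which simultaneously rescales the seed ellipsoid by $k$ (accounting for the target $E(kq/c',kp/c')$) and raises the innermost self-intersection cost to $k^2 pq$, matching exactly the hypothesis $[C]^2 \geq k^2 pq$. The principal technical obstacle in either part is the inflation step: one must verify that a single deformation parameter can produce all the prescribed areas on every sphere in the (possibly nested) configuration simultaneously, while keeping the whole setup symplectic. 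This reduces to checking that the classes $\PD[\tilde C], \PD[E_1],\dots,\PD[E_\ell]$ span the slice of $H^2(\tilde M;\R)$ singled out by the constraint $[C] = c\,\PD[\omega_M]$, and the strict inequality $c' > c$ provides just enough positive slack to carry this out.
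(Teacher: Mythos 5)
Your outline shares the paper's skeleton (resolve the cusp, inflate along the proper transform, blow down, with the identity $[\wt{C}]^2=[C]^2-pq$ doing the numerical work), but the central step as you describe it does not go through. You set up the cusp resolution as a \emph{chain} of exceptional spheres $E_1,\dots,E_\ell$ with Hirzebruch--Jung self-intersections, in which $\wt{C}$ meets only the final $(-1)$-sphere $E_\ell$ and is disjoint from the rest. Inflation along $\wt{C}$ changes the area of $E_i$ by $t\,[\wt{C}]\cdot[E_i]$, so it increases the area of $E_\ell$ alone and leaves $E_1,\dots,E_{\ell-1}$ untouched. You therefore cannot ``choose $t$ so that the areas of $E_1,\dots,E_\ell$ become exactly the weights in the negative weight expansion of $E(q/c',p/c')$'': those weights all scale proportionally with $1/c'$, while your deformation rescales only one of them. (The version of your step that \emph{would} scale all areas proportionally uses the total transforms $e_i$ of the weight-expansion ball decomposition, where $[\wt{C}]\cdot e_i=w_i$; but that is not a chain, is not what you set up, and would then require the full ellipsoid-to-ball-packing machinery to close the loop.) What actually rescues the argument — and what the paper proves via its explicit toric model in \S\ref{subsec:toric_p_q} — is that after blowing the chain back down, the size of the extractable ellipsoid is governed \emph{only} by the last exceptional sphere $\F_L^L$ (the edge of the resolved moment polygon with inward normal $(p,q)$, parallel to the hypotenuse of the triangle defining $E(q,p)$), so enlarging that single sphere by $s$ already yields $(\eps+s)\cdot E(q,p)$ in the blown-down manifold. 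This geometric input is missing from your proposal, and your concluding ``spanning'' criterion in $H^2(\wt{M};\R)$ is not the relevant condition.

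A second, related gap: you never produce the constant $c'$. After inflation and blow-down the symplectic class is $(1+tc)[\omega_M]$, not $[\omega_M]$; one must rescale by $\tfrac{1}{1+tc}$, identify the result with $(M,\omega_M)$ by Moser stability (this is where $[C]=c\,\PD[\omega_M]$ is used), and observe that the embedded ellipsoid $\tfrac{\eps+t}{1+tc}\cdot E(q,p)$ has scale tending to $\tfrac1c$ as $t\to\infty$ — which is exactly why any $c'>c$ (and no $c'\le c$) is achieved. Your appeal to ``just enough positive slack'' gestures at this but does not carry it out. For part (ii) the numerics you quote are right, but the mechanism in the paper is again different from what you sketch: one stops at the \emph{minimal} resolution (the first $L$ blowups), smooths the residual singularity of the proper transform by a Milnor fiber to get a curve $D$ meeting $\G_L^L$ in $k$ points, and the factor $k$ in the target $E(\tfrac{kq}{c'},\tfrac{kp}{c'})$ arises because inflation along $D$ grows the area of $\G_L^L$ at rate $k$ (since $[D]\cdot[\G_L^L]=k$), not from a rescaling of the seed ellipsoid.
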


\NI In particular the existence of $C$ in (i) implies $c_M(p/q) \leq \tfrac{c}{q}$ (assuming $p > q$).
Note that the last sentence of (i) follows since we have
\begin{align*}
\vol(M,\omega_M) = \tfrac{1}{2}\int_M \omega_M \wedge \omega_M = \tfrac{1}{2} \pd [\omega_M] \cdot \pd [\omega_M] = \tfrac{1}{2c^2}[C] \cdot [C].
\end{align*}
The conditions on $[C]\cdot[C]$ imply that the index of the (partial) resolution of $C$ along which we inflate is positive. Indeed, the expression $[C] \cdot [C] - pq$ in (i) corresponds to the self-intersection number of the normal crossing resolution of $C$, while, when $g=2$, the expression $[C] \cdot [C] - k^2 pq$ in (ii) corresponds to the self-intersection number of the minimal resolution of $C$ (see \S\ref{sec:inflate}).

Applying Theorem~\ref{thmlet:inflation_from_sescusp} to family (c) from Theorem~\ref{thm:bob_et_al} recovers the constructive part (i.e. inner corners) for $c_{\CP^2}(x)$. 
We will see below that a similar picture holds for all of the monotone rigid del Pezzo surfaces.

\subsubsection{Outer and inner corner curves} 

We first construct singular algebraic curves responsible for the obstructions at outer corners, generalizing family (d) from Theorem~\ref{thm:bob_et_al}.

\begin{thmlet}\label{thmlet:outer_corner_curves}
In each rigid del Pezzo surface $M$ there is a countable family of rational index zero unicuspidal algebraic\footnote{\label{footnote:Chow}Note that by Chow's theorem we may speak interchangeably about ``algebraic'' and ``holomorphic'' curves, although in the body of the paper we work mostly in the holomorphic category.} curves which correspond precisely to the outer corners of the steps of the infinite staircase in $c_{M}(x)$. 
More specifically, if $(x,y)$ is an outer corner point on the graph of $c_M$, then the corresponding $(p,q)$-unicuspidal curve $C$ in $M$ satisfies $p/q = x$ and $\tfrac{p}{[\omega_M]\cdot [C]} = y$.
\end{thmlet}

\begin{corlet}
Each of the rigid del Pezzo infinite staircases stabilizes, i.e. for each monotone rigid del Pezzo surface $M$ we have $c_{M \times \C^N}(x) = c_{M}(x)$ for all 
$1 \leq x \leq a_\acc(M)$, where $a_\acc(M)$ denotes the accumulation point of the infinite staircase in $c_{M}$. 
\end{corlet}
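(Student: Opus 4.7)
The plan is to establish both inequalities. The direction $c_{M \times \C^N}(x) \leq c_M(x)$ is immediate: any symplectic embedding $\phi \colon E(1/\la, x/\la) \hooksymp M$ extends via $\phi \times \op{id}_{\C^N}$ to a symplectic embedding $E(1/\la, x/\la) \times \C^N \hooksymp M \times \C^N$, so any $\la$ realizing the embedding for $c_M$ also works for the stabilized function.

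For the reverse inequality, I would combine Theorem~\ref{thmlet:outer_corner_curves} with Theorem~\ref{thm:stab_obs_from_curve}. For each outer corner $(x_i, y_i)$ of the staircase of $c_M$, Theorem~\ref{thmlet:outer_corner_curves} supplies an index zero $(p_i, q_i)$-unicuspidal algebraic curve $C_i \subset M$ with $p_i/q_i = x_i$ and $p_i/([\omega_M]\cdot [C_i]) = y_i$; in particular $C_i$ is a symplectic curve. Since monotonicity of $M$ makes $M \times \C^N$ semipositive (as noted in the footnote to Theorem~\ref{thm:stab_obs_from_curve}), the stabilized version of Theorem~\ref{thm:stab_obs_from_curve} applies and yields the constraint $c \leq [\omega_M]\cdot [C_i]/(p_i q_i)$ for any symplectic embedding $E(c q_i, c p_i) \times \C^N \hooksymp M \times \C^N$, which translates to $c_{M \times \C^N}(x_i) \geq y_i$. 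Propagating this single-point obstruction through the monotonicity of symplectic ellipsoid embeddings under inclusions $E(a',b') \subseteq E(a,b)$, each curve $C_i$ yields a full V-shape lower bound
\[
c_{M \times \C^N}(x) \geq \min\!\left(\frac{x\, y_i}{x_i},\; y_i \right) \quad \text{for all } x > 0.
\]

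It remains to see that the supremum of these V-shape bounds over $i$ reproduces $c_M$ on $[1, a_\acc(M)]$, which combined with the upper bound gives the equality. This last step is the main obstacle: one must verify combinatorially that the outer corner data provided by Theorem~\ref{thmlet:outer_corner_curves} assembles into the entire staircase of $c_M$ described in \cite{cristofaro2020infinite} (Theorem~\ref{thm:CG_et_all_staircases}), i.e.\ that the horizontal arms of the V-shapes cover the plateaus of $c_M$, the linear arms cover the sloped segments, and intersections of consecutive V-shapes occur precisely at the inner corners $(x_i^{\op{inner}}, y_i^{\op{inner}})$. The initial interval $[1, x_1]$ before the first outer corner can be handled using the linear arm of the first V-shape together with the standard ball-packing lower bound (which stabilizes for monotone $M$). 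Once this matching of V-shape envelope with the explicit staircase profile is in place, the corollary follows formally.
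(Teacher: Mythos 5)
Your proposal is correct and is essentially the paper's (implicit) argument: the upper bound comes from stabilizing the four-dimensional embeddings by taking products with $\C^N$, and the lower bound comes from applying Theorem~\ref{thm:stab_obs_from_curve} to the outer corner curves of Theorem~\ref{thmlet:outer_corner_curves}, using semipositivity of $M \times \C^N$ for monotone $M$. The final step you flag as "the main obstacle" is not one: the fact that the scaling/monotonicity envelope of the outer-corner obstructions reproduces $c_M$ on $[1,a_\acc(M)]$ is exactly the structure of the staircase established in \cite{cristofaro2020infinite} (as noted in \S\ref{subsubsec:symp_emb}, the staircase is determined by obstructions at outer corners and embeddings at inner corners), so the corollary follows formally.
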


Our first proof of Theorem B in \S\ref{sec:twist}
is based on a generalization of Orevkov's birational transformation $\CP^2 \dashrightarrow \CP^2$ to a birational transformation $\Phi_{M}: M \dashrightarrow M$ for each rigid del Pezzo surface $M$. In brief, we start with two or three ``seed curves'' in $M$, and then we iteratively apply $\Phi_{M}$ to produce the rest of the family. The key point is that for seed curves which are ``well-placed'' (see Definition~\ref{def:well-placed}), successive applications of $\Phi_M$ lead to curves with increasingly singular cusps.

\begin{rmk}
 The number of ``strands'' of the infinite staircase is determined by the number of initial seed curves, which is three for $\CP^2(3) \# \ovl{\CP}^2(1)$ and $\CP^2(3) \#^{\times 2} \ovl{\CP}^2(1)$ and two in the remaining cases (this corresponds to $J$ in Table~\ref{table:CG_et_al}). 
 This number can also be seen in terms of the almost toric structures supported by the symplectic manifold $M$ (i.e. triangles or quadrilaterals, see Figure~\ref{fig:smooth_Fano_polygons}).
\end{rmk}

\sss

We also give a different construction of these outer corner curves in \S\ref{sec:singII} based on almost toric fibrations and $\Q$-Gorenstein deformations. 

\begin{thmlet}\label{thmlet:nodal} 
Let $\pi: \atf \ra Q$ be an almost toric fibration, where $Q \subset \R^2$ is a polygon\footnote{All polygons in this paper are assumed to be convex.} and $\atf$ is a (not necessarily monotone) closed symplectic four-manifold which is diffeomorphic to a rigid del Pezzo surface $M$.
Suppose that $Q$ contains consecutive edges pointing in the directions $(-mr^2,mra-1),(0,-1),(1,0)$ for some $m,r,a \in \Z_{\geq 1}$ with $\gcd(r,a) = 1$.\footnote{Equivalently, $Q$ has a vertex $\vv$ with edge directions $(1,0),(0,1)$ and a vertex on the edge in direction $(0,1)$ with eigenray  in the direction $(r,-a)$ -- see Figure~\ref{fig:ATF_3_pics} or \S\ref{sec:ATF1} for more details.}
Then $M$ contains an index zero $(r,a)$-unicuspidal rational algebraic curve. 
\end{thmlet}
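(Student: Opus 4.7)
The plan is to produce the desired unicuspidal curve as the generic fiber of a flat family over a smoothing disk, using the $\Q$-Gorenstein smoothing perspective on ATFs developed in Section~\ref{sec:ATF1}. First, I would apply that correspondence to realize $\atf$ (equipped with a compatible integrable complex structure) as a general fiber of a flat projective family $\pi: \calX \to \Delta$ over a small disk, whose central fiber $\calX_0 =: \ovl{M}_Q$ is the singular toric surface associated to $Q$. Under this dictionary, the smooth corner $\vv$ of $Q$ becomes a smooth toric fixed point, while the vertex $\vv'$ along the $(0,1)$-edge---whose outgoing edges are $(0,-1)$ and $(mr^2, -(mra-1))$---becomes a T-singularity $\ovl{p} \in \ovl{M}_Q$ of type $\frac{1}{mr^2}(1, mra-1)$, governed by the eigenray $(r,-a)$.

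Next I would construct a rational reference curve $\ovl{C} \subset \ovl{M}_Q$ passing through $\ovl{p}$ with the prescribed tangential data. Guided by the tropical picture in $Q$, a natural candidate is obtained by straightening a tropical segment emanating from the smooth corner $\vv$, running along the $(0,1)$-edge, and terminating at $\vv'$ in the eigenray direction $(r,-a)$. Concretely, $\ovl{C}$ is chosen so that in the local orbifold chart $\C^2/\mu_{mr^2}$ at $\ovl{p}$ its lift is a smooth curve meeting the exceptional preimage of $\ovl{p}$ transversally in the direction encoded by the eigenray, and so that it extends to a globally irreducible rational algebraic curve in $\ovl{M}_Q$ avoiding the other T-singularities.

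The core of the proof is then a local Puiseux calculation. One verifies that the pair $(\ovl{M}_Q, \ovl{C})$ admits unobstructed $\Q$-Gorenstein deformations and lifts $\ovl{C}$ to a flat family $\ovl{\calC} \subset \calX$. Working in the canonical local model for the $\Q$-Gorenstein smoothing of a $\frac{1}{mr^2}(1, mra-1)$ singularity---a cyclic quotient of a hypersurface of the form $\{uv = w^{mr} + \ldots\} \subset \C^3$, with an iterated Wahl-block structure for $m \geq 2$---a direct computation shows that for $t \neq 0$ the curve $C_t := \ovl{\calC} \cap \calX_t$ develops exactly an $(r,a)$-cusp at the smoothed singular point, with Puiseux pair precisely matching the eigenray data $(r,-a)$. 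Away from this point the deformation is locally trivial, hence $C_t$ is smooth there and $C_t \subset M \cong \calX_t$ is a rational unicuspidal algebraic curve. I expect this Puiseux/unobstructedness calculation to be the main technical obstacle, since one must carefully track how the eigenray interacts with the smoothing coordinates across the $m$-fold Wahl chain and rule out extraneous singularities of $C_t$.

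Finally, vanishing of the index is a combinatorial check. Flatness together with the toric formula for $[\ovl{C}]$ (with the appropriate T-singularity correction at $\ovl{p}$) yields $[C_t]^2 = ra-1$. Adjunction for the $(r,a)$-cuspidal rational curve then forces $c_1(M)\cdot [C_t] = r+a$, so $\ind_\R(C_t) = 2c_1(M)\cdot[C_t] - 2r - 2a = 0$, as required.
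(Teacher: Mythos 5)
Your overall framework---realizing $\atf$ as a general fiber of a $\Q$-Gorenstein smoothing of the singular toric surface $V_Q$ and producing the curve by deforming an explicit algebraic curve from a degenerate model---is the same as the paper's (Theorem~\ref{thm:uni_alg_curve} together with Proposition~\ref{prop:QG_def_diff_ATF} and Lemma~\ref{lem:pert_non_Fano}). But the central step of your argument is misplaced: you locate the $(r,a)$ cusp at the smoothed $T$-singular point and propose to extract it from a Puiseux computation in the Wahl chart $\{uv = w^{mr}+\cdots\}/\roots_r^{1,-1,a}$. That cannot work. The local $\Q$-Gorenstein smoothing $B_{m,r,a}$ of $\tfrac{1}{mr^2}(1,mra-1)$ is \emph{smooth}, and the natural holomorphic curves in it (the components of $\{y=0\}$, compatible with the Auroux fibration of \S\ref{subsec:aur_vis}) are embedded disks; no cusp forms there. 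In the paper's construction the cusp sits at the \emph{opposite} end of the visible segment, namely at the smooth toric fixed point corresponding to the Delzant vertex $\vv$: in the affine chart $U_x$ the curve is $\{w^r=\zeta z^a\}$, which is precisely the $(r,a)$ cusp, well-placed at the node of the anticanonical divisor $\{z=0\}\cup\{w=0\}$ (Lemma~\ref{lem:C^+_well_placed_tri}). This matches the symplectic picture of Proposition~\ref{prop:uni_symp_curve}, where the visible unicuspidal sphere lies over a segment running \emph{from the Delzant vertex to a base-node}, the cusp being created at the vertex by pairing the two adjacent edge directions against the covector $a\,dx_1+r\,dx_2$. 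Your proposed local calculation at the smoothed point would simply return a smooth branch, so the unicuspidal curve never appears.

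Relatedly, your degenerate reference curve $\ovl{C}$ is not the right object. In the paper's pencil $S_t=\{xy=\tfrac{1}{1+t}w^{mr}+\tfrac{t}{1+t}z^{ma}\}$ the curves $C_t^+=\{y=0,\ w^r=\zeta z^a\}$ (with $\zeta^m=-t$) degenerate as $t\to 0$ into a component of the \emph{toric boundary} of $V_Q$, not into an interior rational curve meeting the $T$-point transversally; an interior curve whose local lift is transverse to the exceptional set of $\tfrac{1}{mr^2}(1,mra-1)$ is in general obstructed from deforming to the $\Q$-Gorenstein smoothing (its local class need not lie in the image of specialization from $B_{m,r,a}$, which is a rational homology ball when $m=1$), so the unobstructedness you assert is exactly what needs proof and is false in the form stated. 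Finally, you omit the last step of the paper's argument: since $Q$ is not assumed dual Fano, $\wt{V}_Q$ need not be Fano or biholomorphic to $M$, and one needs the further small deformation of Lemma~\ref{lem:pert_non_Fano} (carrying the curve along as in the proof of Theorem~\ref{thm:sesqui_main}) to land in the rigid del Pezzo surface $M$.
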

\NI 
The rough idea is  first to construct unicuspidal symplectic curves in $\atf$ which are ``visible'' in $Q$ (and lie over a line segment connecting a vertex to a base-node as in Figure~\ref{fig:ATF_3_pics} right), and then to compare $\atf$ with a $\Q$-Gorenstein smoothing of the corresponding singular toric surface $V_Q$ in order to upgrade these to algebraic curves.
Theorem~\ref{thmlet:nodal} is a corollary of Theorem~\ref{thm:uni_alg_curve}, which holds beyond rigid del Pezzo surfaces and which we state using the purely combinatorial language introduced in \S\ref{sec:ATF1}. 
A useful consequence of Theorem~\ref{thmlet:nodal} is that we can directly observe ``visible'' obstructions for ellipsoid embeddings into $\atf$ in terms of triangles in the polygon $Q$ (see the shading in Figure~\ref{fig:ATF_3_pics} right and see \S\ref{subsec:singII_intro} for more details). 
 Another noteworthy feature of Theorem~\ref{thmlet:nodal} is that $\atf$ need not be monotone (or equivalently the polygon $Q$  need not be dual Fano in the sense of \S\ref{subsubsec:dual_Fano}). This potentially allows us to construct a much larger class of unicuspidal curves than we could just by looking at monotone ATFs, as we illustrate with Theorem~\ref{thmlet:F_1_classif} below.

\begin{rmk}
One reason for giving two different proofs of Theorem~\ref{thmlet:outer_corner_curves} is that the underlying techniques naturally extend in different directions. For instance, the generalized Orevkov twist is used in \S\ref{subsec:ghost_stairs} to construct sesquicuspidal algebraic curves in $\CP^2$ (the ``ghost stairs'' curves) which we do not currently know how to see using almost toric fibrations.
\end{rmk}

\MS

We now discuss curves responsible for constructing symplectic embeddings, generalizing family (c) in Theorem~\ref{thm:bob_et_al}.
We consider the two-stranded and three-stranded cases separately, as they behave somewhat differently, with the latter requiring more complicated cusp singularities.

\begin{thmlet}\label{thmlet:inner_corner_curves} \hfill

\begin{enumerate}[label=(\alph*)]
  \item For $M$ each of the rigid del Pezzo surfaces $\CP^2, \CP^1 \times \CP^1,$ and $ \CP^2 \#^{\times j} \ovl{\CP}^2,$ $ j = 3,4$,
  there is a countable family of index two unicuspidal rational algebraic curves in $M$ which correspond precisely to the inner corners of the infinite staircase in $c_M(x)$. More specifically, if $(x,y)$ is an inner corner point on the graph of $c_M$, then the corresponding $(p,q)$-unicuspidal curve $C$ in $M$ satisfies $[C] \cdot [C] = pq$ and $[C] = c\, \pd([\omega_M])$ for $c = \tfrac{pq}{p+q+1}$, with $p/q = x$ and $c/q = y$. 
  \item For $M$ each of the rigid del Pezzo surfaces $\CP^2 \# \ovl{\CP}^2$ and $\CP^2 \#^{\times 2} \ovl{\CP}^2$, there is a countable family of weakly\footnote
  {
  The word ``weakly''  indicates that these curves might have other singularities in addition to the distinguished cusp and some ordinary double points. In the symplectic category we can always perturb such a curve to a genuine sesquicuspidal one, but this is not guaranteed in the algebraic category. It seems plausible that this qualifier here could be removed by a more careful analysis of the curves in the proof of Proposition~\ref{prop:curve_in_rat_surf}.} sesquicuspidal 
  rational algebraic curves in $M$ which correspond precisely to the inner corners of the infinite staircase in $c_M(x)$. More specifically, if $(x=p/q,y)$ is an inner corner point on the graph of $c_M$, then the corresponding curve $C$ in $M$ satisfies $[C] = kqy \,\pd([\omega_M])$ and $[C] \cdot [C] \geq k^2pq$ for some $k \in \Z_{\geq 1}$ and has a cusp with one Puiseux pair $(p,q)$ if $k = 1$ and two Puiseux pairs $(p,q), (kp+1,k)$ if $k \in \Z_{\geq 2}$.
\end{enumerate}
\end{thmlet}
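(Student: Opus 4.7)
The plan is to prove Theorem~\ref{thmlet:inner_corner_curves} by the almost toric / $\Q$-Gorenstein smoothing machinery of \S\ref{sec:ATF1}--\S\ref{sec:singII}, running essentially in parallel to the proof of Theorem~\ref{thmlet:nodal} for outer corners, but tuning the polygon combinatorics so that the resulting visible curves have index two and satisfy the self-intersection conditions $[C]\cdot[C] = pq$ (resp.\ $\geq k^2 pq$) required for the symplectic inflation construction of Theorem~\ref{thmlet:inflation_from_sescusp}. The overall strategy is: (i) identify, for each inner corner $(x,y)$ of $c_M$, a polygon $Q_{x,y}$ supporting an ATF on a symplectic four-manifold $\atf$ diffeomorphic to $M$; (ii) produce a visible sesquicuspidal symplectic curve $C_{x,y} \subset \atf$ of the expected type; (iii) upgrade it to an algebraic curve by invoking the correspondence between $Q$-mutations and $\Q$-Gorenstein smoothings.

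For step (i), I would enumerate the relevant base polygons using iterated polygon mutations starting from a monotone Fano triangle or quadrilateral for $M$. The inner-corner data in \cite{cristofaro2020infinite} is governed by the same generalized-Markov recursions that drive the ATF mutation trees used in \cite{casals2022full}; each inner corner naturally corresponds to a polygon with a vertex whose edges are in directions compatible with a $(p,q)$-cusp and with one or more base-nodes aligned along an eigenray of the required slope. In the two-stranded cases (part~(a)), a single base-node suffices and the visible curve has one Puiseux pair. In the three-stranded cases (part~(b)), the mutation tree produces a nested pair of base-nodes along the relevant line segment, which, via the cabling interpretation of iterated base-node traversals, yields a cusp with two Puiseux pairs $(p,q)$ and $(kp+1,k)$, where $k$ is the multiplicity of the secondary node.

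For step (ii), I would construct $C_{x,y}$ as the visible symplectic curve lying over a segment from the distinguished vertex of $Q_{x,y}$ through the base node(s) (in the spirit of the Figure~\ref{fig:ATF_3_pics} picture used for Theorem~\ref{thmlet:nodal}). The key computation is homological: writing $[C_{x,y}] = c\,\PD[\omega_M]$, one reads off $c$ and $c_1([C_{x,y}])$ from the polygon data, and the conditions $c = pq/(p+q+1)$ (part~(a)) and $c = kqy$ with $[C]\cdot[C]\geq k^2pq$ (part~(b)) follow from the affine-length and Chern-number formulas proved in \S\ref{sec:ATF1}. Combined with the adjunction-type identity $\ind_\R(C) = 2c_1([C]) - 2p - 2q$, this gives $\ind_\R(C_{x,y}) = 2$, confirming that the curves live in one-parameter families and matching the inner-corner prediction via Theorem~\ref{thmlet:inflation_from_sescusp}.

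For step (iii), I would invoke the correspondence developed in \S\ref{sec:ATF1}, by which the singular toric surface $V_{Q_{x,y}}$ admits a $\Q$-Gorenstein smoothing biholomorphic to $M$ (as a complex manifold) and under which visible curves deform to honest algebraic curves; in part~(a) the single-Puiseux-pair structure is preserved by the smoothing, while in part~(b) this is exactly where Proposition~\ref{prop:curve_in_rat_surf} enters to guarantee the existence of an algebraic representative. \textbf{The main obstacle is precisely this algebraic upgrade in part~(b)}: the $\Q$-Gorenstein smoothing of $V_{Q_{x,y}}$ is a flat complex deformation, and tracking a visible symplectic curve through it while controlling its local singularity structure near both the primary and secondary base-nodes is delicate — in particular one has no \emph{a priori} control preventing the appearance of extra singularities away from the distinguished cusp, which is why the statement is formulated with the ``weakly'' qualifier. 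Removing this qualifier would require a finer study of the relative Hilbert scheme of the smoothing family, which I would leave as a separate question.
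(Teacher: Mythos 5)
Your high-level architecture (polygons from mutations, curves in almost toric fibrations, algebraic upgrade via $\Q$-Gorenstein smoothings) matches the paper, and your step (i) is essentially Proposition~\ref{prop:inner_corners_vis_ell}. But your step (ii) rests on the wrong geometric model for the curve, and this is a genuine gap. A visible symplectic curve lying over a segment from a Delzant vertex to a base-node has $c_1([C]) = p+q$ (item (e) of \S\ref{subsubsec:vis_Lag_symp}) and hence index \emph{zero}; these are the outer-corner curves of Theorem~\ref{thmlet:nodal}, and they are not Poincar\'e dual to a multiple of $[\omega_M]$. The inner-corner curves must satisfy $[C] = c\,\PD[\omega_M]$, which by Corollary~\ref{cor:PD_to_symp_form_in_ATF} forces $C$ to meet \emph{every} toric boundary divisor with multiplicity proportional to the affine length of the corresponding edge. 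Accordingly the paper's curves are not visible curves over paths at all: they are built algebraically via Proposition~\ref{prop:curve_in_rat_surf} as closures of explicit parametrized rational curves in $(\C^*)^2 \subset V_{Q'}$ hitting each toric divisor (tropical curves meeting every edge, as in Figure~\ref{fig:ATF_3_pics} middle), where $Q'$ is the weighted blowup of $Q$ at the Delzant vertex; the cusp is then created by blowing $V_{Q'}$ back down to $V_Q$ (Lemma~\ref{lem:sesqui_curve_in_orbi}), and sits at the \emph{smooth} toric fixed point, disjoint from the singularities of $V_Q$ and from all base-nodes.

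Two consequences of this misidentification propagate through your argument. First, your explanation of the second Puiseux pair $(kp+1,k)$ via ``nested base-nodes'' and ``iterated base-node traversals'' is not what happens: $k$ is the common factor of the affine lengths $kq, kp$ of the two edges at the Delzant vertex after the minimal lattice rescaling of the quadrilateral, and the Puiseux characteristic $(kq;kp,kp+1)$ arises because the curve upstairs meets the exceptional divisor $\Ddiv_{e_\slant}$ at a single point with contact order $k$ (Example~\ref{ex:res_of_kq_kp_kp+1}); triangles force $k=1$, which is why part (a) gets unicuspidal curves. You also give no actual argument for $[C]\cdot[C] = pq$ resp.\ $\geq k^2pq$; in the paper this follows from adjunction applied to the resolution ($c_1([C^{\op{res}}])\geq 2$ gives $[C^{\op{res}}]\cdot[C^{\op{res}}]\geq 0$, which is equivalent to the claimed bound by Lemma~\ref{lem:k^2pq}). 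Second, your worry in step (iii) about ``tracking a visible symplectic curve through the smoothing near the base-nodes'' is moot once the curve is constructed algebraically in $V_Q$ away from its singular points: the deformation into $\wt{V}_Q$ is handled by automatic transversality (Lemma~\ref{lem:mod_sp_is_reg}), and the ``weakly'' qualifier in part (b) has nothing to do with the smoothing --- it reflects the fact that for a quadrilateral the tropical curve of Proposition~\ref{prop:curve_in_rat_surf} has four or more ends and its singularities in the open torus are not controlled, whereas for a triangle Corollary~\ref{cor:rat_curve_par_edges} gives an embedded curve.
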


Our proof of Theorem~\ref{thmlet:inner_corner_curves} is based on an analogue of Theorem~\ref{thmlet:nodal}, namely Theorem~\ref{thm:sesqui_main}, which says roughly that every ``visible ellipsoid embedding'' (in the sense of \S\ref{subsubsec:vis_ell}) corresponds to an algebraic (weakly) sesquicuspidal rational curve.
The proof of Theorem~\ref{thm:sesqui_main} is similarly based on almost toric fibrations and $\Q$-Gorenstein smoothings, except that the relevant curves no longer correspond to straight line segments in the polygon $Q$ but rather to tropical curves which intersect every edge.

\begin{rmk}
In the above we have observed that we can recover embeddings corresponding to inner corners of infinite staircases by inflating along suitable singular symplectic curves, and moreover these curves can be explicitly constructed with the aid of almost toric fibrations.
However, it should be emphasized that the very same almost toric fibrations can be used much more directly to construct symplectic ellipsoid embeddings (c.f. Proposition~\ref{prop:vis_ell_emb}), which is the approach taken in \cite{cristofaro2020infinite,casals2022full}. 
The main novelty of Theorem~\ref{thmlet:inner_corner_curves} is the connection with singular algebraic (and in particular symplectic) curves.
\end{rmk}

\subsubsection{Unicuspidal curves in the first Hirzebruch surface}

Let $F_1 = \CP^2 \# \ovl{\CP}^2$ denote the first Hirzebruch surface, and let $\ell,e \in H_2(F_1)$ denote the line and exceptional classes.
We showed in \cite[Thm. G]{cusps_and_ellipsoids} that there exists an index zero $(p,q)$-unicuspidal rational symplectic curve in $F_1$ in homology class $A = d\ell-me$ if and only if $A \in H_2(F_1)$ is a $(p,q)$-perfect exceptional class (see \cite[Def. 4.4.2]{cusps_and_ellipsoids}).\footnote{Note that this does not depend on the choice of symplectic form on $F_1$, by e.g.  
Theorem E and Theorem 3.3.2 in \cite{cusps_and_ellipsoids}, along with the fact that all symplectic forms on $F_1$ are deformation equivalent.}
The set 
\begin{align*}
\perf(F_1) := \{(p,q,d,m)\;|\; A = d\ell - me \text{ is a } \text{$(p,q)$-perfect exceptional class}\}
\end{align*} 
is quite complicated but was recently worked out explicitly in \cite{magillmcd2021,magill2022staircase} in the course of classifying infinite staircases for all (not necessarily monotone) symplectic forms on $F_1$ (see also \S\ref{subsec:unicusp_appl} below for a detailed overview). Using results in \cite{magill2022unobstructed}, we
show in  \S\ref{subsec:unicusp_appl} that a polygon as in Theorem~\ref{thmlet:nodal} may be associated to each $(p,q,d,m) \in \perf(F_1)$ with $p/q>6$, and thereby construct an {\em algebraic} $(p,q)$-unicuspidal curve.  Hence we have:

\begin{thmlet}\label{thmlet:F_1_classif}
For any coprime positive integers $p > q$ and homology class $A \in H_2(F_1)$, the following are equivalent:
\begin{itemize}
  \item there exists an index zero $(p,q)$-unicuspidal rational {\em symplectic} curve $C$ in $F_1$ with $[C] = A$
  \item there exists an index zero $(p,q)$-unicuspidal rational {\em algebraic} curve $C$ in $F_1$ with $[C] = A$.
\end{itemize}
\end{thmlet}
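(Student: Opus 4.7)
The implication from algebraic to symplectic is immediate: any complex algebraic curve in $F_1$ with a $(p,q)$ cusp is, equipped with any Kähler form on $F_1$, a symplectic $(p,q)$-unicuspidal curve of the same index and homology class. So the content of the theorem lies in the forward direction, which I would approach by invoking the characterization from \cite{cusps_and_ellipsoids}: an index zero $(p,q)$-unicuspidal rational symplectic curve in $F_1$ with $[C] = d\ell - me$ exists if and only if $(p,q,d,m) \in \perf(F_1)$. The theorem is therefore equivalent to showing that every perfect tuple $(p,q,d,m) \in \perf(F_1)$ is realized by an \emph{algebraic} unicuspidal curve, and this is precisely the task that Theorem~\ref{thmlet:nodal} is designed to accomplish via an explicit polygon.

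The core of the proof will be an explicit combinatorial construction. For each $(p,q,d,m) \in \perf(F_1)$, I would build a polygon $Q \subset \R^2$ satisfying the hypotheses of Theorem~\ref{thmlet:nodal} with $(r,a)$ matching the prescribed cusp type $(p,q)$, and such that the symplectic manifold $\atf$ obtained from an almost toric fibration $\pi: \atf \to Q$ (with suitably placed base-nodes) is diffeomorphic to $F_1$. To locate the correct $Q$, I would use the explicit stratification of $\perf(F_1)$ from \cite{magillmcd2021, magill2022staircase}, which organizes perfect classes into finitely many families indexed by recursive integer sequences arising from blocking classes and ice-cream staircases. For each such family, the parameters $(p,q,d,m)$ determine the edge directions, edge lengths, and base-node positions of $Q$. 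Theorem~\ref{thmlet:nodal} (or rather its refinement Theorem~\ref{thm:uni_alg_curve}) then outputs an index zero $(p,q)$-unicuspidal rational algebraic curve in $\atf$, and since $F_1$ is complex rigid, the complex structure on $\atf$ coming from the $\Q$-Gorenstein smoothing of $V_Q$ is biholomorphic to the standard one, yielding the desired algebraic curve in $F_1$.

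The main obstacle will be verifying, family by family, that the homology class $[C] \in H_2(\atf) \cong H_2(F_1)$ of the algebraic curve produced by Theorem~\ref{thmlet:nodal} matches the prescribed $A = d\ell - me$. This requires tracking the image in the smooth manifold of the "visible" line segment in $Q$ (from the relevant vertex through the base-node) under the $\Q$-Gorenstein smoothing, and computing its intersection with representatives of $\ell$ and $e$. The computation will rely on the identification, developed in \S\ref{sec:ATF1}, between $\Q$-Gorenstein deformations of singular toric surfaces and symplectic almost toric mutations, together with the area/affine-length formulas relating edge data of $Q$ to intersection numbers. A secondary but important consistency check is that the diffeomorphism type of $\atf$ is indeed $F_1 = \CP^2 \# \ovl{\CP}^2$ and not some other rational surface, which I would verify by computing the Euler characteristic and signature from the combinatorics of $Q$ (including corrections from base-nodes) and comparing with $\chi(F_1) = 4$, $\sigma(F_1) = 0$.

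Finally, since the almost toric fibrations $\atf$ produced this way are generally non-monotone, a separate check is required that Theorem~\ref{thmlet:nodal} applies in this non-monotone setting; this is already asserted in the statement of Theorem~\ref{thmlet:nodal}, but it is what makes the construction powerful enough to reach \emph{all} of $\perf(F_1)$ rather than just the perfect classes arising from the monotone staircase. Once the polygon construction is complete for every family in the explicit classification of $\perf(F_1)$, the theorem follows.
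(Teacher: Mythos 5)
Your overall skeleton is right and matches the paper's starting point: the algebraic-to-symplectic direction is trivial, and by \cite[Thm. G]{cusps_and_ellipsoids} the forward direction reduces to realizing every class in $\perf(F_1)$ algebraically. But there are two concrete problems with the execution you describe. First, your plan to build a polygon satisfying the hypotheses of Theorem~\ref{thmlet:nodal} for \emph{every} $(p,q,d,m)\in\perf(F_1)$ is substantially harder than what is actually available: the polygon constructions from \cite{magill2022unobstructed} cover only $\perfbar^+(F_1)\cap[6,\infty)$, and extending them across all of $(3+2\sqrt 2,\infty)$ is a computationally heavy project deferred to forthcoming work. The paper sidesteps this by constructing $T$-quadrilaterals only for $p/q\in\perfbar(F_1)\cap[6,\infty)$ (using the reflection symmetry $R$ to pass from $\perfbar^+$ to $\perfbar^-$ there), and then covering the remaining range $(3+2\sqrt 2,6)$ by iterating the generalized Orevkov twist $\Phi_{F_1}$, which realizes the shift $S\colon p/q\mapsto(6p-q)/p$ on well-placed curves; the monotone range $(1,3+2\sqrt2)$ is already handled by Theorem~\ref{thmlet:outer_corner_curves}. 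Your proposal omits the twist entirely, so as written it rests on polygon constructions that do not yet exist for a large part of $\perfbar(F_1)$. This is also why the paper insists that the curves produced by Theorem~\ref{thm:uni_alg_curve} be well-placed with respect to a nodal anticanonical divisor: that is the hypothesis needed to feed them into $\Phi_{F_1}$.

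Second, your proposed consistency check on the diffeomorphism type of $\atf(Q_\nodal)$ — computing $\chi$ and $\sigma$ and comparing with $\chi(F_1)=4$, $\sigma(F_1)=0$ — cannot work, because $\CP^1\times\CP^1$ has exactly the same Euler characteristic and signature; the two manifolds differ only in the parity of the intersection form, and by Leung--Symington this is precisely the one ambiguity that must be excluded. The paper resolves it by showing that a $(p,q)$-perfect exceptional class in $\CP^1\times\CP^1$ forces $t_{p,q}=\sqrt{p^2-6pq+q^2+8}$ to be an even integer, whereas all $t$-values propagated from the basic generating triples by mutations and the symmetries $R,S$ are odd. On the other hand, the "main obstacle" you identify — matching the homology class of the constructed curve with $A=d\ell-me$ — is actually automatic: index zero gives $c_1(A)=p+q$, adjunction gives $A\cdot A=pq-1$, and these determine $(d,m)$ uniquely from $(p,q)$ (this is the injectivity of $\perf(F_1)\to\perfbar(F_1)$ recorded in \eqref{eq:tpq}), so no tracking of visible segments through the smoothing is needed.
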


\begin{rmk}
It follows by Theorem~\ref{thmlet:F_1_classif} and the results in \cite{magillmcd2021,magill2022staircase} that every rational unicuspidal algebraic curve with one Puiseux pair in $F_1$ corresponds to the outer corner of a staircase in $c_{\calH_b}(x)$ for some $b \in [0,1)$, where $\calH_b := \CP^2(1) \# \ovl{\CP}^2(b)$ denotes $F_1$ with the symplectic form such that a line has area $1$ and the exceptional divisors has area $b$ (this is unique up to symplectomorphism). Note that $\calH_b$ is monotone only if $b=1/3$. 
\end{rmk}

We prove Theorem~\ref{thmlet:F_1_classif} by showing that  such a $(p,q)$-unicuspidal curve  is realized by an almost toric fibration which satisfies the hypotheses of Theorem~\ref{thmlet:nodal}. The analogous statement holds for $\CP^2$ (see Lemma~\ref{lem:d_is_complete}) and is expected for $\CP^1 \times \CP^1$ (c.f. \cite{usher2019infinite,farley2022four} and Remark~\ref{rmk:rel_to_CP1xC^1} below).
Thus it is natural to ask whether something analogous holds also for the remaining rigid del Pezzo surfaces:

\begin{conjlet}\label{conjlet:remaining_four}
Let $M$ be a rigid del Pezzo surface. The following are equivalent:
\begin{enumerate}[label=(\alph*)]
  \item  There exists an index zero $(p,q)$-unicuspidal rational symplectic curve in $M$
  \item There exists an almost toric fibration $\pi: \atf \ra Q$ as in Theorem~\ref{thmlet:nodal}, where $\atf$ is diffeomorphic to $M$ and $Q$ has consecutive edges pointing in the directions $(-mq^2,mpq-1),(0,-1),(1,0)$ for some $m \in \Z_{\geq 1}$. In particular, there exists an index zero $(p,q)$-unicuspidal rational algebraic curve in $M$.
\end{enumerate}
\end{conjlet}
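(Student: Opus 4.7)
The implication $(b) \Rightarrow (a)$ is immediate: given such an almost toric fibration, Theorem~\ref{thmlet:nodal} directly produces an index zero $(p,q)$-unicuspidal rational algebraic curve in $M$, which is in particular a symplectic one. So the substantive content is the converse $(a) \Rightarrow (b)$, for which I would model the argument on the proof of Theorem~\ref{thmlet:F_1_classif}.

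The first step is to establish an \emph{arithmetic classification} of those triples $(p,q,A)$ with $A \in H_2(M)$ for which $M$ carries an index zero $(p,q)$-unicuspidal symplectic curve in class $A$. For $F_1$ this is the content of \cite{magillmcd2021, magill2022staircase}, organized via the notion of $(p,q)$-perfect exceptional class: a blend of Diophantine conditions coming from positivity of symplectic area and self-intersection with the numerical conditions coming from the normal crossing resolution (c.f. \cite[Def.~4.4.2]{cusps_and_ellipsoids}). An analogous classification should hold in each rigid del Pezzo surface, and one expects it to be governed by generalized Markov-type equations indexed by the number of strands of the corresponding infinite staircase in $c_M(x)$ (c.f. \cite{cristofaro2020infinite, casals2022full}). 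The second step is then to construct, for each admissible $(p,q,A)$, an explicit almost toric polygon $Q$ whose boundary contains three consecutive edges in the prescribed directions $(-mq^2, mpq-1)$, $(0,-1)$, $(1,0)$; the natural approach is to build $Q$ by repeated polygon mutations starting from one of the seed dual Fano polygons of Figure~\ref{fig:smooth_Fano_polygons}, in parallel with the inductive construction of seed curves and outer-corner curves already carried out in Theorem~\ref{thmlet:outer_corner_curves}.

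The main obstacle is twofold. Combinatorially, the mutation graph of polygons whose $\Q$-Gorenstein smoothing is diffeomorphic to a multi-point blowup of $\CP^2$ is significantly more intricate than for $F_1$ because the cone of effective classes has higher rank, and one must verify that every arithmetically admissible $(p,q,A)$ is actually reached. This suggests an induction whose base cases are the seed triangles or quadrilaterals, and whose inductive step should be a polygon-level analogue of the generalized Orevkov twist of \S\ref{sec:twist}, plausibly carried out one strand of the staircase at a time. Algebro-geometrically, since the polygons produced are typically not dual Fano, one must further check that the $\Q$-Gorenstein smoothing of the singular toric surface $V_Q$ is indeed diffeomorphic to $M$, which amounts to matching intersection forms on $V_Q$ after smoothing and verifying that the cyclic quotient singularities at the base-nodes admit the expected $T$-smoothings.

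A sensible intermediate target would be the two-strand cases $\CP^1 \times \CP^1$, $\CP^2 \#^{\times 3} \ovl{\CP}^2$, and $\CP^2 \#^{\times 4} \ovl{\CP}^2$, where the polygon combinatorics should be simpler than in the three-strand case $F_1$ already handled by Theorem~\ref{thmlet:F_1_classif} (and where $\CP^1 \times \CP^1$ is already expected by \cite{usher2019infinite, farley2022four}). The remaining three-strand case $\CP^2 \#^{\times 2} \ovl{\CP}^2$ would then draw most directly on the $F_1$ argument, using the matching between seed curves and seed polygons already implicit in \S\ref{sec:twist} and Theorem~\ref{thmlet:outer_corner_curves} as a template.
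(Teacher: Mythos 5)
This statement is labeled as a \emph{conjecture} in the paper, and the paper supplies no proof of it: the only accompanying text is the remark that (a) is equivalent to the existence of a $(p,q)$-perfect exceptional class in $H_2(M)$ (by \cite[Thm.~G]{cusps_and_ellipsoids}) and that the set $\perf(M)$ ``remains to be worked out'' for the surfaces in question. So there is no argument of the authors to compare yours against, and the relevant question is whether your proposal actually closes the conjecture. It does not. Your observation that (b)~$\Rightarrow$~(a) follows immediately from Theorem~\ref{thmlet:nodal} is correct and is the easy half. For (a)~$\Rightarrow$~(b) you have correctly identified the template --- the proof of Theorem~\ref{thmlet:F_1_classif} via Lemma~\ref{lem:T_quad_from_p_q} --- but both essential inputs of that template are deferred rather than supplied: (i) the arithmetic determination of $\perf(M)$ for $M \neq \CP^2, F_1$, which is precisely the open ingredient the paper flags, and for which your suggestion that it is ``governed by generalized Markov-type equations indexed by the number of strands'' understates the difficulty (already for $F_1$ the set $\perfbar(F_1)$ above the accumulation point is organized by generating triples, a shift symmetry, and a reflection symmetry, and is far richer than the staircase recursion alone); and (ii) the construction, for each admissible $(p,q)$, of a $T$-polygon with the prescribed consecutive edge directions whose almost toric total space is diffeomorphic to $M$. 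For $F_1$ step (ii) rests on the computationally heavy results of \cite{magill2022unobstructed,magill_in_prep}, and no analogue is available for the remaining surfaces.

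Two further points where your sketch would need actual content rather than analogy. First, even once a candidate polygon $Q$ is produced, one must rule out that $\atf(Q_\nodal)$ is diffeomorphic to a \emph{different} del Pezzo surface with the same second Betti number; in the $F_1$ case this required the parity argument on the $t$-values $t_{p,q}=\sqrt{p^2-6pq+q^2+8}$ distinguishing $F_1$ from $\CP^1\times\CP^1$, and for higher blowups of $\CP^2$ the Leung--Symington classification gives no such ambiguity in the diffeomorphism type but the realizability of the required edge data in a polygon for the correct $M$ still has to be checked. Second, your proposed ``polygon-level analogue of the generalized Orevkov twist'' as the inductive step is not something the paper constructs: the paper's two mechanisms (full mutations of $T$-polygons, and the Orevkov twist on curves) are shown to realize the \emph{staircase} recursion $(p,q)\mapsto(Kp-q,p)$, but the conjecture concerns \emph{all} perfect classes, including those beyond the accumulation point, where for $F_1$ one needed both the shift $S$ and the reflection $R$ and an explicit family of quadrilaterals. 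In short, your proposal is a reasonable research program consistent with the paper's methods, but it is not a proof, and the conjecture remains open as stated.
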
 

\NI Note that (a) is equivalent to the existence of a $(p,q)$-perfect exceptional homology class $A \in H_2(M)$ (again by \cite[Thm. G]{cusps_and_ellipsoids}), although the set $\perf(M)$ remains to be worked out (and presumably becomes more complicated with more blowups).

\begin{rmk}\label{rmk:nonmonotone_inner}
We do not attempt to construct inner corner curves for staircases in nonmonotone manifolds such as $\calH_b$ for $b \neq 1/3$. One main issue is that in all known cases such staircases exist only when 
the symplectic form has no rational multiple, so that one cannot 
construct optimal ellipsoid embeddings by simply inflating along a curve in a class Poincar\'e dual to a multiple of the symplectic form. 
\end{rmk}

\subsubsection{Sesquicuspidal curves and obstructions beyond staircases}

Another natural question in the spirit of Theorem~\ref{thm:bob_et_al} is to try to classify index zero $(p,q)$-sesquicuspidal rational curves in $\CP^2$. Note that for such a curve we have $d = (p+q)/3$, and by the adjunction formula the number of ordinary double points must be $\tfrac{1}{2}(d-1)(d-2) - \tfrac{1}{2}(p-1)(q-1)$. 
This question is known to be closely related to the study of the stabilized ellipsoid embedding function $c_{\CP^2 \times \C^N}(x)$ beyond the Fibonacci staircase, i.e. for $x > \tau^4$ (see e.g. \cite[\S1]{cusps_and_ellipsoids} and the references therein). In \S\ref{sec:stab} we apply the Orevkov twist $\Phi_{\CP^2}$ to more interesting seed curves to produce a new (to our knowledge) family of rational algebraic plane curves:

\begin{thmlet}\label{thmlet:deg_3_seed}
There is an infinite sequence of index zero rational sesquicuspidal algebraic curves $C_1,C_2,C_3,\dots$ in $\CP^2$ which correspond precisely to the ``ghost stairs'' obstructions from \cite{McDuff-Schlenk_embedding}. 
More specifically, for $k \in \Z_{\geq 1}$, $C_k$ has degree $d_k$ and a $(p_k,q_k)$ cusp and one ordinary double point, where $(p_k,q_k) = (\Fib_{4k+2},\Fib_{4k-2})$ and $d_k = \tfrac{1}{3}(p_k+q_k) = \Fib_{4k}$. 
\end{thmlet}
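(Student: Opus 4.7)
I would construct the sequence $C_1, C_2, \ldots$ by iteratively applying the generalized Orevkov twist $\Phi_{\CP^2}\colon \CP^2 \dashrightarrow \CP^2$ from \S\ref{sec:twist}, but now starting from a degree-three seed in place of the seed line that yields the unicuspidal family~(d) of Theorem~\ref{thm:bob_et_al}. The numerics of the ghost stair family are essentially forced: the Fibonacci identity $\Fib_{n+2}+\Fib_{n-2}=3\Fib_n$ gives $p_k+q_k=3d_k$, which is exactly the index-zero condition $6d_k-2p_k-2q_k=0$ for a sesquicuspidal plane curve; moreover a direct computation with standard Fibonacci identities yields
\begin{equation*}
\tfrac{1}{2}(d_k-1)(d_k-2)\;-\;\tfrac{1}{2}(p_k-1)(q_k-1)\;=\;1 \qquad \text{for every } k \geq 1,
\end{equation*}
so the genus formula forces each $C_k$ to be a rational curve with the prescribed $(p_k,q_k)$-cusp together with exactly one additional ordinary double point.

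The seed $C_1$ should be taken to be a rational nodal cubic, with its node in general position and with its distinguished smooth point (playing the role of the ``$(8,1)$-cusp'' in the trivial sense: a point of contact of order $\Fib_6=8$ with a fixed line) placed to match the base configuration of $\Phi_{\CP^2}$ in the sense of Definition~\ref{def:well-placed}. The invariants $(d_1,p_1,q_1)=(3,8,1)$ and the single node of $C_1$ then agree by inspection. Inductively, define $C_{k+1}$ as the strict transform of $C_k$ under a suitable power of $\Phi_{\CP^2}$; since one application of $\Phi_{\CP^2}$ is expected to advance the Fibonacci index of the cusp data by $2$ (as in family~(d)), one application of $\Phi_{\CP^2}^{\,2}$ should send $(d_k,p_k,q_k)$ to $(d_{k+1},p_{k+1},q_{k+1})$. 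At each step there are two things to verify: \emph{(a)} that the linear recursion induced by $\Phi_{\CP^2}$ on $(d,p,q)$ is exactly the one governing the subsequence $\{\Fib_{4k}\}$, which reduces to a routine Fibonacci identity check along the lines of \S\ref{sec:twist}; and \emph{(b)} that the single node of $C_k$, being kept away from the base locus of $\Phi_{\CP^2}$, transforms to a single ordinary double point of $C_{k+1}$, with no new singularities introduced. Combined with the virtual-node count above, these two facts force the distinguished cusp of $C_{k+1}$ to be precisely of type $(\Fib_{4(k+1)+2},\Fib_{4(k+1)-2})$.

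\textbf{The main obstacle} is to verify that well-placedness propagates through the iteration in the sesquicuspidal rather than the unicuspidal setting. For Orevkov's family~(d) this is absorbed into a single intersection-theoretic bookkeeping argument, but here one must in addition arrange that the strict transform of the node never collides with the base locus of $\Phi_{\CP^2}$ at any step, and that the contact data at the distinguished point remains of the correct order after each twist. A dimension count — the space of plane rational cubics has complex dimension $8$, while the combined well-placed and node-genericity conditions impose conditions of strictly lower codimension — strongly suggests that a workable seed exists. Making this existence effective, and verifying inductively that the image of the node under each iteration remains generic with respect to the (transformed) base locus, is where the real work will lie; once this is secured, rationality, the correct cusp type, and the single-node count of each $C_k$ follow automatically, and the matching of $(d_k,p_k,q_k)$ with the ghost stair obstructions of \cite{McDuff-Schlenk_embedding} is a direct comparison of the explicit Fibonacci formulas.
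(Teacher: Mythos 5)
Your overall strategy --- iterate the Orevkov twist starting from a degree-three seed --- is indeed the paper's strategy, and your adjunction computation correctly predicts one extra node per curve. But the proposal has one fatal gap and two smaller errors. The fatal gap is the existence of the seed. You describe it as a nodal cubic with ``a point of contact of order $8$ with a fixed line,'' which is impossible (a cubic meets a line with total multiplicity $3$); what Definition~\ref{def:well-placed} actually requires is a rational cubic $C$ meeting the fixed nodal cubic $\calN$ only at its node $\db$, with contact order $8$ to the branch $\calB_-$ and order $1$ to $\calB_+$ (note $8+1=9=C\cdot\calN$, so disjointness elsewhere is forced). Your dimension count is also wrong in a way that hides the difficulty: rational cubics form an $8$-dimensional family and the contact-order-$8$ condition at $\db$ has codimension exactly $8$, so this is a rigid, index-zero problem, and no transversality heuristic produces a solution for the specific integrable structure $J_\std$. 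The paper's Proposition~\ref{prop:deg_3_seeds} has to work for this: it invokes the enumerative count $\# \calM^{J}_{\CP^2,3\ell}\lll \T_{\calB_-}^{(8)}\db\rrr = 4$ for generic $J$ from \cite{McDuffSiegel_counting}, deforms $J$ back to $J_\std$, rules out breaking into a line plus a conic by intersection numbers with $\calN$, and finally shows that not all four limits can coincide with $\calN$ itself (two such curves would have mutual intersection at least $8+1+1>9$). None of this is in your proposal, and without it there is no $C_1$.

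Second, your recursion is off by a factor of two: by Proposition~\ref{prop:gen_orev_numerics} a single twist sends $(p,q)\mapsto(7p-q,p)$, and the identity $\Fib_{4k+6}=7\Fib_{4k+2}-\Fib_{4k-2}$ shows that one application of $\Phi_{\CP^2}$ (not $\Phi_{\CP^2}^2$) advances $(p_k,q_k)$ to $(p_{k+1},q_{k+1})$; iterating $\Phi_{\CP^2}^2$ from the $(8,1)$ seed would produce only every other ghost-stairs curve. (The confusion likely comes from Theorem~\ref{thm:orev_orig}, where consecutive curves $C_j,C_{j+1}$ come from different seeds, while one twist sends $C_j$ to $C_{j+2}$, i.e.\ advances the Fibonacci subscripts by $4$.) Finally, the ``main obstacle'' you flag --- keeping the node away from the base locus through the iteration --- is not an obstacle: well-placedness already forces $C\cap\calN=\{\db\}$ together with local irreducibility at $\db$, so the node lies off $\calN$ and off every (infinitely near) blowup center, and Remark~\ref{rmk:well-placed_index_zero} records that the singularities away from $\db$ transform bijectively under the twist.
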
 
\NI Combined with Theorem~\ref{thm:stab_obs_from_curve}, this recovers the main result from \cite{Ghost}, namely there is an infinite sequence $x_1 > x_2 > x_3 > \cdots$ of positive real numbers with $\lim\limits_{i \ra \infty}x_i = \tau^4$ and such that $c_{B^4(1) \times \C^{N}}(x_i) = \tfrac{3x_i}{x_i+1}$ for all $i,N \in \Z_{\geq 1}$.  
We recall the significance of the ``folding function'' $\tfrac{3x}{x+1}$ and discuss its (partly conjectural) analogue for other target spaces in \S\ref{subsec:folding_curve}.
We also note that the same techniques allow for a vast generalization of Theorem~\ref{thmlet:deg_3_seed} conditional on the existence of ``higher degree seed curves'', which we take up in the forthcoming work \cite{sesqui} (see Remark~\ref{rmk:higher_deg_seeds}).

\section*{Acknowledgements} We would like to thank Jonny Evans, Bob Friedman, and Rob Lazarsfeld for helpful discussions, and J\'anos Koll\'ar for many illuminating comments on the first version of this paper.

\section{Unicuspidal curves and the generalized Orevkov twist}\label{sec:twist} 

Our goal in this section is to introduce the generalized Orevkov twist and use it to prove Theorem~\ref{thmlet:outer_corner_curves}. 
We first formalize the twist in $\CP^2$ in \S\ref{subsec:orev_orig}, with a view towards allowing more general seed curves (e.g. those considered in \S\ref{subsec:folding_curve}), and then in \S\ref{subsec:gen_twist} we extend the ambient space to rigid del Pezzo surfaces. 
In \S\ref{subsec:two_twists} we slightly modify the construction in \S\ref{subsec:gen_twist} to produce another twist with quite different numerical properties.
After a brief interlude in \S\ref{subsec:staircase_numerics} to recall some staircase numerics from \cite{cristofaro2020infinite}, we complete the proof by constructing the relevant seed curves in \S\ref{subsec:staircase_numerics}.

\subsection{The Orevkov twist in $\CP^2$}\label{subsec:orev_orig}

In this subsection we recall the definition and basic properties of the birational transformation $\Phi_{\CP^2}: \CP^2 \dashrightarrow \CP^2$ from \cite[\S6]{orevkov2002rational}.\footnote{As pointed out to us by J. Koll\'ar, this transformation was 
described earlier by Yoshihara in \cite[Lem.4.22]{Yoshihara} in connection with his study
of the automorphism group of complements to projective plane curves. However, the usage in \cite{orevkov2002rational} to construct the Fibonacci unicuspidal curves in Theorem~\ref{thm:bob_et_al}(d) is a more direct precursor to our generalization in this paper.} 
Let $\calN \subset \CP^2$ be a fixed nodal cubic, which for concreteness we can take to be 
$\calN_0 := \{X^3 + Y^3 = XYZ\}$ 
(any other nodal cubic is projectively equivalent to this one). 
Let $\db \in \calN$ denote the double point, and let $\calB_+,\calB_-$ denote the two local branches near $\db$.

\begin{construction}\label{constr:Orev_twist}
 The birational transformation $\Phi_{\CP^2}: \CP^2 \dashrightarrow \CP^2$ is defined as follows. Let $\bl^1\CP^2$ denote the blowup\footnote{All blowups in this section are at points and occur in the complex category. In later sections we also consider symplectic blowups, which depend on a symplectic embedding of a ball.}
of $\CP^2$ at the point $\db$, with resulting exceptional divisor $\F_1^1$. Let $\calN^1 \subset \bl^1\CP^2$ denote the proper transform of $\calN$, and let $\calB^1_+ \subset \calN^1$ denote the proper transform of the local branch $\calB_+$. Put $\db^1 := \calB_+^1 \cap \F_1^1 \subset \calN^1$.

 Next, let $\bl^2\CP^2$ denote the blowup of $\bl^1\CP^2$ at the point $\db^1$, with resulting exceptional divisor $\F_2^2$, and with $\F_1^2 \subset \bl^2\CP^2$ the proper transform of $\F_1^1$. Let $\calN^2 \subset \bl^2\CP^2$ denote the proper transform of $\calN^1$, and let $\calB^2_+ \subset \calN^2$ denote the proper transform of the local branch $\calB_+^1$. Put $\db^2 := \calB_+^2 \cap \F_2^2 \subset \calN^2$.

Continuing in this manner, after a total of $7$ blowups we arrive at $\bl^7\CP^2$, which contains a chain of rational curves $\
\calN^7,\F_1^7,\dots,\F_7^7$ with intersection graph as in Figure~\ref{fig:Orev_int_graph}.
In terms of the natural identification $H_2(\bl^7\CP^2) \cong H_2(\CP^2) \oplus \Z\langle e_1,\dots,e_7\rangle$, we have
  \begin{AutoMultiColItemize}
  \item $[\calN^7] = 3\ell - 2e_1 - e_2 - \cdots - e_7$
  \item $[\F_1^7] = e_1-e_2$
  \item $[\F_2^7] = e_2-e_3$
  \item $[\F_3^7] = e_3-e_4$
  \item $[\F_4^7] = e_4-e_5$
  \item $[\F_5^7] = e_5-e_6$
  \item $[\F_6^7] = e_6-e_7$
  \item $[\F_7^7] = e_7$,
  \end{AutoMultiColItemize}where $\ell \in H_2(\CP^2)$ denotes the line class.

\begin{figure}[h]\caption{The chain of rational curves (decorated by their self-intersection numbers) which arise in the half part of the Orevkov twist.}
\label{fig:Orev_int_graph}
\centering
\includegraphics[width=.8\textwidth]{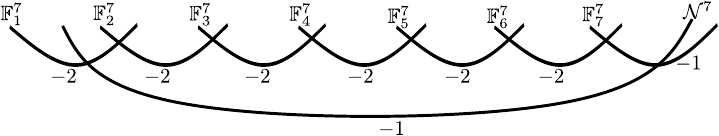}
\end{figure}

Since $[\calN^7] \cdot [\calN^7] = -1$, we can blow down $\bl^7\CP^2$ along $\calN^7$ to obtain $\bl^{7;1}\CP^2$. Let $\F_1^{7;1},\dots,\F_7^{7;1} \subset \bl^{7;1}\CP^2$ denote the proper transforms of $\F_1^7,\dots,\F_7^7$ respectively.
Then since $[\F_1^{7;1}] \cdot [\F_1^{7;1}] = -1$, we can blow down $\bl^{7;1}\CP^2$ along $\F_1^{7;1}$ to obtain $\bl^{7;2}\CP^2$. Let $\F_2^{7;2},\dots,\F_7^{7;2} \subset \bl^{7;2}\CP^2$ denote the proper transforms of $\F_2^{7;1},\dots,\F_7^{7;1}$ respectively.
Continuing in this manner, after a total of $7$ blowdowns (along $\calN^7, \F_1^{7;1},\dots,F_6^{7;6}$), we arrive at $\bl^{7;7}\CP^2$, which contains the nodal rational curve $\F_7^{7;7}$ with $[\F_7^{7;7}] \cdot [\F_7^{7;7}] = 9$. 

Finally, by composing this sequence of $7$ blowups and $7$ blowdowns with a biholomorphism $\bl^{7;7}\CP^2 \cong \CP^2$ sending $\F_7^{7;7}$ to $\calN$,
we arrive at the birational transformation $\Phi_{\CP^2}: \CP^2 \dashrightarrow \CP^2$. 
\end{construction}

\begin{rmk}
Note that the biholomorphism $\bl^{7;7}\CP^2 \cong \CP^2$ is not unique. In fact, the automorphism group of the pair $(\CP^2,\calN_0)$ is isomorphic to the symmetric group $S_3$, generated by $[X:Y:Z] \mapsto [Y:X:Z]$ and $[X:Y:Z] \mapsto [\mu X: \mu^2 Y:Z]$ for $\mu^3 = 1$, where the former swaps the branches $\calB_+,\calB_-$ -- see \cite{Kol}.
For our purposes of constructing curves with prescribed cusps this choice will not play any material role, and thus we will often suppress it from the discussion (and similarly in \S\ref{subsec:gen_twist} below -- see Remark~\ref{rmk:no_nice_biholom}).
\end{rmk}

Given a curve $C \subset \CP^2$, we denote its proper transform under the above birational transformation by $\Phi_{\CP^2}(C) \subset \CP^2$. 
In the following, we say that a curve $C$ satisfies the constraint $\lll \T^{(m)}_{\calB_-}\db\rrr$ if $C$ passes through $\db$ and has a branch with contact order at least $m$ (i.e. tangency order at least $m-1$) to $\calB_-$.
Note that a curve satisfying $\lll \T^{(m)}_{\calB_-}\db\rrr$ must have intersection multiplicity at least $m+1$ with $\calN$, since it also intersects the branch $\calB_+$ at $\db$.

\begin{thm}[\cite{orevkov2002rational}]\label{thm:orev_orig}
Let $L \subset \CP^2$ denote the unique line which satisfies $\lll \T^{(2)}_{\calB_-} \db\rrr$, and put $C_{2k+ 1} := \Phi_{\CP^2}^k(L)$ for $k \in \Z_{\geq 0}$.
Similarly, let $Q \subset \CP^2$ denote the unique (irreducible) conic\footnote{As an alternative to the conic $Q$ we could take the unique line in $\CP^2$ which satisfies $\lll \T_{\calB_+}^{(2)}\db\rrr$, as this transforms into $Q$ under one application of the Orevkov twist $\Phi_{\CP^2}$ (this is actually the approach taken in \cite{orevkov2002rational}).}
 which satisfies $\lll \T^{(5)}_{\calB_-} \db\rrr$, 
and put $C_{2k+2} := \Phi_{\CP^2}^k(Q)$ for $k \in \Z_{\geq 0}$.
Then for $k \in \Z_{\geq 1}$, $C_k$ is a $(p_k,q_k)$-unicuspidal rational plane curve of degree $d_k$, where $(p_k,q_k) = (\Fib_{2k+1},\Fib_{2k-3})$ and $d_k = \tfrac{1}{3}(p_k+q_k) = \Fib_{2k-1}$. 
\end{thm}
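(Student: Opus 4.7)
The plan is to prove the statement by induction on $k$, treating the odd- and even-indexed sub-sequences separately since by construction $\Phi_{\CP^2}$ sends $C_k$ to $C_{k+2}$. The base cases are straightforward: the curves $L$ and $Q$ exist and are unique by a simple dimension count, as $\lll \T^{(2)}_{\calB_-}\db\rrr$ imposes two conditions on the $2$-dimensional linear system of lines and $\lll \T^{(5)}_{\calB_-}\db\rrr$ imposes five on the $5$-dimensional linear system of conics. Both $L$ and $Q$ are smooth, consistent with the ``trivial'' $(2,1)$- and $(5,1)$-cusps; and since $L \cdot \calN = 3$ and $Q \cdot \calN = 6$ are accounted for entirely by the prescribed contact at $\db$, each of $L, Q$ meets $\calN$ only at $\db$.

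For the inductive step I would assume that $C_k$ is a rational $(p,q)$-unicuspidal plane curve of degree $d$ whose unique cusp at $\db$ has tangent direction matching $\calB_-$ and whose entire intersection with $\calN$ lies at $\db$. Working in local analytic coordinates in which $\calB_+ = \{y=0\}$ and $\calB_- = \{x=0\}$, I would track the proper transform of $C_k$ through each of the seven blowups in Construction~\ref{constr:Orev_twist}. Because every successive blowup is centered on (the proper transform of) the $\calB_+$ branch, which is transverse to the cusp's tangent line, the multiplicities of the strict transform of $C_k$ at the infinitely near points $\db^0,\db^1,\dots,\db^6$ are determined explicitly by $(p,q)$ together with the contact order of $C_k$ with $\calB_-$. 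This determines the homology class $d\ell - \sum_{i=1}^{7} m_i e_i \in H_2(\bl^7\CP^2)$ of the strict transform, and reading off the new degree from intersection numbers with the seven blowdown curves yields a closed-form recursion $(d,p,q)\mapsto(d',p',q')$ of the form $(p',q') = (7p - q,\, p)$ and $d' = 3p - d$. This matches the Fibonacci identity $\Fib_{n+4} = 7\Fib_n - \Fib_{n-4}$ (equivalent to $\phi^4 + \psi^4 = 7$), and hence reproduces both $(p_{k+2},q_{k+2}) = (\Fib_{2k+5},\Fib_{2k+1})$ and $d_{k+2} = \Fib_{2k+3}$.

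It remains to check that $\Phi_{\CP^2}(C_k)$ is again unicuspidal with exactly the predicted Puiseux pair, and that it inherits the tangency condition needed to re-apply $\Phi_{\CP^2}$. Rationality is automatic from birationality. For unicuspidality, every blowup in the twist is centered on the $\calB_+$ branch at (successive proper transforms of) $\db$, so any singularity of the transformed curve must lie at the strict preimage of $\db$; combined with rationality and the adjunction formula, this forces the $\delta$-invariant at that single cusp to saturate the full arithmetic genus $\binom{d'-1}{2}$, ruling out extra nodes or cusps. For the tangency, one checks by direct local computation that the transformed cusp has the appropriate residual contact with the new $\calB_-$, so the iteration continues.

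The main obstacle is the local Puiseux analysis in the inductive step: verifying precisely how the cusp's Puiseux expansion transforms through the seven blowups centered on $\calB_+$ and the seven subsequent blowdowns. The cleanest route is probably via Hamburger--Noether expansions, or equivalently the iterated-cabling description of the link of the cusp as a torus knot; either of these reduces the identity $(p,q) \mapsto (7p-q,\, p)$ to a combinatorial recursion on pairs of positive integers. This is essentially the content of \cite[\S2]{orevkov2002rational}, and the theorem here amounts to expressing that calculation in the present language.
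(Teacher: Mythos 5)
Your proposal is correct and follows essentially the same route as the paper: both reduce the theorem to the recursion $(p,q)\mapsto(7p-q,p)$ for well-placed cusps (Proposition~\ref{prop:gen_orev_numerics}), obtained by tracking the proper transform through the seven blowups (noting it is disjoint from all centers after the first) and seven blowdowns, and then invoke the Fibonacci identity $\Fib_{2k+5}=7\Fib_{2k+1}-\Fib_{2k-3}$. The only cosmetic difference is that you rule out extra singularities by saturating the adjunction bound, whereas the paper observes directly that $\Phi_{\CP^2}$ induces a bijection between the singularities of $C\setminus\db$ and those of $\Phi_{\CP^2}(C)\setminus\db$ (Remark~\ref{rmk:well-placed_index_zero}); both work.
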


\NI Here $\Fib_k$ is the $k$th Fibonacci number, i.e. $\Fib_1 = \Fib_2 = 1$ and $\Fib_{k+2} = \Fib_k + \Fib_{k+1}$ (it will also be convenient to put $\Fib_{-1} := 1$). Recall that the $x$-coordinate of the $k$th outer corner point of the Fibonacci staircase in \cite{McDuff-Schlenk_embedding} is precisely the odd index Fibonacci ratio $\Fib_{2k+1}/\Fib_{2k-3}$.

\begin{figure}[h]\caption{The Orevkov twist in $\CP^2$. Notice that $C^{7;1}$ is tangent to $\F^{7;1}_1$ because its blowup $C^7$ intersects $\F^7_1$. However, because the curve $C^{7;1}$ intersects  $\F^{7;1}_1$ at its intersection with $\F^{7;1}_7$ rather than  $\F^{7;1}_2$,  when $\F^{7;1}_1$ is blown down  $C^{7;2}$ is tangent to  $\F^{7;2}_7$ rather than to $\F^{7;2}_2$, and its blowdowns remain tangent to $\F^{7;k}_7$ for $k >2$.}
\centering
\includegraphics[width=1\textwidth]{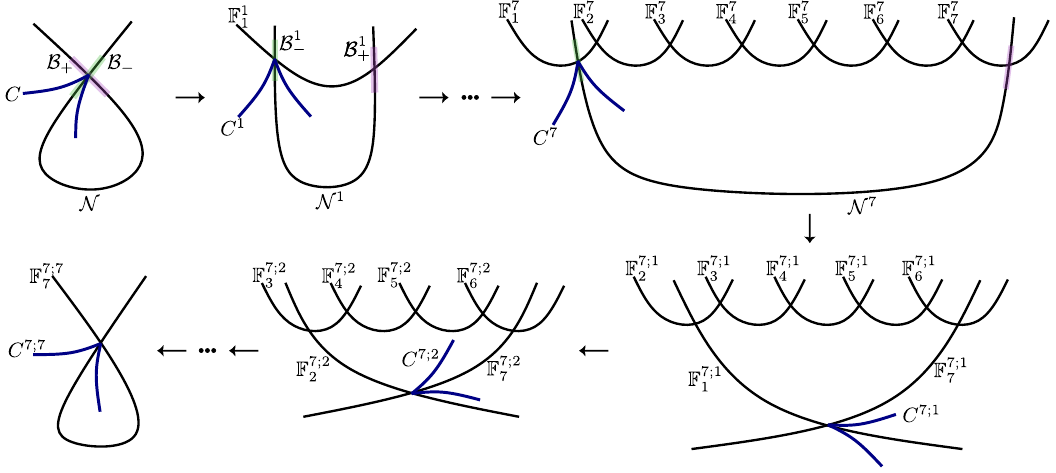}
\label{fig:Orev_curve_transf}
\end{figure}

Theorem~\ref{thm:orev_orig} follows from the identity $\Fib_{2k+5} = 7\Fib_{2k+1}-\Fib_{2k-3}$, together with the fact that for a curve $C \subset \CP^2$ with a well-placed $(p,q)$ cusp (see Definition~\ref{def:well-placed} below), its twist $\Phi_{\CP^2}(C)$ has a well-placed $(7p-q,p)$ cusp.
Indeed, let us analyze the construction of $\Phi_{\CP^2}(C)$ in more detail as follows (see Figure~\ref{fig:Orev_curve_transf}).
For simplicity assume $p > 2q$, and let $C \subset \CP^2$ be a curve which has a $(p,q)$ cusp maximally tangent to the branch $\calB_-$ of the nodal cubic $\calN$ at its double point $\db$. 
The proper transform $C^1 \subset \bl^1\CP^2$ of $C$ then has a $(p-q,q)$ cusp maximally tangent to the branch $\calB_-^1$ of $\calN^1$. 
After $6$ further blowups (which do not affect  $C^1$ since it is disjoint from the blowup points),
we arrive at the curve $C^7 \subset \bl^7\CP^2$, which has a $(p-q,q)$ cusp maximally tangent to the branch $\calB_-^7$ of $\calN^7$. 
We then blow down to obtain the curve $C^{7;1} \subset \bl^{7;1}\CP^2$, which has a $(p,p-q)$ cusp maximally tangent to $\F_1^{7;1}$. In the next blowdown we obtain $C^{7;2} \subset \bl^{7;2}\CP^2$, which has a $(2p-q,p)$ cusp maximally tangent to $\F_7^{7;2}$. Finally, after $5$ further blowdowns we arrive at $\Phi_{\CP^2}(C) = C^{7;7} \subset \bl^{7;7}\CP^2$, which has a $(7p-q,p)$ cusp maximally tangent to a branch of the nodal curve $\F_7^{7;7}$ at its double point.

\begin{example}\label{ex:sporadic}
Suppose that $C$ is a degree $d$ plane curve which intersects the nodal cubic $\calN$ at some point distinct from the double point $\db$ with contact order $3d$.
Then one can check that the twist $\Phi_{\CP^2}(C)$ has a $(21d+1,3d)$ cusp.
In particular, in the case $d=1$, $C$ is a flex line, and $\Phi_{\CP^2}(C)$ has degree $8$ by the adjunction formula, so $\Phi_{\CP^2}(L)$ is a sporadic $(22,3)$-unicuspidal curve as in Theorem~\ref{thm:bob_et_al}(e).
Similarly, when $d=2$, $\Phi_{\CP^2}(C)$ has a $(43,6)$ cusp and degree $16$, and hence represents Theorem~\ref{thm:bob_et_al}(f).
\end{example}

\subsection{The generalized Orevkov twist}\label{subsec:gen_twist}

We now generalize the Orevkov twist to any rigid complex del Pezzo surface $M$, i.e. either $\bl^k\CP^2$ for  $k = 0,1,2,3,4$ or $\CP^1 \times \CP^1$.
Note that $\pd(c_1(M)) \in H_2(M)$ is given by $3\ell - e_1 - \cdots - e_k$ if $M = \bl^k\CP^2$, or by $2\ell_1 + 2\ell_2$ in the case $M = \CP^1 \times \CP^1$ (here we put $\ell_1 := [\CP^1 \times \{\pt\}]$ and $\ell_2 := [\{\pt\} \times \CP^1]$).

In the following we consider $\calN \subset M$ to be a rational nodal curve which is anticanonical (i.e. $[\calN] = \pd(c_1(M))$) and has a unique double point (note that this is consistent with the adjunction formula).
Concretely, in the case $M = \bl^k \CP^2$ for $k = 0,\dots,4$ we can assume up to biholomorphism that $M$ is given by blowing up $\CP^2$ at $k$ points $\nn_1,\dots,\nn_k$ on the standard nodal cubic 
$\calN_0 := \{X^3+Y^3=XYZ\}$ that are away from the node,
 and then we 
 can take $\calN$ to be the proper transform of $\calN_0$ in $M$. (Note that any tuple of $4$ points in $\CP^2$, no $3$ of which are collinear, is projectively equivalent to any other such tuple.)
 In the case $M = \CP^1 \times \CP^1$, we 
 can take $\calN$ to be $\left(\CP^1 \times \{q_1,q_2\}\right) \cup \left(\{p_1,p_2\} \times \CP^1\right)$ for $q_1,q_2,p_1,p_2 \in \CP^1$ with $q_1 \neq q_2$ and $p_1 \neq p_2$, after smoothing the nodes at $(p_1,q_2),(p_2,q_1),(p_2,q_2)$.

The degree of the del Pezzo surface $M$ is by definition $[\calN] \cdot [\calN]$, and it will also be convenient to put $K := [\calN] \cdot [\calN] - 2$ (see Table~\ref{table:CG_et_al}).

\begin{construction}\label{constr:gen_Orev_twist}
For $M$ a rigid del Pezzo surface as above, the birational transformation $\Phi_{M}: M \dashrightarrow M$ is defined as follows. 
Let $\calN \subset M$ be a rational
nodal
anticanonical curve, with local branches $\calB_+,\calB_-$ near the unique double point $\db$.
Let $\bl^1M$ denote the blowup of $M$ at the point $\db$, with resulting exceptional divisor $\F_1^1$. Let $\calN^1 \subset \bl^1M$ denote the proper transform 
of $\calN$, and let $\calB^1_+ \subset \calN^1$ denote the proper transform of the local branch $\calB_+$. Put $\db^1 := \calB_+^1 \cap \F_1^1 \subset \calN^1$.

 Next, let $\bl^2M$ denote the blowup of $\bl^1M$ at the point $\db^1$, with resulting exceptional divisor $\F_2^2$, and with $\F_1^2 \subset \bl^2M$ the proper transform of $\F_1^1$. Let $\calN^2 \subset \bl^2M$ denote the proper transform of $\calN^1$, and let $\calB^2_+ \subset \calN^2$ denote the proper transform of the local branch $\calB_+^1$. Put $\db^2 := \calB_+^2 \cap \F_2^2 \subset \calN^2$.

Continuing in this manner, after a total of $K$ blowups we arrive at $\bl^K M$, which contains a chain of rational curves $\
\calN^K,\F_1^K,\dots,\F_K^K$.
Since $[\calN^K] \cdot [\calN^K] = -1$, we can blow down $\bl^K M$ along $\calN^K$ to obtain $\bl^{K;1}M$. Let $\F_1^{K;1},\dots,\F_K^{K;1} \subset \bl^{K;1}M$ denote the  
images of the  curves $\F_1^K,\dots,\F_K^K$ under this blowdown.
Then since $[\F_1^{K;1}] \cdot [\F_1^{K;1}] = -1$, we can blow down $\bl^{K;1}M$ along $\F_1^{K;1}$ to obtain $\bl^{K;2}M$. Let $\F_2^{K;2},\dots,\F_K^{K;2} \subset \bl^{K;2}M$ denote the corresponding blowdown images. 
Continuing in this manner, after a total of $K$ blowdowns (along $\calN^K, \F_1^{K;1},\dots,F_{K-1}^{K;K-1}$), we arrive at $\bl^{K;K}M$, which contains the rational nodal curve $\F_K^{K;K}$ with $[\F_K^{K;K}] \cdot [\F_K^{K;K}] = K+2$. 

Finally, by composing this sequence of $K$ blowups and $K$ blowdowns with a biholomorphism $\Xi: \bl^{K;K}M \cong M$ (which exists by Lemma~\ref{lem:gen_Orev_gives_back_M} below), we arrive at the birational transformation $\Phi_{M} : M \dashrightarrow M$, whose image  contains the rational nodal curve $\calN' := \Xi(\F_K^{K;K})$.
\end{construction}
\NI We will sometimes denote the generalized Orevkov twist $\Phi_M$ by $\Phi_{M;\calN}$ when we wish to emphasize the role of the anticanonical curve $\calN$.

We begin with some lemmas to justify Construction~\ref{constr:gen_Orev_twist}.
\begin{lemma}\label{lem:N'_antican}
In the setting of Construction~\ref{constr:gen_Orev_twist}, $\calN'$ is also an anticanonical curve in $M$, i.e. we have $[\calN'] = \pd(c_1(M)) \in H_2(M)$.
\end{lemma}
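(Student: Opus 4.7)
The plan is to track the anticanonical class $\pd(c_1(-))$ through the sequence of $K$ blowups and $K$ blowdowns in Construction~\ref{constr:gen_Orev_twist}, and then transport the result via the biholomorphism $\Psi$. I will use two identities: for a blowup $\pi:\widetilde{X}\to X$ of a smooth complex surface at a point with exceptional divisor $E$, one has $\pd(c_1(\widetilde{X})) = \pi^*\pd(c_1(X)) - [E]$; and for the blowdown $\phi:\widetilde{X}\to X$ of a $(-1)$-curve, one has $\pd(c_1(X)) = \phi_*\pd(c_1(\widetilde{X}))$, since $\phi_*$ annihilates the contracted class and preserves the classes of non-contracted curves.

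First I would prove by induction on $j=0,1,\dots,K$ that
\[
[\calN^j] + \sum_{i=1}^{j}[\F_i^j] \;=\; \pd(c_1(\bl^j M)) \quad \text{in } H_2(\bl^j M).
\]
The base case $j=0$ is the hypothesis that $\calN$ is anticanonical. The inductive step from $j-1$ to $j$ only requires care when $j=1$: since $\db$ is a node of $\calN$, the proper transform is $[\calN^1] = \pi_1^*[\calN] - 2[\F_1^1]$, and the extra $-[\F_1^1]$ is cancelled by the new component $[\F_1^1]$ added to the sum, matching $\pd(c_1(\bl^1 M)) = \pi_1^*\pd(c_1(M)) - [\F_1^1]$. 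For $j\ge 2$, the center $\db^{j-1}$ lies transversally on exactly the two smooth curves $\calN^{j-1}$ and $\F_{j-1}^{j-1}$, so $[\calN^j] = \pi_j^*[\calN^{j-1}] - [\F_j^j]$ and $[\F_{j-1}^j] = \pi_j^*[\F_{j-1}^{j-1}] - [\F_j^j]$, while the remaining $\F_i^j$ pull back unchanged; adding $[\F_j^j]$ telescopes the sum to $\pi_j^*\pd(c_1(\bl^{j-1} M)) - [\F_j^j] = \pd(c_1(\bl^j M))$.

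Next I would show by induction on $j=1,\dots,K$ that
\[
\pd(c_1(\bl^{K;j} M)) \;=\; \sum_{i=j}^{K}[\F_i^{K;j}].
\]
The $j$-th blowdown $\phi_j$ contracts $\calN^K$ (when $j=1$) or $\F_{j-1}^{K;j-1}$ (when $j\ge 2$), so applying $\phi_{j,*}$ to the expression for $\pd(c_1(\bl^{K;j-1} M))$ kills one term and carries the others to their strict transforms. Taking $j=K$ gives $[\F_K^{K;K}] = \pd(c_1(\bl^{K;K} M))$. Finally, the biholomorphism $\Psi:\bl^{K;K} M \xrightarrow{\cong} M$ from Lemma~\ref{lem:gen_Orev_gives_back_M} preserves $c_1$, so
\[
[\calN'] \;=\; \Psi_*[\F_K^{K;K}] \;=\; \Psi_*\pd(c_1(\bl^{K;K} M)) \;=\; \pd(c_1(M)),
\]
as required. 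The argument is essentially pure bookkeeping; the one moment of genuine input is the first blowup, where the nodal singularity of $\calN$ forces a multiplicity-$2$ subtraction that is absorbed exactly by the appearance of $\F_1^1$ as a new anticanonical component. After that, all subsequent centers lie at simple normal crossings of the evolving anticanonical divisor, and the identities propagate mechanically.
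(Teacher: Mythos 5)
Your proof is correct and follows essentially the same route as the paper's: the paper likewise observes that the total transform of $\calN$ stays anticanonical under blowups at ordinary double points of the evolving divisor and under blowdowns of exceptional components meeting the rest transversely in two points. Your version just makes the bookkeeping explicit in $H_2$, correctly isolating the multiplicity-$2$ subtraction at the first (nodal) blowup and the simple normal-crossing centers thereafter.
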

\begin{proof}
It suffices to check that $\F_K^{K;K}$ is an anticanonical curve in $\bl^{K;K}M$.
To see this, observe that if $C$ is a (reduced but not necessarily irreducible) anticanonical curve in a smooth complex surface $X$, and if $X'$ denotes the blowup of $X$ at an ordinary double point of $C$, then the total transform of $C$ in $X'$ is again anticanonical. Conversely, if $C$ is an anticanonical curve in a smooth complex surface $Y'$ with an exceptional component $C_0 \subset C$ such that $C_0$ intersects $\ovl{C \setminus C_0}$ transversely in two points, and if $Y$ denotes the blowdown of $Y'$ along $C_0$, then the image of $C$ under the blowdown map $Y' \ra Y$ is again anticanonical.
Since $\F_K^{K;K} \subset \bl^{K;K}M$ is obtained from the anticanonical curve $\calN \subset M$ by a sequence of blowups and blowdowns of these forms, it follows that $\F_K^{K;K}$ is again anticanonical.
\end{proof}
\begin{lemma}\label{lem:gen_Orev_gives_back_M}
In the setting of Construction~\ref{constr:gen_Orev_twist}, there is a biholomorphism $\bl^{K;K}M \cong M$.
\end{lemma}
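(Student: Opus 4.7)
The plan is to identify $\bl^{K;K}M$ as a del Pezzo surface of the same degree as $M$, and then invoke the rigidity of rigid del Pezzo surfaces (Remark~\ref{rmk:rigid_dP}) to conclude biholomorphism with $M$.

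First, I would verify that $\bl^{K;K}M$ is smooth projective. Each operation in Construction~\ref{constr:gen_Orev_twist} is either a blowup at a smooth point or a blowdown along a $(-1)$-curve, and these preserve smoothness and projectivity. Moreover, $b_2(\bl^{K;K}M) = b_2(M)$ since the $K$ blowups and $K$ blowdowns cancel in total rank, and rationality is preserved throughout.

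Second, I would show that $-K_{\bl^{K;K}M}$ is ample, i.e., that $\bl^{K;K}M$ is a del Pezzo surface. By Lemma~\ref{lem:N'_antican}, the irreducible nodal curve $\F_K^{K;K}$ represents $-K_{\bl^{K;K}M}$, and tracking its class through the blowup--blowdown sequence (using the formulas $(C')^2 = C^2 + (C\cdot E)^2$ and $C_1'\cdot C_2' = C_1\cdot C_2 + (C_1\cdot E)(C_2\cdot E)$ for each blowdown) gives $(\F_K^{K;K})^2 = K+2 > 0$. The key observation is that after blowing down $\calN^K$, the curves $\F_1^{K;1}$ and $\F_K^{K;1}$ become linked (with intersection $1$), and successive blowdowns compound the contribution to $(\F_K^{K;K})^2$. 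To promote to ampleness, I would use the Nakai--Moishezon criterion, reducing to checking that every irreducible curve $C \subset \bl^{K;K}M$ satisfies $-K\cdot C > 0$; by the adjunction formula $-K\cdot C = C^2 + 2 - 2g(C)$, this amounts to ruling out $(-n)$-curves with $n \geq 2$. Since $M$ itself is del Pezzo, no such curves exist on $M$, and I would argue that the blowup--blowdown sequence, which is entirely localized near $\calN$, cannot introduce new ones: the strict transforms of curves disjoint from $\db$ are unchanged, while the numerical rules above, applied to curves meeting $\calN$, increase rather than decrease self-intersection.

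Third, with $\bl^{K;K}M$ identified as a del Pezzo surface of degree $\deg M = K + 2$, I would invoke the classification of del Pezzo surfaces of degree $\geq 5$. For $M = \bl^k \CP^2$ with $0 \leq k \leq 4$, the rigid del Pezzo of degree $9-k$ is unique up to biholomorphism (Remark~\ref{rmk:rigid_dP}), yielding $\bl^{K;K}M \cong M$. For $M = \CP^1 \times \CP^1$ (degree $8$), which is the sole ambiguous case as $F_1$ is also a del Pezzo of degree $8$, I would distinguish the two by the parity of the intersection form on $H^2(\bl^{K;K}M;\Z)$: that of $\CP^1 \times \CP^1$ is even while that of $F_1$ is odd. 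This parity can be computed directly from the Picard lattice of $\bl^K M$ modulo the sublattice generated by the classes of the blown-down curves $[\calN^K], [\F_1^{K;1}], \dots, [\F_{K-1}^{K;K-1}]$.

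The main obstacle is the ampleness verification in the second step. The explicit numerics reviewed in \S\ref{subsec:orev_orig} (where the chain turns out to be a cycle, with $\calN^K$ meeting both $\F_1^K$ and $\F_K^K$) show that bookkeeping through the sequence of blowdowns is delicate, and one must rule out the possibility that some auxiliary strict transform acquires a self-intersection $\leq -2$ through the process.
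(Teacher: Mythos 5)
Your overall architecture (show the result is del Pezzo of the same degree, invoke the classification, then break the degree-$8$ tie between $\CP^1\times\CP^1$ and $F_1$) matches the paper's, but the ampleness step — which you yourself flag as the main obstacle — is a genuine gap, and the route you propose for closing it would be painful and, as stated, does not quite work. First, the adjunction reduction is incomplete: $-K\cdot C = C^2+2-2p_a(C)$, so ruling out $(-n)$-curves with $n\ge 2$ only handles $p_a(C)=0$; you would also need to exclude, e.g., arithmetic-genus-one curves with $C^2\le 0$, genus-two curves with $C^2\le 2$, and so on. Second, the heuristic that the transformation ``increases rather than decreases self-intersection'' for curves meeting $\calN$ is not true in general: the $K$ blowup centers $\db,\db^1,\dots$ all lie on successive proper transforms of the branch $\calB_+$, so a curve highly tangent to $\calB_+$ at $\db$ loses self-intersection at every one of the $K$ blowups, and there is no a priori reason the subsequent blowdowns compensate for an arbitrary such curve. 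The paper sidesteps all of this with one observation: if $C\subset\bl^{K;K}M$ is an irreducible curve with $[\F_K^{K;K}]\cdot[C]\le 0$, then (since $\F_K^{K;K}$ is irreducible with positive self-intersection $K+2$) $C$ must be disjoint from $\F_K^{K;K}$; but all the points blown up and all the curves blown down in the construction live over $\F_K^{K;K}$ and its infinitely near points, so such a $C$ transforms to a curve in $M$ disjoint from $\calN$ — impossible because $\calN$ is anticanonical and ample on the del Pezzo surface $M$. With that, Nakai--Moishezon gives ampleness immediately and no negative-curve bookkeeping is needed.

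The remainder of your plan is sound. Your parity-of-the-intersection-form computation for the degree-$8$ case is a legitimate alternative; for $M=F_1$ it is essentially what the paper does (it exhibits the class $[\exc^{6;6}]$ with odd Chern number $7$, which by Wu's formula forces the odd type), whereas for $M=\CP^1\times\CP^1$ the paper avoids the direct lattice computation entirely via a symmetry argument: the inverse of the twist is a twist with the roles of $\calB_+$ and $\calB_-$ exchanged, and since every twist of $F_1$ yields $F_1$, the twist of $\CP^1\times\CP^1$ cannot yield $F_1$. If you carry out the orthogonal-complement computation in $H_2(\bl^K M)$ carefully (taking total transforms of the successively blown-down classes $[\calN^K],[\F_1^{K;1}],\dots$), your version works too, but you should be aware it requires identifying those classes explicitly, which is exactly the kind of bookkeeping the paper's two short arguments are designed to avoid.
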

\begin{proof}
We first claim that $\bl^{K;K}M$ is Fano.
By Lemma~\ref{lem:N'_antican}, $\F^{K;K}_K \subset \bl^{K;K}M$ is anticanonical, so it suffices to check that this is ample, and this follows easily by the Nakai--Moishezon criterion, since any curve in $\bl^{K;K}M$ disjoint from $\F^{K;K}_K$ would imply a curve in $M$ disjoint from $\calN$.

Since $\bl^{K;K}M$ has the same integral homology as $M$, by the classification of del Pezzo surfaces the only possible ambiguity lies in distinguishing $\CP^1 \times \CP^1$ from the first Hirzebruch surface $\bl^1 \CP^2$.
In the case $M = \bl^1 \CP^2$, let $\exc \subset M$ be the exceptional curve, and let $\exc^{6;6}$ be its proper transform in $\bl^{6;6}M$.
One can check (e.g. using an analysis similar to the proof of Lemma~\ref{lem:N'_antican}) that we have $c_1([\exc^{6;6}]) = 7$, and hence $\bl^{6;6}M \cong \bl^1\CP^2$, as all homology classes in $\CP^1 \times \CP^1$ have even Chern number.

A similar but slightly more complicated  argument shows  directly that if $M= 
\CP^1 \times \CP^1$ then we again must have $\bl^{6;6}M \cong M$. However, one can also argue using symmetry: the inverse of the Orevkov twist is given by exactly the same number of blowups and blowdowns but with the two  branches of $\Nn$ reversed. 
The above argument concerning $\bl^1 \CP^2$ shows that any  twist on $\bl^1 \CP^2$ gives $\bl^1 \CP^2$. Therefore,  if the  twist done to $\CP^1 \times \CP^1$ did give $\bl^1 \CP^2$, then the inverse operation done on the other branch would have also to give $\bl^1 \CP^2$, which is impossible.
\end{proof}

\sss

The following language will be useful for understanding how cusps transform under successive applications of the generalized Orevkov twist $\Phi_M$.

\begin{definition}\label{def:well-placed}
A curve $C$ in $M$ is \hl{$(p,q)$-well-placed} with respect to $\calN$ if we have $C \cap \calN = \{\db\}$, $C$ is locally irreducible near $\db$, and we have $(C \cdot \calB_-)_{\db} = p$ and $(C \cdot \calB_+)_{\db} = q$.
\end{definition}

\NI Here $\calN$ is any rational 
nodal 
anticanonical curve as in Construction~\ref{constr:gen_Orev_twist}, and we will sometimes simply say that $C$ is ``well-placed'' if $\calN$ is clear from the context.
Note that if  $\gcd(p,q)=1$ this implies that $C$ has a $(p,q)$ cusp at $\db$ which is maximally tangent to the branch $\calB_-$ (in the sense of  \cite[\S3.5]{cusps_and_ellipsoids}). 
We also allow the case $q = 1$, i.e. $C$ is $(p,1)$-well-placed if it has a single branch passing through $\db$ which is smooth and strictly satisfies the tangency condition $\lll \T_{\calB_-}^{(p)}\db\rrr$, and $C$ is otherwise disjoint from $\calN$.
Note that the line $L$ (resp. conic $Q$) in Theorem~\ref{thm:orev_orig} is $(2,1)$-well-placed (resp. $(5,1)$-well-placed) with respect to $\calN$.

\begin{rmk}\label{rmk:well-placed_index_zero}
Note that in Definition~\ref{def:well-placed} the total homological intersection number $[C] \cdot [\calN]$ is the sum of the local intersection numbers 
$(C \cdot \calB_-)_{\db}$ and $(C \cdot \calB_+)_{\db}$ corresponding to the two branches of the unique intersection point of $C$ with $\calN$.
This means that for any (rational) curve $C \subset M$ which is $(p,q$)-well-placed with respect to $\calN$ we have $c_1([C]) = [C] \cdot [\calN] = p+q$, i.e. $C$ must have index zero. 
Note also that the singularities of $C \setminus \db$ are in bijective correspondence with the singularities of $\Phi_M(C) \setminus \db$. In particular, if $C$ is unicuspidal (with $p,q > 1$) then so is $\Phi_M(C)$. 
\end{rmk}
\begin{rmk}
One can check that the curves in Example~\ref{ex:sporadic} with a $(21d+1,3d)$ cusp are $(21d,3d)$-well-placed, which does not contradict the above discussion since $\gcd(21d,3d) = 3d > 1$.
\end{rmk}

The singularity analysis given at the end of \S\ref{subsec:orev_orig} (and depicted in Figure~\ref{fig:Orev_curve_transf}) immediately extends to the generalized Orevkov twist as follows.

\begin{prop}\label{prop:gen_orev_numerics}
If a curve $C \subset M$ is $(p,q)$-well-placed with respect to $\calN$ with $p > q$, then $\Phi_{M}(C)$ is $(Kp-q,p)$-well-placed with respect to $\calN'$. 
\end{prop}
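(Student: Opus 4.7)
The plan is to track the proper transform of $C$ through each blowup and blowdown of Construction~\ref{constr:gen_Orev_twist}, keeping careful account of contact orders with the various $\calN^j, \F_i^j$. To begin, I pick local coordinates $(x,y)$ near $\db$ with $\calB_-=\{y=0\}$ and $\calB_+=\{x=0\}$, so that the well-placed hypothesis yields a local parametrization $t\mapsto(t^q, t^p+\cdots)$ of $C$ at $\db$. Passing to the chart $(x, v=y/x)$ after the first blowup, $\F_1^1$ becomes $\{x=0\}$, $\calB_-^1$ becomes $\{v=0\}$, and $C^1$ is parametrized by $(t^q, t^{p-q}+\cdots)$. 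From this I read off that $C^1$ meets $\calN^1\cup\F_1^1$ only at $\calB_-^1\cap\F_1^1$, with contact $p-q$ to $\calB_-^1$ and contact $q$ to $\F_1^1$. In particular $C^1$ is disjoint from $\db^1\in\calB_+^1$, hence from all subsequent blowup centers $\db^2,\dots,\db^{K-1}$, so that the remaining $K-1$ blowups leave the curve unchanged and $C^K$ inherits the same local data.

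The main computation is to analyse the $K$ successive blowdowns, for which the key tool is the standard contraction formula
\begin{align*}
\pi(C)\cdot\pi(D) \;=\; C\cdot D + (C\cdot E)(D\cdot E),
\end{align*}
valid when $\pi$ contracts a $(-1)$-curve $E$ meeting $C$ and $D$. Using this I will show by induction on $m$ that for $1\leq m\leq K-1$ the proper transform $C^{K;m}$ is unibranch at the single point $\F_m^{K;m}\cap\F_K^{K;m}$, with contact $p$ to $\F_m^{K;m}$ and contact $mp-q$ to $\F_K^{K;m}$, and meets no other curve in the remaining cycle. The base case $m=1$ follows from $C^{K;1}\cdot\F_1^{K;1} = q + (p-q)\cdot 1 = p$ and $C^{K;1}\cdot\F_K^{K;1} = 0 + (p-q)\cdot 1 = p-q$, using the transversality of $\F_1^K$ and $\F_K^K$ with $\calN^K$; the inductive step is structurally identical, since each of $\F_{m-1}^{K;m-1}, \F_m^{K;m-1}, \F_K^{K;m-1}$ meets its cycle-neighbours transversely in one point.

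The step that requires the most care is the final blowdown, which contracts $\F_{K-1}^{K;K-1}$. At this stage $\F_{K-1}^{K;K-1}$ and $\F_K^{K;K-1}$ form a bigon meeting in two distinct points: the cusp point $\alpha$ of $C^{K;K-1}$ (inherited from the cycle structure), and a second point produced by the preceding sequence of blowdowns. After contracting $\F_{K-1}^{K;K-1}$, these two points coalesce to the unique node $\beta$ of $\calN'=\F_K^{K;K}$, and its two local branches $\calB_-', \calB_+'$ correspond respectively to these two bigon points. The contraction formula, applied with $E=\F_{K-1}^{K;K-1}$ and using $\F_{K-1}^{K;K-1}\cdot\F_K^{K;K-1}=2$, gives
\begin{align*}
\Phi_M(C)\cdot\calN' \;=\; \bigl((K-1)p-q\bigr) + p\cdot 2 \;=\; (K+1)p-q,
\end{align*}
all concentrated at $\beta$. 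To distribute this between the two branches, I will use that $C^{K;K-1}$ meets $\F_{K-1}^{K;K-1}$ with its full multiplicity $p$ at the single point $\alpha$, which in turn corresponds to the tangent direction of $\calB_-'$ at $\beta$. Consequently $\Phi_M(C)$ has multiplicity $p$ at $\beta$ with tangent cone concentrated along $\calB_-'$; since $\calB_+'$ is transverse to $\calB_-'$ this yields $\Phi_M(C)\cdot\calB_+' = p$ and therefore $\Phi_M(C)\cdot\calB_-' = (K+1)p-q-p = Kp-q$. Combined with the fact (tracked throughout the procedure) that $\Phi_M(C)\cap\calN'=\{\beta\}$ and that $\Phi_M(C)$ is locally irreducible at $\beta$ (being the image of a unibranch singularity under a birational contraction that is an isomorphism away from $\F_{K-1}^{K;K-1}$), this shows $\Phi_M(C)$ is $(Kp-q, p)$-well-placed with respect to $\calN'$, as claimed.
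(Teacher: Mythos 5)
Your proof is correct and takes essentially the same route as the paper: the paper's argument for Proposition~\ref{prop:gen_orev_numerics} is exactly the blow-up/blow-down singularity analysis sketched at the end of \S\ref{subsec:orev_orig} (and depicted in Figure~\ref{fig:Orev_curve_transf}), which you have simply made explicit via the contraction formula $\pi(C)\cdot\pi(D)=C\cdot D+(C\cdot E)(D\cdot E)$ and an induction over the blowdowns. The only slip is descriptive and harmless: at the final step the cusp point $\alpha$ is the intersection of $\F_{K-1}^{K;K-1}$ with $\F_K^{K;K-1}$ \emph{created} by the preceding blowdowns, while the second bigon point is the one inherited from the original anticanonical cycle $\calN^K,\F_1^K,\dots,\F_K^K$ --- you have these two labels reversed, but nothing in the computation depends on them.
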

\NI Crucially, since $\calN'$ is itself a rational nodal anticanonical curve by Lemma~\ref{lem:N'_antican}, we can subsequently apply the generalized Orevkov twist using $\calN'$ to obtain a curve $\Phi_{M;\calN'}(\Phi_{M;\calN}(C))$ which is $(K[Kp-q]-p,Kp-q)$-well-placed with respect to $\calN''$, and so on.

\begin{rmk}\label{rmk:no_nice_biholom}
In the case of $\Phi_{\CP^2}$ there is a biholomorphism taking $\calN'$ to $\calN$, but this is not a priori clear (or needed) in the general case.
\end{rmk}

\subsection{The two twists}\label{subsec:two_twists}

It turns out to be quite fruitful to consider\footnote{This subsection is based on discussions with J. Koll\'ar after the first draft of this paper appeared. For a more thorough discussion of these ideas see the recently posted \cite{Kol}.}
the effect of the generalized Orevkov twist $\Phi_M$ on $(p,q)$-well-placed curves with $p < q$.   Equivalently, one can keep $p>q$ but interchange the roles of the branches $\Bb_-, \Bb_+$ in the definition of well-placed, thus assuming that the initial curve is $p$-fold tangent to $\Bb_+$ and $q$-fold tangent to $\Bb_-$.
Then, provided that $p>Kq$ when we blow up $K$-times along $\Bb_+$ we obtain a curve that is $(p-Kq)$-fold tangent to $\Bb_+^K$, so that after blowing down as before we get a curve with a cusp of type $(q + K(p-Kq), p-Kq)$.
Thus, we have:
\begin{prop}\label{prop:two_twists}
Given a $(p,q)$-well-placed curve $C$ in a rigid del Pezzo surface $M$, there exists 
\begin{enumerate}[label=(\alph*)]
   \item a $(Kp-q,p)$-well-placed curve $\Phi_M(C)$ in $M$ provided that $p/q > 1$, and 
   \item a $(q+K(p-Kq),p-Kq)$-well-placed curve $\Psi_M(C)$ in $M$ provided that $p/q > K$.
 \end{enumerate}
\end{prop}

Noting that $\frac{q + K(p-Kq)}{p-Kq} = \frac{1 + K(p/q- K)}{p/q-K}$, let $R: (K,\infty) \ra (K,\infty)$ denote the function 
\begin{align}
R(x) := \frac{1+K(x-K)}{x-K},
\end{align}
which is an involution which fixes $K+1$ and exchanges $(K,K+1)$ with $(K+1,\infty)$.
This function will play an important role in the case of the first Hirzebruch surface $M = F_1$ in \S\ref{subsec:unicusp_appl} below.

\begin{rmk}
Koll\'ar has observed that the generalized Orevkov twist $\Phi_M$ can be understood naturally in terms of Geiser involutions $\si_+,\si_-$, which are automorphisms of $M \setminus \calN$ dating back to \cite{geiser} (see e.g. \cite[\S8.7]{dolgachev2012classical}).
Roughly speaking, the construction of $\si_+$ (resp. $\si_-$) proceeds as in Construction~\ref{constr:gen_Orev_twist} by blowing up $K$ times at the branch $\calB_+$ (resp $\calB_-$), and then it appeals to the fact that the resulting weak del Pezzo surface has degree two and hence carries a canonical involution (see e.g. \cite[\S8]{dolgachev2012classical}). 
For more details we defer to \cite{Kol}.
\end{rmk}

\subsection{Staircase numerics and seed curves}\label{subsec:staircase_numerics}

Before completing the proof of Theorem~\ref{thmlet:outer_corner_curves}, we briefly recall some numerical aspects of the rational infinite staircases. 
Our discussion is largely informed by \cite{cristofaro2020infinite,casals2022full}, and we refer the reader to these references for more details.

Recall that associated to each rigid del Pezzo surface $M$ is a sequence of nonnegative integers $g_1,g_2,g_3,\dots$ which determines the locations of the outer and inner corners of the corresponding infinite staircase.
These sequences are explicated in \cite[Table 1.18]{cristofaro2020infinite}, which is reproduced in Table~\ref{table:CG_et_al}.
Here $J$ denotes the number of ``strands'', $K+2$ is the degree of the corresponding del Pezzo surface, and $a_\acc$ is the accumulation point, i.e. the limiting $x$-value.
More explicitly, the sequence $g_1,g_2,g_3,\dots$ determines the locations of the outer and inner corner points in the graph of $c_M(x)$ as follows:
\begin{itemize}
  \item $k$th outer corner: $x$-coordinate $\frac{g_{k+J}}{g_k}$, $y$-coordinate $\frac{g_{k+J}}{g_k + g_{k+J}}$
  \item $k$th inner corner: $x$-coordinate $\frac{g_{k+J}(g_{k+1}+g_{k+1+J})}{g_{k+1}(g_k + g_{k+J})}$, $y$-coordinate $\frac{g_{k+J}}{g_{k} + g_{k+J}}$.
\end{itemize}
In particular, if $p/q$ is the $x$-coordinate of an outer corner, then $(Kp-q)/p$ is the $x$-coordinate of the outer corner $J$ steps away. This means that the full set of outer corners is obtained by iteratively applying the recursion $p/q \mapsto (Kp-q)/p$ to the seeds $\tfrac{g_{1+J}}{g_1},\dots,\tfrac{g_{2J}}{g_J}$.
Note that the generalized Orevkov twist achieves precisely the recursion $(p,q) \mapsto (Kp-q,p)$ by Proposition~\ref{prop:gen_orev_numerics}.

\begin{example}
In the case $M = \CP^2$, the sequence $g_1,g_2,g_3,\dots$ corresponds to the odd index Fibonacci numbers.
In the case $M = \CP^1 \times \CP^1$, the even index entries of $g_1,g_2,g_3,\dots$ correspond to the odd index Pell numbers, while the odd index entries of $g_1,g_2,g_3,\dots$ correspond to the even index half-companion Pell numbers.
\end{example}

\begin{rmk}\label{rmk:y_val_of_oc_is_aut}
Note that if $M$ is endowed with its monotone symplectic form $\omega_M$, and if $C$ is a $(p,q)$-sesquicuspidal rational symplectic curve in $M$ of index zero, then by Theorem~\ref{thm:stab_obs_from_curve} we have $c_M(p/q) \geq \tfrac{p}{[\omega_M] \cdot [C]} = \tfrac{p}{c_1([C])} = \tfrac{p}{p+q}$. Meanwhile, the outer corners described above are all of the form $(x,y) = (\tfrac{p}{q},\tfrac{p}{p+q})$ for $p,q \in \Z_{\geq 1}$.
\end{rmk}

\begin{table} \caption{The sequences controlling the rational infinite staircases, reproduced from \cite[Table 1.18]{cristofaro2020infinite}.}\label{table:CG_et_al} 
\begin{tabular}{|c||c|c|c|c|c|c|}\hline 
rigid del Pezzo  & negative weight 
& $K$ & $J$ & recursion & seeds& acc. pt.  \\
surface & expansion & & &$ g_{k+2J} = Kg_{k+J}-g_k $ & $g_0,\dots,g_{2J-1}$ & $a_\acc$ \\ 
\hline\hline
$\CP^2(3)$ & $(3)$ & $7$ &2& $g_{k+4} = 7g_{k+2}-g_k$ & $2,1,1,2$ & $\tfrac{7+3\sqrt{5}}{2}$  \\
 \hline
$\CP^1(2) \times \CP^1(2) $& $(4;2,2)$ & $6$&2& $g_{k+4} = 6g_{k+2}-g_k$  & $1,1,1,3$ & $3+2\sqrt{2}$\\ 
\hline
$\CP^2(3) \#\ovl{\CP}^2(1)$&$(3;1)$ &$6$&3&$g_{k+6} = 6g_{k+3}-g_k$ & $1,1,1,1,2,4$ & $3+2\sqrt{2}$\\
 \hline
$\CP^2(3) \#^{\times 2}\ovl{\CP}^2(1)$ & $(3;1,1)$ &$5$&3&$g_{k+6} = 5g_{k+3}-g_k$& $1,1,1,1,2,3$ & $\tfrac{5+\sqrt{21}}{2}$\\ 
\hline
$\CP^2(3) \#^{\times 3}\ovl{\CP}^2(1)$ & $(3;1,1,1)$ &$4$&2& $g_{k+4} = 4g_{k+2}-g_k$ & $1,1,1,2$ & $2+\sqrt{3}$\\ 
\hline
$\CP^2(3) \#^{\times 4}\ovl{\CP}^2(1)$ & $(3;1,1,1,1)$ &$3$&2& $g_{k+4} = 3g_{k+2}-g_k$ & $1,2,1,3$& $\tfrac{3+\sqrt{5}}{2}$\\ \hline
\end{tabular}
\end{table}
\MS

By the discussion in the previous subsection, in order to complete the proof of Theorem~\ref{thmlet:outer_corner_curves} it remains to construct seed curves.
Namely, for $M$ a rigid del Pezzo surface with corresponding integer sequence $g_0,g_1,g_2,\dots$, we must construct a well-placed $(g_{k+J},g_{k})$-unicuspidal rational algebraic curve in $M$ for $k = 0,\dots,J-1$.
More explicitly, inspecting Table~\ref{table:CG_et_al}, it suffices to find a well-placed $(p,q)$-unicuspidal rational algebraic curve $C$ with $(p,q)$ ranging as follows:
\begin{itemize}
\item $M = \CP^2$: $(p,q) = (1,2),(2,1)$
\item $M = \CP^1 \times \CP^1$ : $(p,q) = (1,1),(3,1)$
\item $M = \bl^1 \CP^2$ :  $(p,q) = (1,1),(2,1),(4,1)$
\item $M = \bl^2 \CP^2$:  $(p,q) = (1,1),(2,1),(3,1)$
\item $M = \bl^3 \CP^2$: $(p,q) = (1,1),(2,1)$ 
\item $M = \bl^4 \CP^2$: $(p,q) = (1,1),(3,2)$.
\end{itemize}

As above, for $k = 1,2,3,4$ we take $\bl^k\CP^2$ to be the blowup of $\CP^2$ at $k$ points $\nn_1,\dots,\nn_k$ in general position on the standard nodal cubic 
$\calN_0 = \{X^3+Y^3=XYZ\}$ that are away from the node,
 and we take $\calN$ to be the proper transform of $\calN_0$.
Meanwhile in the case of $\CP^1 \times \CP^1$ we take $\calN$ to be the smoothing of $(\CP^1 \times \{q_1,q_2\}) \cup (\{p_1,p_2\} \times \CP^1)$ at the nodes $(p_1,q_2),(p_2,q_1),(p_2,q_2)$.
\MS

\underline{Case $M = \CP^2$}: For $(p,q) = (1,2)$ and $(p,q) = (5,1)$ (i.e. $(7\cdot 1 -2,1))$ 
we take the line $L$ and conic $Q$ respectively mentioned in Theorem~\ref{thm:orev_orig}.

\underline{Case $M = \bl^1\CP^2$}:  
\begin{itemize}[topsep=0pt]
  \item For $(p,q) = (1,1)$, we take $C$ to be the proper transform of the unique line in $\CP^2$ which passes through $\nn_1$ and the double point $\db$ of $\calN_0$. Note that we have $[C] = \ell - e \in H_2(\bl^1\CP^2)$.
  \item  For $(p,q) = (2,1)$, we take $C$ to be the proper transform of the unique line in $\CP^2$ which is tangent to $\calB_-$ at $\db$. Since this line
  does not go through $\nn_1$,  we have $[C] = \ell \in H_2(\bl^1\CP^2)$.
  \item For $(p,q) = (4,1)$, we take $C$ to be the unique conic in $\CP^2$ which satisfies $\lll \T^{(4)}_{\calB_-}\db\rrr$ and also passes through $\nn_1$ (this is easily constructed using a linear system, or by a deformation argument similar to the ones given below).
\end{itemize}

\underline{Case $M = \CP^1 \times \CP^1$}: 
\begin{itemize}[topsep=0pt]
  \item For $(p,q) = (1,1)$, we take $C$ to be the unique line in class $\ell_1$ (or alternatively $\ell_2$) which passes through the double point $\db$ of $\calN$.
  \item For $(p,q) = (3,1)$, we take $C$ to be the unique rational curve of bidegree $(1,1)$ which has contact order $3$ to a branch of $\calN$ at $\db$. To construct such a curve, we can start with a bidegree $(1,1)$ curve $D$ passing through $\db$ and two other nearby points $x_1,x_2 \in \calB_-$ (e.g. $D$ can be realized as the graph of a holomorphic map $\CP^1 \ra \CP^1$). As we move the points $x_1,x_2$ into $\db$ along $\calB_-$, $D$ correspondingly deforms into a curve of the desired kind.
\end{itemize}

\underline{Case $M = \bl^2\CP^2$}:

\begin{itemize}[topsep=0pt]
  \item For $(p,q) = (1,1)$, we take $C$ to be the proper transform of the unique line in $\CP^2$ which passes through $\db$ and $\nn_1$ (or alternatively $\nn_2$)
  \item For $(p,q) = (2,1)$, we take $C$ to be the proper transform of the unique line in $\CP^2$ which is tangent to $\calB_-$ at $\db$.
 \item For $(p,q) = (3,1)$, we take $C$ to be the proper transform of the unique conic in $\CP^2$ which satisfies $\lll \T^{(3)}_{\calB_-}\db\rrr$ and passes through $\nn_1$ and $\nn_2$.
\end{itemize}

\underline{Case $M = \bl^3\CP^2$}:

\begin{itemize}[topsep=0pt]
  \item For $(p,q) = (1,1)$, we take $C$ to be the proper transform of the unique line in $\CP^2$ which passes through $\db$ and $\nn_1$.
  \item For $(p,q) = (2,1)$, we take $C$ to be the proper transform of the unique line in $\CP^2$ which is tangent to $\calB_-$ at $\db$.
\end{itemize}

\underline{Case $M = \bl^4\CP^2$}:

\begin{itemize}[topsep=0pt]
    \item For $(p,q) = (1,1)$, we take $C$ to be the proper transform of the unique line in $\CP^2$ which passes through $\db$ and $\nn_1$.

    \item For $(p,q) = (3,2)$, we take $C$ to be the proper transform of a rational cubic in $\CP^2$ which has a $(3,2)$ cusp 
  with contact order $3$ to $\calB_-$ at $\db$ and which passes through points $\nn_1,\nn_2,\nn_3,\nn_4$ on the standard nodal cubic $\calN_0$, as guaranteed by Lemma~\ref{lem:3_2_seed} below. 

  \end{itemize}

  \sss

We end this subsection by constructing the above $(3,2)$ seed curve, which then completes the proof of Theorem~\ref{thmlet:outer_corner_curves}.
Similar to \cite{cusps_and_ellipsoids}, we will denote by $\lll \CC^{(p,q)}\db\rrr$ the constraint that a curve has a holomorphic parametrization $u: \CP^1 \ra M$ such that $u([0:1]) = \db$, and $u$ has contact order at least $p$ with $\calB_-$ at $[0:1]$ and contact order at least $q$ with $\calB_+$ at $[0:1]$. Here $\calB_-,\calB_+$ are the local branches of $\calN_0 = \{X^3 + Y^3 = XYZ\}$ near its double point $\db = [0:0:1]$.

\begin{lemma}\label{lem:3_2_seed}
Given any four pairwise distinct points $\nn_1,\dots\nn_4 \in \calN_0 \setminus \{\db\}$, with $\nn_1,\dots,\nn_4,\db$ in general position, there exists a rational cubic algebraic curve in $\CP^2$ which satisfies the cuspidal constraint $\lll \CC^{(3,2)}\db\rrr$ and passes through $\nn_1,\dots,\nn_4$.
\end{lemma}
\begin{proof}
Consider the linear system of degree three homogenous polynomials $P(X,Y,Z)$, whose coefficients identify it with $\CP^{9}$. 
For a cubic curve $C = \{P(X,Y,Z) = 0\} \subset \CP^2$, the conditions 
\begin{itemize}
  \item  $C$ passes through $\db$
  \item $(C \cdot \calB_-)_{\db} = 3$ and $(C \cdot \calB_+)_{\db} = 2$
  \item $\nn_1,\dots,\nn_4 \in C$
\end{itemize}
impose a total of $8$ linear constraints on the coefficients of $P(X,Y,Z)$, and hence they cut out a linear subspace of $\CP^9$ of dimension at least $1$. 
Note that for a polynomial in this subspace, the corresponding curve $C = \{P(X,Y,Z) = 0\}$ could have a double point at $\db$, with one branch tangent to $\calB_-$. 
However, we can force $C$ to have a cusp at $\db$ but imposing that the determinant  of the Hessian
$\bigl(\begin{smallmatrix}
 \bdy_{xx}P & \bdy_{xy} P \\ \bdy_{yx}P & \bdy_{yy}P  
 \end{smallmatrix}\bigr)$
 vanishes at $[0:0:1]$. This amounts to one additional linear constraint, so that overall we get a linear subspace of $\CP^9$ of dimension at least $0$, and hence there is at least one curve $C$ satisfying all of these constraints.

It remains to show that $C$ is irreducible, and hence rational (since it has a singularity). 
In the case of three lines, two of them must pass through $\db$, with one tangent to $\db$. Then the third line must pass through three of the points $\nn_1,\dots,\nn_4$, which is ruled out by general position.
Meanwhile, in the case of a line and a conic, the Hessian determinant condition forces them to be tangent to each other at $\db$, and hence they must both be tangent to $\calB_-$ at $\db$. But this already contributes $6$ to the intersection number $C \cdot \calN_0$, which together with the constraints $\nn_1,\dots,\nn_4$ gives $C \cdot \calN_0 \geq 10$, a contradiction.
\end{proof}

\section{Inflating along sesquicuspidal curves}\label{sec:inflate}

The main goal of this section is to prove Theorem~\ref{thmlet:inflation_from_sescusp}, using the following basic outline:
\begin{enumerate}[label=\arabic*)]
  \item construct a (partial) resolution $\wt{C}$ of $C$ in a suitable iterated blowup $\wt{M}$ of $M$
  \item apply the technique of symplectic inflation to $\wt{C}$ to modify the symplectic form on $\wt{M}$
  \item blow down again to obtain a symplectic manifold $M'$ which is symplectomorphic to $M$ and by construction contains a large symplectic ellipsoid.
\end{enumerate}
The main technicality is that we need to perform the blowdowns in families in the symplectic category, where blowups and proper transforms are more delicate than in the complex category.
Indeed, recall that whereas complex blowups are performed at a point, symplectic blowups require the data of a symplectic ball embedding $\iota: B^{2n}(R) \hooksymp M$ for some $R \in \R_{>0}$.\footnote{Strictly speaking the construction requires choosing an extension of this embedding to $B^{2n}(R+\eps)$ for some $\eps > 0$, but we will suppress this from the discussion.} The symplectic blowup $\bl_\iota M$ is then defined roughly by removing the interior of $\iota(B^{2n}(R))$ and collapsing the boundary along the fibers of the Hopf fibration.
Some precise relations between complex and symplectic blowups are detailed in \cite[\S7.1]{INTRO}.

In \S\ref{subsec:toric_p_q}, we first discuss a model for the resolution $\wt{M} \ra M$ in the case of a $(p,q)$ cusp singularity using toric moment maps, and we use this to prove Theorem~\ref{thmlet:inflation_from_sescusp}(i) in \S\ref{subsec:infl_p_q}.
In \S\ref{subsec:res_mult_Puis} we extend the discussion to cusps with multiple Puiseux pairs, and finally we prove Theorem~\ref{thmlet:inflation_from_sescusp}(ii) in \S\ref{subsec:res_mult_Puis}. Along the way we also discuss some generalities on resolutions of cusp singularities which will be needed elsewhere in the paper.

\subsection{Toric resolution of a $(p,q)$ cusp}\label{subsec:toric_p_q}

Recall that any cusp singularity of a holomorphic curve $C \subset \C^2$ can be resolved by a finite sequence of point blowups (see e.g. \cite[\S3.3]{wall2004singular}).
We will denote the exceptional divisor resulting from the $i$th blowup by $\F_i^i$, and for $j > i$ we denote its proper transform in the $j$th blowup $\bl^j \C^2$ by $\F_i^j$. 
We arrive at the minimal resolution $C^K$ after some number $K \in \Z_{\geq 1}$ of blowups, and after $L-K$ further blowups for some $L \in \Z_{\geq 1}$ we arrive at the normal crossing resolution $C^L$, in which the total transform $C^L \cup \F_1^L \cup \cdots \cup \F_L^L \subset \bl^L\C^2$ of $C$ is a normal crossing divisor.
We have $[\F_L^L] \cdot [\F_L^L] = -1$ and $[\F_i^L] \cdot [\F_i^L] \leq -2$ for $i = 1,\dots,L-1$, and the spheres $\F_1^L,\dots,\F_{L-1}^L$ 
are disjoint from $C^L$, while $\F_L^L$ intersects $C^L$ transversely in one point.
In the case of a $(p,q)$ cusp, the combinatorics of the normal crossing resolution are related to the continued fraction expansion of $p/q$ and are neatly encoded in the so-called box diagram for $(p,q)$ (see \cite[\S4.1]{cusps_and_ellipsoids}).

Let $\mu_{\C^2}: \C^2 \ra \R^2_{\geq 0}$, $\mu(z_1,z_2) = (\pi |z_1|^2,\pi |z_2|^2)$, denote the moment map for the standard torus action on $\C^2$.
Given $p > q$ coprime positive integers, let $\Delta_{(q,p)} \subset \R^2_{\geq 0}$ denote the triangle with vertices $(0,0),(q,0),(0,p)$, let $\Om_{(q,p)} \subset \R^2_{\geq 0}$ denote the closure of its complement, and let $X_{(q,p)}$ denote the corresponding toric symplectic orbifold with moment map 
$\mu_{X_{(q,p)}}: X_{(q,p)} \ra \Om_{(q,p)}$ (this can be viewed as a weighted blowup of $\C^2$).
Note that $X_{(q,p)}$ has two cyclic quotient singularities of types $\tfrac{1}{p}(1,p-q)$ and $\tfrac{1}{q}(1,q-r)$, where $r$ is the remainder when $p$ is divided by $q$,\footnote
{For more information, see \S4.1a.} and these can each be resolved by finitely many toric blowups (c.f. \cite[Rmk. 4.3.4]{cusps_and_ellipsoids}). 
On the level of moment polygons, a toric blowup at a corner adjacent to edges having primitive inward normals $(1,0),(a,b) \in \Z^2$ with $a < b$ amounts to chopping off the corner so as to introduce a new edge with inward normal $(1,1)$ (the general case reduces to this one by an integral affine transformation).
We denote by $\wt{\Om}_{(q,p)} \subset \R_{\geq 0}^2$ any (noncompact) polygon obtained from $\Om_{(q,p)}$ after resolving both of the singularities by successive toric blowups.
The corresponding (noncompact) toric symplectic manifold $\mu_{\wt{X}_{(q,p)}}: \wt{X}_{(q,p)} \ra \wt{\Om}_{(q,p)}$ is also obtained from $\C^2$ by a sequence of $L$ toric blowups.
\sss

\begin{example}\label{ex:311}
Figure~\ref{fig:toric_blowup_seq} illustrates the construction of $\wt{X}_{(2,3)}$ from $\C^2$ by $3$ toric blowups, with corresponding inward normal vectors $(1,1),(2,1),(3,2)$.
\end{example}
\begin{figure}[h]
\caption{The sequence of toric blowups starting at $\C^2$ and ending at $\wt{X}_{(2,3)}$. The green lines represent the visible $(3,2)$-cuspidal curve $C_{(3,2)} \subset \C^2$ and its resolution $\wt{C}_{(3,2)} \subset \wt{X}_{(2,3)}$.}
\label{fig:toric_blowup_seq}
\centering
\includegraphics[width=1\textwidth]{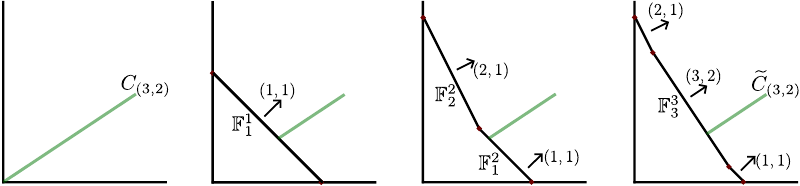}
\end{figure}

Let $\wt{\Ddiv}_{(q,p)} := \F_1^L \cup \cdots \cup \F_L^L$ denote the compact components of the toric boundary divisor in $\wt{X}_{(q,p)}$.
Note that, for $1 \leq i < j \leq L$, $\F_i^L$ and $\F_j^L$ are either disjoint or intersect symplectically orthogonally in one point.
For $i = 1,\dots,L$, the symplectic area of $\F_i^L$ is given by the affine length of the corresponding edge $\mu_{\wt{X}_{(q,p)}}(\F_i^L) \subset \bdy \wt{\Om}_{(q,p)}$, and evidently in the construction of $\wt{X}_{(q,p)}$ we can independently choose arbitrary values $\la_1,\dots,\la_L \in \R_{>0}$ for these affine lengths.

Given a neighborhood $U$ of $\mu_{\wt{X}_{(q,p)}}(\wt{\Ddiv}_{(q,p)})$ in $\wt{\Om}_{(q,p)}$, there is a corresponding neighborhood $V = U \cup \wt{\Delta}_{(q,p)}$ of the origin in $\R^2_{\geq 0}$, 
where $\wt{\Delta}_{(q,p)}$ denotes the closure of $\R^2_{\geq 0} \setminus \wt{\Om}_{(q,p)}$,
so that $\bdy U \cap \R_{>0}^2 = \bdy V \cap \R_{>0}^2$.
Then the corresponding toric domains $X_V := \mu_{\C^2}^{-1}(V)$ and $\wt{X}_{U} := \mu_{\wt{X}_{(q,p)}}^{-1}(U)$ coincide away from compact subsets.
Our model for the symplectic resolution $\wt{M} \ra M$ of a $(p,q)$ cusp singularity will roughly amount to excising $X_V$ and gluing in $\wt{X}_U$.
Note that the ellipsoid $E(q,p)$ naturally sits in $X_V$.

Now suppose that $C_1,\dots,C_L$ is any configuration of symplectically embedded two-spheres in a symplectic four-manifold $W$ which have the same respective areas as $\F_1,\dots,\F_L$ and the same intersection graph with symplectically orthogonal intersections.
Then by a version of the symplectic neighborhood theorem (see e.g. \cite[Prop. 3.5]{symington1998symplectic}), there is a neighborhood $\calW$ of $C_1 \cup \cdots \cup C_L$ in $W$ which is symplectomorphic to a neighborhood of $\F_1 \cup \cdots \cup \F_L$ in $\wt{X}_{(q,p)}$ of the form $\wt{X}_U$ for some $U \subset \wt{\Om}_{(q,p)}$ containing $\mu_{\wt{X}_{(q,p)}}(\wt{\Ddiv}_{(q,p)})$.
This means that there is a symplectic surgery of $W$ which excises $\calW$ and glues in $X_V$, with $V = U \cup \wt{\Delta}_{(q,p)}$ as above.
This gives an explicit model for the symplectic blowdown of $W$ along $C_L,\dots,C_1$, which by construction contains the ellipsoid $E(q,p)$.

\sss

Observe that there is a $(p,q)$-unicuspidal  symplectic curve $C_{(p,q)}$ in $\C^2$ whose image under $\mu_{\C^2}$ is the ray $\R_{\geq 0} \cdot (p,q)$, given explicitly by 
\begin{align}\label{eq:vis_curve_def}
C_{(p,q)} = \{ (r \sqrt{p}e^{2\pi i p t},r \sqrt{q}e^{2\pi i qt}) \in \C^2 \;|\; r \in \R_{\geq 0}, t \in [0,1]\}
\end{align}
(this is ``visible'' in the sense of \S\ref{subsubsec:vis_Lag_symp} below and can be viewed as the hyperK\"ahler twist of the Schoen--Wolfson Lagrangian discussed in \cite[Ex. 5.11]{evans2023lectures}).
Although $C_{(p,q)}$ is not equal on the nose to the model $(p,q)$ cusp $\{x^p + y^q = 0\}$, these two curves intersect the bounding sphere of a small neighborhood of the singular point in  
the same transverse torus knot,
namely the $(p,q)$ torus knot of maximal self-linking number, and hence there is a symplectic isotopy that takes one to the other. Thus
they are essentially interchangeable for the purposes of this section.
Similarly, there is a nonsingular visible symplectic curve $\wt{C}_{(p,q)}$ in $\wt{X}_{(q,p)}$ whose image under $\mu_{\wt{X}_{(q,p)}}$ is the intersection of the ray $\R_{\geq 0} \cdot (p,q)$ with $\wt{\Om}_{(q,p)}$.
In \S\ref{subsec:infl_p_q} we will take $\wt{C}_{(p,q)}$ as a model for the symplectic proper transform of $C_{(p,q)}$ in $\wt{X}_{(q,p)}$, noting that $\wt{C}_{(p,q)}$ intersects $\F_L^L$ positively in one point and is disjoint from $\F_i^L$ for $i = 1,\dots,L-1$.
The moment map images of $C_{(p,q)}$ and $\wt{C}_{(p,q)}$ are illustrated in Figure~\ref{fig:toric_blowup_seq} for the case $(p,q) = (3,2)$.

\subsection{Inflating along a curve with a $(p,q)$ cusp}\label{subsec:infl_p_q}

We first prove part (i) of Theorem~\ref{thmlet:inflation_from_sescusp}.

\begin{proof}[Proof of Theorem~\ref{thmlet:inflation_from_sescusp}(i)] 
Let $C$ be a $(p,q)$-sesquicuspidal symplectic curve in $M$ (as defined in \S\ref{subsubsec:sing_curves}), which satisfies $[C] = c\pd [\omega_M]$ and $[C] \cdot [C] \geq pq$.
After resolving any double points, we will assume that $C$ is nonsingular away from the $(p,q)$ cusp (but possibly of higher genus).
After further modifying $C$ near the cusp point, we can further assume that
\begin{itemize}
  \item  there is a neighborhood $\calD \subset M$ of the cusp which is symplectomorphic to $\eps \cdot X_V$, where $X_V = \mu_{\C^2}^{-1}(V) \subset \C^2$ as in \S\ref{subsec:toric_p_q} with $\wt{\Delta}_{(q,p)} \subset V \subset \R_{\geq 0}^2$, and $\eps \cdot X_V$ is the result after scaling the symplectic form by some $\eps > 0$ sufficiently small 
  \item $C \cap \calD$ is sent to $C_{(p,q)} \cap X_V$, with $C_{(p,q)}$ the visible symplectic curve defined in \eqref{eq:vis_curve_def}.
\end{itemize}

Let $U := V \setminus \Int \wt{\Delta}_{(q,p)}$ denote the corresponding neighborhood of the finite edges in $\wt{\Om}_{(q,p)}$, with associated domain $\wt{X}_{U} = \mu^{-1}_{\wt{X}_{(q,p)}}(U) \subset \wt{X}_{(q,p)}$.
Let $(\wt{M},\omega_{\wt{M}})$ denote the result after excising $\calD$ from $M$ and gluing in $\eps \cdot \wt{X}_U$ under the natural symplectic identification $\Op(\bdy \calD) \cong \Op(\bdy (\eps \cdot \wt{X}_U))$.
Let $\wt{C} \subset \wt{M}$ be the unique symplectic curve which agrees with $C$ outside of $\calD$ and agrees with $\wt{C}_{(p,q)}$ in $\eps \cdot \wt{X}_U$.
In other words, $\wt{M}$ is a model for the $L$-fold symplectic blowup of $M$, and $\wt{C}$ is a model for the symplectic resolution of $C$ at its cusp point.

We now symplectically inflate along $\wt{C}$ as follows. Note that $\wt{C}$ is smoothly embedded, and by assumption we have $[\wt{C}] \cdot [\wt{C}] = [C] \cdot [C] - pq \geq 0$.
Therefore, using e.g. \cite[Lem. 3.7]{mcduff1994notes}, there exists a closed two-form $\eta$ on $\wt{M}$ such that: 
\begin{itemize}
  \item $[\eta] = \pd([\wt{C}]) \in H^2(\wt{M};\R)$
  \item $\eta$ has support in a small neighborhood of $\wt{C}$ which is disjoint from $\F_1^L,\dots,\F_{L-1}^L$
  \item $\wt{\omega}_s := \omega_{\wt{M}} + s \eta$ is a symplectic form for all $s \in \R_{>0}$ 
  \item $\F_L^L$ is a symplectic submanifold of $(\wt{M},\wt{\omega}_s)$ for all $s \in \R_{>0}$.
\end{itemize}
Note that $(\wt{M},\wt{\omega}_s)$ contains the configuration of symplectic spheres $\F_1^L,\dots,\F_L^L$ which still intersect symplectically orthogonally with the same intersection pattern for all $s \in \R_{\geq 0}$, and we have 
\begin{align*}
\int_{\F_i^L}\wt{\omega}_s = 
\begin{cases}
  \int_{\F_i^L} \omega_{\wt{M}} & i = 1,\dots,L-1\\
  \int_{\F_L^L} \omega_{\wt{M}} + s & i = L.
\end{cases}
\end{align*}

Now let $(M_s,\omega_s)$ denote the result after performing the toric model for the symplectic blowdown along $\F_L^L,\dots,\F_1^L$ as described in \S\ref{subsec:toric_p_q}.
By choosing the relevant symplectic neighborhoods smoothly with $s$ and identifying $M_s$ smoothly with $M$, we view $\{\omega_s\}_{s \geq 0}$ as a smooth family of symplectic forms on $M$, such that $[\omega_s] = [\omega_M] + s\pd([C]) = (1+sc)[\omega_M]$, and such that there is a symplectic embedding of $(\eps + s)\cdot E(q,p)$ into $(M,\omega_s)$. 
By the Moser's stability theorem, the rescaled symplectic form $\tfrac{1}{1+sc}\cdot  \omega_s$ is symplectomorphic to $\omega_M$, and it admits a symplectic embedding of $\tfrac{\eps +s}{1+sc} \cdot E(q,p)$. Since $\tfrac{\eps + s}{1+sc} \ra \tfrac{1}{c}$ as $s \ra \infty$, the result now follows by taking $s$ sufficiently large.
\end{proof}

\subsection{Cusps with multiple Puiseux pairs}\label{subsec:res_mult_Puis}

In this subsection, we first recall some more generalities about cusp singularities and their resolutions,
in order to relate the blowup sequence for a $(p,q)$ cusp with that of a cusp with Puiseux pairs $(p,q),(p_2,q_2),\dots,(p_k,q_k)$.
In particular, we recall the definition of the Puiseux characteristic, which is a useful alternative to Puiseux pairs when  discussing blowups.
We then state a technical lemma relating symplectic and complex blowups which will be used in the proof of Theorem~\ref{thmlet:inflation_from_sescusp}(ii) at the end of this subsection.

According to \cite[\S2]{wall2004singular}, for $C$ any germ of a holomorphic curve near the origin in $\C^2$ which is not tangent to $\{x = 0\}$ we can find a local parametrization of the form
\begin{align*}
x = t^m, \;\;\;\;y = a_{m_1}t^{m_1} + a_{m_2}t^{m_2} + a_{m_3}t^{m_3} + \cdots,
\end{align*}
with $m \leq m_1 < m_2 < \cdots$ and $a_{m_1},a_{m_2},a_{m_3},\dots \in \C^*$, and such that $\gcd(m,m_1,m_2,\dots) = 1$.
Here $m$ is the multiplicity of $C$ at the origin. We define $\beta_1$ to be the smallest $m_i$ which is not a multiple of $m$, we put $e_1 := \gcd(m,\beta_1)$, and we put inductively 
\begin{align*}
\beta_{k+1} = \min \{m_i\;|\; e_k \notdiv m_i\},\;\;\;\;\; e_{k+1} = \gcd(e_k,\beta_{k+1}).
\end{align*}
We necessarily arrive at $e_g = 1$ for some $g \in \Z_{\geq 1}$, and the Puiseux characteristic of $C$ is by definition $(m;\beta_1,\dots,\beta_g)$.
One can show that this is independent of the coordinate representation of $C$ and is obtained from the Puiseux pairs $(n_1,d_1),\dots,(n_g,d_g)$ via
\begin{align*}
m = d_1 \cdots d_g, \;\;\;\;\; \beta_i = n_id_{i+1}\cdots d_g.
\end{align*}
In the reverse direction, given $(m;\beta_1,\dots,\beta_g)$ we can recover $n_1,\dots,n_g$ and $d_1,\dots,d_g$ via $\tfrac{\beta_i}{m} = \tfrac{n_i}{d_1\cdots d_i}$. 
In particular, note that if the first Puiseux pair is $(n_1,d_1) = (p,q)$ then the Puiseux characteristic takes the form $(kq;kp,\beta_2,\dots,\beta_g)$ with $k = d_2\cdots d_g$.
One can also show that the Puiseux characteristic determines the multiplicity sequence and vice versa (see \cite[Thm. 3.5.6]{wall2004singular}).

\begin{rmk}
Recall that, according to \cite{eisenbud1985three}, any local branch of a singular holomorphic curve in $\C^2$ is homeomorphic to the cone over an iterated torus knot, where the cabling parameters $(d_k,s_k)$ can be read off from the Puiseux pairs with $d_k \neq 1$ via $s_1 = n_1$ and $s_k = n_k - n_{k-1}d_k + d_{k-1}d_k s_{k-1}$ for $k \geq 2$ (here we follow the conventions of \cite{neumann2017topology}).
\end{rmk}

The following example is of primary relevance for Theorem~\ref{thmlet:inner_corner_curves}(b):
\begin{example}\label{ex:kp+1}
The Puiseux pairs $(p,q),(kp+1,k)$ correspond to the Puiseux characteristic $(kq;kp,kp+1)$ (and vice versa).
\end{example}

\sss

If $C$ has Puiseux characteristic $(m;\beta_1,\dots,\beta_g)$, then its proper transform after blowing up has Puiseux characteristic $(m',\beta_1',\dots,\beta_{g'}')$ given as follows (see \cite[Thm. 3.5.5]{wall2004singular}):
\begin{align}\label{eq:blowup_Puis_char}
(m';\beta_1',\dots,\beta_{g'}') = 
\begin{cases}
(m;\beta_1-m,\dots,\beta_g-m) & \beta_1 > 2m \\
(\beta_1-m;m,\beta_2-\beta_1+m,\dots,\beta_g-\beta_1 + m) & \beta_1 < 2m \text{ and } (\beta_1-m) \notdiv m \\
(\beta_1-m;\beta_2-\beta_1+m,\dots,\beta_g-\beta_1+m) & (\beta_1-m) | m.
\end{cases}
\end{align}

Using \eqref{eq:blowup_Puis_char}, the following is readily checked:
\begin{lemma}\label{lem:partial_res}
Suppose that the normal crossing resolution of a $(p,q)$ cusp requires $L$ blowups and results in negative self-intersection spheres $\F_1^L,\dots,\F_L^L$ as in \S\ref{subsec:toric_p_q}.
Let $C$ be any cusp singularity with Puiseux pairs $(n_1,d_1),(n_2,d_2),\dots,(n_g,d_g)$ with $(n_1,d_1) = (p,q)$. Then the first $L$ blowups of the resolution sequence for $C$ produce spheres $\G_1^L,\dots,\G_L^L$ having the same intersection pattern (including self-intersection numbers) as $\F_1^L,\dots,\F_L^L$.  The proper transform $\wt{C}$ of $C$ intersects $\G_L^L$ in one point with contact order $k := d_2\cdots d_g$, and is disjoint from $\G_1^L,\dots,\G_{L-1}^L$.   
\end{lemma}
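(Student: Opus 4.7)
The plan is to apply the Puiseux characteristic blowup formula \eqref{eq:blowup_Puis_char} iteratively to both cusps in parallel and exploit a scaling relationship between their Puiseux data to conclude that the first $L$ blowups produce combinatorially identical exceptional configurations. The pure $(p,q)$ cusp has Puiseux characteristic $(q;p)$, whereas $C$ has Puiseux characteristic $(kq; kp, \beta_2, \dots, \beta_g)$ with $k = d_2 \cdots d_g$ and $\beta_j = n_j d_{j+1} \cdots d_g$ for $j \geq 2$.

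I would first establish by induction on $i = 0, 1, \dots, L$ the following scaling invariant: if after $i$ blowups the proper transform of the pure $(p,q)$ cusp has Puiseux characteristic $(m^{(i)}; \beta_1^{(i)}, \dots, \beta_{h_i}^{(i)})$, then the proper transform of $C$ has Puiseux characteristic of the form $(km^{(i)}; k\beta_1^{(i)}, \dots, k\beta_{h_i}^{(i)}, \tilde\beta_{h_i+1}^{(i)}, \dots)$, with the leading block scaled by $k$. The inductive step rests on the observation that the trichotomy in \eqref{eq:blowup_Puis_char} (whether $\beta_1 > 2m$, or $\beta_1 < 2m$ with $(\beta_1 - m) \notdiv m$, or $(\beta_1 - m) \mid m$) depends only on the ratio and divisibility properties of the leading pair $(m,\beta_1)$, both of which are invariant under simultaneous scaling by $k$. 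Hence the same case of the formula triggers for both cusps at every step, producing identical blowup combinatorics; in particular, the self-intersection numbers and intersection pattern of $\G_1^L, \dots, \G_L^L$ match those of $\F_1^L, \dots, \F_L^L$.

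For the final assertion, at step $i = L$ the pure $(p,q)$ cusp has reached its normal crossing resolution, so its proper transform is smooth and meets $\F_L^L$ transversely at one point. By the scaling invariant, the proper transform of $C$ at this stage passes through the corresponding point on $\G_L^L$ with local multiplicity exactly $k$ and shares the same tangent line as the pure cusp's (now smooth) proper transform. Since this tangent line is transverse to $\G_L^L$ by the normal crossing property, and $C$ remains locally irreducible at this point, the local intersection multiplicity between the proper transform of $C$ and $\G_L^L$ equals the multiplicity, giving contact order $k$. Disjointness from $\G_1^L, \dots, \G_{L-1}^L$ follows from the identical combinatorial structure of the two blowup sequences.

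The main obstacle I anticipate is the careful bookkeeping through case 2 of \eqref{eq:blowup_Puis_char}, where the Puiseux characteristic acquires a new $\beta$-entry and the scaling invariant must be verified to extend, as well as handling the steps between the minimal and normal crossing resolutions, where the pure cusp's proper transform is already smooth and the blowup formula must be supplemented by direct tangent-line arguments to identify the correct infinitely near point; in both cases, the relevant information is accessible from the local analytic data and should be a manageable calculation.
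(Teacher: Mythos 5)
Your overall strategy is the paper's: the paper proves this lemma precisely by iterating \eqref{eq:blowup_Puis_char}, and your observation that the trichotomy in that formula depends only on the leading pair $(m,\beta_1)$ and is invariant under simultaneous scaling by $k$ is the right engine. For $i=0,\dots,L-1$ the scaled-leading-block invariant does hold and correctly forces the same case of the formula, hence the same multiplicity sequence (scaled by $k$) and the same configuration $\G_1^L,\dots,\G_L^L$.

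However, the final paragraph contains a genuine error. You claim that after $L$ blowups the proper transform $\wt{C}$ passes through its point on $\G_L^L$ \emph{with local multiplicity exactly $k$}, with tangent line transverse to $\G_L^L$, and you deduce the contact order $k$ as (multiplicity)$\times$(transverse intersection). This is false in general, and the paper's own Example~\ref{ex:res_of_kq_kp_kp+1} is a counterexample: for Puiseux characteristic $(9;15,16)$ one has $k=3$, $L=4$, and the resolution sequence $(9;15,16)\ra(6;9,10)\ra(3;7)\ra(3;4)\ra(1;\cdot)$ shows that $\wt{C}$ is \emph{smooth} (multiplicity $1$, not $3$) after $L$ blowups; it meets $\G_L^L$ with contact order $3$ because it is \emph{tangent} to $\G_L^L$ to order $3$, not transverse. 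In general $\op{mult}_P(\wt{C})$ can be anything between $1$ and $k$ depending on the later Puiseux pairs, and correspondingly your scaling invariant itself fails at the step $i=L$ (the leading multiplicity of $C$'s transform need not be $k$ times that of the pure cusp once the $L$-th blowup is performed). The correct derivation of the contact order does not go through the multiplicity of $\wt{C}$ at all: by the standard fact that the proper transform of a curve meets the exceptional divisor of a blowup with total intersection multiplicity equal to the multiplicity of the curve at the center, one has $\wt{C}\cdot \G_L^L = m_{L-1}(C) = k\cdot m_{L-1}(\text{pure cusp}) = k\cdot 1 = k$; since $C$ is a single branch this intersection is concentrated at one point, giving contact order exactly $k$ there (with $\wt{C}$ distributing this between its own multiplicity and tangency to $\G_L^L$). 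Disjointness from $\G_1^L,\dots,\G_{L-1}^L$ then follows because the corresponding total intersection numbers are $k$ times the vanishing ones for the pure cusp. With the last step repaired in this way the argument is complete.
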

\NI Note that $\wt{C}$ may itself have a residual cusp singularity.

\begin{example}[continuation of Example~\ref{ex:kp+1}]\label{ex:res_of_kq_kp_kp+1}
Let $C$ be a curve with cusp having Puiseux characteristic of the form $(kq;kp,kp+1)$ (and hence Puiseux pairs $(p,q), (kp+1,k)$), and let $L$ be as in Lemma~\ref{lem:partial_res}. Then the first $L$ blowups in the resolution sequence achieve the normal crossing resolution for a $(p,q)$ cusp, after which the proper transform $\wt{C}$ is nonsingular but intersects $\G_L^L$ in a single point of contact order $k$. Thus $L$ blowups achieve the minimal resolution for $C$, and an additional $k$ blowups achieve the normal crossing resolution for $C$.

For instance, the minimal resolution sequence for Puiseux characteristic $(3;5)$ is $(3;5) \ra (2;3) \ra (1;2)$, while that of $(9;15,16)$ is $(9;15,16) \ra (6;9,10) \ra (3;7) \ra (3;4) \ra (1,3)$ (this corresponds to $(q,p) = (3,5)$, $k=3$, and $L = 4$). 
\end{example}

We record the following for later purposes: 
\begin{lemma}\label{lem:k^2pq}
  Let $C$ be a curve with a cusp having Puiseux characteristic $(kq;kp,kp+1)$, and let $\wt{C}$ be its minimal resolution. Then we have $[\wt{C}] \cdot [\wt{C}] = [C] \cdot [C] - k^2pq$. In particular, $[C] \cdot [C] \geq k^2pq$ if and only if $[\wt{C}] \cdot [\wt{C}] \geq 0$.
\end{lemma}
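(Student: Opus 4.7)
The strategy is to invoke the standard formula $[\wt C]^2 = [C]^2 - \sum_{j=1}^L m_j^2$, where $m_j$ denotes the multiplicity of the proper transform of $C$ at the $j$th blowup point in the minimal resolution sequence. This follows by inductively applying the identity $[C']^2 = [C]^2 - m^2$ at each single blowup of a point of multiplicity $m$ on $C$. So it suffices to show $\sum_{j=1}^L m_j^2 = k^2 pq$, which would immediately yield both the equality $[\wt C]^2 = [C]^2 - k^2 pq$ and the equivalence $[\wt C]^2 \geq 0 \iff [C]^2 \geq k^2 pq$.

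By Lemma~\ref{lem:partial_res}, $L$ equals the number of blowups needed in the normal crossing (embedded) resolution of a single $(p,q)$ cusp. The key claim to establish is that the multiplicity sequence $(m_1,\ldots,m_L)$ for the minimal resolution of $C$ equals $k$ times the multiplicity sequence $(\bar m_1,\ldots,\bar m_L)$ for the normal crossing resolution of a $(p,q)$ cusp. Granted the claim, one then applies the classical identity $\sum_{j=1}^L \bar m_j^2 = pq$ for a $(p,q)$ cusp (readily proved by induction on the Euclidean algorithm for $p/q$, since for $p > q$ the multiplicity sequence consists of $\lfloor p/q \rfloor$ copies of $q$ followed by the multiplicity sequence for the remainder pair, and $pq = \lfloor p/q \rfloor q^2 + (p \bmod q) \cdot q$), to conclude $\sum_j m_j^2 = k^2 \sum_j \bar m_j^2 = k^2 pq$.

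To prove the multiplicity-scaling claim, I apply the blowup transformation rule \eqref{eq:blowup_Puis_char} iteratively, comparing the Puiseux characteristics of the proper transforms of $C$ and of the $(p,q)$ cusp at each step. So long as we remain in case 1 or case 2 of \eqref{eq:blowup_Puis_char}, a direct check shows that a characteristic of the form $(kq'; kp', kp'+1)$ transforms to another of the same form with $(p',q')$ replaced by the corresponding transform for the $(q';p')$ cusp, and the multiplicity is always $k$ times that of the $(q';p')$ cusp. Case 3, which occurs when $(p'-q') \mid q'$, sends $(q';p')$ to a smooth germ while sending $(kq';kp',kp'+1)$ to the one-Puiseux-pair cusp $(k(p'-q'); kq'+1)$, still with multiplicity $k$ times that of $(q';p')$.

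The main obstacle is completing the analysis past case 3: once the $(p,q)$ cusp proper transform is smooth, the remaining blowups in the normal crossing resolution of $(p,q)$ take place at tangency points with exceptional curves and have multiplicity $1$ throughout; meanwhile $C$ still has a residual cusp $(k(p'-q'); kq'+1)$, whose minimal resolution takes place at precisely the corresponding points (by Lemma~\ref{lem:partial_res}, since the surrounding exceptional chains agree). The final technical point is to verify that the multiplicity sequence of this residual cusp's minimal resolution is $k$ times the sequence of tangency-blowup multiplicities (each equal to $1$) in the $(p,q)$ normal crossing resolution. This matches the pattern displayed in Example~\ref{ex:res_of_kq_kp_kp+1}, and can be verified in general by another iterated application of \eqref{eq:blowup_Puis_char} to the residual characteristic $(k(p'-q'); kq'+1)$, combined with the observation that the smooth proper transform in the $(q';p')$ case has contact order $(p'-q')$ with the last exceptional divisor.
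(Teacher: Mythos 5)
Your argument is correct and is, as far as one can tell, exactly the argument the paper has in mind: the paper records Lemma~\ref{lem:k^2pq} without proof, relying implicitly on Lemma~\ref{lem:partial_res}, Example~\ref{ex:res_of_kq_kp_kp+1}, and the standard drop $[C']^2=[C]^2-m^2$ under a blowup at a point of multiplicity $m$. Your reduction to $\sum_j m_j^2 = k^2pq$, the claim that the multiplicity sequence of the minimal resolution of $(kq;kp,kp+1)$ is $k$ times the normal crossing multiplicity sequence of the $(p,q)$ cusp, and the Euclidean induction giving $\sum_j \bar m_j^2 = pq$ all check out; e.g.\ for $(9;15,16)$ the sequence is $(9,6,3,3)=3\cdot(3,2,1,1)$ and $81+36+9+9=135=9\cdot 15$. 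One correction to your final clause: after the case-3 blowup at a stage with characteristic $(q';q'+1)$, the smooth proper transform of the $(p,q)$-cusp meets the newest exceptional divisor with contact order $q'$ (the multiplicity at that step), not $p'-q'=1$; taken literally, contact order $1$ would leave too few remaining multiplicity-one blowups for the counts to match. It is the contact order $q'$ that accounts for the $q'$ remaining multiplicity-one blowups in the normal crossing resolution of the $(p,q)$ cusp, matching the $q'$ multiplicity-$k$ blowups that your iteration of \eqref{eq:blowup_Puis_char} produces for the residual characteristic $(k;kq'+1)$. With that repaired, the step counts agree (consistently with Lemma~\ref{lem:partial_res}, which gives $L$ steps on both sides) and the proof is complete.
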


We now are in a position to complete the proof of Theorem~\ref{thmlet:inflation_from_sescusp}.
In \S\ref{subsec:infl_p_q} we assumed that $C$ coincides with the visible symplectic curve $C_{(p,q)} \subset \C^2$ from \eqref{eq:vis_curve_def} locally near its cusp. 
Since this curve has no direct analogue for a cusp with multiple Puiseux pairs, we will augment the explicit toric resolution model from \S\ref{subsec:toric_p_q} with a slightly more abstract argument.

The following technical lemma relating symplectic and complex blowups will suffice for our purposes.
\begin{lemma}[{see \cite[\S7.1]{INTRO}}]\label{lem:bu_tech}
Let $(M,\omega)$ be a symplectic manifold equipped with an $\om$-tame almost complex structure $J$ which is integrable near a point $\db \in M$.
Let $(\bl_\db M,\wt{J})$ denote the complex blowup $M$ at $\db$, with exceptional divisor $\E_{\bl_\db M}$.
Then for some $\delta > 0$ there exists a symplectic embedding $\iota: (B^{2n}(\delta),\omega_\std) \hooksymp (M,\omega)$ with $\iota(0) = \db$ for which the corresponding symplectic blowup
 $(\bl_\iota M,\wt{\omega})$ admits a diffeomorphism $\Phi: \bl_\iota M \xrightarrow[]{\cong} \bl_\db M$ such that 
$\Phi^*\wt{J}$ is $\wt{\omega}$-tame and 
 $\Phi(\E_{\bl_\iota M}) = \E_{\bl_\db M}$.

Furthermore, suppose that $\Ddiv_1$ and $\Ddiv_2$ are smooth $J$-holomorphic local divisors in $M$ which intersect $\omega$-orthogonally at $\db$, and let $\wt{\Ddiv}_1,\wt{\Ddiv}_2 \subset \bl_\db M$ denote their $\wt{J}$-holomorphic proper transforms. Then we can arrange that $\Phi^{-1}(\wt{\Ddiv}_1)$ and $\Phi^{-1}\wt{\Ddiv}_2$ each intersect $\E_{\bl_\iota M}$ $\wt{\omega}$-orthogonally.
\end{lemma}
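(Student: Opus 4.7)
The plan is to leverage the local integrability of $J$ near $\db$ to set up a holomorphic chart in which the symplectic and complex blowups can be compared directly with their standard models on $\C^n$. I would first choose a holomorphic chart $\phi\colon V \to \C^n$ sending $\db$ to $0$ with $\phi_* J = J_{\std}$, and write $\omega_V := (\phi^{-1})^*\omega$, a symplectic form on $\phi(V)$ taming $J_{\std}$. Since the Hermitian form $\omega_V|_0$ is nondegenerate and positive on complex lines, a complex-linear change of coordinates brings it into standard form, so after this change we may assume $\omega_V|_0 = \omega_{\std}|_0$.

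Next I would construct $\iota$ via Moser's trick applied to the linear interpolation $\omega_t := (1-t)\omega_{\std} + t\,\omega_V$, which agrees with $\omega_{\std}|_0$ at the origin for every $t \in [0,1]$ and is therefore nondegenerate on some ball $B^{2n}(\delta)$. Moser produces a diffeomorphism $\psi\colon B^{2n}(\delta) \to \psi(B^{2n}(\delta)) \subset \phi(V)$ with $\psi(0) = 0$, $d\psi|_0 = \mathrm{id}$, and $\psi^*\omega_V = \omega_{\std}$. Setting $\iota := \phi^{-1} \circ \psi$ yields the desired symplectic embedding with $\iota(0) = \db$.

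To produce $\Phi$ I would exploit the canonical identification between the symplectic blowup of $(\C^n,\omega_{\std})$ along a round ball at the origin and the complex blowup of $\C^n$ at the origin: both are obtained from $\C^n \setminus \{0\}$ by collapsing Hopf fibers on shrinking spheres to the zero section of $\mathcal{O}(-1) \to \CP^{n-1}$. Because $\iota$ factors through the holomorphic chart $\phi$, this canonical identification transfers to a diffeomorphism $\Phi\colon \bl_\iota M \xrightarrow{\cong} \bl_\db M$ which is the identity on $M \setminus \{\db\}$ and sends $\E_{\bl_\iota M}$ to $\E_{\bl_\db M}$. The taming condition is automatic away from the exceptional divisor, while near $\E_{\bl_\iota M}$ it reduces to a comparison on the model $\bl_0 \C^n$; since $d\psi|_0 = \mathrm{id}$, the pulled-back complex structure $\Phi^*\wt{J}$ agrees with the standard Kähler complex structure to first order along $\E_{\bl_0 \C^n}$, and openness of the taming condition yields the conclusion after possibly shrinking $\delta$.

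For the divisor addendum I would further refine the initial chart so that $\Ddiv_1$ and $\Ddiv_2$ become the coordinate hyperplanes $\{z_1 = 0\}$ and $\{z_2 = 0\}$, using simultaneous holomorphic straightening of two transverse smooth $J$-holomorphic hypersurfaces. The $\omega$-orthogonality hypothesis, combined with $J$-invariance of $T_\db \Ddiv_i$, forces $\omega|_\db$ to respect the splitting $\C \oplus \C \oplus \C^{n-2}$, so a further block-diagonal complex-linear change of variables normalizes it to $\omega_{\std}|_0$ while preserving the two hyperplanes. Running Moser's argument relative to these hyperplanes arranges that $\psi$ preserves them exactly, so after blowup the proper transforms of $\Ddiv_1, \Ddiv_2$ are identified via $\Phi$ with the proper transforms of the coordinate hyperplanes in $\bl_0 \C^n$, which meet $\E_{\bl_0 \C^n}$ $\omega_{\std}$-orthogonally by direct inspection. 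The main obstacle throughout is this simultaneous normalization: organizing the Moser-type arguments so that the holomorphic chart, the symplectic form, and (in the addendum) the pair of divisors are all standardized at once while keeping $\iota$ a genuine symplectic embedding that is compatible with $J$ to the extent needed for the symplectic and complex blowups to match.
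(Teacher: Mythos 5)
The paper does not actually prove this lemma --- it is quoted from \cite[\S7.1]{INTRO} --- so there is no internal proof to compare against; I will assess your argument on its own terms. Your overall route (holomorphic chart, Moser normalization of the form, transfer of the standard model identification of $\bl_0\C^n$ with the symplectic blowup of a round ball) is the natural one, but it has a genuine gap at the very first normalization step. You claim that, because $\omega_V|_0$ tames $J_\std$, a complex-linear change of coordinates brings it to $\omega_\std|_0$. This is false for a merely \emph{taming} form: at a point, $\omega_V|_0$ decomposes into a positive $(1,1)$ part plus a $(2,0)+(0,2)$ part, and $\gl(n,\C)$ preserves this type decomposition, so it can normalize the $(1,1)$ part but cannot kill the anti-invariant part. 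Only for an $\omega$-\emph{compatible} $J$ at $\db$ is $\omega_V|_0$ a positive Hermitian form lying in a single $\gl(n,\C)$-orbit. The lemma as stated assumes only taming, and everything downstream leans on the resulting identity $d\psi|_0 = \mathrm{id}$: without it, $d\psi|_0$ is a real-linear symplectomorphism that need not be complex-linear, so the lift of $\psi$ to the complex blowup need not even extend smoothly over the exceptional divisor (the proper transform of a diffeomorphism fixing $0$ extends over $\E_{\bl_0\C^n}$ only when its differential at $0$ carries complex lines to complex lines), and your ``first-order agreement along $\E$ plus openness of taming'' step collapses.

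Two ways to repair this. (1) If you are content with the compatible case (which is what the paper actually uses, e.g.\ in the proof of Theorem~A(ii) an $\omega_M$-compatible $J$ is chosen), your argument goes through essentially as written. (2) For the general taming case one should reverse the order of operations, as the cited reference does: work on the \emph{complex} blowup $\rho: \bl_\db M \ra M$ and define $\wt{\omega}_\delta := \rho^*\omega + \delta^2\tau$, where $\tau$ is a closed form supported near $\E_{\bl_\db M}$ restricting to the Fubini--Study form on it; taming of $\wt{J}$ follows because $\rho^*\omega$ is nonnegative on $\wt{J}$-complex lines and $\tau$ is positive on $T\E$, and one then identifies $(\bl_\db M,\wt{\omega}_\delta)$ with a symplectic blowup along a ball embedding by a Moser argument on the complement of $\E$. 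Note also that the Moser interpolation $(1-t)\omega_\std + t\,\omega_V$ you use remains nondegenerate near $0$ by convexity of the taming cone even without the pointwise normalization, so that particular step is fine; it is only the conclusion $d\psi|_0=\mathrm{id}$ that fails. Finally, two smaller points: $\Phi$ cannot literally be the identity on $M\setminus\{\db\}$ (it must radially spread the complement of the excised ball over the complement of the point), and the relative Moser step preserving $\Ddiv_1,\Ddiv_2$ requires choosing the primitive of $\omega_V-\omega_\std$ so that the Moser vector field is tangent to both hyperplanes --- standard, but worth saying.
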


Note that, in the context of Lemma~\ref{lem:bu_tech}, if $C$ is a (singular) symplectic curve in $M$ which is preserved by $J$ near $\db$, then we can define its symplectic proper transform to be $\Phi^{-1}(\wt{C})$, where $\wt{C}$ is the $\wt{J}$-holomorphic proper transform of $C$ in $\bl_\db M$. 

\begin{proof}[Proof of Theorem~\ref{thmlet:inflation_from_sescusp}(ii)]
  
Let $C$ be a sesquicuspidal symplectic curve in $M$ with Puiseux pairs $(p,q),(p_2,q_2),\dots,(p_g,q_g)$, whose homology class satisfies $[C] = c \PD[\omega_M]$ and $[C] \cdot [C] \geq k^2 pq$ for $k = q_2\cdots q_g$.
As before, after resolving any double points we can assume that $C$ is embedded
away from the cusp point. 
By assumption there is a neighborhood $U$ of the cusp point such that $(U,C \cap U)$ is symplectomorphic to $(U',C' \cap U')$, where $U'$ is a neighborhood of the origin in $\C^2$ and $C' \subset \C^2$ is a holomorphic curve having a cusp with Puiseux pairs $(p,q)=(p_1,q_1),\dots,(p_g,q_g)$. 
By pulling back $J_\std|_{U'}$ to $U$ and extending over $M$, we can find an $\omega_M$-compatible almost complex structure $J$ on $M$ which preserves $C$ and is integrable near the cusp.

Let $(\wt{M}_\comp,\wt{J})$ denote the $L$-fold complex blowup of $(M,J)$ which achieves normal crossing resolution of a $(p,q)$ cusp singularity as in Lemma~\ref{lem:partial_res}, with negative self-intersection $\wt{J}$-holomorphic spheres $\G_1^L,\dots,\G_L^L \subset \wt{M}_\comp$.
Let $\wt{C} \subset \wt{M}_\comp$ be the corresponding proper transform of $C$ (this may be smooth or have a residual cusp).
Using Lemma~\ref{lem:bu_tech}, there is a corresponding $L$-fold symplectic blowup $(\wt{M}_\symp,\wt{\omega})$ of $(M,\omega_M)$ and a diffeomorphism $\Phi: \wt{M}_\symp \xrightarrow[]{\cong} \wt{M}_\comp$ such that $\Phi^*\wt{J}$ is $\wt{\omega}$-tame, and the symplectic spheres 
$\Phi^{-1}(\G_L^1),\dots,\Phi^{-1}(\G_L^L)$ intersect symplectically orthogonally.
After smoothing the residual cusp of $\Phi^{-1}(\wt{C})$ (if necessary) by replacing it with a perturbation of the corresponding Milnor fiber, we obtain a symplectically embedded curve $D \subset \wt{M}_\symp$ which intersects $\Phi^{-1}(\G_L^L)$ positively in $k$ points and is disjoint from $\Phi^{-1}(\G_1^L),\dots,\Phi^{-1}(\G_{L-1}^L)$. 
Note that  $[\Phi^{-1}(\wt{C})]$ has positive self-intersection number by Lemma~\ref{lem:k^2pq}.
The rest of the proof proceeds as in \S\ref{subsec:infl_p_q} by inflating along $D$, blowing down $\Phi^{-1}(\G_{L-1}^L),\dots,\Phi^{-1}(\G_1^L)$ using the same toric model from \S\ref{subsec:toric_p_q}, and finally rescaling the symplectic form and applying Moser's stability theorem.
Note that after inflating $\Phi^{-1}(\G_L^L)$ has symplectic area $\eps + ks$ since $[D] \cdot [\G_L^L] = k$, and hence the rescaled symplectic manifold $(M,\omega_s)$ admits a symplectic embedding of $\tfrac{\eps + ks}{1+sc} E(q,p)$.
\end{proof}

\section{$\Q$-Gorenstein smoothings and almost toric fibrations} \label{sec:ATF1}

In this section we collect various facts about (a) $\Q$-Gorenstein smoothings of singular toric algebraic surfaces (\S\ref{subsec:toric_surf}) and (b) symplectic almost toric fibrations (\S\ref{subsec:atfs_and_polys}).
Few if any of the results in this section are original, but our perspective is somewhat novel in that we emphasize the central role played by $T$-polygons (see \S\ref{subsubsec:T-sings}) in both algebraic and symplectic geometry.
Roughly, we associate to a $T$-polygon $Q$ both an algebraic surface $\wt{V}_Q$ (defined as a $\Q$-Gorenstein smoothing) and a symplectic four-manifold $\atf(Q_\nodal)$ (defined as the total space of an almost toric fibration).
Proposition~\ref{prop:QG_def_diff_ATF} gives a direct comparison between these two geometries, which we utilize in \S\ref{sec:singI} and \S\ref{sec:singII} in order to construct algebraic curves via symplectic techniques.

\subsection{Toric surfaces and $T$-singularities} \label{subsec:toric_surf}

In this subsection, we begin by briefly reviewing some toric algebraic geometry and singularity theory and setting up our notation. We then recall the notion of $T$-singularities and their $\Q$-Gorenstein smoothings, and define $T$-polygons. We also discuss (dual) Fano polygons and their mutations, which play an important role in the mirror symmetry approach to Fano surfaces (see e.g. \cite{galkin2010mutations,akhtar2016mirror,kasprzyk2017minimality,coates2012mirror}).

\subsubsection{Cyclic quotient singularities and toric surfaces}

For $\ka \in \Z_{\geq 1}$, let 
\begin{align*}
\roots_\ka = \{e^{2\pi\sqrt{-1}  j/\ka} \;|\; j = 0,\dots,\ka-1\}
\end{align*}
 denote the group of $\ka$th roots of unity.
Given $w_1,\dots,w_n \in \Z_{\geq 0}$, we consider the action of $\mu_\ka$ on $\C^n$ with weights $w_1,\dots,w_n$, i.e. with $\mu \cdot (z_1,\dots,z_n) = (\mu^{w_1}z_1,\dots,\mu^{w_n}z_n)$ for $\mu \in \roots_\ka$.
Note that the weights $w_1,\dots,w_n$ are only relevant modulo $\ka$.
We denote this representation of $\roots_\ka$ by $\roots_\ka^{w_1,\dots,w_n}$.

Cyclic quotient singularities are by definition quotients of the form
\begin{align*}
\tfrac{1}{\ka}(w_1,\dots,w_n) := \C^n / \roots_\ka^{w_1,\dots,w_n}
\end{align*}
for $\ka \in \Z_{\geq 1}$ and $w_1,\dots,w_n \in \Z_{\geq 1}$ coprime to $\ka$.
Here we have $\tfrac{1}{\ka}(w_1,\dots,w_n) = \tfrac{1}{\ka}(\ell w_1,\dots,\ell w_n)$ for any $\ell \in (\Z/\ka)^\times$, so we may assume $w_1 = 1$.
Note that the cyclic quotient surface singularity $\tfrac{1}{\ka}(1,w)$ is the affine toric variety $U_\si := \spec_m(\C[S_\sigma])$ corresponding to the cone $\sigma \subset \R^2$ generated by $(0,1)$ and $(\kappa,-w)$. Here $\sigma^\vee \subset \MM_\R$ is the dual cone to $\si$, $S_\sigma$ is the semigroup of lattice points in $\sigma^\vee$, and $\C[S_\sigma]$ is the associated semigroup algebra (see e.g. \cite{cox2011toric,fulton1993introduction,brasselet2004introduction,da2003symplectic} for more background on toric varieties).
\MS

Let $\NN$ be a lattice of rank $n \in \Z_{\geq 1}$, with dual lattice $\MM = \hom(\NN,\Z)$ (typically we will have $\NN = \Z^n$, but this notation is still helpful in distinguishing the roles of $\NN$ and $\MM$).
We put $\NN_\R := \NN \otimes_\Z \R$ and $\MM_\R := \MM \otimes_\Z \R$.
We will say that a polytope\footnote{By \hl{polytope} $P \subset \NN_\R$ we mean the convex hull of finitely many points in $\NN_\R$. We call this a \hl{polygon} when $\NN$ has rank two.} $P \subset \NN_\R$ is \hl{centered}
if $P$ is $n$-dimensional and contains the origin in its interior.
Given a centered polytope $P \subset \NN_\R$, the \hl{dual polytope} $P^o \subset \MM_\R$ is by definition
\begin{align*}
P^o := \{u \in \MM_\R\;|\; \lan u,v\ran \geq -1 \;\forall\; v \in P \}.
\end{align*}
Note that (unless $P$ is reflexive) $P^o$ is typically not a lattice polytope (i.e. having vertices in $\MM$), even if $P$ is.  
For a polytope $Q \subset \MM_\R$, the dual polytope $Q^o \subset \NN_\R$ is defined similarly.

We associate to $P$ its \hl{face fan} $\fan_{P}$ in $\NN_\R$, which has a cone $\si_\tau$ for each face $\tau$ of $P$, where $\si_\tau$ is generated by the vertices of $\tau$.
Equivalently, this is the \hl{normal fan} $\fan_{P^o}$ of $P^o$, which has a cone $\si_\eta$ for each face $\eta$ of $P^o$, where $\si_\eta$ is generated by the inward normal vectors of those facets of $P^o$ which contain $\eta$.
We denote by $V_\fan$ the (typically singular) toric variety associated to a fan $\fan$.
In the case $\fan = \fan_P = \fan_{P^o}$ we will also denote $V_\fan$ by $V_P$ or $V_{P^o}$ when we wish to emphasize the polytope $P$ or its dual $P^o$.\footnote{It should be clear from the context whether we are taking the face fan or normal fan since $P$ and $P^o$ live in different vector spaces.}

For a general polygon $Q \subset \MM_\R$,
the toric surface $V_{Q}$ has cyclic quotient singularities at its toric fixed points, which correspond to the vertices of $Q$ (or equivalently the maximal cones of the normal fan $\fan_{Q^o}$).
Explicitly, for each vertex $v \in Q$ there is an integral affine transformation\footnote{By \hl{integral transformation} of $\MM_\R$ we mean a map $\MM \otimes_\Z \R \ra \MM \otimes_\Z \R$ which is a group isomorphism $\MM \cong \MM$ on the first factor and the identity on the second factor. By \hl{integral affine transformation} of $\MM_\R$ we mean the composition of an integral transformation with a translation.} of $\MM_\R$ sending $v$ to the origin, so that the edge directions become $(0,1)$ and $(n,-q)$ for some coprime $n,q \in \Z_{\geq 1}$, in which case the singularity has type $\tfrac{1}{n}(1,q)$.
In the case $n=1$, this corresponds to a smooth point of $V_Q$ and we refer to $v$ as a \hl{Delzant vertex} vertex of $Q$.
If $Q$ has only Delzant vertices then it is a \hl{Delzant polygon}.

We end this subsection with a remark about the homology of a smooth toric surface.
Let $Q \subset \MM_\Q$ be a Delzant polygon with edges $e_1,\dots,e_\ell$ and corresponding toric divisors $\Ddiv_{e_1},\dots,\Ddiv_{e_\ell} \subset V_Q$.
Recall that the homology group $H_2(V_Q)$ of the associated nonsingular toric variety $V_Q$ is generated by the toric divisors $\Ddiv_{e_1},\dots,\Ddiv_{e_\ell}$. More precisely, letting $\vecn_1,\dots,\vecn_\ell \in \NN$ be the primitive inward normal vectors to the edges, we have the short exact sequence
\begin{align}\label{eq:hom_of_V_Q}
0 \ra \MM \ra \Z\lan [\Ddiv_{e_1}],\dots,[\Ddiv_{e_\ell}]\ran \ra H_2(V_Q) \ra 0,
\end{align}
where the first nontrivial map sends $u \in \MM$ to $\sum_{i=1}^\ell \langle \vecn_i,u\ran [\Ddiv_{e_i}]$; see \cite[\S5.1]{cox2011toric}.

\subsubsection{Fano and dual Fano polygons}\label{subsubsec:dual_Fano}

A polytope $P \subset \NN_\R$ is said to be \hl{Fano} if it is centered and its vertices are primitive lattice vectors (see e.g. \cite[\S3]{akhtar2012minkowski}).
In this case the corresponding toric variety $V_P$ has anticanonical divisor which is $\Q$-Cartier and ample, i.e. $V_P$ is a (typically singular) toric Fano variety. In particular, if we now restrict to the case $\dim(P) = 2$, $V_P$ is a singular toric del Pezzo surface.
We will say that a centered polygon $Q \subset \MM_\R$ is \hl{dual Fano} if the dual polygon 
$Q^o \subset \NN_\R$ is Fano, and hence in particular a lattice polytope.
Note that the dual Fano condition is equivalent to each edge $e$ of $Q$ having height one, where we define the \hl{height} of an edge to be the number 
$\height(e)$ such that $\lan \vecn,u\ran = -\height(e)$ for all $u \in e$, where $\vecn \in \NN$ is the primitive inward normal vector to $e$.

For points $v,w \in \MM_\R$, recall that the \hl{affine length} $\afflen([v,w])$ of the line segment $[v,w] \subset \MM_\R$ is $|c|$, where we put $v-w = c(v-w)_\prim$ for $c \in \R$ and $(v-w)_\prim \in \MM$ a primitive lattice vector. The following gives another characterization of the dual Fano condition (c.f. Proposition~\ref{prop:dual_Fano_monotone} for a symplectic counterpart).

\begin{lemma}\label{lem:delz_Q_dual_Fano_c1_crit}
If a centered Delzant polygon $Q \subset \MM_\R$ is dual Fano, then we have $c_1([\Ddiv_e]) = \afflen(e)$ for each edge $e$. The converse also holds if $Q$ is a lattice polygon.
\end{lemma}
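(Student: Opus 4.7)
The plan is to translate both conditions into equivalent presentations of the monotonicity $[\omega_Q] = c_1(V_Q)$ of the toric symplectic structure on $V_Q$, and then read off the dual Fano condition from the heights. Write $Q = \{u \in \MM_\R : \lan \vecn_i, u\ran \geq -h_i,\; i = 1,\dots,\ell\}$ with primitive inward normals $\vecn_i$ and heights $h_i > 0$. I would first assemble three standard consequences of the Delzant construction for the smooth toric surface $V_Q$: (a) $[\omega_Q] = \sum_i h_i\, [\Ddiv_{e_i}]$ in $H^2(V_Q;\R)$, and $\int_{\Ddiv_{e_i}}\omega_Q = \afflen(e_i)$; (b) $c_1(V_Q) = \sum_i [\Ddiv_{e_i}]$, since the reduced toric boundary is anticanonical; (c) the vertex of $Q^o$ dual to $e_i$ is $\vecn_i/h_i$, so (as each $\vecn_i$ is primitive) $Q$ is dual Fano precisely when $h_i = 1$ for every $i$.

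The forward direction is then immediate from (a)--(c): if $Q$ is dual Fano, then $[\omega_Q] = \sum_i [\Ddiv_{e_i}] = c_1(V_Q)$, and pairing with $[\Ddiv_{e_i}]$ together with (a) gives $\afflen(e_i) = c_1([\Ddiv_{e_i}])$.

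For the converse, the hypothesis $c_1([\Ddiv_{e_i}]) = \afflen(e_i)$ for every $i$ rewrites as $(c_1(V_Q) - [\omega_Q])\cdot [\Ddiv_{e_i}] = 0$ for all $i$. Because the classes $[\Ddiv_{e_i}]$ generate $H_2(V_Q;\Z)$ via \eqref{eq:hom_of_V_Q} and the intersection form on $V_Q$ is nondegenerate, this forces $c_1(V_Q) = [\omega_Q]$, i.e.\ $\sum_i (h_i-1)[\Ddiv_{e_i}] = 0$. Applying \eqref{eq:hom_of_V_Q} in the reverse direction, there exists $u_0 \in \MM_\R$ with $h_i - 1 = \lan \vecn_i, u_0\ran$ for all $i$; equivalently, the translate $Q - u_0$ has all heights equal to $1$.

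It remains to show $u_0 = 0$, which is where the lattice and centered hypotheses enter. Since each $h_i$ is a positive integer, $\lan \vecn_i, u_0\ran = h_i - 1 \geq 0$ for every $i$; on the other hand, as $Q$ is bounded and contains the origin in its interior, the common cone $\bigcap_i \{v \in \MM_\R : \lan \vecn_i, v\ran \geq 0\}$ must reduce to $\{0\}$, since any nonzero $v$ in this cone would yield an unbounded half-line $\R_{\geq 0}\cdot v \subset Q$. Hence $u_0 = 0$ and $h_i = 1$ for every $i$, so $Q$ is dual Fano. I expect this final rigidification step to be the only subtle point: the algebraic identity alone determines $Q$ only up to a translation, and the integrality and centered-ness conditions are what pin it down.
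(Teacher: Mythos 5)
Your proof is correct, and it takes a genuinely different route from the paper's. The paper works edge by edge: it intersects the relation \eqref{eq:hom_of_V_Q} with $[\Ddiv_{e_1}]$ to compute the self-intersection number $[\Ddiv_{e_1}]\cdot[\Ddiv_{e_1}]$ in terms of heights and affine lengths, then applies adjunction to obtain the explicit formula \eqref{eq:c_1_of_D_e_i} for $c_1([\Ddiv_{e_i}])$, from which both implications are read off (the converse by observing that $(\height(e_i)-1)\afflen(e_i) = 2\height(e_i)-\height(e_{i+1})-\height(e_{i-1})$ sums to zero over $i$ while each term is nonnegative). You instead work globally: you identify $\PD(c_1(V_Q))=\sum_i[\Ddiv_{e_i}]$ and $\PD([\omega_Q])=\sum_i h_i[\Ddiv_{e_i}]$, so that the hypothesis becomes $(c_1-[\omega_Q])\cdot[\Ddiv_{e_i}]=0$ for all $i$, which by nondegeneracy of the intersection form is monotonicity $c_1=[\omega_Q]$; the exact sequence \eqref{eq:hom_of_V_Q} then converts $\sum_i(h_i-1)[\Ddiv_{e_i}]=0$ into the existence of $u_0\in\MM_\R$ with $h_i-1=\lan\vecn_i,u_0\ran$, and integrality of the heights plus compactness of $Q$ (trivial recession cone) forces $u_0=0$. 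Your version buys two things: it makes transparent that the lemma is really a monotonicity statement (anticipating Proposition~\ref{prop:dual_Fano_monotone}), and it isolates exactly why the converse needs the lattice hypothesis — $c_1([\Ddiv_e])$ and $\afflen(e)$ are translation-invariant while "dual Fano" is not, so the identity only determines $Q$ up to translation and the integrality pins it down. The cost is that you invoke the standard toric fact $\PD([\omega_Q])=\sum_i h_i[\Ddiv_{e_i}]$, which the paper does not state (it only uses $\int_{\Ddiv_{e_i}}\omega_Q=\afflen(e_i)$); this is entirely standard but should be cited if written up. The paper's computation, by contrast, yields the per-edge formula \eqref{eq:c_1_of_D_e_i} as a byproduct.
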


\begin{proof} 
We can assume $\MM = \Z^2$ and $\MM_\R = \R^2$ without loss of generality. Let $\vv_1,\dots,\vv_\ell$ be the vertices of $Q$ ordered counterclockwise, and let $e_i$ be the edge joining $\vv_i$ and $\vv_{i+1}$ for $i = 1,\dots,\ell$ (modulo $\ell$). 
Let $\height(e_i)$ denote the height of the edge $e_i$, and let $\vecn_i \in \NN$ denote the primitive inward normal vector to the edge $e_i$ for $i = 1,\dots,\ell$.

After applying an integral transformation of $\R^2$, we can further assume that $\vecn_1 = (0,1)$ and $\vecn_\ell = (1,0)$, and hence $$
\vv_1 = (-\height(e_\ell),-\height(e_1))\;\; \mbox{ and } \;\;  \vv_2 = \vv_1 + (\afflen(e_1),0).
$$
By \eqref{eq:hom_of_V_Q}, we have
$\sum\limits_{i=1}^\ell \lan \vecn_i,\vv_2\ran \left([\Ddiv_{e_i}] \cdot [\Ddiv_{e_1}]\right) = 0$.
Further we have $[\Ddiv_{e_i}] \cdot [\Ddiv_{e_1}] = 0$ for $i \notin \{1,2,\ell\}$,
and  $[\Ddiv_{e_1}] \cdot [\Ddiv_{e_2}] = [\Ddiv_{e_1}] \cdot [\Ddiv_{e_\ell}] = 1$ 
 since $Q$ is Delzant.
Thus 
\begin{align*}
0 &= \lan \vecn_1,\vv_2 \ran \left([\Ddiv_{e_1}] \cdot [\Ddiv_{e_1}]\right) + \lan \vecn_2,\vv_2\ran + \lan \vecn_\ell,\vv_2 \ran 
\\ &= -\height(e_1)\left([\Ddiv_{e_1}] \cdot [\Ddiv_{e_1}]\right) -\height(e_2)  -\height(e_\ell) + \afflen(e_1),
\end{align*}
so that
\begin{align*}
[\Ddiv_{e_1}] \cdot [\Ddiv_{e_1}] = \frac{-\height(e_2) - \height(e_\ell) + \afflen(e_1)}{\height(e_1)}.
\end{align*}
Noting that $\Ddiv_{e_1}$ is an embedded two-sphere, by the adjunction formula and symmetry we have 
\begin{align}\label{eq:c_1_of_D_e_i}
c_1([\Ddiv_{e_i}]) = 2 + \frac{-\height(e_{i+1}) - \height(e_{i-1}) + \afflen(e_i)}{\height(e_i)}
\end{align}
for $i = 1,\dots,\ell$.

If $Q$ is dual Fano, then we have $\height(e_i) = 1$ for $i = 1,\dots,\ell$, so \eqref{eq:c_1_of_D_e_i} becomes
$c_1([\Ddiv_{e_i}]) = \afflen(e_i)$.
Conversely, if $Q$ is a lattice polygon and if $c_1([\Ddiv_{e_i}]) = \afflen(e_i)$ for $i = 1,\dots,\ell$, then \eqref{eq:c_1_of_D_e_i} becomes
$\afflen(e_i) = 2 + \frac{-\height(e_{i+1}) - \height(e_{i-1}) + \afflen(e_i)}{\height(e_i)}$. Since $\height(e_1),\dots,\height(e_\ell) \geq 1$, this is only possible if $\height(e_1) = \cdots = \height(e_\ell) = 1$.
\end{proof}

\subsubsection{$T$-singularities and polygon mutations}\label{subsubsec:T-sings}
A cyclic quotient surface singularity $\tfrac{1}{n}(1,q)$ is a \hl{$T$-singularity} if we have $n = mr^2$ and $q = mra-1$ for some $m,r,a \in \Z_{\geq 1}$ with $\gcd(r,a) = 1$.\footnote
{Notice that the singularity type depends only  on $a$ mod $r$. }
These were shown in \cite{kollar1988threefolds} to be precisely those cyclic quotient surface singularities which admit $\Q$-Gorenstein smoothings. 
Here a \hl{$\Q$-Gorenstein smoothing} of a normal surface $X$ with quotient singularities is a flat family $\calX$ over a smooth curve germ $\calS$ such that the central fiber is $X$, the general fiber is smooth, and the relative canonical divisor $K_{\calX/\calS}$ is $\Q$-Cartier (see e.g. \cite[\S2.1]{hacking2010smoothable} or \cite[\S2]{lee2011construction} and the references therein). Note that this last condition is equivalent to the 
total space being $\Q$-Gorenstein, i.e. having $\Q$-Cartier canonical divisor.

Although the local deformation theory of the cyclic quotient surface singularity $\tfrac{1}{n}(1,q)$ is quite complicated, 
the restriction to $\Q$-Gorenstein deformations is well-understood by \cite{kollar1990flips,kollar1988threefolds} (c.f. \cite[\S1]{akhtar2016mirror}).
Specializing to the case of $T$-singularities, the base of the miniversal family of $\Q$-Gorenstein deformations of the $T$-singularity $\tfrac{1}{mr^2}(1,mra-1)$ 
is isomorphic to $\C^{m-1}$, corresponding to the family of hypersurfaces
\begin{align}\label{eq:T_sing_miniversal}
\{xy = z^{rm} + C_{m-2}z^{r(m-2)} + \cdots + C_{1}z^{r} + C_0\} \subset \C^3/\roots_r^{1,-1,a}
\end{align}
for parameters $C_0,\dots,C_{m-2} \in \C$.
Note here that the central fiber is indeed isomorphic to ${\tfrac{1}{mr^2}(1,mra-1)}$ by the isomorphism
\begin{align}\label{eq:Milnor_quot_iso}
\C^2/\roots_{mr^2}^{1,mra-1} \xrightarrow{\cong} \{xy = z^{rm}\} / \mu_r^{1,-1,a},\;\;\;\;\; (z_1,z_2) \mapsto (z_1^{rm},z_2^{rm},z_1z_2).
\end{align}
The general fiber is smooth and is diffeomorphic to 
\begin{align}\label{eq:B_m_r_a}
B_{m,r,a} := \{xy = (z^r-\zeta_1)\cdots(z^r-\zeta_m)\}/\roots_r^{1,-1,a},
\end{align}
for some real $0 < \zeta_1 < \cdots < \zeta_m$,
i.e. $B_{m,r,a}$ is the quotient of the $A_{rm-1}$ Milnor fiber by $\mu_r$, and we have $H_1(B_{m,r,a};\Q) = 0$ and $\dim H_2(B_{m,r,a};\Q) = m-1$.
In the special case $m=1$, $B_{1,r,a}$ is a rational homology ball and plays an important role in constructing exotic four-manifolds with small homology groups (see e.g. \cite{fintushel1997rational}).

We will refer to a vertex $\vv \in Q$ of a polygon $Q \subset \MM_\R$ as a \hl{$T$-vertex} if the corresponding toric fixed point $\pp_\vv \in V_Q$ is a $T$-singularity, and we will call $Q$ a \hl{$T$-polygon} if all of its vertices are $T$-singularities (note this includes the case of Delzant vertices).\footnote
{
The notions of $T$-polygon and dual Fano polygon are  independent: there are $T$-polygons whose fan is not Fano and there are dual Fano polygons that are not $T$-polygons.}
Given a $T$-vertex $\vv$, there is an isomorphism $\MM_\R \cong \R^2$ of integral affine manifolds which sends $\vv$ to $(0,0)$ with edge vectors $(0,1)$ and $(mr^2,mra-1)$,
and we will refer to the image of the direction $(r,a)$ as the\footnote{One can check that this definition is unambiguous, 
since the integral affine transformation of $\R^2$ which swaps $(0,1)$ and $(mr^2,mra-1)$ fixes $(r,a)$. In particular, 
this singularity is equivalent to its reflection in the $x$-axis with edge vectors
 $,(0,-1), (mr^2,1-mra)$ and eigenray $(r,-a)$.}
\hl{eigenray emanating from $\vv$}
(this corresponds to the eigendirection of a suitable affine monodromy in \S\ref{subsubsec:nodal_int_from_poly}).\footnote{This is also often referred to as a \hl{nodal ray} in the context of almost toric fibrations as in \S\ref{subsubsec:nodal_int_from_poly}.}

Let $Q \subset \MM_\R$ be a $T$-polygon, and as before let $V_Q$ denote the corresponding toric surface with $T$-singularities. By definition $T$-singularities admit local $\Q$-Gorenstein smoothings as in \eqref{eq:T_sing_miniversal}, and according to \cite[Prop. 3.1]{hacking2010smoothable}\footnote{To apply the hypotheses of \cite[Prop. 3.1]{hacking2010smoothable} it suffices to note that the anticanonical divisor of a toric divisor is always big. Indeed, if $\Ddiv_1,\dots,\Ddiv_k$ denote the toric boundary divisors, then we can find an ample divisor of the form $\sum_{i=1}^k a_i \Ddiv_i$ for $a_1,\dots,a_k \in \Z_{\geq 1}$. Then for any positive integer $m \geq a_1,\dots,a_k$ we have that $m$ times the anticanonical divisor is
$m \sum_{i=1}^k \Ddiv_i = \sum_{i=1}^k a_i \Ddiv_i + \sum_{i=1}^k (m - a_i)\Ddiv_i$, which is a sum of an ample divisor and an effective divisor and hence big by \cite[Cor 2.2.7]{lazarsfeld2017positivity}.} 
there are no local-to-global obstructions to deformations, so in particular $V_Q$ admits a $\Q$-Gorenstein smoothing.
Thus we have:

\begin{lemma}\label{lem:QGorsmooth} For any $T$-polygon $Q \subset \MM_\R$, there is a $\Q$-Gorenstein smoothing $\wt{V}_Q$ of $V_Q$. If in addition $Q$ is  dual Fano, then $\wt{V}_Q$ is Fano for any sufficiently small smoothing, and in particular rigid if $\rk\, H^2(\wt{V}_Q, \Q) \le 5$. 
\end{lemma}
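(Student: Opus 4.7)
The plan is to prove the three parts of the lemma in sequence, leveraging the local smoothing theory for $T$-singularities together with standard openness and classification results.

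For the existence of a $\Q$-Gorenstein smoothing, I would invoke the miniversal family \eqref{eq:T_sing_miniversal} which shows that each $T$-singularity of $V_Q$ admits a local $\Q$-Gorenstein smoothing parametrized by $\C^{m-1}$. To globalize these, I would cite \cite[Prop. 3.1]{hacking2010smoothable}, which guarantees the vanishing of local-to-global obstructions for a projective surface with $T$-singularities once the anticanonical class is big. The bigness hypothesis is immediate for a projective toric surface $V_Q$: as noted in the paper's footnote, any toric divisor on a projective toric variety can be written as an ample divisor plus an effective sum, so a sufficiently large multiple of $-K_{V_Q} = \sum_i \Ddiv_i$ decomposes as ample plus effective, and hence is big. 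This produces the desired smoothing $\wt{V}_Q$ over a smooth curve germ $\calS$.

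For the Fano property under the dual Fano hypothesis, I would argue that $V_Q$ is itself a (singular) Fano variety: by Lemma~\ref{lem:delz_Q_dual_Fano_c1_crit} (and its generalization to the non-Delzant case, which amounts to saying each edge has height one), the anticanonical class $-K_{V_Q}$ is the $\Q$-Cartier divisor $\sum_i \Ddiv_{e_i}$ and is ample. Since $K_{\calX/\calS}$ is $\Q$-Cartier by definition of a $\Q$-Gorenstein smoothing, so is its restriction to the nearby fibers. Ampleness is an open condition in flat families of $\Q$-Cartier divisors (for instance by the relative Nakai--Moishezon criterion, or by semicontinuity of fiber dimensions of sections of high multiples), so for $\calS$ sufficiently shrunk around the central point, each nearby fiber is Fano.

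For rigidity, I would simply invoke the classification of smooth Fano surfaces (del Pezzo surfaces): up to diffeomorphism these are $\CP^2$, $\CP^1 \times \CP^1$, and $\bl^k \CP^2$ for $k = 1, \dots, 8$, with second Betti numbers $1, 2, 2, 3, 4, 5, 6, 7, 8, 9$ respectively. A smooth del Pezzo surface $\wt{V}_Q$ with $\rk H^2(\wt{V}_Q;\Q) \leq 5$ therefore has smooth type among $\CP^2,\CP^1 \times \CP^1, \bl^k\CP^2$ for $k \leq 4$, which is precisely the list of rigid del Pezzo surfaces recalled in Remark~\ref{rmk:rigid_dP}.

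The main obstacle I anticipate is the verification that the local-to-global obstruction vanishes, but this reduces cleanly to the cited result once the bigness check is in place; the openness of ampleness and the classification step are standard. Overall this is essentially a bookkeeping argument, since the heavy lifting has been done by Koll\'ar--Shepherd-Barron and Hacking--Prokhorov.
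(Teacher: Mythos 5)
Your proposal is correct and follows essentially the same route as the paper: the paper derives the existence of the smoothing from the local miniversal families \eqref{eq:T_sing_miniversal} for $T$-singularities together with \cite[Prop. 3.1]{hacking2010smoothable}, checking bigness of the anticanonical divisor exactly as you do, and treats the Fano and rigidity assertions as consequences of openness of ampleness and the classification of del Pezzo surfaces. Your write-up merely makes explicit the standard steps (ampleness of $-K_{V_Q}$ when $Q$ is dual Fano, openness in the flat family, and the Betti-number count) that the paper leaves implicit.
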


Following e.g. \cite{galkin2010mutations,akhtar2012minkowski}, there is a notion of mutation which inputs a dual Fano polygon\footnote{One can also describe the mutation in terms of the Fano polygon $Q^o \subset \NN_\R$, although for our purposes the formula using dual Fano polygons is more succinct and more directly related to mutations of almost toric fibrations. An extension of mutations to higher dimensional Fano polytopes is also defined in  \cite{akhtar2012minkowski}.} $Q \subset \MM_\R$ and a choice of vertex $\vv \in Q$ and produces a new dual Fano polygon $\mut_\vv(Q) \subset \MM_\R$. 
Namely, let $f \in \NN$ be a primitive lattice vector 
such that $\lan f,\vv\ran = 0$,
let $\vv_\prim = t\vv \in \MM$ be primitive for some $t \in \R_{>0}$,
and consider the piecewise-linear map 
\begin{align}\label{eq:psi_mut}
\psi: \MM_\R \ra \MM_\R,\;\;\;\;\; u \mapsto u - \min(\lan f,u\ran,0)\,\vv_\prim. 
\end{align}
We put $\mut_\vv(Q) := \psi(Q) \subset \MM_\R$.
In the case $\MM_\R = \R^2$, this is equivalent to
\begin{align}\label{eq:mutation}
\psi(u) = 
\begin{cases}
\shear_{\vv_\prim}(u) & \text{if}\;\; \lan f,u\ran \leq 0\\
  u & \text{if}\;\; \lan f,u\ran > 0,
\end{cases}
\end{align}
where $\shear_{\vv_\prim}(u): \R^2 \ra \R^2$ is the \hl{primitive shear} along $\vv_\prim$, defined by 
\begin{align}\label{eq:prim_shear}
\shear_{\vv_\prim}(u) = u + \det(\vv_\prim,u) \cdot \vv_{\prim},
\end{align}
with $\det(\vv_\prim,u)$ the determinant of the $2 \times 2$ matrix with columns $\vv_\prim$ and $u$.
We will say that $\mut_\vv(Q)$ is the \hl{(primitive) mutation} of the polygon $Q$ at the vertex $\vv$.

\begin{rmk}
 Strictly speaking there are two choices for $f$ in the above (i.e. in the case $\vv = (x,y) \in \R^2$ we have $f = \pm(y,-x)$), although the choice becomes unique if we choose an orientation on $\MM_\R$ and ask for $f,\vv_\prim$ to be an oriented basis. At any rate, it is easy to show using \eqref{eq:mutation} that the two choices for $f$ give mutations which are related by an integral affine transformation of $\MM_\R$.
\end{rmk}

We also define $\mut_\vv(Q)$ in the case that $Q \subset \MM_\R$ is any polygon (not necessarily dual Fano) and $\vv \in Q$ is a $T$-vertex.
Note that in this case the origin in $\MM_\R$ is not necessarily in distinguished position relative to $Q$ (it may not even be contained in $Q$).
We put
\begin{align}\label{eq:mutation_non_Fano}
\mut_\vv(Q) := \tau^{-1}(\mut_{\tau(\vv)}(\tau(Q))),
\end{align}
where $\tau: \MM_\R \ra \MM_\R$ is any translation sending a point on the eigenray emanating from $\vv$ to the origin (c.f. \S\ref{subsubsec:T-sings}), and 
$\mut_{\tau(\vv)}(\tau(Q))$ is defined as above. One can check that this coincides with the previous definition when $Q$ is a dual Fano $T$-polygon (in that case the eigenray emanating from $\vv$ passes through the origin).

If $\vv\in Q$ is a $T$-vertex of type $\tfrac{1}{mr^2}(1,mra-1)$ with $m=1$, 
then it is straightforward to check that the mutated polytope $\mut_\vv(Q)$ no longer has a vertex at $\vv$, but there is 
a (possibly new) vertex at the point where the eigenray emanating from $\vv_\prim$ meets a side of $Q$. On the other hand, if 
$m>1$ then $\vv$ is still a vertex of $\mut_\vv(Q)$, but now with parameters $(m-1,r,a)$.
Thus for all $1\le k \le m$  
the \hl{$k$-fold mutation} $\mut^k_\vv(Q) := \psi^k(Q)$ of $Q$
at $\vv$ is well-defined, and we define the \hl{full mutation} of $Q$ at $\vv$ to be $\mut^\full_\vv(Q) := \mut^m_\vv(Q)$. We have:

\begin{lemma}\label{lem:full_mut_pres_sides}
If $\vv$ is a $T$-vertex of a polygon $Q \subset \MM_\R$ with $k$ sides, then $\mut^\full_\vv(Q)$ is a polygon with either $k$ or $k-1$ sides. In particular, if $Q$ is a triangle then so is $\mut^\full_\vv(Q)$.
\end{lemma}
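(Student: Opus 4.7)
My plan is to reduce the lemma to a single explicit computation in local coordinates near $\vv$, and then combine this with a straightforward global analysis of $\psi^m$ on the rest of the boundary.

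First I would set up normalized coordinates using \eqref{eq:mutation_non_Fano}: translate $Q$ so that the primitive lattice point on the eigenray from $\vv$ sits at the origin, and apply an integral transformation so that the two edges at $\vv$ are in directions $(0,1)$ (the \emph{fold-side} edge) and $(mr^2, mra-1)$ (the \emph{other-side} edge). After these normalizations $\vv = -(r,a)$ lies on the fold line $\{\langle f,u\rangle = 0\}$ with $f = (a,-r)$, and $\vv_\prim = (-r,-a)$. The eigenray from $\vv$ is the half of this fold line pointing into the interior of $Q$.

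Next I would analyze iterates of $\psi$ near $\vv$. Since $\vv$ lies on the fold line, each iterate fixes $\vv$; the other-side edge lies in $\{\langle f,u\rangle \geq 0\}$ where $\psi$ is the identity, so it is unchanged. On the fold-side $\psi$ acts by $u \mapsto u - \langle f,u\rangle \vv_\prim$, a shear parallel to $(r,a)$ that preserves $\langle f,\cdot\rangle$. Parametrize the fold-side edge as $u(t) = \vv + t(0,1)$ for $t \geq 0$; then $\langle f, u(t)\rangle = -rt$ is constant under iteration, and a short induction gives $\psi^k(u(t)) = \vv + t(-kr^2, 1 - kra)$. Setting $k = m$ yields $(-mr^2, 1-mra) = -(mr^2, mra-1)$, which is exactly antiparallel to the other-side edge direction. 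Hence after the full mutation the two edges at $\vv$ become collinear with $\vv$ in their interior, merge into a single straight edge, and $\vv$ is no longer a vertex of $\mut^\full_\vv(Q)$.

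For the global step, the fold line exits $Q$ at a unique boundary point $p \neq \vv$; since $\psi$ fixes the fold line pointwise, so does every iterate. Every other vertex $w$ of $Q$ lies strictly on one side of the fold line: on the other-side it is untouched, while on the fold-side $\psi^m$ acts on a neighborhood of $w$ as an integral affine shear, so $w$ persists with the same local edge combinatorics. The only candidate new vertex is $p$: if $p$ lies in the interior of some edge of $Q$, then $\psi$ introduces a kink there on the first iteration (which is preserved by subsequent iterations), creating one new vertex; if $p$ already coincides with a vertex of $Q$, no new vertex is created. Combining this with the loss of $\vv$, $\mut^\full_\vv(Q)$ has $k$ sides in the first case and $k-1$ in the second.

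Finally, for the triangle statement, the fold line from $\vv$ must exit through the unique non-adjacent side $[v_1, v_2]$, and its exit point $p$ cannot coincide with $v_1$ or $v_2$: these lie on rays from $\vv$ in directions $(0,1)$ and $(mr^2, mra-1)$, but the $2 \times 2$ determinants $\det((r,a),(0,1)) = r$ and $\det((r,a),(mr^2,mra-1)) = -r$ are both nonzero (using $r \geq 1$), so $(r,a)$ is proportional to neither. Thus $p$ is interior to $[v_1,v_2]$ and the mutated polygon is again a triangle. The main technical step is the local collapse identity $(-mr^2, 1-mra) = -(mr^2, mra-1)$: this numerical coincidence is exactly what makes $m$ iterations the right number to annihilate $\vv$, and once it is in hand everything else is bookkeeping.
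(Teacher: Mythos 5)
Your proof is correct and follows the same line of reasoning the paper sketches just before the lemma (a single primitive mutation decrements $m$ at $\vv$, so after $m$ iterations the two edges at $\vv$ become collinear and $\vv$ disappears, while a new vertex may appear only where the eigenray exits $\partial Q$); the paper itself states the lemma without a written proof, and your explicit computation $\psi^k(\vv+t(0,1)) = \vv + t(-kr^2,\,1-kra)$ together with the determinant check in the triangle case correctly supplies the missing details.
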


It is shown in \cite[Lem. 7]{akhtar2016mirror} (building on \cite{ilten2012mutations}) that if two dual Fano polygons $Q,Q' \subset \MM_\R$ are mutation equivalent (i.e. $Q'$ can be obtained from $Q$ by a sequence of mutations), then the corresponding singular del Pezzo surfaces $V_Q$ and $V_{Q'}$ are $\Q$-Gorenstein deformation equivalent (in the sense of \cite[Def. 2]{akhtar2016mirror}). We discuss an approach to this in \S\ref{subsec:QG_pencils}. In particular, if $Q$ (and hence $Q'$) is also a $T$-polygon,  the smoothings $\wt{V}_Q$ and $\wt{V}_{Q'}$ as in Lemma~\ref{lem:QGorsmooth} are also $\Q$-Gorenstein deformation equivalent.
Conversely, \cite[Conj. A]{akhtar2016mirror} conjectures that two dual Fano polygons $Q,Q' \subset \MM_\R$ are mutation equivalent if the corresponding singular toric del Pezzos $V_Q,V_{Q'}$ are $\Q$-Gorenstein deformation equivalent.
The specialization of this conjecture to dual Fano $T$-polygons is proved in \cite[Thm. 1.2]{kasprzyk2017minimality}.
By the classification of smooth del Pezzo surfaces, it follows that there are precisely $10$ mutation equivalence classes of dual Fano $T$-polygons.
Representatives of these equivalence classes (or rather their duals) are shown in \cite[Fig. 1]{akhtar2016mirror}.
For the $6$ equivalence classes corresponding to rigid smooth del Pezzo surfaces, representatives which are particularly useful for constructing ellipsoid embeddings are shown in Figure~\ref{fig:smooth_Fano_polygons} (which is a reproduction of \cite[Fig. 5.12]{cristofaro2020infinite}). 
The construction of these representatives is based on almost toric techniques as in \cite{vianna2017infinitely}.

\begin{figure}
\centering
\includegraphics[scale=.85]{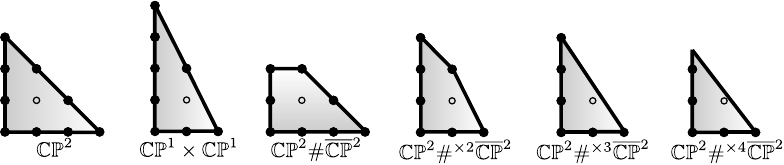}
\caption{
Dual Fano $T$-polygons $Q$ such that $\wt{V}_Q$ is a smooth rigid del Pezzo surface. Although these are not unique (due to the possibility of mutation), each representative has a Delzant vertex and the minimal possible number of sides.
Note that the polygon for $\CP^2 \#^{\times 4}\ovl{\CP}^2$ is not a lattice polygon (the top left vertex is $(-1,3/2)$.)}
\label{fig:smooth_Fano_polygons} 
\end{figure}

\sss

If $Q$ is a $T$-triangle, then by Lemma~\ref{lem:full_mut_pres_sides} it remains so under successive full mutations. 
As explained in \cite[\S H.2]{evans2023lectures}, if the vertices have types $\tfrac{1}{m_ir_i^2}(1,m_ir_ia_i-1)$ for $i=1,2,3$, then this data satisfies a generalized Markov equation
\begin{align}\label{eq:gen_Mark}
m_1r_1^2 + m_2r_2^2 + m_3r_3^2 = C\sqrt{m_1m_2m_3}\,r_1r_2r_3,
\end{align}
where $C \in \R_{>0}$ is invariant under full mutations (one can check that the set $\{m_1,m_2,m_3\}$ is also preserved under full mutations).
We thus have a bijection between
\begin{enumerate}[label=(\alph*)]
  \item the set of $T$-triangles which are full mutation equivalent to $Q$, and
  \item the set of triples $(r_1,r_2,r_3) \in \Z_{\geq 1}^3$ satisfying \eqref{eq:gen_Mark}.
\end{enumerate}
The specialization $r_1 = 1$ corresponds to triangles with a Delzant vertex (c.f. Figure~\ref{fig:smooth_Fano_polygons}).
We will see later that these taken together encode all of the relevant unicuspidal curves for the two-stranded rigid del Pezzo infinite staircases (c.f. Proposition~\ref{prop:inner_corners_vis_ell} and Proposition~\ref{prop:out_cor_vis}).

\begin{example}
When $Q$ is the first triangle in Figure~\ref{fig:smooth_Fano_polygons}, \eqref{eq:gen_Mark} becomes the classical Markov equation $r_1^2 + r_2^2 + r_3^2 = 3r_1r_2r_3$. The solutions are well-known to form an infinite trivalent tree, with edges corresponding to Markov mutations $(r_1,r_2,r_3) \mapsto (r_1,r_2,3r_1r_2-r_3)$ and their permutations (see e.g. \cite{aigner2015markov}).  
The solutions with $r_1 = 1$ correspond to pairs of consecutive odd index Fibonacci numbers. 
\end{example}

\subsection{Almost toric fibrations and polygons}\label{subsec:atfs_and_polys}
In this section we discuss symplectic almost toric fibrations from the point of view of $T$-polygons.
After some preliminaries on Lagrangian torus fibrations, we define almost toric fibrations, discuss their construction from $T$-polygons, and compare these with the $\Q$-Gorenstein smoothings from the previous subsection.

\subsubsection{Abstract almost toric fibrations and almost toric bases}\label{sec:alm_tor_bases}

By definition a closed symplectic manifold $M^{2n}$ is toric if it carries an effective Hamiltonian $\TT^n$-action.
In this case the image of the corresponding moment map $\pi: M \ra \R^n$ is a convex polytope $\pi(M)$ (see \cite{atiyah1982convexity,guillemin1982convexity}), such that $\pi$ is a regular Lagrangian torus fibration over the interior of $\pi(M)$.
Meanwhile, $\pi$ has toric singularities over the boundary of $\pi(M)$, with the fiber over a point in the interior of a $k$-dimensional face of $\pi(M)$ being an isotropic torus $\TT^{2(n-k)}$ of dimension $2(n-k)$.
Almost toric fibrations extend this picture (in dimension four) by allowing $\pi$ to have additional focus-focus singularities.

An \hl{almost toric fibration} (see e.g. \cite{symington71four,Leung-Symington,evans2023lectures}) is a smooth proper surjective map $\pi: M^4 \ra B^2$ with connected fibers, where $M^4$ is a symplectic
 four-manifold and $B^2$ is a smooth compact two-manifold with corners,
  such that
\begin{itemize}
  \item at each regular point $\pp \in M^4$ the kernel $\ker d_\pp \pi \subset T_\pp M$ is a Lagrangian subspace 
  \item for each critical point $\pp \in M$ of $\pi$ there are Darboux coordinates $x_1,y_1,x_2,y_2$ for $M$ near $\pp$ and smooth coordinates $b_1,b_2$ for $B$ near $\pi(\pp)$ such that $\pi$ has one of the following local normal forms:
  \begin{enumerate}
     \item $\pi(x_1,y_1,x_2,y_2) = (x_1,x_2^2+y_2^2)$ \;(corank one elliptic)
     \item $\pi(x_1,y_1,x_2,y_2) = (x_1^2+y_1^2,x_2^2+y_2^2)$ \;(corank two elliptic)
     \item $\pi(x_1,y_1,x_2,y_2) = (x_1y_1+x_2y_2,x_1y_2 - x_2y_1)$ \;(focus-focus).
   \end{enumerate}
\end{itemize}

Note that the regular fibers are necessarily two-dimensional tori by the Arnold--Liouville theorem.
The first two types of singularities are modeled on the singularities of a moment map of a toric symplectic manifold over an edge or vertex respectively of the moment polytope. The third type of singularity is topologically equivalent to a critical point of a Lefschetz fibration, but here the fibers are Lagrangian rather than symplectic. 
The images of focus-focus singularities are called \hl{base-nodes}.

Given an almost toric fibration $\pi: M^4 \ra B^2$, observe that $\pi$ restricts to a regular Lagrangian torus fibration over the regular values $B^\reg$ of $\pi$. 
Thus  $B^\reg$ inherits an integral affine structure as follows.
Given $\pp \in M$ and $\bb = \pi(\pp) \in B^\reg$, the symplectic form on $M$ induces a nondegenerate pairing
\begin{align}\label{eq:LTF_pairing}
\langle -,-\rangle: T_\bb B \times T^\vert_\pp M \ra \R, \;\;\;\;\; \langle u,v\rangle = \om(\wt{u},v),
 \end{align}
where $\wt{u} \in T_\pp M$ is any lift of $u \in T_\bb B$ (i.e. $\pi_*\wt{u} = u)$ and $T^\vert_\pp M = T_\pp \pi^{-1}(\bb)$ is the vertical tangent space at $\pp$.
In particular, for each covector in $T_\bb^*B$ there is a corresponding vector field along $\pi^{-1}(\bb)$, and taking its time-$1$ flow gives an action of $T_\bb^*B$ on the fiber $\pi^{-1}(\bb)$.
The set of covectors in $T^*_\bb B$ which act trivially on $\pi^{-1}(\bb)$ defines a lattice $\La_\bb^* \subset T_\bb^* B$, with dual lattice $\La_\bb \subset T_\bb B$.
The corresponding lattice bundle $\La^* = \bigcup\limits_{\bb \in B^\reg} \La^*_\bb \subset T^*B^{reg}$ gives a natural symplectomorphism 
\begin{align}\label{eq:symp_pres_of_reg_LTF}
T^*B^\reg / \La^* \cong M^\reg,
\end{align}
while the dual lattice bundle $\La = \bigcup\limits_{\bb \in B} \La_\bb \subset TB^{reg}$ defines the integral affine structure on $B^\reg$.

This integral affine structure on $B^\reg$ gives rise to an \hl{affine monodromy} map $\mon: \pi_1(B^\reg,*) \ra \aut(H_1(\pi^{-1}(*);\Z))$ which measures how the integral affine structure twists as we go around loops in $B^\reg$ (here $* \in B^\reg$ is a basepoint).
By picking a basis we can also view this as a map $\pi_1(B^\reg) \ra \gl_2(\Z)$ which is defined up to global conjugation by an element of $\gl_2(\Z)$.

For a small loop $\ga$ in $B^\reg$ surrounding a base-node $\bb_0$, the corresponding affine monodromy $\mon(\ga) \in \gl_2(\Z)$ is conjugate to the matrix
$\bigl(\begin{smallmatrix}
  1 & k \\ 0 & 1
\end{smallmatrix}\bigr)$
with eigenvalue $1$,
where $k$ is the number of focus-focus critical points in the fiber $\pi^{-1}(\bb_0)$.
Thus for any $\bb' \neq \bb_0$ in a small neighborhood of $\bb_0$ in $B$ there is a well-defined eigenline in $T_{\bb'}B$ for the affine monodromy around $\bb_0$,
and these limit to a line $\eig_{\bb_0} \subset T_{\bb_0}B$, which we call the \hl{eigenline at $\bb_0$}.

Recall that there exists a closed smooth toric symplectic manifold with moment polygon $Q \subset \R^n$ if and only if $Q$ is a Delzant polytope (\cite{delzant1988hamiltoniens}).
In dimension four we expect to associate almost toric fibrations $\pi: M^4 \ra B^2$ to more general polygons with non-Delzant vertices (even though $M$ is assumed to be smooth).
However, some care is needed in formulating the almost toric analogue of the moment polygon, since there is no global torus action and thus no global moment map.

Given an almost toric fibration $\pi: M^4 \ra B^2$, the set of regular values $B^\reg \subset B$ naturally inherits an integral affine structure, and this extends over the toric critical values to $B \setminus \{\bb_1,\dots,\bb_\ell\}$, where $\bb_1,\dots,\bb_\ell \in B$ are the base-nodes.
Thus we have a \hl{nodal integral affine surface}, i.e. a triple $(B,\{\bb_i\},\calA)$, where $B$ is a smooth two-dimensional manifold with corners equipped with a subset $\{\bb_1,\dots,\bb_\ell\} \subset B$ (the base-nodes) and an integral affine structure $\calA$ on $B \setminus \{\bb_1,\dots,\bb_\ell\}$, such that for each $i = 1,\dots,\ell$, a punctured neighborhood of $\bb_i$ is integral affine isomorphic to a punctured neighborhood of the origin in the model nodal integral structure $(V_{\op{mod}}^{k_i},\calA_{\op{mod}}^{k_i})$ from \cite[\S4.4]{symington71four}, for some nonzero $k_i \in \Z$.
In particular, this means that the affine monodromy around a small loop $\ga_i$ surrounding $\bb_i$ is conjugate to 
$\bigl(\begin{smallmatrix}
  1 & k_i \\ 0 & 1
\end{smallmatrix}\bigr)$.
Given an almost toric fibration $\pi: M^4 \ra B^2$, we will refer to the associated nodal integral affine surface $(B,\{\bb_i\},\calA)$ as its \hl{almost toric base}.

\begin{rmk}
 In slightly more detail, $(V_{\op{mod}}^k,\calA_{\op{mod}}^k)$ is defined as follows (see \cite[\S4.4 and \S5.1]{symington71four} for a fuller discussion).
 Pick a nonzero vector $u \in \R^2$, put $A := \bigl(\begin{smallmatrix}
  1 & 1 \\ 0 & 1
\end{smallmatrix}\bigr)$, and consider the sector in $\R^2$ bounded by the rays $\R_{\geq 0} \cdot u$ and $\R_{\geq 0} \cdot A^k u$ whose (interior) angle lies in $(\pi,2\pi]$.
Then $(V_{\op{mod}}^k,\calA_{\op{mod}}^k)$ is the integral affine manifold given by puncturing this sector at the origin and gluing together its two bounding rays via $A^k$. One can check that this is independent of $u$ up to integral affine isomorphism, and, according to \cite[Prop. 4.14]{symington71four}, any base-node of an almost toric fibration is locally of this form.
\end{rmk}

We will say that two nodal integral affine surfaces $(B,\{\bb_i\},\calA)$ and $(B',\{\bb_i'\},\calA')$ are isomorphic if there is a diffeomorphism $B \xrightarrow{\cong} B'$
which restricts to a bijection $\{\bb_i\} \xrightarrow{\cong} \{\bb_i'\}$ and preserves the integral affine structures on the complements of the base-nodes.
It is shown in \cite[Cor. 5.4]{symington71four} that if two almost toric fibrations $\pi_1: M_1 \ra B_1$ and $\pi_2: M_2 \ra B_2$ have isomorphic almost toric bases with nonempty boundaries then their total spaces are symplectomorphic.

Moreover, by \cite[Thm. 5.2]{symington71four}, a nodal integral affine surface $(B,\{\bb_i\},\calA)$ is the base of an almost toric fibration if and only if each point $\bb \in B \setminus \{\bb_i\}$ has a neighborhood which is integral affine isomorphic to a neighborhood of a point $\R_{\geq 0}^2$ (with its standard integral affine structure). 
In particular, given such a nodal integral affine surface $(B,\{\bb_i\},\calA)$ with $B$ compact, there is an associated closed symplectic four-manifold which we denote by $\atf(B,\{\bb_i\},\calA)$ and which is well-defined up to symplectomorphism.

\subsubsection{Nodal integral affine surfaces from $T$-polygons}\label{subsubsec:nodal_int_from_poly}

For the convenience of the reader, the beginning of this section (until Remark~\ref{rmk:nodal_trade}) reviews
the construction of
a nodal integral affine surface $Q_\nodal = (B,\{\bb_i\},\calA)$ from a $T$-polygon $Q \subset \MM_\R$; for more details see for example \cite[\S7.3-4,Rmk 8.8]{evans2023lectures}.
Here $B$ is the smooth surface with boundary given by smoothing the corners of the polygon $Q$, while $\calA$ is given roughly by implanting various nodes into the integral affine structure on $Q$ (i.e. the one induced from $\MM_\R$).
Together with the discussion in \S\ref{sec:alm_tor_bases}, this construction associates to any $T$-polygon $Q$ a closed symplectic four-manifold $\atf(Q_\nodal)$ which carries an almost toric fibration
\begin{align*}
\pi: \atf(Q_\nodal) \ra Q_\nodal.
\end{align*}
Strictly speaking $Q_\nodal$ depends on some auxiliary parameters giving the locations of the base-nodes (varying these is called a \hl{nodal slide}), but using Moser's stability theorem as in \cite[Thm.8.10]{evans2023lectures} one can show that $\atf(Q_\nodal)$ is independent of these choices up to symplectomorphism.

In more detail, we assume $\MM = \Z^2$ for simplicity, and let $Q \subset \R^2$ be a $T$-polygon with vertices $\vv_1,\dots,\vv_\ell$ ordered counterclockwise.
Since each vertex $\vv_i$ is a $T$-vertex, the corresponding toric fixed point has type $\tfrac{1}{m_ir_i^2}(1,m_ir_ia_i-1)$ for some $m_i \in \Z_{\geq 1}$ and coprime $r_i,a_i \in \Z_{\geq 1}$.
For $i =1,\dots,\ell$, let $-h_i \in \MM$ denote the primitive integral vector pointing in the direction of the eigenray emanating from $\vv_i$ (as in \S\ref{subsubsec:T-sings}).
Note that by our conventions $-h_i$ points inward towards $Q$.

For $i = 1,\dots,\ell$, pick $\eps_i > 0$ sufficiently small, and let $\ga_i \subset Q$ be the line segment ${\{\vv_i - th_i\;|\; t \in [0,\eps_i]\}}$.
Pick $\eps = t_i^1 > \cdots > t_i^{m_i} > 0$, and let $\bb_i^1,\dots,\bb_i^{m_i}$ be the corresponding points along $\ga_i$ (ordered towards $\vv_i$) given by $\bb_i^j := \vv_i - t_i^jh_i$.
Let $\shear_i \in \gl_2(\Z)$ be the primitive shear along $-h_i$ as in \S\ref{subsubsec:T-sings}, i.e. $\shear_i(u) = u + \det(-h_i,u) \cdot (-h_i)$, and let $\tau_i: \R^2 \ra \R^2$ be any translation sending a point on the eigenray emanating from $\vv_i$ to the origin.  
 Let $s_i^1,\dots,s_i^{m_i}$ be the components of $\ga_i \setminus \{\bb_i^1,\dots,\bb_i^{m_i}\}$ ordered towards $\vv_i$.

We now modify the integral affine surface $Q \setminus \{\bb_i^j\}$ to obtain a new one by cutting $Q$ along $s_i^j$ and regluing via the transformation $\tau_i \circ \shear_i^j \circ \tau_i^{-1} \in \gl_2(\Z)$ for $i = 1,\dots,\ell$ and $j =1,\dots,m_i$ (here $\shear_i^j$ denotes the $j$th power of $\shear_i$).
More precisely, if $(s_i^j)^-,(s_i^j)^+$ are the boundary segments of $Q \setminus \bigcup\limits_{i=1}^\ell \ga_i$ (in counterclockwise order) arising from cutting along $s_i^j$, then the gluing identifies $u \in (s_i^j)^+$ with $(\tau_i \circ \shear_i^j\circ \tau_i^{-1})(u) \in (s_i^j)^-$.
Since $\shear_i^j$ fixes $\ga_i$ pointwise, the resulting topological space $B_\nodal$ is naturally identified with $Q \setminus \{\bb_i^j\}$, but by construction it carries an integral affine structure $\calA$ which has affine monodromy around each $\bb_i^j$ conjugate to 
$\bigl(\begin{smallmatrix}
  1 & 1 \\ 0 & 1
\end{smallmatrix}\bigr)$.

\begin{lemma}\label{lem:Q_nodal_loc_tor}
The glued integral affine structure $\calA$ on $Q \setminus \{\bb_i^j\}$ 
is locally isomorphic to $\R^2$ near any interior point $\bb \in \Int Q \setminus \{\bb_i^j\}$, and it is locally isomorphic to a boundary point of $\R \times \R_{\geq 0}$ near any point in $\bdy Q$. In particular, the smooth structure on $B_\nodal$ is such that there are no corner points.
\end{lemma}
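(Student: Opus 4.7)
The plan is to exhibit an explicit local integral-affine chart at each type of point, treating four cases in turn. First, at interior points of $Q$ not on any cut $\ga_i$ and at boundary points in the relative interior of an edge of $Q$, the gluing construction leaves the ambient affine structure inherited from $\MM_\R$ unchanged, so the assertion is immediate: locally one sees $\R^2$ or a boundary chart of $\R\times\R_{\geq 0}$ respectively.

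Next I would handle a point $\bb$ in the relative interior of a cut segment $s_i^j$. The key observation is that $\ga_i$ lies along the eigenray of $\shear_i$ --- by definition $\shear_i$ is the primitive shear along $-h_i$, and $\ga_i$ emanates from $\vv_i$ in the direction $-h_i$ --- so $\shear_i^j$ fixes $\ga_i$ pointwise, and the same holds for the conjugate $\tau_i\circ\shear_i^j\circ\tau_i^{-1}$ since $\tau_i$ is a translation taking a point of the eigenray to the origin. Consequently the gluing across $s_i^j$ restricts to the identity on $s_i^j$, and the two half-disks on either side of the cut patch together into an integral-affine disk around $\bb$ modeled on an open subset of $\R^2$.

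The substantive step, which I expect to be the main obstacle, is the vertex case. I would work in a chart placing $\vv_i$ at the origin with the two edges of $Q$ along the directions $(0,1)$ and $(m_ir_i^2, m_ir_ia_i-1)$ and the eigenray along $(r_i,a_i)$. For $\eps_i$ small only the segment $s_i^{m_i}$ adjacent to $\vv_i$ enters a neighborhood of the origin, and the gluing across it is by $\shear_i^{m_i}$. The crucial identity is that $\shear_i$ decrements the $m$-parameter of the non-vertical edge: since $\det\bigl((r_i,a_i),(mr_i^2, mr_ia_i-1)\bigr) = -r_i$, a direct computation gives
\[
\shear_i\bigl((mr_i^2,\, mr_ia_i-1)\bigr) \;=\; \bigl((m-1)r_i^2,\;(m-1)r_ia_i-1\bigr),
\]
and iterating $m_i$ times yields $\shear_i^{m_i}\bigl((m_ir_i^2, m_ir_ia_i-1)\bigr)=(0,-1)$. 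Hence in the chart that uses the identity on one side of $s_i^{m_i}$ and $\shear_i^{m_i}$ on the other, the two boundary edges at $\vv_i$ become the collinear rays in the directions $(0,1)$ and $(0,-1)$. A neighborhood of $\vv_i$ in $B_\nodal$ is therefore integral-affine isomorphic to a neighborhood of a boundary point of $\R\times\R_{\geq 0}$, and in particular $\vv_i$ is a smooth boundary point rather than a corner.
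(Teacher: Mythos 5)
Your proposal is correct and follows essentially the same route as the paper: reduce to the vertex case, place $\vv_i$ at the origin with the standard edge and eigenray directions, and use the chart that is the identity on one subcone and $\shear_i^{m_i}$ on the other, the whole argument resting on the identity $\shear_i^{m_i}\bigl((m_ir_i^2,\,m_ir_ia_i-1)\bigr)=(0,-1)$, which you verify correctly (the paper proves it in one step from $\shear^{m}(x,y)=(x,y)+m\det((r,a),(x,y))\,(r,a)$, while you iterate the single-shear computation). The only difference is cosmetic: you spell out the easy interior and mid-cut cases that the paper dismisses with ``it suffices to analyze the structure near a vertex.''
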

\NI To complete the construction of $Q_\nodal$, we let $B$ be the smooth surface with boundary given by filling in the punctures of $B_\nodal$. Note that $B$ is diffeomorphic to a two-dimensional closed disk.

\begin{proof}[Proof of Lemma~\ref{lem:Q_nodal_loc_tor}]
  
It suffices to analyze the integral affine structure near a vertex $\vv_i$. After an integral affine transformation we can assume that $\vv_i = (0,0)$ with incoming edge vector $(0,-1)$ and outgoing edge vector $(m_ir_i^2,m_ir_ia_i-1)$, and $-h_i = (r_i,a_i)$.
Let $C \subset \R_{\geq 0}^2$ be the cone spanned by $(0,1),(m_ir_i^2,m_ir_ia_i-1)$, and let $C^-,C^+ \subset C$ be the subcones spanned by $(0,1),(r_i,a_i)$ and $(r_i,a_i),(m_ir_i^2,m_ir_ia_i-1)$ respectively.
Then we have an integral affine isomorphism from a neighborhood of $\vv_i$ in $B_\nodal$ to a neighborhood of $(0,0)$ in $\R_{\geq 0} \times \R$, given by
\begin{align*}
u \mapsto 
\begin{cases}
  u & u \in C^- \\
  \shear_i^{m_i}(u) & u \in C^+.
\end{cases}
\end{align*}
Indeed, it suffices to check that the vector $(m_ir_i^2,m_ir_ia_i-1)$ gets sent to $(0,-1)$, i.e. $\shear_i^{m_i}(m_ir_i^2,m_ir_ia_i-1) = (0,-1)$, and this is the content of the following lemma.
\end{proof}

\begin{lemma}
  If $\shear \in \gl_2(\Z)$ is the primitive shear along $(r,a)$, we have $\shear^{m}(mr^2,mra-1) = (0,-1)$. 
\end{lemma}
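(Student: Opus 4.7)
The plan is a short direct induction. I would prove the stronger statement that
\[
\shear^k(mr^2, mra - 1) = ((m-k)r^2, (m-k)ra - 1) \qquad \text{for all } 0 \le k \le m,
\]
from which the lemma follows by setting $k = m$.

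The entire computation hinges on a single observation: for every $\ell \in \Z_{\ge 0}$,
\[
\det\bigl((r,a),\,(\ell r^2, \ell ra - 1)\bigr) = r(\ell ra - 1) - a(\ell r^2) = -r,
\]
which is \emph{independent of} $\ell$. Consequently one application of $\shear$ sends any such vector to $(\ell r^2, \ell ra - 1) + (-r)\cdot (r,a) = ((\ell-1)r^2, (\ell-1)ra - 1)$, decreasing the parameter $\ell$ by one. Iterating this identity $m$ times starting at $\ell = m$ yields $(0, -1)$, completing the proof.

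An equivalent route uses linearity of $\shear$: decompose $(mr^2, mra-1) = mr(r,a) + (0,-1)$; the first summand is fixed by $\shear$ since $(r,a)$ is its eigenvector with eigenvalue one (as $\det((r,a),(r,a))=0$); and the second iterates as $\shear^k(0,-1) = (-kr^2, -1-kra)$ by the same constant-determinant computation. Summing, the two contributions cancel at $k = m$ to give $(0,-1)$.

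There is no real obstacle here: the lemma is an elementary verification. The only conceptual content is the invariance of $\det((r,a),\cdot)$ along the $\shear$-orbit of $(mr^2,mra-1)$, which reflects the fact that $(r,a)$ spans the eigenline of the affine monodromy at the corresponding base-node in the preceding discussion.
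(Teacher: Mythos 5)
Your proof is correct and rests on the same key fact as the paper's: the paper notes the closed form $\shear^{m}(x,y) = (x,y) + m\det((r,a),(x,y))\cdot (r,a)$, which is exactly the invariance of $\det((r,a),\cdot)$ along the $\shear$-orbit that drives your induction. Both then reduce to the computation $\det((r,a),(mr^2,mra-1)) = -r$.
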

\begin{proof}
Using \eqref{eq:prim_shear} we have $\shear^{m}(x,y) = (x,y) + m\det((r,a),(x,y)) \cdot (r,a)$, from which the result directly follows.
\end{proof}

\begin{figure}[ht]
    \centering
    \begin{minipage}[b]{0.29\linewidth}
        \centering

\begin{tikzpicture}[scale=1,cross/.style={cross out, draw, minimum size=2*(#1-\pgflinewidth), inner sep=0pt, outer sep=0pt}, cross/.default={3pt}]

    \coordinate (A) at (-1,-1);
    \coordinate (B) at (3,-1);
    \coordinate (C) at (-1,1);
    \coordinate (O) at (0,0);

    \draw (A) -- (B) -- (C) -- cycle;

    \node[below] at (A) {\scriptsize(-1,-1)};
    \node[below] at (B) {\scriptsize (3,-1)};
    \node[above] at (C) {\scriptsize (-1,1)};

    \coordinate (A1) at ($(A)+.5*(1,1)$);
    \coordinate (B1) at ($(B)+.7*(-3,+1)$);
    \coordinate (C1) at ($(C)+.4*(1,-1)$);    
    \coordinate (C2) at ($(C)+.75*(1,-1)$);

    \draw[dashed] (B) -- (B1);
    \draw[dashed] (C) -- (C1);
    \draw[dashed] (C1) -- (C2);    

    \draw[line width=.8pt] ($(B1)-(2pt,2pt)$) -- ++(4pt,4pt);
    \draw[line width=.8pt] ($(B1)-(2pt,-2pt)$) -- ++(4pt,-4pt);

    \draw[line width=.8pt] ($(C1)-(2pt,2pt)$) -- ++(4pt,4pt);
    \draw[line width=.8pt] ($(C1)-(2pt,-2pt)$) -- ++(4pt,-4pt);

    \draw[line width=.8pt] ($(C2)-(2pt,2pt)$) -- ++(4pt,4pt);
    \draw[line width=.8pt] ($(C2)-(2pt,-2pt)$) -- ++(4pt,-4pt);

    \node[below] at ($(B1)+0.05*(0,-1)$) {\scriptsize $\bb_1^1$}; 
    \node[below] at ($(C1)+0.025*(-4,-1)$) {\scriptsize $\bb_2^2$}; 
    \node[below] at ($(C2)+0.025*(-4,-1)$) {\scriptsize $\bb_2^1$};     

    \fill (0,0) circle (1pt); 
    \node[anchor=west] at (0,0) {\scriptsize (0,0)}; 

        \end{tikzpicture}

    \end{minipage}
    \hfill
    \begin{minipage}[b]{0.29\linewidth}
        \centering

\begin{tikzpicture}[scale=1,cross/.style={cross out, draw, minimum size=2*(#1-\pgflinewidth), inner sep=0pt, outer sep=0pt}, cross/.default={3pt}]

    \coordinate (A) at (-1,-1);
    \coordinate (B) at (3,-1);
    \coordinate (C) at (-1,1);
    \coordinate (O) at (0,0);

    \draw (A) -- (B) -- (C) -- cycle;

    \node[below] at (A) {\scriptsize (-1,-1)};
    \node[below] at (B) {\scriptsize (3,-1)};
    \node[above] at (C) {\scriptsize (-1,1)};

    \coordinate (A1) at ($(A)+.5*(1,1)$);
    \coordinate (B1) at ($(B)+.4*(-3,+1)$);
    \coordinate (C1) at ($(C)+.35*(1,-1)$);    
    \coordinate (C2) at ($(C)+.7*(1,-1)$);

    \draw[dashed] (B) -- (B1);
    \draw[dashed] (C) -- (C1);
    \draw[dashed] (C1) -- (C2);    

    \draw[line width=1.5pt,color=violet] (O) -- (-1,0);    
    \draw[line width=1.5pt,color=violet] (O) -- (0,-1);        
    \draw[line width=1.5pt,color=violet] (O) -- (1/5,2/5);            

    \node[below] at ($(O)!0.5!(-1,0)$) {\scriptsize (1)};
    \node[right] at ($(O)!0.5!(0,-1)$) {\scriptsize (2)};
    \node[below right] at ($(O)!0.8!(1/5,2/5)$) {\scriptsize (1)};        

    \draw[line width=.8pt] ($(B1)-(2pt,2pt)$) -- ++(4pt,4pt);
    \draw[line width=.8pt] ($(B1)-(2pt,-2pt)$) -- ++(4pt,-4pt);

    \draw[line width=.8pt] ($(C1)-(2pt,2pt)$) -- ++(4pt,4pt);
    \draw[line width=.8pt] ($(C1)-(2pt,-2pt)$) -- ++(4pt,-4pt);

    \draw[line width=.8pt] ($(C2)-(2pt,2pt)$) -- ++(4pt,4pt);
    \draw[line width=.8pt] ($(C2)-(2pt,-2pt)$) -- ++(4pt,-4pt);
  
    \fill (0,0) circle (1pt);

        \end{tikzpicture}

    \end{minipage}
    \hfill 
        \begin{minipage}[b]{0.29\linewidth}
        \centering

\begin{tikzpicture}[scale=1,cross/.style={cross out, draw, minimum size=2*(#1-\pgflinewidth), inner sep=0pt, outer sep=0pt}, cross/.default={3pt}]

    \fill[fill=orange!30, fill opacity=1] (-1,-1) -- (-1,1/3) -- (3,-1) -- cycle;

    \coordinate (A) at (-1,-1);
    \coordinate (B) at (3,-1);
    \coordinate (C) at (-1,1);
    \coordinate (O) at (0,0);

    \draw (A) -- (B) -- (C) -- cycle;

    \node[below] at (A) {\scriptsize (-1,-1)};
    \node[below] at (B) {\scriptsize (3,-1)};
    \node[above] at (C) {\scriptsize (-1,1)};

    \coordinate (A1) at ($(A)+.5*(1,1)$);
    \coordinate (B1) at ($(-3/5,1/5)$);
    \coordinate (C1) at ($(C)+.4*(1,-1)$);    
    \coordinate (C2) at ($(C)+.75*(1,-1)$);

    \draw[dashed] (B) -- (B1);
    \draw[dashed] (C) -- (C1);
    \draw[dashed] (C1) -- (C2);    
    \draw[line width=1.5pt,blue] (A) -- (B1);

    \draw[line width=.8pt] ($(B1)-(2pt,2pt)$) -- ++(4pt,4pt);
    \draw[line width=.8pt] ($(B1)-(2pt,-2pt)$) -- ++(4pt,-4pt);

    \draw[line width=.8pt] ($(C1)-(2pt,2pt)$) -- ++(4pt,4pt);
    \draw[line width=.8pt] ($(C1)-(2pt,-2pt)$) -- ++(4pt,-4pt);

    \draw[line width=.8pt] ($(C2)-(2pt,2pt)$) -- ++(4pt,4pt);
    \draw[line width=.8pt] ($(C2)-(2pt,-2pt)$) -- ++(4pt,-4pt);

        \end{tikzpicture}
    \end{minipage}
    
    \caption{Left: $Q_\nodal$, where $Q$ is the triangle with vertices $\vv = (3,-1), \vv_2 = (-1,1), \vv_3 = (-1,-1)$. Middle: a tropical representation of a curve intersecting each toric divisor once as in Corollary~\ref{cor:curve_hitting_each_edge_once}. Right: a visible symplectic unicuspidal curve as in \S\ref{subsubsec:vis_Lag_symp}. The outer corner obstruction given by this curve implies that the shaded triangle represents an optimal ellipsoid embedding.}
    \label{fig:ATF_3_pics}
\end{figure}

\begin{rmk}\label{rmk:nodal_trade}
Gluing in a node at a Delzant vertex is called a \hl{nodal trade} and it does not change the resulting symplectic four-manifold up to symplectomorphism (see e.g. \cite[Thm. 6.5]{symington71four} or \cite[\S8.2]{evans2023lectures}).
 In the above construction of $Q_\nodal$ we have chosen to glue in nodes at all of the Delzant vertices of $Q$ (i.e. those of type $\tfrac{1}{mr^2}(1,mra-1)$ with $m=r=1$) only for uniformity of exposition, but we can equally well leave the integral affine structure alone near some or all of the Delzant vertices. When constructing sesquicuspidal curves it will be beneficial to have one Delzant vertex without a nodal trade.
\end{rmk}

\begin{rmk}\label{rmk:nodal_ray}
For an almost toric fibration $\pi: \atf(Q_\nodal) \ra Q_\nodal$ as above, the polygon $Q$ decorated by its base-nodes $\{\bb_i^j\}$ and the eigenrays at its vertices is sometimes called an \hl{almost toric base diagram}. Our approach here is to keep track of only the $T$-polygon $Q \subset \MM_\R$, since the eigenrays and number of base-nodes are uniquely determined by $Q$ and the locations of the base-nodes are immaterial up to symplectomorphism of the total space.  

In \S\ref{sec:singII}, we construct $(p,q)$-unicuspidal curves in $\atf(Q_\nodal)$ 
in $T$-polygons that have one smooth vertex. The types $(p,q)$ of their cusps do not depend simply on the types
 $\frac1{q^2}(1, pq-1)$ of the $T$-singularities of the vertices of $Q$ (where $p$ is only well-defined mod $q$), but rather on  the relation between  the eigenrays at the vertices of $Q$ and the smooth corner at the origin.  For a vertex of type
$\frac1{q^2}(1, pq-1)$ on the $y$-axis the adjacent edges of $Q$ have directions $(0,-1), (q^2, 1- pq)$ with eigenray $(q,-p)$.
Recall also from \S\ref{subsubsec:T-sings} that eigenrays are invariant under the reflection that interchanges its two adjacent edges. 
\end{rmk}

\begin{example}
  Figure~\ref{fig:ATF_3_pics} left illustrates $Q_\nodal$ in the case that $Q$ is a triangle with vertices $\vv_1 = (3,-1), \vv_2 = (-1,1), \vv_3 = (-1,-1)$. The vertices are:
  \begin{itemize}
    \item smooth at $(3,-1)$, with a nodal trade
    \item type $\tfrac{1}{2}(1,1)$ at $(-1,1)$
    \item smooth at $(-1,-1)$, without a nodal trade.
  \end{itemize}
Here we have $-h_1 = (-3,1)$ and $-h_2 = (1,-1)$.
In this example the symplectic four-manifold $\atf(Q_\nodal)$ is symplectomorphic to $\CP^1(2) \times \CP^1(2)$.
\end{example}

\sss

We can equivalently describe $\atf(Q_\nodal)$ by starting with the symplectic toric orbifold with moment polygon $Q$ and performing a cut-and-paste operation near each toric fixed point.
Namely, near the $i$th toric fixed point we excise a neighborhood which is symplectomorphic to a neighborhood in $\tfrac{1}{m_ir_i^2}(1,m_ir_ia_i-1) \cong \{xy = z^{r_im_i}\} / \roots_{r_i}^{1,-1,a_i}$ and we glue in a neighborhood in $B_{m_i,r_i,a_i} = \{xy = (z^{r_i}-\zeta_1)\cdots(z^{r_i}-\zeta_{m_i})\} / \roots_{r_i}^{1,-1,a_i}$ (c.f. \S\ref{subsubsec:T-sings}).
Since the group actions above are unitary, these spaces inherit symplectic forms from the standard one on affine space.
Here $B_{m,r,a}$ carries the Auroux-type almost toric fibration
\begin{align}\label{eq:pi_aur}
\pi_\aur: B_{m,r,a} \ra \C,\;\;\;\;\; \pi_\aur(x,y,z) = \left(|z|^2,\tfrac{1}{2}|x|^2 - \tfrac{1}{2}|y|^2 \right),
\end{align}
which has $m_i$ focus-focus critical points mapping to distinct base-nodes.
Note that $\pi_\aur$  does indeed descend to the quotient by $\roots_r^{(1,-1,a)}$ (see \cite[\S7.4]{evans2023lectures} and \S\ref{subsec:aur_vis} below).

By comparing the local description of $\Q$-Gorenstein smoothings of $T$-singularities as in \eqref{eq:T_sing_miniversal} with the above cut-and-paste description of $\atf(Q_\nodal)$, one can show that, for any sufficiently close $\Q$-Gorenstein smoothing $\wt{V}_Q$ of $V_Q$, there is a diffeomorphism $\Phi: \wt{V}_Q \ra \atf(Q_\nodal)$ such that $\Phi_*(\wt{J})$ tames the symplectic form on $\atf(Q_\nodal)$, where $\wt{J}$ is the integrable almost complex structure on $\wt{V}_Q$.
More explicitly, we have:
\begin{prop}\label{prop:QG_def_diff_ATF}
Let $Q$ be a $T$-polygon with corresponding (singular) toric surface $V_Q$.
Let $\pi: \calX \ra \D_\eps$ be a proper holomorphic map, where $\calX$ is a complex manifold and $\D_\eps := \{|z| < \eps\}$ denotes the open disk in $\C$ of radius $\eps > 0$.
Assume that the family $\pi$ is flat, $\Q$-Gorenstein, and submersive over $\D_\eps \setminus \{0\}$, with $X_0 := \pi^{-1}(0)$ biholomorphic to $V_Q$ and $X_t := \pi^{-1}(t)$ nonsingular for all $t \neq 0$.
Then, for all $|t| > 0$ sufficiently small, there exists a diffeomorphism $\Phi_t: X_t \ra \atf(Q_\nodal)$ such that $(\Phi_t)_*(J_{t})$ tames the symplectic form on $\atf(Q_\nodal)$, where $J_{t}$ denotes the integrable almost complex structure induced on $X_t$ as a submanifold of $\calX$.
\end{prop}

\begin{proof}[Sketch of proof]
Let $\pp_1,\dots,\pp_k$ denote the singular points of $X_0$, where $\pp_i$ is a $T$-singularity of the form $\tfrac{1}{m_ir_i^2}(1,m_ir_ia_i-1)$ for $i = 1,\dots,k$.
By miniversality of the family \eqref{eq:T_sing_miniversal}, we can find some $\de > 0$ such that, for each $i = 1 ,\dots,k$ and all $|t| < \de$, we have
\begin{itemize}
  \item open subsets $U_i^t \subset X_t$, where $U_i^t \cap U_j^t = \nil$ for $i \neq j$ and $U_i^0 \cap \{\pp_1,\dots,\pp_k\} = \{\pp_i\}$, such that the closure of $U_i^t$ has smooth boundary which varies smoothly with $t$
    \item complex affine varieties $B_i^t = \{xy = z^{r_im} + C_{i,m-2}^{t}z^{r_i(m-2)} + \cdots + C_{i,1}^{t}z^{r_i} + C_{i,0}^{t}\} \subset \C^3/\roots_{r_i}^{1,-1,a_i}$  for some parameters $C_{i,j}^{t} \in \C$ which vary holomorphically with $t$ and vanish for $t = 0$, such that $B_i^t$ is nonsingular for $t \neq 0$
\item holomorphic embeddings $\iota_i$ from $\calU_i := \bigcup\limits_{|t| < \de} U_i^t \subset \calX$ into $\C^3 / \roots_{r_i}^{1,-1,a_i} \times \D_\de$ which restrict to embeddings $\iota_i^t: U_i^t \hookrightarrow B_i^t$. 
\end{itemize}

Let us equip $X_0$ with the compatible symplectic form coming from its identification with $V_Q$ (strictly speaking as an orbifold, but we can ignore the behavior near the singularities), and equip $\calU_i$ with the symplectic structure induced from its embedding into $\C^3 / \roots_{r_i}^{1,-1,a_i} \times \D_\de$.
We consider the resulting symplectic connection on $\calU_1 \cup \cdots \cup \calU_k$ (i.e. whose horizontal tangent spaces are symplectically orthogonal to the vertical tangent spaces) and extend it to an arbitrary connection on $\calX$.
Fix small open subsets $R_i^0,S_i^0 \subset X_0$ satisfying $\pp_i \in R_i^0 \Subset S_i^0 \subset U_i^0$ for $i = 1,\dots,k$,
and let $R_i^t \Subset S_i^t \subset X_t$ denote the neighborhoods in $X_t$ corresponding to $R_i^0 \Subset S_i^0 \subset X_0$ under parallel transport (here $\Subset$ denotes inclusion as a precompact subset).
Note that parallel transport then induces diffeomorphisms 
\[\Psi_t: X_0 \setminus \left( R_1^0 \cup \cdots \cup R_k^0\right ) \cong X_t \setminus \left( R_1^t \cup \cdots \cup R_k^t\right ),\]
and by construction $\Psi_t$ restricts to a symplectic embedding $S_i^0 \setminus R_i^0 \hookrightarrow S_i^t \setminus R_i^t$ for each $i = 1,\dots,k$ and $|t| < \de$.

Observe that, up to symplectomorphism, we have
\begin{align*}
\atf(Q_\nodal) = X_0 \setminus \left(R_1^0 \cup \cdots \cup R_k^0\right) \underset{\Psi_t}{\cup} \left(R_1^t \cup \cdots \cup R_k^t\right),
\end{align*}
 where the gluing uses $\Psi_t$ to symplectically identify $S_i^0 \setminus R_i^0$ with $S_i^t \setminus R_i^t$ for $i=1,\dots,k$.
This follows essentially from the above cut-and-paste description of $\atf(Q_\nodal)$, noting that $B_i^t$ is symplectomorphic to $B_{m_i,r_i,a_i}$ as defined in \eqref{eq:B_m_r_a} and it carries a natural Auroux-type almost toric fibration similar to \eqref{eq:pi_aur}. 

We now define the diffeomorphism 
$$\Phi_t: X_t \ra X_0 \setminus (R_1^0 \cup \cdots \cup R_k^0) \underset{\Psi_t}{\cup} (R_1^t \cup \cdots \cup R_k^t)$$
to be $\Psi_t^{-1}$ on $X_t \setminus (R_1^t \cup \cdots \cup R_k^t)$,
and the identity on $R_1^t \cup \cdots \cup R_k^t$.
To see that $(\Phi_t)_*J_t$ tames the symplectic form on the target, note that since $J_{0}$ is compatible with the symplectic form on $X_0$ and tameness is an open condition, we can assume (possibly after further shrinking $\de$) that 
$(\Psi_t)^*(J_{t})$ tames the symplectic form on $X_0 \setminus (R_1^0 \cup \cdots \cup R_k^0)$ for all $|t| < \de$.
Meanwhile, $J_{t}$ is compatible with the symplectic form on $R_i^t$ for $i = 1,\dots,k$, both structures being induced by the standard ones on $\C^3$.
\end{proof}

\subsubsection{Mutations of almost toric fibrations}\label{subsubsec:ATF_mutations}

Let $Q \subset \MM_\R$ be a $T$-polygon.
For almost toric fibrations of the form $\atf(Q_\nodal)$ as in \S\ref{subsubsec:nodal_int_from_poly}, mutating $Q$ at a vertex $\vv$ as in \S\ref{subsubsec:T-sings} recovers the familiar notion of mutation for almost toric fibrations (see e.g. \cite[\S8.4]{evans2023lectures}).
One can check that, up to nodal slides (i.e. moving the base-nodes), $(\mut_\vv(Q))_\nodal$ and $Q_\nodal$ are isomorphic nodal integral affine polygons, differing in only their presentations in terms of polygons with branch cuts (roughly speaking, a primitive mutation at a vertex $\vv$ corresponds to rotating the branch cut at one of the nearby base-nodes by $180$ degrees).
In particular, we have:
\begin{prop}\label{prop:mut_imply_symp}
If $Q, Q' \subset \MM_\R$ are mutation equivalent $T$-polygons, then the symplectic four-manifolds $\atf(Q_\nodal)$ and $\atf(Q'_\nodal)$ are symplectomorphic.
\end{prop}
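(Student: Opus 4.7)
The plan is to reduce to a single primitive mutation and then to exhibit an isomorphism of nodal integral affine surfaces between $Q_\nodal$ and $(\mut_\vv(Q))_\nodal$, up to nodal slides. Since mutation equivalence is by definition generated by primitive mutations at $T$-vertices, by induction on the length of a mutation chain it suffices to treat the case $Q' = \mut_\vv(Q)$ for a single $T$-vertex $\vv \in Q$. Once this is established, the result follows by applying \cite[Cor.~5.4]{symington71four}, which says that two almost toric fibrations with isomorphic almost toric bases (with nonempty boundary) have symplectomorphic total spaces, together with the fact that nodal slides do not change the symplectomorphism type of the total space (an application of Moser's stability theorem; see e.g.\ \cite[\S6.1]{symington71four} or \cite[\S8.3]{evans2023lectures}).

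The heart of the argument is the local comparison at $\vv$. After an integral affine transformation, assume $\vv = (0,0)$ with adjacent edges in directions $(0,1)$ and $(mr^2, mra-1)$ and eigenray in direction $-h = (r,a)$, so that $\vv$ is of type $\tfrac{1}{mr^2}(1,mra-1)$. The construction of \S\ref{subsubsec:nodal_int_from_poly} places $m$ base-nodes $\bb^1,\dots,\bb^m$ along the eigenray at $\vv$, with branch cuts joining them to $\vv$ via the primitive shear $\shear = \shear_{-h}$. The primitive mutation $\mut_\vv(Q) = \psi(Q)$ applies the piecewise-linear map $\psi$ of \eqref{eq:psi_mut} that equals $\shear$ on one half-plane and the identity on the other. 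In terms of $(\mut_\vv(Q))_\nodal$, this has the effect of presenting the very same germ of nodal integral affine structure, except that the branch cut emanating from the outermost base-node $\bb^1$ is now rotated by $180^\circ$ (the ``new'' branch cut exits $\vv$ on the opposite side rather than continuing along the eigenray), and the $T$-vertex at $\vv$ is replaced by one of type $\tfrac{1}{(m-1)r^2}(1,(m-1)ra-1)$ (or by a smooth vertex if $m=1$, in which case the would-be $T$-vertex degenerates and the eigenray meets a new side of $Q'$).

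To make the isomorphism explicit, slide the base-node $\bb^1$ outward along the eigenray until it lies near the boundary; this is a nodal slide and hence changes neither $\atf(Q_\nodal)$ nor the underlying nodal integral affine surface (up to isomorphism). On the complement of this pushed-out branch cut, both $Q_\nodal$ and $(\mut_\vv(Q))_\nodal$ now carry integral affine structures obtained by gluing the same chart data along the remaining $m-1$ cuts at $\vv$; the only remaining difference is the direction in which the cut at $\bb^1$ is drawn, which is precisely the ambiguity absorbed by the choice of presentation of a nodal integral affine surface in terms of a polygon with branch cuts. One now checks, using \eqref{eq:mutation} and \eqref{eq:prim_shear}, that the resulting identification of punctured neighborhoods respects the integral affine structure and the affine monodromy around each $\bb^j$; away from $\vv$ the two polygons $Q$ and $\mut_\vv(Q)$ agree, so the isomorphism extends globally.

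The main obstacle is bookkeeping: the piecewise-linear map $\psi$ simultaneously modifies the combinatorial type of the vertex $\vv$ (decreasing $m$ by one) and rotates a branch cut, and one needs to verify that these two changes together are implemented by a nodal slide on the level of $Q_\nodal$. Once this compatibility is confirmed, iterating gives the full mutation equivalence, and appealing to \cite[Cor.~5.4]{symington71four} (noting $B$ has nonempty boundary since $Q$ is a polygon) yields $\atf(Q_\nodal) \cong \atf(Q'_\nodal)$ symplectomorphically.
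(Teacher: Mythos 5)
Your proposal is correct and follows essentially the same route as the paper: the paper likewise observes that a primitive mutation amounts to rotating the branch cut at one of the base-nodes near $\vv$ by $180^\circ$, so that $Q_\nodal$ and $(\mut_\vv(Q))_\nodal$ are isomorphic nodal integral affine surfaces up to nodal slides, and then invokes \cite[Cor.~5.4]{symington71four} together with Moser stability for nodal slides. The only difference is that you spell out the local bookkeeping at $\vv$ in more detail than the paper, which leaves it as a "one can check".
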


\section{Singular algebraic curves in almost toric manifolds I}\label{sec:singI}

In this section, we first discuss in \S\ref{subsec:vis_geom} various geometric features of a symplectic four-manifold which are ``visible'' from the base of an almost toric fibration, such as (singular) symplectic and Lagrangian subspaces and ellipsoid embeddings. Then, in \S\ref{subsec:rat_curves_tor_surf} we construct some explicit rational holomorphic curves with prescribed cusp singularities in (singular) toric surfaces.
 Finally, in \S\ref{subsec:inflatable_curves}, we explain how to transport these holomorphic curves into symplectic rigid del Pezzo surfaces in order to construct inner corner curves and prove Theorem~\ref{thmlet:inner_corner_curves}.

\subsection{Visible geometry in almost toric fibrations}\label{subsec:vis_geom}

\subsubsection{Visible Lagrangians and symplectic curves}\label{subsubsec:vis_Lag_symp}

Suppose that $\pi: M^4 \ra B^2$ is a four-dimensional regular Lagrangian torus fibration,
and let $C^2 \subset M^4$ be a compact 
two-dimensional submanifold which projects to a regular path $\ga \subset B^2$.
Put $C_\bb := C \cap \pi^{-1}(\bb)$ for each $\bb \in B$. Let $\lan -,-\ran$ denote the pairing from \eqref{eq:LTF_pairing}.
The following is readily checked using the symplectomorphism \eqref{eq:symp_pres_of_reg_LTF}:
\begin{itemize}
  \item $C$ is Lagrangian if and only if $\langle u,v\rangle = 0$ for any $\pp \in C$, $u \in T_{\pi(\pp)}\ga$, and $v \in T_\pp C_{\pi(\pp)}$
  \item $C$ is symplectic if and only if $\langle u,v\rangle \neq 0$ for any $\pp \in C$, $u \in T_{\pi(\pp)}\ga$, and $v \in T_\pp C_{\pi(\pp)}$. 
\end{itemize}
In these situations we will say that $C$ is a \hl{visible} Lagrangian or symplectic submanifold of $M$.
For $\bb \in \ga$, we will say that the fiber $C_\bb$ is \hl{straight} if it is an orbit of the action of $T_\bb^* B$ on $\pi^{-1}(\bb)$.
For a visible Lagrangian $C$, it is easy to check that each fiber $C_\bb$ is straight, and thus each tangent vector to $\ga$ is a multiple of a lattice vector, so $\ga$ is also straight with respect to the integral affine structure on $B$.

For a visible symplectic curve $C$ the fibers $C_\bb$ need not be straight, but at each point $\bb \in \ga$ there is a unique (up to sign) primitive covector $\al_\bb \in \La_\bb^*$ such that $C_\bb$ is homologous to an orbit of the vector field along $\pi^{-1}(\bb)$ corresponding to $\al_\bb$ (c.f. the discussion after \eqref{eq:LTF_pairing}).
Thus for a visible symplectic curve $C$ there is a section $\al$ of the pullback of $\La^*$ along $\ga$, such that $\al_\bb(v) \neq 0$ for every $\bb \in \ga$ and $0 \neq v \in T_\bb \ga$.
We will refer to the pair $(\ga,\al)$ as a \hl{covector-decorated path}. 
For future reference, we note that the symplectic area of the visible symplectic curve $C$ is naturally computed by  integrating the covector field $\al$ along $\ga$:
\begin{align}\label{eq:area_vis_symp}
\op{area}(C) = \int \al(\ga'(t))dt.
\end{align}

\begin{example}
Suppose that $B$ is $\R^2$ with its standard integral affine structure and coordinates $x_1,x_2$, and let $\ga: (a,b) \mapsto \R^2$ be the straight line segment $t \ra (pt,qt)$ for some primitive lattice vector $(p,q) \in \Z^2$.
 Then we can take $\al$ to be the constant covector field  $pdx_1 + qdx_2$, and the area of the corresponding visible symplectic cylinder $C_{(\ga,\al)} \subset T^*\mathbb{T}^2$ is $(b-a)(p^2+q^2)$.
\end{example}

One can extend the above discussion to define visible Lagrangian and symplectic submanifolds in an almost toric fibration $\pi: M^4 \ra B^2$.
Over $B^\reg$ the situation is identical, and with some care we can also allow paths in $B$ which end on toric boundary points or base-nodes.
Roughly, as $\bb' \in B^\reg$ approaches an interior point $\bb$ of an edge $e$ of a moment polygon, the torus fibers $\pi^{-1}(\bb')$ collapse to circles along a direction determined by $e$, and, similarly, as $\bb'$ approaches a Delzant vertex $\vv$ (having no base-nodes) the torus fibers collapse to a point.\footnote{Note that a general corank one or two elliptic value $\bb \in B$ is locally integral affine isomorphic to one of these situations.} Meanwhile, as $\bb' \in B^\reg$ approaches a base-node $\bb$, the torus fibers $\pi^{-1}(\bb')$ get pinched along a circle which depends on the eigenline $\eig_\bb \in T_\bb B$.
In slightly more detail, we have the following building blocks:

\begin{enumerate}[label=(\roman*)]
  \item a smooth Lagrangian disk over any straight line segment $\ga$ ending on a base-node $\bb$, provided that $\ga$ is tangent to the eigenline $\eig_\bb$ at $\bb$
  \item a symplectic disk over any covector-decorated path $(\ga,\al)$ ending perpendicularly to  an interior point $\bb$ of an edge $e$, provided that the covector $\al_\bb \in T_\bb^*B$ vanishes on the tangent space $T_\bb e$ to the edge
  \item a symplectic disk over any covector-decorated path $(\ga,\al)$ ending on a base-node $\bb$ perdendicularly to its eigenline $\eig_\bb$, provided that the covector $\al_\bb$ vanishes on $\eig_\bb$
  \item a $(p,q)$-unicuspidal  symplectic disk over any covector-decorated path $(\ga,\al)$ that has one end  on a Delzant vertex $\vv$,
  where $p$ and $q$ are obtained by evaluating $\al$ on the primitive tangent vectors to the edges adjacent to $\vv$ (see Example~\ref{ex:311}).
\end{enumerate}
\NI 
The symplectic disks in (ii), (iii) are smooth if the paths are straight near their endpoints.
Other local models for visible singular Lagrangians are also discussed e.g. in \cite[\S5.1 and \S6.4]{evans2018markov}.

Let us now specialize to the case of an almost toric fibration $\pi: \atf(Q_\nodal) \ra Q_\nodal$ associated to a $T$-polygon $Q \subset \R^2$ as in \S\ref{subsubsec:nodal_int_from_poly}.
Here we assume that the vertex $\vv_i$ of $Q$ is of type $\tfrac{1}{m_ir_i^2}(1,m_ir_ia_i-1)$.
Note that a path in $Q_\nodal$ is 
smooth  in the usual sense in $\R^2$, except that whenever it crosses some nodal line it bends by the appropriate shear.
In this case we have for example:
\begin{enumerate}[label=(\alph*)]
  \item\label{item:vis_Lag_sphere} a visible Lagrangian two-sphere $\LL_{[\bb_i^j,\bb_i^{j+1}]}$ over the line segment $[\bb_i^j,\bb_i^{j+1}] \subset Q_\nodal$ for each $i = 1,\dots,\ell$ and $j=1,\dots,m_i-1$ (c.f. \cite[Fig. 7.8]{evans2023lectures})
  \item\label{item:vis_Lag_pin} a visible \hl{Lagrangian $(r_i,a_i)$-pinwheel} $\LL_{[\bb_i^{m_i},\vv_i]}$ (see \cite[Def. 3.1]{khodorovskiy2013symplectic}) over the line segment $[\bb_i^{m_i},\vv_i]$ for each $i=1,\dots,\ell$ (c.f. again \cite[Fig. 7.8]{evans2023lectures})
  \item\label{item:vis_symp_inner} a visible symplectic two-sphere $C$ with $c_1(C) = 2$ over any straight line segment in $Q_\nodal \setminus \{\bb_i^j\}$ with both ends ending perpendicularly on interior points of edges (see the violet path in Figure~\ref{fig:vis_curves} left)
  \item\label{item:vis_symp_outer} a visible nonsingular symplectic two-sphere  $C$ with $c_1(C) = 1$  over any straight line segment $\ga$ in $Q_\nodal$ with one end ending perpendicularly on an interior point of an edge and the other end ending on a base-node perpendicularly to the associated eigenline (see the blue path in Figure~\ref{fig:vis_curves} right) 
  \item\label{item:vis_symp_uni} a visible $(p,q)$-unicuspidal symplectic two-sphere $C_{p,q}$ with $c_1(C) = p+q$ over any straight line segment $\ga$ in $Q_\nodal$ of slope $p/q$ with one end at a Delzant vertex (having no base-nodes) and the other end ending on a base-node perpendicularly to the associated eigenray (see the blue path in Figure~\ref{fig:ATF_3_pics} right). 
\end{enumerate}

The above claims about the first Chern class $c_1(C)$ are justified by work of Symington, quoted in Lemma~\ref{lem:c_1_is_bdy} below.
As illustrated in Figure~\ref{fig:vis_curves} below (see also  Figure~\ref{fig:toric_blowup_seq}), the normal crossing resolution $\wt{C}_{p,q}$ of the cuspidal curve $C_{p,q}$ is a curve of type (d) that  is visible in an appropriate blowup of $\atf(Q_\nodal)$ at its smooth point.
Note also that a Lagrangian $(r,a)$-pinwheel is homeomorphic to the closed unit two-disk $\ovl{\D}^2$ with its boundary quotiented out by the equivalence relation $z \sim e^{2\pi a\sqrt{-1}/r}z$ for $z \in \bdy \ovl{\D}^2$. In particular, a Lagrangian $(1,1)$-pinwheel is just an embedded Lagrangian disk and a Lagrangian $(2,1)$-pinwheel is an embedded Lagrangian real projective plane $\mathbb{RP}^2$.
The visible Lagrangians \ref{item:vis_Lag_sphere} and \ref{item:vis_Lag_pin} will be useful for describing the symplectic form on $\atf(Q_\nodal)$ in the sequel, while the visible symplectic curves \ref{item:vis_symp_inner}, \ref{item:vis_symp_outer}, and \ref{item:vis_symp_uni} give symplectic analogues of some of the algebraic curves which we construct in \S\ref{sec:singI} and \S\ref{sec:singII} respectively.

\begin{figure}[ht]
    \centering
    \begin{minipage}[b]{0.49\linewidth}
        \centering

\begin{tikzpicture}[cross/.style={cross out, draw, minimum size=2*(#1-\pgflinewidth), inner sep=0pt, outer sep=0pt}, cross/.default={3pt}]

    \coordinate (A) at (-1,-1);
    \coordinate (B) at (5,-1);
    \coordinate (C) at (-1,1/2);
    \coordinate (O) at (0,0);

    \def\epsvar{0.2} 

    \coordinate (Ox) at ($(A)!\epsvar!(B)$);
    \coordinate (Oy) at ($(A)!\epsvar!(C)$);

    \draw (Oy) -- (Ox) -- (B) -- (C) -- cycle;

    \coordinate (D) at (${1-\epsvar}*(A) + .5*\epsvar*(B) + .5*\epsvar*(C)$);
    \coordinate (E) at ($(D) + .281*(1,4)$);

    \node[below] at (B) {\scriptsize (5,-1)};
    \node[above] at (C) {\scriptsize (-1,1/2)};

    \coordinate (A1) at ($(A)-.5*(A)$);
    \coordinate (B1) at ($(B)-.6*(B)$);
    \coordinate (C1) at ($(C)-.35*(C)$);

    \draw[dashed] (B) -- (B1);
    \draw[dashed] (C) -- (C1);

    \draw[line width=1.5,color=violet] (D) -- (E);

    \fill (0,0) circle (1pt); 

\begin{scope}[scale=.8]

    \draw[line width=.8pt] ($(B1)-(2pt,2pt)$) -- ++(4pt,4pt);
    \draw[line width=.8pt] ($(B1)-(2pt,-2pt)$) -- ++(4pt,-4pt);

    \draw[line width=.8pt] ($(C1)-(2pt,2pt)$) -- ++(4pt,4pt);
    \draw[line width=.8pt] ($(C1)-(2pt,-2pt)$) -- ++(4pt,-4pt);
\end{scope}
        \end{tikzpicture}

    \end{minipage}
    \hfill 
    \begin{minipage}[b]{0.49\linewidth}
        \centering

\begin{tikzpicture}[cross/.style={cross out, draw, minimum size=2*(#1-\pgflinewidth), inner sep=0pt, outer sep=0pt}, cross/.default={3pt}]

    \coordinate (A) at (-1,-1);
    \coordinate (B) at (5,-1);
    \coordinate (C) at (-1,1/2);
    \coordinate (O) at (0,0);

    \def\epsvar{0.2} 

    \coordinate (Ox) at ($(A)+\epsvar*(2,0)$);
    \coordinate (Oy) at ($(A)+\epsvar*(0,1)$);

    \draw (Oy) -- (Ox) -- (B) -- (C) -- cycle;

    \coordinate (D) at ($(Ox)!0.5!(Oy)$);
    \coordinate (E) at ($(D) + .52*(1,2)$);

    \node[below] at (B) {\scriptsize (5,-1)};
    \node[above] at (C) {\scriptsize (-1,1/2)};

    \coordinate (A1) at ($(A)-.5*(A)$);
    \coordinate (B1) at ($(B)-.6*(B)$);
    \coordinate (C1) at ($(C)-.72*(C)$);    

    \draw[dashed] (B) -- (B1);
    \draw[dashed] (C) -- (C1);

    \draw[line width=1.5,color=blue] (D) -- (E);

    \fill (0,0) circle (1pt); 

\begin{scope}[scale=.8]

    \draw[line width=.8pt] ($(B1)-(2pt,2pt)$) -- ++(4pt,4pt);
    \draw[line width=.8pt] ($(B1)-(2pt,-2pt)$) -- ++(4pt,-4pt);

    \draw[line width=.8pt] ($(C1)-(2pt,2pt)$) -- ++(4pt,4pt);
    \draw[line width=.8pt] ($(C1)-(2pt,-2pt)$) -- ++(4pt,-4pt);
\end{scope}
        \end{tikzpicture}

    \end{minipage}
    \hfill 

   \caption{Some visible symplectic curves in almost toric fibrations. The curve $\wt{C}_{1,2}$ in the right diagram is the normal crossing resolution of the visible cuspidal curve $C_{1,2}$}
    \label{fig:vis_curves}
\end{figure}

\subsubsection{Visible ellipsoid embeddings}\label{subsubsec:vis_ell}

Let $Q \subset \MM_\R$ be a $T$-polygon, and let $\pi: \atf(Q_\nodal) \ra Q_\nodal$ denote the corresponding almost toric fibration as in \S\ref{subsubsec:nodal_int_from_poly}.
Noting that the base-nodes of $Q_\nodal$ can be pushed arbitrarily close to the boundary by nodal slides (and recalling Remark~\ref{rmk:nodal_trade}), we have:
\begin{prop}[{\cite[Prop. 2.35]{cristofaro2020infinite}}]\label{prop:vis_ell_emb}
Let $\vv$ be a Delzant vertex of $Q$ such that the adjacent edges have affine lengths $a$ and $b$.
Then for any $\eps > 0$ we have a symplectic embedding $E(\tfrac{a}{1+\eps},\tfrac{b}{1+\eps}) \hooksymp \atf(Q_\nodal)$.
\end{prop}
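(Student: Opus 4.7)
The plan is to exhibit the claimed ellipsoid as the preimage under $\pi$ of a suitable triangle sitting at the Delzant vertex $\vv$. The core idea is that at a Delzant vertex the almost toric fibration admits a genuine local toric model, and in a toric four-manifold the preimage of a standard corner simplex is the standard symplectic ellipsoid. The task is to show that a corner simplex of ``nearly full'' affine size can be arranged to lie in this toric region.

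First I would invoke Remark~\ref{rmk:nodal_trade} to arrange that no node is glued in at the Delzant vertex $\vv$ itself. Then, using that nodal slides preserve $\atf(Q_\nodal)$ up to symplectomorphism, I would push every base-node $\bb_i^j$ arbitrarily close to its associated vertex $\vv_i \neq \vv$ along its eigenray; equivalently, in the construction of $Q_\nodal$ I choose the parameters $\eps_i$ of \S\ref{subsubsec:nodal_int_from_poly} to be as small as I please. Let $\vecu_1, \vecu_2 \in \MM$ be the primitive edge vectors at $\vv$, and let $T_\eps \subset \MM_\R$ denote the triangle with vertex $\vv$ and legs $\tfrac{a}{1+\eps}\vecu_1$ and $\tfrac{b}{1+\eps}\vecu_2$. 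The key geometric observation is that, by convexity of $Q$ together with the fact that $\vv$ is Delzant with adjacent edges of affine lengths exactly $a$ and $b$, every other vertex $\vv' \neq \vv$ of $Q$ satisfies the inequality defining the complement of the full triangle $T_0$ (with legs $a,b$); otherwise $\vv'$ would lie strictly inside $T_0 \subset Q$ and hence fail to be a vertex of the convex polygon $Q$. For any $\eps>0$, the hypotenuse of $T_\eps$ has strictly positive affine distance from every $\vv'$ lying strictly outside $\overline{T_0}$, so after taking the $\eps_i$ small enough (depending on $\eps$), all branch cuts of $Q_\nodal$ are disjoint from $T_\eps$. The degenerate situation in which some $\vv'$ lies on the hypotenuse of $T_0$ can be handled either by a small mutation (which preserves $\atf(Q_\nodal)$ by Proposition~\ref{prop:mut_imply_symp}) or by first replacing $\eps$ with $\eps/2$ and then sliding so that the corresponding branch cut points away from $T_{\eps/2}$.

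With $T_\eps$ contained in the Delzant (toric) region of $Q_\nodal$, the restriction of $\pi$ to $\pi^{-1}(T_\eps)$ is a genuine moment map for a local Hamiltonian $\TT^2$-action. An integral affine transformation identifies $T_\eps$ with the standard simplex
\begin{align*}
\left\{(t_1, t_2) \in \R^2_{\geq 0} \;\bigg|\; \tfrac{(1+\eps)t_1}{a} + \tfrac{(1+\eps)t_2}{b} \leq 1\right\},
\end{align*}
which is precisely the image of $E(\tfrac{a}{1+\eps}, \tfrac{b}{1+\eps}) \subset \C^2$ under $\mu_{\C^2}$. The Delzant uniqueness theorem (applied locally) then furnishes a symplectomorphism $\pi^{-1}(T_\eps) \cong E(\tfrac{a}{1+\eps}, \tfrac{b}{1+\eps})$, which composed with the inclusion $\pi^{-1}(T_\eps) \hookrightarrow \atf(Q_\nodal)$ gives the desired embedding. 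The main obstacle is the geometric verification in the second step: showing that nodal slides, combined if necessary with mutations, suffice to push all branch cuts out of $T_\eps$. This in turn reduces to the convexity argument together with a careful analysis of how eigenrays at nearby vertices of $Q$ can intrude on the corner simplex at $\vv$.
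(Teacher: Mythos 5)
Your proof is correct and follows essentially the same route the paper takes (which it only sketches in the sentence preceding the statement before deferring to \cite[Prop.~2.35]{cristofaro2020infinite}): omit the nodal trade at $\vv$, slide all base-nodes toward their respective vertices so the branch cuts miss the corner simplex $T_\eps$, and identify $\pi^{-1}(T_\eps)$ with the ellipsoid via the local toric model. The only cosmetic remark is that your "degenerate situation" of a vertex $\vv'$ on the hypotenuse of $T_0$ cannot actually occur, since such a $\vv'$ would lie in the convex hull of two other points of $Q$ and hence fail to be an extreme point, so the mutation workaround is unnecessary.
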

We will refer to an embedding as in Proposition~\ref{prop:vis_ell_emb} as a \hl{visible ellipsoid embedding}.
Note that if $Q$ has a smooth corner $\vv_\op{sm}$, then the same is true of its mutations $\mut_\vv(Q)$ at vertices $\vv \neq \vv_\op{sm}$.
Since by Proposition~\ref{prop:mut_imply_symp} $Q$ and $\mut_\vv(Q)$ encode symplectic four-manifolds which are symplectomorphic, this gives a mechanism for constructing many ellipsoid embeddings into a fixed target space.
Indeed, the ellipsoid embeddings corresponding to the inner corner points of the rigid del Pezzo infinite staircases can all be described in this way:

\begin{prop}[{\cite[\S5]{cristofaro2020infinite}}]\label{prop:inner_corners_vis_ell}
Let $M$ be a rigid del Pezzo surface, and let $Q$ be the corresponding dual Fano $T$-polygon (either a triangle or a quadrilateral) in Figure~\ref{fig:smooth_Fano_polygons}.
Then each inner corner point of the infinite staircase in the ellipsoid embedding function $c_M(x)$
corresponds to a visible ellipsoid embedding after applying a sequence of full mutations to $Q$.
\end{prop}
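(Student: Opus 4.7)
The plan is to combine Proposition~\ref{prop:mut_imply_symp} --- that full mutations of a $T$-polygon $Q$ yield symplectomorphic total spaces $\atf(Q_\nodal)$ --- with Proposition~\ref{prop:vis_ell_emb}, which produces a visible ellipsoid embedding from each Delzant vertex of a $T$-polygon with dimensions equal to the affine lengths of the two adjacent edges. Since by Lemma~\ref{lem:full_mut_pres_sides} mutating only at the non-smooth vertices preserves both the total number of sides and the existence of any Delzant vertex $\vv_\sm$, any visible ellipsoid embedding in a mutant $Q'$ of $Q$ descends to a symplectic embedding into $\atf(Q_\nodal) \cong M$. The task therefore reduces to showing that as $Q'$ ranges over the full-mutation orbit of $Q$ (fixing $\vv_\sm$), the pairs of affine edge lengths at $\vv_\sm$ realize exactly the ellipsoid parameters coming from the inner corners of the staircase in $c_M(x)$.

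For the triangular cases ($M \in \{\CP^2, \CP^1 \times \CP^1, \CP^2 \#^{\times 3}\ovl{\CP}^2, \CP^2\#^{\times 4}\ovl{\CP}^2\}$), I would parametrize the full-mutation orbit by triples $(r_1, r_2, r_3)$ with $r_1 = 1$ (reflecting the preserved Delzant vertex) satisfying the generalized Markov equation~\eqref{eq:gen_Mark}. By the discussion following \eqref{eq:gen_Mark}, these triples form a trivalent tree whose edges correspond to the mutations $(r_1, r_2, r_3) \mapsto (r_1, r_2, C\sqrt{m_1 m_2/m_3}\, r_1 r_2 - r_3)$ and their permutations. Meanwhile the inner corners are indexed by the integer sequence $g_k$ from Table~\ref{table:CG_et_al} via the recursion $g_{k+2J} = K g_{k+J} - g_k$ (\S\ref{subsec:staircase_numerics}). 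The core step is to exhibit a bijection between these two parametrizations and verify that under this bijection the two edge lengths at $\vv_\sm$ match the ellipsoid parameters determined by the corresponding inner corner.

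Concretely, after normalizing $\vv_\sm$ to sit at the origin of $\MM_\R$ with primitive outgoing edge directions $(1,0)$ and $(0,1)$, the other two vertices of the triangle are constrained by their types $\frac{1}{m_i r_i^2}(1, m_i r_i a_i - 1)$ and by the requirement that the third edge actually connects them. Solving the resulting linear system expresses the two edge lengths at $\vv_\sm$ as explicit rational functions of $(r_2, r_3)$. Comparing with the inner corner formulas $\left( \tfrac{g_{k+J}(g_{k+1}+g_{k+1+J})}{g_{k+1}(g_k+g_{k+J})}, \tfrac{g_{k+J}}{g_k+g_{k+J}} \right)$ then reduces everything to polynomial identities relating $(r_2, r_3)$ and the $g_k$, which can be verified inductively along the Markov tree using standard identities such as $g_{k+1}g_{k+2J-1} - g_k g_{k+2J} = \pm 1$ and the base case of the seed triangle in Figure~\ref{fig:smooth_Fano_polygons}.

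For the quadrilateral cases ($M = \CP^2\#\ovl{\CP}^2$ or $\CP^2\#^{\times 2}\ovl{\CP}^2$), one proceeds analogously, except that the full-mutation orbit now depends on mutations at the three non-smooth vertices, mirroring the three-stranded structure of the staircase ($J = 3$ in Table~\ref{table:CG_et_al}). The analogous bijection between the orbit and the inner-corner indexing must be constructed, and the linear system determining the edge lengths at $\vv_\sm$ is slightly larger. The main obstacle in all cases is the combinatorial bookkeeping needed to match mutation recursions with the $g_k$ recursion exactly; my expectation is that this reduces to a finite check on seed polygons, plus an induction along the mutation tree that is straightforward but tedious.
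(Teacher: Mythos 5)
Note first that the paper does not actually prove this proposition: it is imported wholesale from \cite[\S5]{cristofaro2020infinite}, and your outline is essentially a reconstruction of that reference's argument — mutations preserve the symplectomorphism type (Proposition~\ref{prop:mut_imply_symp}), each mutant with a Delzant vertex yields a visible embedding (Proposition~\ref{prop:vis_ell_emb}), and the numerics of the mutation orbit must then be matched against the inner-corner coordinates from Table~\ref{table:CG_et_al}. So the strategy is the right one and is the one the paper relies on.

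Two caveats. First, the step you describe as ``straightforward but tedious'' — identifying the affine edge lengths at $\vv_\sm$ in the $k$-th mutant with $(\tfrac{1}{y},\tfrac{x}{y})$ for the $k$-th inner corner $(x,y)$ — is the entire mathematical content of the proposition; as written, your proposal asserts the existence of the bijection and the requisite polynomial identities rather than establishing them. Moreover, the relevant orbit is not the whole mutation tree but the specific sequence of full mutations at the top-left vertex, under which the non-Delzant vertices cyclically permute (cf.\ the discussion preceding Lemma~\ref{lem:rigid_dP_mut_T_sings}); restricting to $r_1=1$ in \eqref{eq:gen_Mark} happens to cut out exactly this chain in the two-stranded triangular cases, but that coincidence itself needs justification, and in the quadrilateral ($J=3$) cases there is no Markov-type equation to lean on. Second, your appeal to Lemma~\ref{lem:full_mut_pres_sides} to conclude that the side count and the Delzant vertex are preserved is not quite sufficient: that lemma allows the number of sides to drop by one, and ruling this out for the quadrilaterals requires the additional input — itself a computation in \cite{cristofaro2020infinite} — that the eigenray at the mutated vertex always meets the interior of the opposite (horizontal) edge rather than a vertex. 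With those points supplied, your argument coincides with the cited one.
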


If $Q$ is a triangle then the ellipsoidal embedding in Proposition~\ref{prop:inner_corners_vis_ell} fills the entire volume of $ M$ and hence is clearly maximal.  We will explain in 
 \S\ref{subsec:singII_intro} (see in particular Proposition~\ref{prop:out_cor_vis})
 why this is still the case when $Q$ is a quadrilateral via the notion of visible symplectic obstructions.  
 It will follow that the numerics of these staircases 
can be completely understood in terms of structures that are visible  in  suitable families of almost toric bases.

\subsubsection{Cohomology class of the symplectic form}\label{subsubsec:atf_cohom}

For $Q$ a Delzant polygon with edges $e_1,\dots,e_\ell$, recall that the associated toric surface $V_Q$ carries a natural K\"ahler form $\omega$ for which $Q$ is the moment polygon of a Hamiltonian $\TT^2$-action (see e.g. \cite[\S6.6]{da2003symplectic}).
Using \eqref{eq:hom_of_V_Q} together with the fact that the toric divisor $\Ddiv_{e_i}$ has symplectic area $\afflen(e_i)$ for $i = 1,\dots,\ell$, this characterizes the cohomology class $[\omega] \in H^2(V_Q;\R)$.

We seek to extend this to almost toric fibrations of the type $\pi: \atf(Q_\nodal) \ra Q_\nodal$ constructed in \S\ref{subsubsec:nodal_int_from_poly}.
Under the homeomorphic identification $Q \cong Q_\nodal$, the vertices $\vv_1,\dots,\vv_\ell$ of $Q$ now correspond to corank one elliptic values in $Q_\nodal$.
In particular, the edge preimages $\pi^{-1}(e_i) \subset \atf(Q_\nodal)$ are symplectic annuli rather than two-spheres, so they do not a priori represent homology classes.
However, note that $r_i$ times the circle $\pi^{-1}(\vv_i)$ bounds the Lagrangian pinwheel $\LL_{[\bb_i^{m_i},\vv_i]}$ discussed in \S\ref{subsubsec:vis_Lag_symp}.
Therefore the rational cycle
\begin{align}
\wt{\Ddiv}_{e_i} := \pi^{-1}(e_i) - \tfrac{1}{r_i}\cdot \LL_{[\bb_i^{m_i},\vv_i]} + \tfrac{1}{r_{i+1}}\cdot \LL_{[\bb_{i+1}^{m_{i+1}},\vv_{i+1}]}
\end{align}
defines a homology class $[\wt{\Ddiv}_{e_i}] \in H_2(\atf(Q_\nodal);\Q)$.

The following results are easily checked.  Note also that the cycle 
$[\wt{\Ddiv}_{e_i}]$ has symplectic area equal to the integral affine length of the edge $e_i$.
Using the cut-and-paste discussion in \S\ref{subsubsec:nodal_int_from_poly} (similar to Symington's Euler computation for almost toric fibrations given in \cite[\S8]{symington71four}), we have:

\begin{lemma}\label{cor:symp_form_in_ATF0}
 The rational homology group $H_2(\atf(Q_\nodal);\Q)$ is generated by $[\wt{\Ddiv}_{e_i}]$ for $i=1,\dots,\ell$ and 
 the Lagrangian sphere classes $[\LL_{[\bb_i^j,\bb_i^{j+1}]}]$ for $i=1,\dots,\ell$ and $j = 1,\dots,m_i-1$.
\end{lemma}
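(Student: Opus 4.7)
The plan is to apply Mayer--Vietoris to the cut-and-paste decomposition of $\atf(Q_\nodal)$ from \S\ref{subsubsec:nodal_int_from_poly}. Specifically, I would write $\atf(Q_\nodal) = W \cup B$, where $W := \pi^{-1}(Q \setminus \bigsqcup_i U_{\vv_i})$ is the preimage of the complement of small open neighborhoods of the vertices $\vv_1, \dots, \vv_\ell$, and $B := \bigsqcup_{i=1}^\ell \pi^{-1}(U_{\vv_i})$ has each component symplectomorphic to a neighborhood of the singular fiber in the Milnor fiber $B_{m_i, r_i, a_i}$ from \eqref{eq:B_m_r_a}. The overlap $W \cap B$ deformation retracts onto the disjoint union of lens spaces $\bigsqcup_i L(n_i, q_i)$, where $n_i = m_i r_i^2$ and $q_i = m_i r_i a_i - 1$. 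Since lens spaces are rational homology spheres, $H_j(W \cap B; \Q) = 0$ for $j = 1, 2$, and the Mayer--Vietoris sequence collapses to an isomorphism $H_2(\atf(Q_\nodal); \Q) \cong H_2(W; \Q) \oplus H_2(B; \Q)$.

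I would next identify each summand with the corresponding subset of proposed generators. For $H_2(B; \Q) = \bigoplus_i H_2(B_{m_i, r_i, a_i}; \Q)$, each $B_{m_i, r_i, a_i}$ is a smoothing of a $T$-singularity with rational second Betti number $m_i - 1$, and the visible Lagrangian spheres $\LL_{[\bb_i^j, \bb_i^{j+1}]}$ for $j = 1, \dots, m_i - 1$ intersect in the pattern of the $A_{m_i - 1}$ Dynkin diagram, hence form a basis. For $H_2(W; \Q)$, the long exact sequence of the pair $(V_Q, W)$, combined with excision and the vanishing of reduced rational homology of a cone on a lens space in degrees $\leq 2$, gives $H_2(W; \Q) \cong H_2(V_Q; \Q)$, which by \eqref{eq:hom_of_V_Q} is rationally generated by the toric divisor classes $[\Ddiv_{e_i}]$.

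The remaining task would be to show that the classes $[\wt{\Ddiv}_{e_i}]$ project to a generating set for the $H_2(W;\Q)$ summand. The point of the pinwheel correction terms $\pm \tfrac{1}{r_i} \LL_{[\bb_i^{m_i}, \vv_i]}$ in the definition of $\wt{\Ddiv}_{e_i}$ is precisely to cancel the boundary of the annulus $\pi^{-1}(e_i)$ lying in the collar $W \cap B$: the pinwheel $\LL_{[\bb_i^{m_i}, \vv_i]}$ has boundary $r_i \cdot \pi^{-1}(\vv_i)$, which matches the boundary of $\pi^{-1}(e_i)$ at its endpoint $\vv_i$ (and similarly at $\vv_{i+1}$). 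Consequently, the image of $[\wt{\Ddiv}_{e_i}]$ in the first factor of the Mayer--Vietoris splitting should correspond, via $H_2(W;\Q) \cong H_2(V_Q;\Q)$, to $[\Ddiv_{e_i}]$.

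The main obstacle is making this last compatibility precise, i.e.\ tracking how a mixed cycle like $\wt{\Ddiv}_{e_i}$, built from pieces in both $W$ and $B$, decomposes under Mayer--Vietoris. I would approach this by replacing each pinwheel correction with a rational 2-chain supported in the lens space $L(n_i, q_i) \subset \bdy W$ bounding the same boundary (such chains exist because $H_1(L(n_i, q_i); \Q) = 0$) to obtain a genuine rational cycle in $W$; one then shows that the discrepancy with $\wt{\Ddiv}_{e_i}$ lies in $H_2(B;\Q)$ and is a rational combination of the $[\LL_{[\bb_i^j, \bb_i^{j+1}]}]$. As an alternative, one can bypass this comparison entirely by verifying that the intersection pairing matrix among the proposed generators has the expected rank $\sum_i m_i - 2$, which suffices to conclude that they generate $H_2(\atf(Q_\nodal); \Q)$ as a $\Q$-vector space.
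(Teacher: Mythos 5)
Your Mayer--Vietoris argument is correct and is exactly the intended fleshing-out of the paper's justification, which states the lemma with no proof beyond an appeal to the cut-and-paste description of $\atf(Q_\nodal)$: the decomposition into $W$ and the pieces $B_{m_i,r_i,a_i}$ glued along rational homology sphere lens spaces, the identification $H_2(W;\Q)\cong H_2(V_Q;\Q)$, and the $A_{m_i-1}$-chains of visible Lagrangian spheres are all as intended. The only cosmetic points are that for singular $V_Q$ one needs the rational (simplicial toric) version of \eqref{eq:hom_of_V_Q} rather than the Delzant case as literally stated in the paper, and your fallback dimension count $\ell-2+\sum_i(m_i-1)=\sum_i m_i-2$ is consistent and suffices to close the comparison step you flag as delicate.
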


\begin{cor}\label{cor:PD_to_symp_form_in_ATF}
A homology class $A \in H_2(\atf(Q_\nodal);\R)$ is Poincar\'e dual to the cohomology class of the symplectic form if and only if we have $A \cdot [\wt{\Ddiv}_{e_i}] = \afflen(e_i)$ for $i=1,\dots,\ell$ and $A \cdot [\LL_{[\bb_i^j,\bb_i^{j+1}]}] = 0$ for $i=1,\dots,\ell$ and $j = 1,\dots,m_i-1$.
\end{cor}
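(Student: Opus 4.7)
The plan is to combine the previous lemma (which identifies an explicit rational spanning set of $H_2(\atf(Q_\nodal);\Q)$) with direct area computations. Since $H_2(\atf(Q_\nodal);\Q)$ is generated by the classes $[\wt{\Ddiv}_{e_i}]$ and $[\LL_{[\bb_i^j,\bb_i^{j+1}]}]$, Poincar\'e duality reduces the claim to verifying the two identities
\begin{align*}
\int_{\wt{\Ddiv}_{e_i}} \omega = \afflen(e_i), \qquad \int_{\LL_{[\bb_i^j,\bb_i^{j+1}]}} \omega = 0.
\end{align*}
The second identity is immediate since $\LL_{[\bb_i^j,\bb_i^{j+1}]}$ is a visible Lagrangian two-sphere (see \S\ref{subsubsec:vis_Lag_symp}\ref{item:vis_Lag_sphere}), so it suffices to focus on the first. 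By the very definition of $\wt{\Ddiv}_{e_i}$ and the fact that the Lagrangian pinwheels $\LL_{[\bb_i^{m_i},\vv_i]}$ have vanishing symplectic area (they fiber over straight rays with the tautological $1$-form vanishing on them), the problem reduces to computing the symplectic area of the annulus $\pi^{-1}(e_i)$.

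To compute $\int_{\pi^{-1}(e_i)} \omega$, I would use the cut-and-paste description from \S\ref{subsubsec:nodal_int_from_poly}: away from small neighborhoods of the vertices $\vv_i$ and $\vv_{i+1}$, the almost toric fibration $\pi$ agrees with the standard toric moment map on $V_Q$. In the toric model one knows (e.g.\ \cite[\S6.6]{da2003symplectic}) that the toric divisor $\Ddiv_{e_i}$ has symplectic area equal to the affine length of $e_i$; this can be seen by pulling back $\omega$ to the preimage of the edge $e_i$ via a local action-angle chart, where $\omega = dx_1\wedge d\theta_1 + dx_2 \wedge d\theta_2$ and the fiber $\pi^{-1}(\bb)$ collapses a single circle over an edge point. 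Thus over an interior subsegment $e_i' \subset e_i$, the symplectic area of $\pi^{-1}(e_i')$ is $\afflen(e_i')$. Letting $e_i'$ exhaust $e_i$ then gives $\int_{\pi^{-1}(e_i)}\omega = \afflen(e_i)$ (using Moser stability/continuity to absorb the tails near $\vv_i,\vv_{i+1}$ whose contribution tends to zero as $e_i' \to e_i$).

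The main technical point is to check that the corrections coming from the almost toric modifications near each vertex $\vv_i$ genuinely have zero net contribution. I would verify this by working in the local model $B_{m_i,r_i,a_i}$ equipped with $\pi_\aur$ from \eqref{eq:pi_aur}: the pinwheel $\LL_{[\bb_i^{m_i},\vv_i]}$ bounds the circle $r_i \cdot \pi^{-1}(\vv_i)$ (which lies in $\bdy \pi^{-1}(e_i)$ when viewed as a cycle with boundary), so the algebraic combination $\pi^{-1}(e_i) - \tfrac{1}{r_i}\LL_{[\bb_i^{m_i},\vv_i]} + \tfrac{1}{r_{i+1}}\LL_{[\bb_{i+1}^{m_{i+1}},\vv_{i+1}]}$ defines a closed rational cycle whose symplectic area coincides with that of the corresponding toric divisor, by Stokes' theorem applied to a primitive of $\omega$ in a neighborhood of each $\pi^{-1}(\vv_i)$ minus $\vv_i$ itself. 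Combining the two identities then gives the claim.
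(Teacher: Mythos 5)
Your proof is correct and follows essentially the same route the paper intends: the corollary is deduced from the preceding lemma's spanning set by checking that the Lagrangian spheres have zero area and that $\wt{\Ddiv}_{e_i}$ has area $\afflen(e_i)$, the latter because the pinwheel corrections are Lagrangian and the annulus $\pi^{-1}(e_i)$ has area equal to the affine length in the toric/action-angle model. The only implicit step worth making explicit is that nondegeneracy of the intersection form on the closed four-manifold is what converts agreement of pairings on a spanning set into the equality $A = \PD[\omega]$.
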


\sss

By Lemma~\ref{lem:delz_Q_dual_Fano_c1_crit}, if $Q$ is a Delzant polygon  the toric symplectic four-manifold $V_Q$ is monotone with monotonicity constant $1$ (i.e. $[\omega] = \pd(c_1)$) if and only if $Q$ is dual Fano.
This extends to  almost toric manifolds as follows:

\begin{prop}\label{prop:dual_Fano_monotone}
If $Q$ is a dual Fano $T$-polygon, then the symplectic four-manifold $\atf(Q_\nodal)$ is monotone with monotonicity constant $1$.
\end{prop}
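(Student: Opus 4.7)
The plan is to apply Corollary~\ref{cor:PD_to_symp_form_in_ATF} to reduce the monotonicity equality $[\omega_{\atf(Q_\nodal)}] = c_1$ in $H^2(\atf(Q_\nodal); \R)$ to verifying the two intersection identities
\begin{align*}
\text{(i)} \quad c_1 \cdot [\LL_{[\bb_i^j, \bb_i^{j+1}]}] = 0, \qquad
\text{(ii)} \quad c_1 \cdot [\wt{\Ddiv}_{e_i}] = \afflen(e_i).
\end{align*}
Claim (i) is the standard vanishing of $c_1$ on Lagrangian classes: for any $\omega$-compatible almost complex structure $J$, the isomorphism $T\atf(Q_\nodal)|_L \cong TL \otimes_\R \C$ implies $c_1|_L = 0 \in H^2(L; \Q)$, so the pairing against the Lagrangian sphere class vanishes.

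For claim (ii), I would pass to the algebraic world via Proposition~\ref{prop:QG_def_diff_ATF}: fix a sufficiently close $\Q$-Gorenstein smoothing $\wt{V}_Q$ of $V_Q$ with complex structure $\wt{J}$ and a diffeomorphism $\Phi: \wt{V}_Q \to \atf(Q_\nodal)$ such that $\Phi_*\wt{J}$ tames the symplectic form (so that $c_1$ can be computed from $\wt{J}$). The toric divisor $\Ddiv_{e_i} \subset V_Q$ is $\Q$-Cartier and extends to a $\Q$-Cartier divisor $\calD_{e_i}$ on the total space of the smoothing, whose general fiber restriction is an algebraic divisor $\wt{\Ddiv}_{e_i}^{\op{alg}} \subset \wt{V}_Q$. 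The main technical step is to show $\Phi^{-1}_*[\wt{\Ddiv}_{e_i}] = [\wt{\Ddiv}_{e_i}^{\op{alg}}]$ in $H_2(\wt{V}_Q; \Q)$, which requires a local analysis at each $T$-vertex comparing the cut-and-paste of \S\ref{subsubsec:nodal_int_from_poly} with the miniversal smoothing~\eqref{eq:T_sing_miniversal}, accounting for the fractional Lagrangian pinwheel corrections that make $\wt{\Ddiv}_{e_i}$ a closed rational cycle.

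Granting this compatibility, since $-K_{\calX/\calS} = \sum_j \calD_{e_j}$ is $\Q$-Cartier and restricts fiber-wise, one has $-K_{\wt{V}_Q} = \sum_j [\wt{\Ddiv}_{e_j}^{\op{alg}}]$ in $\op{Pic}(\wt{V}_Q) \otimes \Q$, and deformation invariance of $\Q$-Cartier intersection numbers in the flat family gives
\begin{align*}
c_1 \cdot [\wt{\Ddiv}_{e_i}] \;=\; \sum_j [\wt{\Ddiv}_{e_j}^{\op{alg}}] \cdot [\wt{\Ddiv}_{e_i}^{\op{alg}}] \;=\; (-K_{V_Q}) \cdot \Ddiv_{e_i},
\end{align*}
the right-hand side being taken in the $\Q$-intersection theory of the singular toric surface $V_Q$. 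The final identity $(-K_{V_Q}) \cdot \Ddiv_{e_i} = \afflen(e_i)$ is the orbifold-toric analogue of Lemma~\ref{lem:delz_Q_dual_Fano_c1_crit}: the $\Q$-Weil version of the short exact sequence~\eqref{eq:hom_of_V_Q} holds on $V_Q$, and the computation in the proof of Lemma~\ref{lem:delz_Q_dual_Fano_c1_crit} transcribes verbatim, using only that every edge of $Q$ has height one.

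The main obstacle I expect is the compatibility step in the previous paragraph: verifying that the symplectic rational cycle $\wt{\Ddiv}_{e_i}$, built from $\pi^{-1}(e_i)$ plus fractional pinwheel caps, is $\Q$-homologous under $\Phi$ to the algebraic proper transform $\wt{\Ddiv}_{e_i}^{\op{alg}}$. This is a purely local statement at each $T$-vertex and should follow from matching the Auroux fibration~\eqref{eq:pi_aur} on $B_{m,r,a}$ with the holomorphic miniversal family near the corresponding $\tfrac{1}{mr^2}(1,mra-1)$ singularity, converting the algebraic divisor fiber-by-fiber into the almost toric annulus together with the precise pinwheel fractions dictated by the monodromy.
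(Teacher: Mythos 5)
Your reduction of the statement to the two identities $c_1\cdot[\LL_{[\bb_i^j,\bb_i^{j+1}]}]=0$ and $c_1\cdot[\wt{\Ddiv}_{e_i}]=\afflen(e_i)$ via Corollary~\ref{cor:PD_to_symp_form_in_ATF} is valid, and part (i) is fine; this is a genuinely different route from the paper, which instead uses mutation-invariance (Proposition~\ref{prop:mut_imply_symp}) to reduce to ten explicit representatives, disposes of the two Delzant ones by Lemma~\ref{lem:delz_Q_dual_Fano_c1_crit}, and handles the triangular ones by a homology-basis argument plus a visible-disk test class, computing $c_1$ throughout via Lemma~\ref{lem:c_1_is_bdy}. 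However, your step (ii) has a genuine gap, located exactly where you put your flag but deeper than you suggest. The $\Q$-Gorenstein condition guarantees that the \emph{relative anticanonical} divisor is $\Q$-Cartier on the total space; it does not give an extension of the individual components $\Ddiv_{e_i}$. In the local model $\{xy=z^{rm}+\cdots+C_0\}/\roots_r^{1,-1,a}$ the central-fiber boundary divisor $\{y=z=0\}$ is not contained in any nearby fiber at all: the locus $\{y=0\}$ in a general fiber is a disjoint union of $m$ lines whose flat limit is the axis with multiplicity, so at best a multiple of the local divisor class specializes, and the multiples forced at the two endpoints of a single edge need not match. This is precisely why the paper must define $\wt{\Ddiv}_{e_i}$ with fractional pinwheel caps; the honest statement is homological (a comparison of the topological specialization map $H^2(V_Q;\Q)\to H^2(\wt{V}_Q;\Q)$ with the cut-and-paste of \S\ref{subsubsec:nodal_int_from_poly}), and establishing it is the entire content of the argument rather than a routine compatibility check. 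As written, the assertion that $\Ddiv_{e_i}$ ``extends to a $\Q$-Cartier divisor on the total space whose general fiber restriction is an algebraic divisor'' is false in the local model.

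A secondary problem: the final identity does not follow by transcribing the proof of Lemma~\ref{lem:delz_Q_dual_Fano_c1_crit} verbatim. That proof uses $[\Ddiv_{e_1}]\cdot[\Ddiv_{e_2}]=1$ (true only at Delzant vertices; adjacent boundary divisors meeting at a $\tfrac{1}{n}(1,q)$ point have intersection number $\tfrac{1}{n}$) and the smooth adjunction formula, which acquires orbifold correction terms on $V_Q$. The conclusion $(-K_{V_Q})\cdot\Ddiv_{e_i}=\afflen(e_i)$ is nonetheless correct and standard for height-one polygons, since $\sum_j\Ddiv_{e_j}$ is the $\Q$-ample divisor whose polytope is exactly $Q$; so this piece is repairable, whereas the specialization-of-cycles step above is the one that needs real work before the proposal constitutes a proof.
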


Before beginning the proof, we recall the following result.

\begin{lemma}[{\cite[Prop. 8.2]{symington71four}}] \label{lem:c_1_is_bdy}
 For a general $T$-polygon $Q$ with edges $e_1,\dots,e_\ell$, 
the full boundary preimage $\pi^{-1}(\bdy Q_\nodal) = \bigcup\limits_{i=1}^\ell \pi^{-1}(e_i)$ is Poincar\'e dual to $c_1(\atf(Q_\nodal))$.
\end{lemma}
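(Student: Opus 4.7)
The plan is to combine Lemma~\ref{lem:c_1_is_bdy} with Corollary~\ref{cor:PD_to_symp_form_in_ATF} to reduce the claimed equality $[\om] = c_1(\atf(Q_\nodal))$ in $H^2(\atf(Q_\nodal);\R)$ to a small number of intersection number checks, and then handle those using the standard fact that $c_1$ vanishes on Lagrangian $2$-spheres together with an orbifold adjunction computation on $V_Q$ transferred via the $\Q$-Gorenstein smoothing of Proposition~\ref{prop:QG_def_diff_ATF}.

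First I will observe the cyclic cancellation identity
\begin{align*}
  \sum_{i=1}^\ell [\wt{\Ddiv}_{e_i}] \;=\; [\pi^{-1}(\bdy Q_\nodal)] \;\in\; H_2(\atf(Q_\nodal);\Q),
\end{align*}
which is immediate from the definition $\wt{\Ddiv}_{e_i} = \pi^{-1}(e_i) - \tfrac{1}{r_i}\LL_{[\bb_i^{m_i},\vv_i]} + \tfrac{1}{r_{i+1}}\LL_{[\bb_{i+1}^{m_{i+1}},\vv_{i+1}]}$: in the cyclic sum the pinwheel at each vertex $\vv_i$ enters once with coefficient $-\tfrac{1}{r_i}$ (from $\wt{\Ddiv}_{e_i}$) and once with $+\tfrac{1}{r_i}$ (from $\wt{\Ddiv}_{e_{i-1}}$), so only $\sum_i \pi^{-1}(e_i) = \pi^{-1}(\bdy Q_\nodal)$ survives. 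Combined with Lemma~\ref{lem:c_1_is_bdy}, this yields $\PD(c_1) = \sum_i [\wt{\Ddiv}_{e_i}]$.

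Next, by Corollary~\ref{cor:PD_to_symp_form_in_ATF} the class $\PD(c_1)$ equals $\PD[\om]$ exactly when it pairs with each $[\LL_{[\bb_i^j,\bb_i^{j+1}]}]$ to give $0$ and with each $[\wt{\Ddiv}_{e_i}]$ to give $\afflen(e_i)$. The Lagrangian sphere pairings vanish by the standard Weinstein-neighborhood identification $NL \cong T^*L$, which forces $c_1(TM)|_L = c_1(TL) + c_1(T^*L) = 0$ for any Lagrangian $2$-sphere $L$. For the divisor pairings, substituting the identity from the previous step reduces the claim to
\begin{align*}
  \sum_{j=1}^\ell [\wt{\Ddiv}_{e_j}] \cdot [\wt{\Ddiv}_{e_i}] \;=\; \afflen(e_i)
\end{align*}
for each edge $e_i$. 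Since $[\wt{\Ddiv}_{e_i}]$ and $[\wt{\Ddiv}_{e_j}]$ can be made disjoint when $e_i$ and $e_j$ share no vertex, only the three terms $j \in \{i-1,i,i+1\}$ contribute.

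To compute those remaining three intersection numbers I plan to invoke Proposition~\ref{prop:QG_def_diff_ATF}: a $\Q$-Gorenstein smoothing $\wt{V}_Q$ of the toric orbifold $V_Q$ is diffeomorphic to $\atf(Q_\nodal)$ in a way that carries the rational class $[\wt{\Ddiv}_{e_j}]$ to the class of the toric divisor $\Ddiv_{e_j}$ on $V_Q$. Since rational intersection pairings are preserved along the $\Q$-Gorenstein family, the sum above equals the orbifold intersection $-K_{V_Q} \cdot [\Ddiv_{e_i}]$ on $V_Q$. Under the dual Fano hypothesis every edge height equals $1$, so the orbifold extension of the calculation in the proof of Lemma~\ref{lem:delz_Q_dual_Fano_c1_crit} gives $-K_{V_Q} \cdot [\Ddiv_{e_i}] = \afflen(e_i)$, completing the argument. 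The main obstacle is the orbifold intersection bookkeeping at each $T$-vertex of type $\tfrac{1}{mr^2}(1,mra-1)$: there the two adjacent toric divisors on $V_Q$ meet with rational intersection $\tfrac{1}{mr^2}$, and one must verify that the fractional pinwheel corrections $\pm\tfrac{1}{r}\LL_{[\bb^m,\vv]}$ in the definition of $\wt{\Ddiv}_{e_j}$ reproduce exactly this intersection after smoothing, so that the global sum telescopes into $\afflen(e_i)$ just as in the smooth Delzant case.
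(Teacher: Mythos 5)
Your proposal does not prove the stated lemma; it proves (or rather sketches) a different result, namely Proposition~\ref{prop:dual_Fano_monotone}, the monotonicity statement $[\om_{\atf(Q_\nodal)}] = \PD(c_1)$ for \emph{dual Fano} $T$-polygons. The lemma you were asked to prove is the assertion that $c_1(\atf(Q_\nodal))$ is Poincar\'e dual to the boundary preimage $\pi^{-1}(\bdy Q_\nodal)$ for a \emph{general} $T$-polygon, with no dual Fano hypothesis. Worse, your second paragraph explicitly invokes Lemma~\ref{lem:c_1_is_bdy} as a known input (``Combined with Lemma~\ref{lem:c_1_is_bdy}, this yields $\PD(c_1) = \sum_i [\wt{\Ddiv}_{e_i}]$''), so as a proof of that lemma the argument is circular. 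The final paragraph then appeals to ``the dual Fano hypothesis'' to conclude every edge has height one, which is not available here and in any case only bears on the symplectic area computation, not on the identification of $c_1$.

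What the lemma actually requires is an identification of the anticanonical class of the almost toric manifold with the class of the boundary divisor; in the paper this is simply quoted from Symington \cite[Prop.~8.2]{symington71four} and not re-proved. A self-contained argument would have to produce the first Chern class directly --- for instance by exhibiting a complex structure on the fibers away from $\pi^{-1}(\bdy Q_\nodal)$ with a nonvanishing section of the anticanonical bundle (the Lagrangian torus fibration over $B^\reg$ trivializes $\La^2_\C T M$ there, and one then accounts for the zeros along the elliptic locus and checks the focus-focus points contribute nothing), or by transporting the toric computation $-K_{V_Q} = \sum_i \Ddiv_{e_i}$ through the $\Q$-Gorenstein smoothing of Proposition~\ref{prop:QG_def_diff_ATF} and matching $[\Ddiv_{e_i}]$ with the rational classes $[\wt{\Ddiv}_{e_i}]$, using that the pinwheel corrections are Lagrangian and hence $c_1$-trivial. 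Your opening cancellation identity $\sum_i [\wt{\Ddiv}_{e_i}] = [\pi^{-1}(\bdy Q_\nodal)]$ is correct and would be a useful ingredient in the second route, but as written your argument assumes the conclusion rather than deriving it.
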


\begin{proof}[Proof of Proposition~\ref{prop:dual_Fano_monotone}]
By \cite[Thm. 1.2]{kasprzyk2017minimality} there are precisely $10$ mutation equivalence classes of dual Fano $T$-polygons, corresponding to the $10$ topological types of smooth del Pezzo surfaces.
Since mutations imply symplectomorphisms (see Proposition~\ref{prop:mut_imply_symp}), it suffices to check the result for one representative in each of the $10$ equivalence classes.

 In fact, \cite{vianna2017infinitely} shows that $8$ of the $10$ mutation equivalence classes of dual Fano $T$-polygons have a triangular representative. Moreover, the two exceptions, $\CP^1(3) \# \ovl{\CP}^2(1)$ and $\CP^1(3) \#^{\times 2} \ovl{\CP}^2(1)$, instead have Delzant representatives, for which the result follows directly by Lemma~\ref{lem:delz_Q_dual_Fano_c1_crit} since in the smooth case $\atf(Q) = V_Q$.

Now suppose that $Q \subset \R^2$ is a dual Fano $T$-triangle. 
By Lemma~\ref{lem:c_1_is_bdy} and the preceding discussion, the boundary preimage $[\pi^{-1}(\bdy Q_\nodal)]$ together with the visible Lagrangian spheres $[\LL_{[\bb_i^j,\bb_i^{j+1}]}]$ for $i=1,\dots,\ell$ and $j=1,\dots,m_i-1$ form a basis for $H_2(\atf(Q_\nodal);\Q)$, and since $c_1([\LL_{[\bb_i^j,\bb_i^{j+1}]}]) = [\LL_{[\bb_i^j,\bb_i^{j+1}]}] \cdot [\pi^{-1}(\bdy Q_\nodal)] = 0$, it follows that $\atf(Q_\nodal)$ is monotone.

To see that the monotonicity constant is $1$, we will define a test homology class in $H_2(\atf(Q_\nodal);\R)$ and check that its symplectic area agrees with its Chern number.
Let $e_1,e_2,e_3$ denote the edges of $Q$, where $e_i$ has primitive outward normal vector $(p_i,q_i) \in \Z^2$ and affine length $\ell_i \in \R_{>0}$, so that we have $\sum\limits_{i=1}^3 \ell_i \cdot (p_i,q_i) = (0,0)$.
 Let $\ga_i \subset \R^2$ denote a straight line segment
 which starts at the origin and ends on  the interior of the edge $e_i$, let $\al_i$ denote the covector field along $\ga_i$ with constant value $p_idx_1 + q_idx_2$, and let $C_i := C_{(\ga_i,\al_i)} \subset \atf(Q_\nodal)$ denote the corresponding visible symplectic disk as in \S\ref{subsubsec:vis_Lag_symp}.
Using \eqref{eq:area_vis_symp} and the fact that $Q$ is dual Fano, the symplectic area of $C_i$ is $1$.
  Also, note that $\bdy C_i$ is a circle in the Lagrangian torus fiber $\pi^{-1}(\vec{0})$ for $i=1,2,3$, and we have 
 \begin{align*}
 \sum\limits_{i=1}^3 \ell_i [\bdy C_i] = 0 \in H_1(\pi^{-1}(\vec{0});\R).
 \end{align*}
Thus we can find a Lagrangian $\R$-chain $\calL$ in $\pi^{-1}(\vec{0})$ such that 
$\sum\limits_{i=1}^3 \ell_i C_i + \calL$ is an $\R$-cycle in $\atf(Q_\nodal)$, say representing a class $A \in H_2(\atf(Q_\nodal);\R)$.
The symplectic area of $A$ is $\sum\limits_{i=1}^3 \ell_i\, \op{area}(C_i) = \ell_1 + \ell_2 + \ell_3$, and by Lemma~\ref{lem:c_1_is_bdy} its Chern number is also $A \cdot [\pi^{-1}(\bdy Q_\nodal)] = \ell_1 + \ell_2 + \ell_3$.
\end{proof}

\subsection{Rational curves in toric surfaces}\label{subsec:rat_curves_tor_surf}

We now turn our attention to the construction of rational algebraic curves in toric surfaces, typically of strictly positive index.
Our approach here is to write down explicit formulas for rational curves in the dense complex torus $(\C^*)^2 \subset V_Q$, and then take their closures to obtain rational curves in $V_Q$.
This could be seen as a step in the direction of a tropical-to-holomorphic correspondence as in \cite{Mikhalkin_JAMS}, but here we focus primarily on those curves which we will need in \S\ref{subsec:inflatable_curves}, keeping the discussion as elementary and explicit as possible.

As before, let $\MM$ be a rank two lattice with dual lattice $\NN$, and put $\MM_\R = \MM \otimes_\Z \R$ and $\NN_\R = \NN \otimes_\Z \R$. In the following, we will say that a polygon $Q \subset \MM_\R$ is \hl{rational} if all of its vertices lie in $\MM_\Q := \MM \otimes_\Z \Q$. Note that any rational polygon becomes a lattice polygon after scaling by some positive integer.

\begin{prop}\label{prop:curve_in_rat_surf}
Let $Q \subset \MM_\R$ be a rational polygon with edges $e_1,\dots,e_\ell$ and corresponding primitive inward normals $\vecn_1,\dots,\vecn_\ell \in \NN$. 
Suppose that we are given a set (possibly empty) of positive integers $J_{e_i} = \{k^i_1,\dots,k^i_{s_i}\}$ for each $i=1,\dots,\ell$, such that the following balancing condition holds
\begin{align}\label{eq:rat_curve_prop_bal}
\sum_{i=1}^\ell \sum\limits_{j=1}^{s_i} k^i_j \vecn_i = \vec{0},
\end{align}
and such that the quantities $d_i :=\sum\limits_{j=1}^{s_i}k^i_j$ satisfy $\gcd(d_1,\dots,d_\ell) = 1$.
Then there exists a rational
algebraic curve $C$ in $V_Q$ such that, for $i=1,\dots,\ell$, $C$ intersects $\Ddiv_{e_i}$ in its interior in precisely $s_i$ local branches, with  contact orders $k^i_1,\dots,k^i_{s_i}$, and $C$ is otherwise disjoint from $\Ddiv_{e_i}$.
\end{prop}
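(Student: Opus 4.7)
The plan is to build $C$ by constructing an explicit parametrization $f \colon \CP^1 \to V_Q$. I would first choose distinct points $z^i_j \in \C \subset \CP^1$ (for $i = 1,\dots,\ell$, $j = 1,\dots,s_i$) and a basepoint $c \in (\C^*)^2 \subset V_Q$, and then specify a rational map into the dense torus by prescribing its composition with each character:
\[
\chi^u \circ f(t) \;:=\; \chi^u(c) \cdot \prod_{i,j}(t - z^i_j)^{k^i_j \lan \vecn_i,\, u\ran}, \qquad u \in \MM.
\]
This assignment is multiplicative in $u$ and so defines a morphism from $\CP^1 \setminus (\{z^i_j\} \cup \{\infty\})$ into $\hom(\MM,\C^*) = (\C^*)^2$.

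Next I would extend $f$ over the finite set of bad points into $V_Q$. At $t = \infty$, the balancing condition \eqref{eq:rat_curve_prop_bal} gives $\op{ord}_\infty(\chi^u \circ f) = -\lan \sum_i d_i \vecn_i,\, u\ran = 0$ for every $u \in \MM$, so $f$ extends across $\infty$ into $(\C^*)^2$. To extend across $z^i_j$, I would use the $1$-cone $\sigma_i := \vecn_i \cdot \R_{\geq 0}$ in the normal fan of $Q$ and its affine chart $U_{\sigma_i} \subset V_Q$, whose orbit stratification consists only of the open torus together with the smooth locus of $\Ddiv_{e_i}$ (no toric fixed points). For any $u \in \sigma_i^\vee \cap \MM$ one has $\op{ord}_{z^i_j}(\chi^u \circ f) = k^i_j \lan \vecn_i, u\ran \geq 0$, so $f$ extends across $z^i_j$ into $U_{\sigma_i}$, landing on a smooth point of $\Ddiv_{e_i}$. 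Since $\vecn_i$ is primitive, any $u_i \in \MM$ with $\lan \vecn_i, u_i \ran = 1$ furnishes a local defining equation $\chi^{u_i}$ for $\Ddiv_{e_i}$, and $\chi^{u_i} \circ f$ vanishes to order exactly $k^i_j$ at $z^i_j$, so the contact order of $C := f(\CP^1)$ with $\Ddiv_{e_i}$ at $f(z^i_j)$ is precisely $k^i_j$. For generic choices of $\{z^i_j\}$ and $c$ the images $f(z^i_j)$ are distinct, producing the $s_i$ prescribed local branches.

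Finally, I would verify that $f$ is birational onto $C$, which is where the coprimality hypothesis enters. Factor $f = g \circ p$, where $g \colon \CP^1 \to V_Q$ is the normalization of $C$ and $p \colon \CP^1 \to \CP^1$ has some degree $m$. Comparing divisors of $\chi^u \circ f = p^*(\chi^u \circ g)$ on $\CP^1$, one finds at each $z^i_j$ that $k^i_j = n_\alpha \cdot \op{mult}_{z^i_j}(p)$, where $n_\alpha$ is the contact order of $C$ with $\Ddiv_{e_i}$ at $g(p(z^i_j))$. Grouping by fibers of $p$ and using both that $\sum_{z \in p^{-1}(w)} \op{mult}_z(p) = m$ and that $f^{-1}(\Ddiv_{e_i}) = \{z^i_j\}_j$ by construction, summing over $j$ yields $d_i = m \cdot (C \cdot \Ddiv_{e_i})$, so $m$ divides each $d_i$. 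The hypothesis $\gcd(d_1,\dots,d_\ell) = 1$ then forces $m = 1$, so $f$ is birational and $C$ is rational. The main technical obstacle I expect is the bookkeeping in this last step — correctly tracking which points of $\CP^1$ lie in each fiber of $p$ and excluding extraneous preimages in $f^{-1}(\Ddiv_{e_i})$ — while the earlier extension steps reduce to standard toric manipulations.
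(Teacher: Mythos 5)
Your construction is essentially identical to the paper's proof: both parametrize the curve by an explicit monomial map $\CP^1 \dashrightarrow (\C^*)^2$ with zero/pole orders $k^i_j\vecn_i$ at prescribed points, use the balancing condition to extend over $\infty$, and compute contact orders in the affine chart of the ray $\R_{\geq 0}\vecn_i$ (the paper just fixes coordinates with $\vecn_i=(0,1)$ where you argue via characters and $\sigma_i^\vee$). The one point where you go beyond the paper is the final birationality step using $\gcd(d_1,\dots,d_\ell)=1$ — the paper takes the closure of the image without explicitly ruling out a multiple cover — and your divisibility argument $m \mid d_i$ there is correct and worth keeping.
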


\begin{example}
Figure~\ref{fig:ATF_3_pics} middle gives a tropical representation of the curve $C$ in Proposition~\ref{prop:curve_in_rat_surf} in the case that $Q$ is a triangle with vertices $(-1,-1),(-1,1),(3,-1)$, with $J_{e_1} = \{2\}$, $J_{e_2} = \{1\}$, $J_{e_3} = \{1\}$.
\end{example}

Let $\Si$ denote the punctured Riemann surface $\CP^1 \setminus \{w_1,\dots,w_\vk\}$ for some pairwise distinct $w_1,\dots,w_\vk \in \CP^1$.
Given $\vecfrako = (\frako_1,\dots,\frako_\vk) \in \Z^{\vk}$, there is a unique (up to choice of constant $A \in \C^*$) holomorphic function $f_{\Si,\vecfrako}: \Si \ra \C^*$ with zero of order $\frako_i$ (i.e. pole of order $-\frako_i$) at the puncture $w_i$ for $i = 1,\dots,\vk$ and no other zeros or poles, given explicitly by
\begin{align}
f_{\Si,\vecfrako}(z) = A(z-w_1)^{\frako_1}\cdots (z-w_\vk)^{\frako_\vk}.
\end{align}
For simplicity we will usually take $A=1$.

By \hl{ordered toric degree} we will mean a tuple 
$\tordeg = (\vecv_1,\dots,\vecv_\vk)$ 
for some $\vk \in \Z_{\geq 2}$, with 
$\vecv_1,\dots,\vecv_\vk \in \NN \setminus \{\vec{0}\}$ such that $\sum_{i=1}^\vk \vecv_i = \vec{0}$.
Let $T_\NN \subset V_Q$ denote the complex torus $\NN \otimes_\Z \C^*$, which is identified with $(\C^*)^n$ after choosing a basis $\vecb_1,\dots,\vecb_n$ for $\NN$.
Given an ordered toric degree $\tordeg = (\vecv_1,\dots,\vecv_\vk)$, we have the holomorphic function $f_{\Si,\tordeg}: \Si \ra T_\NN$ whose $j$th component with respect to the chosen basis is $f_{\Si,(v_1^j,\dots,v_\vk^j)}$, where we put $\vecv_i = \sum_{j=1}^n v_i^j \vecb_j$ for $i = 1,\dots,\vk$.  
More explicitly, in the case $\NN_\R = \R^2$ with $\vecb_1,\vecb_2$ the standard basis, we put
\begin{align}\label{eq:f_Si_tor}
f_{\Si,\tordeg}(z) = (f_x(z),f_y(z)) = \left((z-w_1)^{v_1^x}\cdots (z-w_\ell)^{v_\ell^x},(z-w_1)^{v_1^y}\cdots (z-w_\ell)^{v_\ell^y} \right),
\end{align}
with $\vecv_i = (v_i^x,v_i^y)$ for $i=1,\dots,\vk$.

\begin{proof}[Proof of Proposition~\ref{prop:curve_in_rat_surf}]

Put $\tordeg := (k_1^1\vecn_1,\dots,k_{s_1}^1\vecn_1,\dots,k^\ell_1 \vecn_\ell,\dots,k^\ell_{s_\ell}\vecn_\ell)$, let $\Si$ be $\CP^1$ minus $\sum\limits_{i=1}^\ell \sum\limits_{j=1}^{s_i}k_j^i$ punctures, and let $C$ be the closure of the image of $f_{\Si,\tordeg}: \Si \ra (\C^*)^2 \subset V_Q$. 
We will show that at the first puncture, $w_1$, $C$ intersects $\Ddiv_{e_1}$ in its interior with contact order $k_1^1$, the situation being similar at the other punctures by symmetry.
We may assume that our basis for $\NN$ is chosen such that $\vecn_1 = (0,1)$.
Let $\si = \R_{\geq 0}\lan \vecn_1 \ran \subset \NN_\R$ be the cone generated by $\vecn_1$, with the dual cone $\si^\vee \subset \MM_\R$ generated by $(0,1),(1,0),(-1,0)$.
The corresponding affine toric variety $U_\si$ is identified with $\{(z_1,z_2,z_3) \in \C^3\;|\; z_1z_3 = 1\}$, with $\Ddiv_{e_1} \cap U_\si = \{(z_1,z_2,z_3) \in U_\si\;|\; z_2 = 0\}$ and 
 with the inclusion map $\iota: (\C^*)^2 \ra U_\si$ given by $(x,y) \mapsto (x,y,x^{-1})$.
We thus have $(\iota \circ f_{\Si,\tordeg})(z) = (f_x(z),f_y(z),f_x(z)^{-1})$, and therefore
\begin{align*}
\lim\limits_{z \ra w_1} (\iota \circ f_{\Si,\tordeg})(z) &= \lim\limits_{z \ra w_1} (A (z-w_1)^{k_1^1 n_1^x},B (z-w_1)^{k_1^1 n_1^y},A^{-1} (z-w_1)^{-k_1^1 n_1^x}) \\&= \lim\limits_{z \ra w_1} (A (z-w_1)^{k_1^1 \cdot 0},B (z-w_1)^{k_1^1 \cdot 1},A^{-1} (z-w_1)^{-k_1^1 \cdot 0}) \\&= (A,0,A^{-1})
\end{align*}
for some constants $A,B \in \C^*$.
The corresponding contact order with $\Ddiv_{e_1}$ is given by the vanishing order of $f_y(z)$ as $z \ra w_1$, which is $k_1^1$.
\end{proof}

\begin{cor}\label{cor:curve_hitting_each_edge_once}
Assume that $Q \subset \MM_\R$ is a lattice polygon such that the edge affine lengths $\afflen(e_1),\dots,\afflen(e_\ell) \in \Z_{\geq 1}$ are coprime.
Then there exists a
 rational algebraic curve $C$ in $V_Q$ such that
$C$ intersects $\Ddiv_{e_i}$ in a single point in its interior with multiplicity $\afflen(e_i)$ for $i=1,\dots,\ell$.
\end{cor}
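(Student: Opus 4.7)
The plan is to deduce this corollary as a direct specialization of Proposition~\ref{prop:curve_in_rat_surf}. For each edge $e_i$, I will take $s_i = 1$ and $k_1^i = \afflen(e_i)$, so that $J_{e_i} = \{\afflen(e_i)\}$ and the associated quantity $d_i = \afflen(e_i)$. The hypothesis that $\gcd(\afflen(e_1),\dots,\afflen(e_\ell)) = 1$ is then precisely the coprimality condition required in Proposition~\ref{prop:curve_in_rat_surf}, so the only remaining thing to verify is the balancing condition
\begin{align*}
\sum_{i=1}^\ell \afflen(e_i) \vecn_i = \vec{0}.
\end{align*}

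This balancing identity is a well-known elementary fact about lattice polygons. Indeed, if $J: \NN_\R \to \NN_\R$ denotes the $90$-degree rotation, then $J(\vecn_i)$ is a primitive lattice vector parallel to the edge $e_i$, and $\afflen(e_i) J(\vecn_i)$ (with appropriate sign depending on the orientation of the boundary) equals the edge vector from one endpoint of $e_i$ to the other. Summing these edge vectors around the boundary of $Q$ gives $\vec{0}$ since $\bdy Q$ is a closed polygonal loop, and applying $J^{-1}$ then yields the displayed identity.

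With this balancing check in hand, Proposition~\ref{prop:curve_in_rat_surf} produces a rational algebraic curve $C \subset V_Q$ whose intersection with each toric divisor $\Ddiv_{e_i}$ consists of a single interior point with contact order $\afflen(e_i)$, which is exactly the claim. I do not anticipate any serious obstacle: the entire content of the corollary is the observation that the coprimality hypothesis is precisely what one needs to invoke the proposition, the balancing relation being automatic for lattice polygons.
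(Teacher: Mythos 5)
Your proposal is correct and is essentially identical to the paper's own proof: the paper likewise sets $J_{e_i} = \{\afflen(e_i)\}$ and observes that the balancing condition follows because the edge vectors of the closed polygon sum to zero. Your explicit rotation argument relating edge vectors to primitive inward normals just spells out the step the paper leaves implicit.
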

\begin{proof}
Put $J_{e_i} = \{\afflen(e_i)\}$ for $i=1,\dots,\ell$.
Since $Q$ is a closed polygon, we have $\sum\limits_{i=1}^\ell e_i = \vec{0}$, which implies the balancing condition \eqref{eq:rat_curve_prop_bal}.
\end{proof}

The following is an algebraic counterpart of the visible symplectic curves over straight lines discussed in \S\ref{subsubsec:vis_Lag_symp}:
\begin{cor}\label{cor:rat_curve_par_edges}
 Let $Q \subset \MM_\R$ be a rational polygon having two parallel edges $e_+,e_-$.
Then there exists a nonsingular rational algebraic curve $C$ in $V_Q$ which intersects each of $\Ddiv_{e_+},\Ddiv_{e_-}$ transversely in one point and is disjoint from the other toric divisors. 
\end{cor}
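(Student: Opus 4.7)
The plan is to reduce the statement to a direct application of Proposition~\ref{prop:curve_in_rat_surf}, taking $J_{e_+} = J_{e_-} = \{1\}$ and $J_{e_i} = \emptyset$ for all other edges $e_i$. First I would verify the two hypotheses of that proposition. Since $e_+$ and $e_-$ are two parallel edges of a convex polygon, the primitive inward normals $\vecn_+,\vecn_-$ point in opposite directions and hence satisfy $\vecn_+ = -\vecn_-$ as primitive lattice vectors; therefore the balancing condition \eqref{eq:rat_curve_prop_bal} reduces to $\vecn_+ + \vecn_- = \vec 0$, which is automatic. The coprimality condition is trivial since $d_+ = d_- = 1$, so $\gcd(d_1,\dots,d_\ell)=1$ regardless of the zero values of the other $d_i$. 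Proposition~\ref{prop:curve_in_rat_surf} then supplies a rational algebraic curve $C \subset V_Q$ that meets $\Ddiv_{e_+}$ and $\Ddiv_{e_-}$ each once transversely (with local contact order one) and is disjoint from the remaining toric divisors.

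The only point requiring additional care is the nonsingularity of $C$. To handle this I would examine the explicit parametrization produced in the proof of Proposition~\ref{prop:curve_in_rat_surf}. After applying a lattice automorphism of $\NN$ I may assume $\vecn_+ = (0,1)$ and $\vecn_- = (0,-1)$, so the ordered toric degree is $\tordeg = (\vecn_+,\vecn_-)$ with only two punctures $w_1,w_2 \in \CP^1$. Formula~\eqref{eq:f_Si_tor} then gives
\begin{align*}
f_{\Si,\tordeg}(z) = \bigl(1,\;(z-w_1)(z-w_2)^{-1}\bigr),
\end{align*}
which is an embedding of $\Si = \C^*$ into the sub-torus $\{x = 1\} \subset (\C^*)^2$. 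Thus $C$ is the closure in $V_Q$ of this sub-torus, and geometrically it is a fiber of the rational map $V_Q \dashrightarrow \CP^1$ associated to the projection $\NN \to \NN/\Z\vecn_+$ (i.e.\ the projection along the direction perpendicular to $e_\pm$ in the moment picture). Such a fiber closes up to a smooth $\CP^1$ meeting $\Ddiv_{e_\pm}$ transversely once each, confirming both nonsingularity and rationality.

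The main (minor) obstacle is the nonsingularity verification, which I would carry out by the normalization-fiber description above rather than a local Jacobian computation. Everything else is formal: balancing, coprimality, and the intersection pattern are immediate consequences of Proposition~\ref{prop:curve_in_rat_surf}, and the curve's geometric identification as a toric $\CP^1$-fiber makes smoothness manifest.
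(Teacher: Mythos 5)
Your argument is correct and is exactly the route the paper intends: the corollary is stated as an immediate consequence of Proposition~\ref{prop:curve_in_rat_surf} with $J_{e_\pm}=\{1\}$ and the other $J_{e_i}$ empty, the balancing condition holding because parallel edges of a convex polygon have opposite primitive inward normals. Your extra verification of nonsingularity via the explicit parametrization $f_{\Si,\tordeg}(z)=(A,B(z-w_1)(z-w_2)^{-1})$ — an injective immersion whose closure adds two smooth transverse boundary points away from the torus-fixed points — correctly fills in the one detail the paper leaves implicit.
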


We will also need to know that the curves constructed above are suitably robust  under deformations of the complex structure.
Let $\calM(J_{e_1},\dots,J_{e_\ell})$ denote the (uncompactified) moduli space of curves $C$ as in Proposition~\ref{prop:curve_in_rat_surf}.
Here we view curves in $\calM(J_{e_1},\dots,J_{e_\ell})$ as holomorphic maps $\CP^1 \ra V_Q$ (modulo biholomorphic reparametrization) having specified intersection pattern with the toric divisors $\Ddiv_{e_1},\dots,\Ddiv_{e_\ell}$.
\begin{lemma}\label{lem:mod_sp_is_reg}
  The moduli space $\calM(J_{e_1},\dots,J_{e_\ell})$ is regular.
\end{lemma}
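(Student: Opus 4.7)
The plan is to verify that $\calM = \calM(J_{e_1},\dots,J_{e_\ell})$ is smooth of the expected complex dimension $\vk - 1$ (where $\vk := \sum_i s_i$), which is equivalent to vanishing of the obstruction space of the linearized Cauchy--Riemann operator with tangency constraints, and hence to regularity. To do this I would compute the dimension in two ways---via an explicit parametrization and via the index formula---and observe that they agree.

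For the explicit parametrization, I would observe that any $u \in \calM$ restricts to a holomorphic map $\CP^1 \setminus \{w^i_j\} \to T_\NN \subset V_Q$ into the dense algebraic torus, and in particular avoids the singular locus of $V_Q$ (the toric fixed points), since by hypothesis $u$ only meets each $\Ddiv_{e_i}$ in its interior. Writing the coordinates of $u$ in a basis of characters of $T_\NN$, each coordinate becomes a rational function on $\CP^1$ whose divisor is uniquely prescribed by the tangency data $\{k^i_j\}$ and hence is determined up to a single multiplicative constant $A \in T_\NN$, exactly as in the formula \eqref{eq:f_Si_tor} appearing in the proof of Proposition~\ref{prop:curve_in_rat_surf}. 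This realizes $\calM$ as an open subset of the quotient of the space of distinct tuples $(w_1,\dots,w_\vk) \in (\CP^1)^\vk$ together with constants $A \in T_\NN$, by the diagonal $\op{PSL}_2(\C)$-action on $(\CP^1)^\vk$ (and a finite group permuting punctures lying on the same divisor with equal multiplicity). A dimension count gives $\dim_\C \calM = \vk + 2 - 3 = \vk - 1$.

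Next I would compute the virtual dimension of $\calM$. For a rational curve $C$ in the smooth locus of $V_Q$ meeting toric divisors $\Ddiv_{e_i}$ at variable points with tangency orders $k^i_j$, the standard index formula gives virtual complex dimension
\begin{equation*}
  c_1([C]) - 1 - \sum_{i,j}(k^i_j - 1).
\end{equation*}
Using that $\sum_i \Ddiv_{e_i}$ is an anticanonical divisor on $V_Q$ and that $[C]\cdot[\Ddiv_{e_i}] = \sum_j k^i_j$ by construction, we have $c_1([C]) = \sum_{i,j}k^i_j$, so the virtual dimension simplifies to $\vk - 1$, matching the actual dimension computed above. Since the actual moduli space is smooth of the expected dimension, the obstruction space at each point is zero, i.e.\ $\calM$ is regular.

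The one technical point I expect to require care is verifying that the above parametrization truly surjects onto $\calM$---i.e.\ that every element of $\calM$ arises from some $(A, w_1, \dots, w_\vk)$, rather than being a degenerate (for instance multiply-covered) map not captured by the rational-function description. This is precisely where the coprimality hypothesis $\gcd(d_1,\dots,d_\ell) = 1$ from Proposition~\ref{prop:curve_in_rat_surf} plays its role: it forces the map $u$ to be generically injective onto its image, ruling out multiple covers and ensuring that the divisor data on $\CP^1$ determines $u$ up to the single multiplicative constant $A$.
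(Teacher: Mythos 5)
Your route is genuinely different from the paper's: the paper deduces regularity from Wendl's automatic transversality criterion $\ind_\R(u) > 2Z(du)-2$, bounding the total vanishing order $Z(du)$ of the derivative by writing out the logarithmic derivative of $f_{\Si,\tordeg}$ explicitly. You instead try to parametrize $\calM$ and match its dimension against the virtual dimension. Both dimension counts you perform are correct ($\vk+2-3=\vk-1$ for the parametrization, and $c_1([C])-1-\sum_{i,j}(k^i_j-1)=\vk-1$ using that $\sum_i \Ddiv_{e_i}$ is anticanonical). Incidentally, the surjectivity of the parametrization --- the point you flag as the delicate one --- is actually unproblematic and does not rest on the $\gcd$ hypothesis: any $u\in\calM$ restricts to a map $\Si\to T_\NN$ whose coordinates are meromorphic functions on $\CP^1$ with divisors prescribed by the contact orders, and such functions are determined up to multiplicative constants.

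The genuine gap is the final inference: \emph{a moduli space that is (set-theoretically) smooth of the expected dimension need not be regular}. Regularity means $\op{coker} D_u = 0$ for the linearized operator. Your parametrization shows that the solution set contains a $(\vk-1)$-dimensional holomorphic family through $u$, which gives the lower bound $\dim_\C \ker D_u \geq \vk-1$; the index formula gives $\dim_\C\ker D_u - \dim_\C\op{coker} D_u = \vk-1$. Combining these yields only $\dim_\C\op{coker} D_u \geq 0$, which is vacuous. What is actually needed is the \emph{upper} bound $\dim_\C\ker D_u \leq \vk-1$, i.e.\ that every infinitesimal deformation satisfying the tangency constraints is tangent to your explicit family --- and that is essentially equivalent to the unobstructedness you are trying to prove. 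The model failure is $\{y^2=0\}\subset\C^2$: the zero set is smooth of the expected dimension, yet the differential of the defining section vanishes along it. To close the gap you must control the kernel (equivalently the cokernel) directly, e.g.\ by identifying $\ker D_u$ with sections of a line bundle on $\CP^1$ of degree $\vk-2$ and invoking $h^1=0$, or by the automatic transversality route --- and it is pleasant to note that the explicit formula for $f_{\Si,\tordeg}$ that drives your parametrization is exactly what the paper uses to bound $Z(du)\leq \vk-2$ and verify Wendl's criterion.
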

\begin{proof}
Let $u: \CP^1 \ra V_Q$ be a curve in  $\calM(J_{e_1},\dots,J_{e_\ell})$.
Then $u$ is regular by automatic transversality (see \cite[Thm. 1]{Wendl_aut}) provided that we have $\ind_\R(u) > 2Z(du) - 2$, where $Z(du)$ is the total (complex)
vanishing order of the derivative of $u$ at all of its critical points.
Note that we have $\ind_\R(u) = 2\vk - 2$, where $\vk = \sum\limits_{i=1}^\ell s_i$ is the number of punctures of $\Si$.
Meanwhile, we have 
\begin{align*}
\frac{f_x'(z)}{f_x(z)}=\tfrac{d}{dz} \log(f_x(z)) = \sum\limits_{i=1}^\vk \frac{v_i^x}{z-w_i} = \frac{P(z)}{(z-w_1)\cdots(z-w_\vk)},
\end{align*}
where $P(z)$ is a polynomial of degree at most $\vk-1$ (actually at most $\vk-2$ since $\sum\limits_{i=1}^\vk v_i^x = 0$), so we have $Z(du) \leq \vk-1$ and thus
$\ind_\R(u) = 2\vk -2 > 2\vk - 4 \geq 2Z(du)-2$.
 \end{proof}

\begin{rmk}
In the case $\vk=3$, $f_{\Si,\tordeg}$ is in fact an immersion, i.e. $Z(du) = 0$.
Indeed, without loss of generality we can take $w_1 = 0$, $w_2 = 1$, $w_3 = \infty$, so that \eqref{eq:f_Si_tor} becomes
\begin{align*}
f_{\Si,\tordeg}(z) = \left( z^{v_1^x}(z-1)^{v_2^x},z^{v_1^y}(z-1)^{v_2^y}\right).
\end{align*}
Observe that we have $f_{\Si,\tordeg} = \Phi \circ  g$, 
where $g: \Si \ra (\C^*)^2$, $g(z) = (z,z-1)$ is a parametrization of the (nonsingular) pair of pants $\{x = y+1\} \subset (\C^*)^2$ and $\Phi: (\C^*)^2 \ra (\C^*)^2$, $\Phi(x,y) = (x^{v_1^x}y^{v_2^x},x^{v_1^y}y^{v_2^y})$ is a degree $|\det(\vecv_1,\vecv_2)|$ 
holomorphic covering map (c.f. \cite[\S G.2]{evans2023lectures}).
\end{rmk}

\subsection{Inflatable sesquicuspidal curves and the inner corners}\label{subsec:inflatable_curves}

The goal of this subsection is to prove the following result, which we then use to deduce Theorem~\ref{thmlet:inner_corner_curves}.
Recall that we associate to a $T$-polygon $Q$ an almost toric fibration $\pi: \atf(Q_\nodal) \ra Q_\nodal$ as in \S\ref{subsubsec:nodal_int_from_poly}. 

\begin{thm}\label{thm:sesqui_main}
Let $Q \subset \MM_\R$ be a dual Fano lattice $T$-polygon with a Delzant vertex $\vv_\sm$. Let $e,e'$ be the edges adjacent to $\vv_\sm$, and put $\afflen(e) = kp$ and $\afflen(e') = kq$, where $k,p,q \in \Z_{\geq 1}$ satisfy $\gcd(p,q) = 1$ and $p \geq q$. Assume also that the affine lengths of the edges of $Q$ are coprime.
Then there exists a rational $\Jint$-holomorphic curve $C$ in $\atf(Q_\nodal)$, where:
\begin{itemize}
  \item $C$ has a cusp with Puiseux characteristic $(q;p)$ if $k=1$ and $(kq;kp,kp+1)$ if $k \geq 2$
  \item $C$ is Poincar\'e dual to the symplectic form on $\atf(Q_\nodal)$
    \item $[C] \cdot [C] \geq k^2pq$ 
       \item $\Jint$ is a tame integrable almost complex structure on $\atf(Q_\nodal)$ such that $(\atf(Q_\nodal),\Jint)$ is biholomorphic to a $\Q$-Gorenstein smoothing $\wt{V}_Q$ of $V_Q$.
\end{itemize}
Moreover, when $Q$ is a triangle we can assume that $C$ is unicuspidal.
\end{thm}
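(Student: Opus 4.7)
The strategy is to construct the algebraic curve $C$ as a deformation of an auxiliary rational curve $\ovl{C} \subset V_Q$ into the $\Q$-Gorenstein smoothing $\wt{V}_Q \cong (\atf(Q_\nodal),\Jint)$ supplied by Proposition~\ref{prop:QG_def_diff_ATF}. The curve $\ovl{C}$ is built using the toric parametrizations of Proposition~\ref{prop:curve_in_rat_surf}, and the Puiseux data of the cusp at $\vv_\sm$ is read off from the local deformation.

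First I would choose multiplicity data $J_{e_1},\dots,J_{e_\ell}$ so that the total intersection of $\ovl{C}$ with each toric divisor $\Ddiv_{e_i}$ equals $\afflen(e_i)$; coprimality of the edge affine lengths together with the balancing condition \eqref{eq:rat_curve_prop_bal} ensures that such data exists. Crucially, I would refine the parametrization $f_{\Si,\tordeg}$ from \S\ref{subsec:rat_curves_tor_surf} so that the punctures associated to the two edges $e,e'$ adjacent to $\vv_\sm$ collide at a single puncture $w_1 \in \CP^1$ mapping to $\vv_\sm$; the resulting local image then has analytic equation $y^q = c\,x^p$ (up to units) in toric coordinates at $\vv_\sm$, with $\ovl{C}$ parametrized as a $k$-fold cover of this image. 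For $k=1$ this is already a genuine $(p,q)$-cusp with Puiseux characteristic $(q;p)$.

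Next I would use flatness of the $\Q$-Gorenstein smoothing to deform $\ovl{C}$ to a curve $C$ in $\wt{V}_Q$. Away from $\vv_\sm$ the deformation is controlled by the local models \eqref{eq:T_sing_miniversal} for each $T$-singularity, and near $\vv_\sm$ the deformation is simply a deformation of a smooth point, so the $k=1$ cusp persists unchanged. For $k \geq 2$ the $k$-fold covering parametrization is unstable under generic perturbation, and a careful choice of representative in the versal family breaks it into a cusp with second Puiseux pair $(kp+1,k)$, yielding Puiseux characteristic $(kq;kp,kp+1)$ as described in Example~\ref{ex:kp+1}. The homology condition $[C] = \pd([\omega])$ follows from Corollary~\ref{cor:PD_to_symp_form_in_ATF} together with the choice of intersection data, and the self-intersection lower bound $[C] \cdot [C] \geq k^2 pq$ follows from Lemma~\ref{lem:k^2pq} applied to the minimal resolution of the cusp.

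The main technical obstacle will be controlling the additional singularities of $C$ globally, as the smoothing may a priori introduce nodes where $\ovl{C}$ meets the other $T$-singular vertices of $V_Q$; these are precisely the extra singularities responsible for the ``weakly'' qualifier in Theorem~\ref{thmlet:inner_corner_curves}(b). Regularity of the moduli space (Lemma~\ref{lem:mod_sp_is_reg}) together with a dimension count bounds these extra singularities to be at worst nodal. When $Q$ is a triangle the combinatorial rigidity of $\tordeg$ forced by \eqref{eq:rat_curve_prop_bal} and the coprimality hypothesis makes $\ovl{C}$ essentially unique, and a direct analysis of the three-vertex case shows the only singularity of $C$ is the cusp at $\vv_\sm$, giving unicuspidality.
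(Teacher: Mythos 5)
Your global skeleton (construct a toric rational curve, deform it into the $\Q$-Gorenstein smoothing, identify the smoothing with $\atf(Q_\nodal)$ via Proposition~\ref{prop:QG_def_diff_ATF}) matches the paper's, and your treatment of the homology class and of the $k=1$ case is essentially sound. But the mechanism you propose for producing the cusp is where the argument breaks down for $k\geq 2$. Colliding the punctures over $e$ and $e'$ produces a local parametrization of the form $x=t^{kq}(\cdots)$, $y=t^{kp}(\cdots)$, i.e.\ a $k$-fold multiple cover of a $(p,q)$-cusp, and you then appeal to ``a careful choice of representative in the versal family'' to break this into a cusp with Puiseux characteristic $(kq;kp,kp+1)$. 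This is unjustified: as you yourself note, $\vv_\sm$ is a smooth point of $V_Q$, so the $\Q$-Gorenstein smoothing is trivial near $\vv_\sm$ and gives no control whatsoever over the local deformation of the multiple cover there. A perturbation of $t\mapsto(t^{kq},t^{kp})$ can produce many different singularity types (including $k$ separate branches, or cusps with other second Puiseux pairs), and nothing in your proposal pins down the specific characteristic $(kq;kp,kp+1)$.

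The paper's route differs at exactly this point: one chops off $\vv_\sm$ to form the $(p,q)$-weighted blowup $V_{Q'}$, whose new slant edge $e_\slant$ has affine length $k$; Proposition~\ref{prop:curve_in_rat_surf} then gives a curve $C'$ meeting $\Ddiv_{e_\slant}$ at a single interior point with contact order $k$ and disjoint from the two truncated edges, and one resolves the two new singular vertices and blows down the resulting chain. The resolution combinatorics (Lemma~\ref{lem:partial_res}, Example~\ref{ex:res_of_kq_kp_kp+1}) show that a curve tangent to order $k$ to the last exceptional sphere of the $(p,q)$-resolution blows down to a cusp with Puiseux characteristic exactly $(kq;kp,kp+1)$. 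This construction also yields $[C]\cdot[C]\geq k^2pq$ via adjunction applied to the resolution (your appeal to Lemma~\ref{lem:k^2pq} requires the input $[\wt{C}]\cdot[\wt{C}]\geq 0$, which you do not supply), and unicuspidality in the triangle case via Corollary~\ref{cor:rat_curve_par_edges}, which produces an embedded $C'$ upstairs. Finally, your concern that the smoothing introduces new singularities at the $T$-singular vertices is moot: in either construction the curve meets the toric divisors only in their interiors, away from all singular points of $V_Q$; the ``weakly'' qualifier in Theorem~\ref{thmlet:inner_corner_curves}(b) reflects possible non-nodal self-intersections of the image of $f_{\Si,\tordeg}$ away from the cusp, not an artifact of the smoothing.
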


Note that in the symplectic category we can easily perturb $C$ to make it sesquicuspidal (i.e. positively immersed away from the cusp), although this is not guaranteed as a $\Jint$-holomorphic curve.
In particular, after such a perturbation $C$ satisfies the hypotheses of Theorem~\ref{thmlet:inflation_from_sescusp} with $c=1$, i.e. inflating along $C$ gives a symplectic embedding $E(\tfrac{kq}{c'},\tfrac{kp}{c'}) \hooksymp M$ for any $c' > 1$.
In other words, any visible ellipsoid embedding (in the sense of Proposition~\ref{prop:vis_ell_emb}) can be obtained by symplectic inflation along a sesquicuspidal rational symplectic curve.
Theorem~\ref{thmlet:inner_corner_curves} upgrades this to algebraic curves in the case of rigid del Pezzo surfaces.

\begin{proof}[Proof of Theorem~\ref{thmlet:inner_corner_curves}]

Let $M$ be a rigid del Pezzo surface and let $(x=p/q,y)$ be an inner corner point on the graph of the corresponding ellipsoid embedding function $c_M(x)$.
According to Proposition~\ref{prop:inner_corners_vis_ell},
there exists a dual Fano $T$-polygon $Q \subset \R^2$ such that
\begin{itemize}
  \item $\atf(Q_\nodal)$ is symplectomorphic to $M$ 
  \item $Q$ has a Delzant vertex $\vv_\sm$ with adjacent edges $e,e'$ satisfying $\afflen(e) = \tfrac{1}{y}$ and $\afflen(e') = \tfrac{p}{qy}$
  \item $Q$ is a triangle in the cases $M = \CP^2,\CP^1 \times \CP^2,\CP^2 \#^{\times j}\ovl{\CP}^2$, $j =3,4$, and $Q$ is a quadrilateral in the cases $M = \CP^2 \#^{\times j} \ovl{\CP}^2$, $j = 1,2$.
\end{itemize}
Let $s \in \R_{>0}$ be the minimal scaling factor such that $s \cdot Q \subset \R^2$ is a lattice polygon.
Then $s \cdot Q$ satisfies the hypotheses of Theorem~\ref{thm:sesqui_main}, with $\afflen(s \cdot e) = kq$ and $\afflen(s \cdot e') = kp$ for $k := \tfrac{s}{qy} \in \Z_{\geq 1}$.
Let $C$ be the resulting curve in $\atf(s \cdot Q_\nodal)$.
Here $\atf(s \cdot Q_\nodal)$ is naturally identified as a symplectic manifold with $\atf(Q_\nodal)$ after scaling the symplectic form by $s$. 
In particular, $C$ corresponds to a curve $C'$ in $\atf(Q_\nodal)$ which is Poincar\'e dual to $s$ times the symplectic form.
Note that $(\atf(Q_\nodal),\Jint)$ is a rigid Fano complex surface and hence is necessarily biholomorphic to $M$.
Also, observe when $Q$ is a triangle we must have $k = 1$, since then evidently $s = qy$ is minimal such that $s \cdot Q$ is a lattice triangle.
Thus the curve $C'$ verifies Thereom~\ref{thmlet:inner_corner_curves}.
\end{proof}

Our proof of Theorem~\ref{thm:sesqui_main} will proceed roughly in the following steps:
\begin{enumerate}[label=\arabic*)]
  \item construct a rational curve in a weighted blowup of $V_Q$ using the results in \S\ref{subsec:rat_curves_tor_surf} 
  \item blow down to get a curve with a distinguished cusp in $V_Q$
   \item push this curve into a $\Q$-Gorenstein smoothing $\wt{V}_Q$ of $V_Q$ 
     \item identify $\wt{V}_Q$ diffeomorphically with $\atf(Q_\nodal)$.
\end{enumerate}

We now proceed with the details. Let $Q$ as in Theorem~\ref{thm:sesqui_main}, and let $\vv_1,\dots,\vv_\ell$ denote the vertices of $Q$, with corresponding edges $e_i = [\vv_i,\vv_{i+1}]$ for $i=1,\dots, \ell$ (here $i$ is taken modulo $\ell$).
Here we take $\vv_1 = \vv_\sm$ to be the Delzant vertex, with adjacent edges $e = e_1$ and $e' = e_\ell$.

\begin{lemma}\label{lem:sesqui_curve_in_orbi} With $Q$ as above, there
 exists a rational algebraic curve $C$ in $V_Q$ such that 
\begin{itemize}
  \item for $i = 2,\dots,\ell-1$, $C$ intersects $\Ddiv_{e_i}$ in a single point in its interior with multiplicity $\afflen(e_i)$
  \item $C \cap \Ddiv_{e_1} = C \cap \Ddiv_{e_\ell} = \Ddiv_{e_1} \cap \Ddiv_{e_\ell}$
  \item $C$ has a cusp at the point $\Ddiv_{e_1} \cap \Ddiv_{e_\ell}$ with Puiseux characteristic $(q;p)$ if $k = 1$ and 
  $(kq;kp,kp+1)$ if $k \geq 2$ 
  \item $[C] \cdot [C] \geq k^2pq$.
\end{itemize}
Moreover, when $Q$ is a triangle (i.e. $\ell = 3$) we can assume that $C$ is unicuspidal.
\end{lemma}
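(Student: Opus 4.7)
The plan is to construct $C$ as the closure of the image of an explicit rational map from a punctured $\CP^1$, generalizing the construction of Proposition~\ref{prop:curve_in_rat_surf} to accommodate a puncture whose toric degree points through the Delzant corner $\vv_\sm$ rather than into the interior of an edge. After an integral affine transformation I may assume $\vv_\sm = (0,0)$ with $\vecn_1 = (0,1)$ and $\vecn_\ell = (1,0)$, so that $\Ddiv_{e_1}, \Ddiv_{e_\ell}$ are the local coordinate axes near $\vv_\sm$. I assign to a distinguished puncture $w_1$ the corner toric degree $\vecv_1 = (kq,kp) = kp\,\vecn_1 + kq\,\vecn_\ell$, and to further punctures $w_2,\dots,w_{\ell-1}$ the edge toric degrees $\vecv_i = \afflen(e_i)\,\vecn_i$; the closed-polygon identity $\sum_i \afflen(e_i)\,\vecn_i = \vec{0}$ (combined with $\afflen(e_1) = kp$, $\afflen(e_\ell) = kq$) yields the balancing $\sum_i \vecv_i = \vec{0}$, and the coprimality hypothesis ensures that the resulting parametrization is generically birational onto its image. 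Let $C$ be the closure of the image of $f_{\Si,\tordeg}: \Si \to (\C^*)^2 \subset V_Q$.

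For $i = 2,\dots,\ell-1$ the local analysis at $w_i$ is identical to that in the proof of Proposition~\ref{prop:curve_in_rat_surf} and shows that $C$ intersects $\Ddiv_{e_i}$ in a single interior point with multiplicity $\afflen(e_i)$. Near $w_1$, writing $s = z-w_1$ in the smooth toric chart at $\vv_\sm$ gives $x(s) = s^{kq}\,U(s)$ and $y(s) = s^{kp}\,V(s)$ with units $U(0), V(0) \neq 0$, so $C$ meets $\vv_\sm$ with local intersection multiplicities $kp$ on $\Ddiv_{e_1}$ and $kq$ on $\Ddiv_{e_\ell}$ and is otherwise disjoint from both. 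Passing to a uniformizer $t$ with $x = t^{kq}$ and inverting the power series produces a Puiseux expansion $y(t) = c\,t^{kp} + \kappa\,t^{kp+1} + O(t^{kp+2})$, for an explicit rational expression $\kappa$ in the leading Taylor data of $U, V$. When $k = 1$, the equality $\gcd(p,q) = 1$ yields the Puiseux characteristic $(q;p)$ unconditionally; when $k \geq 2$, the equality $\gcd(kp,kq) = k$ yields the characteristic $(kq;kp,kp+1)$ provided $\kappa \neq 0$.

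The main obstacle is verifying $\kappa \neq 0$ for some (and hence a generic) choice of parameters. I plan to write $\kappa$ explicitly as a rational function of the puncture locations $w_1,\dots,w_{\ell-1}$ and the overall constants of $f$, and show that it is not identically zero, for instance by specializing to a configuration where the contributions of the remaining punctures to $U'(0), V'(0)$ do not conspire to cancel; combined with the regularity of the parameter space from Lemma~\ref{lem:mod_sp_is_reg}, a generic $C$ then lies in the open locus $\{\kappa \neq 0\}$. The remaining claims follow routinely. The identity $[C]\cdot[\Ddiv_{e_i}] = \afflen(e_i)$ for all $i$ identifies $[C]$ with the $\Q$-class $-K_{V_Q}$ (by the dual Fano hypothesis), so $[C]\cdot[C] = K_{V_Q}^2 = 2\,\op{Area}(Q) \geq k^2 pq$, the lower bound coming from the right triangle with legs $kp$ and $kq$ at $\vv_\sm$ sitting inside $Q$. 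Finally, in the triangle case $\ell = 3$, the edge opposite $\vv_\sm$ has primitive direction $(-p,q)$ and hence affine length $k$, so coprimality of $\{kp, k, kq\}$ forces $k = 1$; the parametrization then reduces after rescaling $z$ to $z \mapsto (z^q, z^p)$, whose closure is the unicuspidal curve $\{x^p = y^q\}$.
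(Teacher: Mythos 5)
Your overall strategy is sound and close in spirit to the paper's, but it diverges at exactly the step that carries the real content, and there you have a genuine gap. The paper does \emph{not} send a puncture directly into the corner $\vv_\sm$. Instead it first passes to the $(p,q)$-weighted blowup $V_{Q'}$ obtained by chopping off $\vv_\sm$ (introducing the edge $e_\slant$), applies Proposition~\ref{prop:curve_in_rat_surf} there to get a curve $C'$ meeting $\Ddiv_{e_\slant}$ in a single \emph{interior} point via one smooth branch of contact order exactly $k$, and then blows back down; the transformation law \eqref{eq:blowup_Puis_char} together with Example~\ref{ex:res_of_kq_kp_kp+1} then \emph{forces} the Puiseux characteristic of the image to be $(q;p)$ or $(kq;kp,kp+1)$, with no genericity hypothesis. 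Your direct parametrization $s \mapsto (s^{kq}U(s), s^{kp}V(s))$ only yields the claimed characteristic for $k\geq 2$ if the coefficient $\kappa$ of $t^{kp+1}$ in the normalized Puiseux expansion is nonzero; if $\kappa$ vanished, the first exponent not divisible by $k$ could be $kp+j$ for some $j\geq 2$, giving a genuinely different (and for the application, wrong) singularity. You correctly identify this as the main obstacle, but the proof you offer is only a plan ("write $\kappa$ explicitly\dots and show that it is not identically zero"); nothing in the balancing or coprimality hypotheses rules out cancellation, and Lemma~\ref{lem:mod_sp_is_reg} gives regularity of the moduli space of maps with prescribed divisor contacts, not openness or density of the locus $\{\kappa\neq 0\}$ inside it. Until $\kappa\neq 0$ is actually established (for at least one configuration), the second and third bullet points of the lemma are unproved for $k\geq 2$. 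The cleanest repair is precisely the paper's detour through $V_{Q'}$, which converts the coefficient condition into the structural statement ``single branch of contact order $k$ at an interior point of the exceptional divisor,'' which Proposition~\ref{prop:curve_in_rat_surf} does guarantee.

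Two smaller remarks. Your argument for $[C]\cdot[C]\geq k^2pq$ (identify $[C]$ with $-K_{V_Q}$ via its intersections with all toric divisors, compute $K_{V_Q}^2 = 2\op{Area}(Q)$, and bound below by the inscribed right triangle with legs $kp,kq$ at $\vv_\sm$) is a legitimate alternative to the paper's route (adjunction applied to the resolution $C^{\op{res}}$ plus Lemma~\ref{lem:k^2pq}), but note that it leans on the dual Fano hypothesis, whereas the paper's computation does not. Your treatment of the triangle case (coprimality of edge lengths forces $k=1$, and the two-puncture parametrization degenerates to the monomial curve $z\mapsto(z^q,z^p)$, which is injective since $\gcd(p,q)=1$) is correct and is essentially equivalent to the paper's use of Corollary~\ref{cor:rat_curve_par_edges}.
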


\begin{proof}[Proof of Lemma~\ref{lem:sesqui_curve_in_orbi}]

Let $Q'$ be the polygon with vertices $\vv_1',\vv_2,\dots,\vv_\ell,\vv_1''$, where $\vv_1' = \tfrac{1}{2} \vv_1 + \tfrac{1}{2}\vv_2$ and $\vv_1'' =  \tfrac{1}{2}\vv_\ell + \tfrac{1}{2} \vv_1$ (i.e. $Q'$ is obtained from $Q$ by ``chopping off'' the vertex $\vv_1$).
Denote the corresponding edges of $Q'$ by 
$e_1',e_2,\dots,e_{\ell-1},e_\ell',e_\slant$,
where $e_1' = [\vv_1',\vv_2]$, $e_{\ell}' = [\vv_\ell,\vv_1'']$, and $e_\slant = [\vv_1'',\vv_1']$.
Let $\Ddiv_{e_1'},\Ddiv_{e_2},\dots,\Ddiv_{e_{\ell-1}},\Ddiv_{e_\ell'},\Ddiv_{e_\slant}$ denote the corresponding toric boundary divisors of the associated toric surface $V_{Q'}$, which is a $(p,q)$-weighted blowup of $V_Q$.
By Proposition~\ref{prop:curve_in_rat_surf}, we can find a rational algebraic curve $C' \subset V_{Q'}$ such that
\begin{itemize}
  \item $C'$ intersects $\Ddiv_{e_\slant}$ in a single point in its interior with multiplicity $k$
  \item for $i = 2,\dots,\ell-1$, $C'$ intersects $\Ddiv_{e_i}$ in a single point in its interior with multiplicity $\afflen(e_i)$
    \item $C'$ is disjoint from $\Ddiv_{e_1'}$ and $\Ddiv_{e_\ell'}$.
\end{itemize}
Note that, in the case $\ell=3$, $C'$ is nonsingular as in Corollary~\ref{cor:rat_curve_par_edges}.

We now consider the image of $C$ under the weighted blowdown $V_{Q'} \ra V_Q$ along $\Ddiv_{e_\slant}$.
More explicitly, we first consider the iterated toric blowup $V^{\op{res}}$ of $V_{Q'}$ which minimally resolves the singularities of $V_{Q'}$ at the toric fixed points corresponding to $\vv'_1$ and $\vv''_1$.
Let $C^{\op{res}}$ denote the proper transform of $C'$ in $V^\op{res}$.
These blowups result in a collection of negative self-intersection spheres $\F_1,\dots,\F_{L}$, where $\F_L$ is the proper transform of $\Ddiv_{e_\slant}$, such that $C^\op{res}$ intersects $\F_L$ in a single point with multiplicity $k$ and is disjoint from $\F_1,\dots,\F_{L-1}$.
In the case $k=1$, the collection $\F_1,\dots,\F_L,C^\op{res}$ has precisely the same intersection pattern as in the normal crossing resolution of a $(p,q)$ cusp (c.f. \S\ref{subsec:toric_p_q}), whence we can blow down along $\F_L,\dots,\F_1$ to obtain a curve $C$ in $V_Q$ with a $(p,q)$ cusp.
Similarly, in the case $k \geq 2$, a comparison with Example~\ref{ex:res_of_kq_kp_kp+1} shows that the blown down curve $C$ has a cusp with Puiseux characteristic $(kq;kp,kp+1)$.

To establish $[C] \cdot [C] \geq k^2pq$, note that $c_1([C^\op{res}])$ is given by the homological intersection number of $[C^\op{res}]$ with the toric boundary divisor of $V^{\op{res}}$. This is evidently at least $2$, so using the adjunction formula we have $[C^{\op{res}}] \cdot [C^{\op{res}}] \geq 0$, which by Lemma~\ref{lem:k^2pq} is equivalent to $[C] \cdot [C] \geq k^2pq$.

Finally, the last sentence of the lemma follows by Corollary~\ref{cor:rat_curve_par_edges}.
\end{proof}

\begin{proof}[Proof of Theorem~\ref{thm:sesqui_main}]

Let $C$ be a rational algebraic curve in $V_Q$ as constructed by Lemma~\ref{lem:sesqui_curve_in_orbi}. In particular, $C$ has a distinguished cusp and satisfies $[C] \cdot [C] \geq k^2pq$.
We view $C$ as having a $J$-holomorphic parametrization $u: \CP^1 \ra V_Q$ with a prescribed cusp singularity at $\Ddiv_{e_1} \cap \Ddiv_{e_\ell}$,
where $J$ is the preferred integrable almost complex structure on $V_Q$.
Since $C$ is regular (see Lemma~\ref{lem:mod_sp_is_reg}) and disjoint from the singularities of $V_Q$, it deforms to a nearby curve $\wt{C}$ with the same type of cusp in a sufficiently small $\Q$-Gorenstein smoothing $\wt{V}_Q$ of $V_Q$.\footnote{Strictly speaking Lemma~\ref{lem:mod_sp_is_reg} says that a resolution $C' \subset V_{Q'}$ is regular (without any cusp condition), which suffices for our purposes since we can readily pass between curves with prescribed cusps and their resolutions (c.f. \cite[\S4.3]{cusps_and_ellipsoids}).}
By Proposition~\ref{prop:QG_def_diff_ATF}, there is a diffeomorphism $\Phi: \wt{V}_Q \ra \atf(Q_\nodal)$ such that $\Phi_*(\wt{J})$ tames the symplectic form on $\atf(Q_\nodal)$ (here $\wt{J}$ is the integrable almost complex structure on $\wt{V}_Q$).
Put $\wt{C}' := \Phi(\wt{C})$. We can assume that the smoothing $\wt{V}_Q$ is such that $[\wt{C}'] \cdot [\wt{\Ddiv}_{e_i}] = [C] \cdot [\Ddiv_{e_i}]$ for $i =1,\dots,\ell$ (here $[\wt{\Ddiv}_{e_1}],\dots,[\wt{\Ddiv}_{e_\ell}] \in H_2(\atf(Q_\nodal);\Q)$ are the homology classes from \S\ref{subsubsec:atf_cohom}), and hence $\wt{C}'$ is Poincar\'e dual to the symplectic form of $\atf(Q_\nodal)$ by Corollary~\ref{cor:PD_to_symp_form_in_ATF}.
Note also that we have $[\wt{C}'] \cdot [\wt{C}'] = [\wt{C}] \cdot [\wt{C}] = [C] \cdot [C] \geq k^2pq$.
Thus $\wt{C}'$ satisfies all of the conclusions of Theorem~\ref{thm:sesqui_main} with $\Jint := \Phi_*(\wt{J})$.
\end{proof}

\section{Singular algebraic curves in almost toric manifolds II}\label{sec:singII}

In this section we develop techniques to construct index zero unicuspidal rational algebraic curves in $\Q$-Gorenstein smoothings of singular toric surfaces. These are closely parallel to visible symplectic curves in almost toric fibrations which pass through a focus-focus singularity (as in \S\ref{subsubsec:vis_Lag_symp}).
In particular, we prove Theorem~\ref{thm:uni_alg_curve}, which is a (slight strengthening of a) restatement of Theorem~\ref{thmlet:nodal}.
The main technique is an explicit construction of $\Q$-Gorenstein pencils associated to polygon mutations as in \cite{ilten2012mutations,akhtar2016mirror}.

More specifically, in \S\ref{subsec:singII_intro} we introduce the notion of visible ellipsoid obstructions, which are carried by symplectic and in fact algebraic unicuspidal curves, and we observe that all obstructions for the rigid del Pezzo infinite staircases are of this type.
After a brief interlude on the Auroux model in \S\ref{subsec:aur_vis}, we discuss $\Q$-Gorenstein pencils in \S\ref{subsec:QG_pencils}, and we use these to construct explicit algebraic curves in $\Q$-Gorenstein smoothings in \S\ref{subsec:expl_uni_alg}.
Finally, in \S\ref{subsec:unicusp_appl} we discuss the classification of index zero unicuspidal rational algebraic curves in the first Hirzebruch surface and prove Theorem~\ref{thmlet:F_1_classif}.

\subsection{Unicuspidal curves from $T$-polygons and the outer corners}\label{subsec:singII_intro}

\subsubsection{Visible ellipsoid obstructions and unicuspidal symplectic curves}

We begin by discussing ellipsoid embedding obstructions which come from visible unicuspidal symplectic curves in the base of an almost toric fibration.
Fix $m,r,a \in \Z_{\geq 1}$ with $\gcd(r,a) = 1$.
Note that we do not assume $a < r$, but we can uniquely write $a = a' + \vs r$ for some $a' \in \{1,\dots,r-1\}$ and $\vs \in \Z_{\geq 0}$.
Let $Q \subset \MM_\R$ be a $T$-polygon with a Delzant vertex $\vv$ adjacent to vertices $\uu$ and $\ww$, where $\ww$ has type $\tfrac{1}{mr^2}(1,mra-1)$, such that the eigenray emanating from $\ww$ intersects the line segment $[\uu,\vv]$ in a point $\pp$.
We will assume that $\MM_\R = \R^2$, $\vv = (0,0)$, $\ww$ lies on the positive $y$-axis, $\uu$ lies on the positive $x$-axis, and the edges emanating from $\ww$ are $(0,-1)$ and $(mr^2,1-mra)$.
In particular, the eigenray emanating from $\ww$ points in the direction $(r,-a)$.

Put $\ell_1 := \afflen([\vv,\ww])$ and $\ell_2 := \afflen([\vv,\pp])$.
Note that we have 
$\tfrac{\ell_1}{\ell_2} = \tfrac{a}{r}$,
 and, because the nodal ray from $\ww$ meets the side $[\uu, \vv]$,
there is a visible ellipsoid embedding in the sense of \S\ref{subsubsec:vis_ell} (c.f. Figure~\ref{fig:ATF_3_pics} right\footnote{Strictly speaking the roles of the $x$ and $y$ axes are swapped in Figure~\ref{fig:ATF_3_pics} right compared with our conventions in this section. There is another (unshaded) ellipsoid obstruction which is visible in this same figure, which is associated with the eigenray emanating from the other vertex (at $(-1,1)$).})  
\begin{align}\label{eq:vis_ell_emb_eray}
E(\tfrac{\ell_1}{c'},\tfrac{\ell_2}{c'}) \hooksymp \atf(Q_\nodal)
\end{align}
for any $c' > 1$. 

\begin{prop}\label{prop:uni_symp_curve} Let $Q$ be a $T$-polygon as above such that the eigenray emanating from $\ww$ is in direction $(r,-a)$.  Then there is an index zero rational $(r,a)$-unicuspidal symplectic curve $C$ in $\atf(Q_\nodal)$ with  area  
  $r\ell_1 = a\ell_2$.
 \end{prop}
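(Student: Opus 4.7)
The plan is to construct $C$ as a visible $(r,a)$-unicuspidal symplectic two-sphere in the almost toric fibration $\pi \colon \atf(Q_\nodal) \to Q_\nodal$, of the type described in item (e) of \S\ref{subsubsec:vis_Lag_symp}. After a nodal slide (which does not alter $\atf(Q_\nodal)$ up to symplectomorphism), I would place a base-node $\bb$ associated to $\ww$ along the eigenray at the intersection of the ray through $\vv = (0,0)$ in direction $(a,r)$ with the eigenray from $\ww = (0,\ell_1)$ in direction $(r,-a)$. A direct calculation gives $\bb = \bigl( ar\ell_1/(r^2+a^2),\, r^2\ell_1/(r^2+a^2) \bigr)$, so the straight line segment $\ga$ from $\vv$ to $\bb$ has affine length $S = r\ell_1/(r^2+a^2)$ in the primitive direction $(a,r)$.

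Near $\vv$, which is Delzant with edges along $(1,0)$ and $(0,1)$, I would take $C$ to coincide with the model visible curve $C_{(a,r)}$ from \eqref{eq:vis_curve_def}, transported via a symplectomorphism identifying a neighborhood of $\vv$ in $\atf(Q_\nodal)$ with a neighborhood of the origin in $\C^2$. Since the $(a,r)$ and $(r,a)$ torus knots are isotopic, this realizes the required symplectic $(r,a)$-cusp at $\vv$. Over the regular portion of $\ga$, extend $C$ by symplectic parallel transport with the constant covector $\al = a\,dx + r\,dy$, whose value at $\vv$ matches the fiber-circle direction $(a,r)$ of $C_{(a,r)}$. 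Finally, near $\bb$ close up the disk into a smooth symplectic two-sphere using the Auroux local model from \S\ref{subsec:aur_vis}; the required perpendicularity condition is the identity $\al((r,-a)) = ar + r(-a) = 0$.

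For the invariants, \eqref{eq:area_vis_symp} gives the area of $C$ as $\int_\ga \al(\ga'(t))\,dt = S \cdot (a^2+r^2) = r\ell_1 = a\ell_2$, matching the prediction obtained by combining the visible ellipsoid embedding \eqref{eq:vis_ell_emb_eray} with the stable obstruction of Theorem~\ref{thm:stab_obs_from_curve}. Since $C$ meets $\pi^{-1}(\bdy Q_\nodal)$ only at the cusp point $\vv$, with local intersection multiplicities $a$ and $r$ against the two edge preimages (computed as the linking numbers of the $(a,r)$ torus knot with the coordinate axes in the toric model), Lemma~\ref{lem:c_1_is_bdy} gives $c_1([C]) = r+a$, whence $\ind_\R(C) = 2(r+a) - 2r - 2a = 0$.

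The delicate point will be the cap-off at $\bb$: one must verify that $C$ closes up into a smooth symplectically embedded two-sphere rather than merely a disk whose boundary sits in a regular Lagrangian torus. This is precisely the role of the Auroux model developed in \S\ref{subsec:aur_vis}: the condition $\al_\bb(\eig_\bb) = 0$ ensures that the $S^1$-orbits in the regular fibers which foliate $C$ collapse onto the focus-focus critical point of $\pi$ lying over $\bb$, providing the missing point and closing up the sphere.
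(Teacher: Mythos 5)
Your proposal is correct and follows essentially the same route as the paper: a nodal slide placing a base-node at the foot of the perpendicular from $\vv$ to the eigenray of $\ww$, followed by the visible $(r,a)$-unicuspidal curve of item \ref{item:vis_symp_uni} over the segment from $\vv$ to that base-node with covector field $\al = a\,dx_1 + r\,dx_2$, and the area computation via \eqref{eq:area_vis_symp}. The extra details you supply (the explicit cusp model \eqref{eq:vis_curve_def}, the Auroux cap-off, and the index computation via Lemma~\ref{lem:c_1_is_bdy}) are exactly the ingredients the paper delegates to \S\ref{subsubsec:vis_Lag_symp}.
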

\begin{proof}
We have $\vv = (0,0)$, $\ww = (0,\ell_1)$ and $\pp = (\ell_2,0)$, with the eigenray emanating from $\ww$ pointing in the direction $(r,-a)$.
Let $(\ga,\al)$ be the covector-decorated path where:
\begin{itemize}
  \item $\ga$ is the straight line segment starting on $\vv$ and ending at a point 
  $(ta,tr)$ on $[\ww,\pp]$ for some $t \in \R_{>0}$ 
  \item $\al$ is the lattice covector field along $\ga$ which vanishes on the eigenray direction $(r,-a)$, i.e. $\al = a dx_1 + r dx_2$.
  \end{itemize}
After a nodal slide, we can assume that $Q_\nodal$ has a base-node at $(ta,tr)$.
Let $C$ be the corresponding visible $(r,a)$-unicuspidal symplectic curve in $\atf(Q_\nodal)$ as in item \ref{item:vis_symp_uni} in \S\ref{subsubsec:vis_Lag_symp}.
By ~\eqref{eq:area_vis_symp}, the symplectic area of $C$ is given by
$\int_\ga \al = \lan(ta,tr),(a,r)\ran = \lan \ww,(a,r) \ran = r\ell_1$.
\end{proof} 
\begin{rmk}\label{rmk:cusp_type_depends_on_vs}
Note that the cusp type $(r,a) = (r,a' + \vs r)$ appearing in Proposition~\ref{prop:uni_symp_curve} depends not just on the vertex type $\tfrac{1}{mr^2}(1,mra-1) = \tfrac{1}{mr^2}(1,mra'-1)$ of $\ww$ but also on the parameter $\vs$, which controls how the eigenray of $\ww$ meets the Delzant vertex $\vv$.
 One can also relax the assumption that $\ww$ is adjacent to $\vv$, provided that the line segment joining $\vv$ perpendicularly to the eigenray of $\ww$ is contained in $Q$.
\end{rmk}

\begin{rmk} \label{rmk:vis_uni_symp_homol}
We can arrange that the curve $C$ in Proposition~\ref{prop:uni_symp_curve} is disjoint from the Lagrangian spheres $\LL_{[\bb_i^j,\bb_i^{j+1}]}$ (see \S\ref{subsubsec:vis_Lag_symp}) and also from the Lagrangian pinwheels $\LL_{[\bb_i^{m_i},\vv_i]}$ unless $\vv_i = \ww$, in which case there is a single transverse intersection point.
Let $[\ww,\qq]$ be the other edge containing $\ww$, where $\qq = \uu$ in the case that $Q$ is a triangle.
Then, using the generators from \S\ref{subsubsec:atf_cohom}, the homology class $[C]$ is characterized by
$[C] \cdot [\LL_{[\bb_i^j,\bb_i^{j+1}]}] = 0$, $[C] \cdot [\wt{\Ddiv}_{[\uu,\vv]}] = r$, $[C] \cdot [\wt{\Ddiv}_{[\vv,\ww]}] = a + 1/r$, $[C] \cdot [\wt{\Ddiv}_{[\ww,\qq]}] = -1/r$, and $[C] \cdot [\wt{\Ddiv}_e] = 0$ for all other edges $e$ of $Q$. In particular, $c_1([C]) = a + r$.

For example if $Q$ is a triangle such that $\atf(Q_\nodal) = \CP^1\times \CP^1$, then, writing $\ell_1,\ell_2$ for the usual generators of $H_2(\atf(Q_\nodal),\Z)$,  we have $[C] = d (\ell_1 + \ell_2)$ where $4d = r+a$ if $(r,-a)$ is the nodal ray of multiplicity $m=1$, and  otherwise $[C] = d_1\ell_1 + d_2\ell_2$ where $2(d_1 +d_2) = r+a$ and $|d_1-d_2| = 1$.
\end{rmk}

 Together with the obstruction provided by Theorem~\ref{thm:stab_obs_from_curve}, Proposition~\ref{prop:uni_symp_curve} immediately gives:
\begin{cor}\label{cor:vis_ell_obs}
Let $Q$ be a $T$-polygon as above such that the eigenray emanating from $\ww$ meets the edge $[\uu,\vv]$. Then
 the embedding \eqref{eq:vis_ell_emb_eray} is optimal in the sense that there is no such embedding for $c' < 1$ (even after stabilizing by $\C^{N \geq 1}$).
\end{cor}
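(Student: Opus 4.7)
The plan is to extract the obstruction directly from the unicuspidal symplectic curve produced by Proposition~\ref{prop:uni_symp_curve} and feed it into the stable ellipsoid obstruction of Theorem~\ref{thm:stab_obs_from_curve}. Since the hypotheses of the corollary match those of Proposition~\ref{prop:uni_symp_curve} verbatim, we obtain an index zero $(r,a)$-unicuspidal symplectic curve $C \subset \atf(Q_\nodal)$ whose symplectic area is $r\ell_1 = a\ell_2$ (the two expressions agree since the eigenray condition forces $\ell_1/\ell_2 = a/r$).

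Next I would apply Theorem~\ref{thm:stab_obs_from_curve} with $(p,q) = (r,a)$ and $A = [C]$, noting that $[\omega]\cdot A = r\ell_1$. The theorem then tells us that any symplectic embedding $E(ca,cr) \hooksymp \atf(Q_\nodal)$ forces
\[
c \;\leq\; \frac{[\omega]\cdot [C]}{ra} \;=\; \frac{r\ell_1}{ra} \;=\; \frac{\ell_1}{a} \;=\; \frac{\ell_2}{r}.
\]
To translate this into the normalization of \eqref{eq:vis_ell_emb_eray}, suppose $E(\ell_1/c',\ell_2/c') \hooksymp \atf(Q_\nodal)$ with $c' > 0$. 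Setting $c := \ell_1/(ac') = \ell_2/(rc')$ (well-defined by $r\ell_1 = a\ell_2$) identifies this embedding with $E(ca,cr) \hooksymp \atf(Q_\nodal)$, and the inequality above becomes $\ell_1/(ac') \leq \ell_1/a$, i.e.\ $c' \geq 1$. This rules out any embedding with $c' < 1$, so \eqref{eq:vis_ell_emb_eray} is optimal.

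For the stabilized statement, I would invoke the second half of Theorem~\ref{thm:stab_obs_from_curve}: the same inequality holds for embeddings $E(ca,cr) \times \C^N \hooksymp \atf(Q_\nodal) \times \C^N$ provided $\atf(Q_\nodal) \times \C^N$ is semipositive. Since $\dim \atf(Q_\nodal) = 4$ the factor $\atf(Q_\nodal)$ is automatically semipositive, so the case $N=1$ is immediate; for larger $N$ semipositivity is guaranteed whenever $\atf(Q_\nodal)$ is monotone, which by Proposition~\ref{prop:dual_Fano_monotone} holds in particular when $Q$ is dual Fano (the case of primary interest for the rigid del Pezzo staircases).

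The argument is essentially a two-line assembly and I do not foresee any serious obstacle; the only subtlety worth flagging is the bookkeeping of parameters in the identification $E(ca,cr) = E(\ell_1/c',\ell_2/c')$, which relies crucially on the eigenray identity $\ell_1/\ell_2 = a/r$ built into the setup preceding Proposition~\ref{prop:uni_symp_curve}.
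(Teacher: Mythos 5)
Your proposal is correct and is exactly the paper's argument: the paper derives the corollary in one line by combining Proposition~\ref{prop:uni_symp_curve} (the visible index zero $(r,a)$-unicuspidal curve of area $r\ell_1=a\ell_2$) with the obstruction of Theorem~\ref{thm:stab_obs_from_curve}, which is precisely your assembly. Your parameter bookkeeping $E(ca,cr)=E(\ell_1/c',\ell_2/c')$ and your caveat about semipositivity for $N\geq 2$ both match the paper's setup.
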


 We will refer to an obstruction as in Corollary~\ref{cor:vis_ell_obs} as a \hl{visible ellipsoid obstruction}, since it is read off visually from the polygon $Q$ as in Figure~\ref{fig:ATF_3_pics} right.
Note that it follows that for any $\pp' \in [\pp,\uu]$ the visible ellipsoid embedding corresponding to the triangle with vertices $\vv,\pp',\ww$ is also optimal. 
In particular, this determines the ellipsoid embedding function for $M = \atf(Q_\nodal)$ on an appropriate closed interval as follows:\footnote{The slightly awkward phrasing division into cases 
comes from the fact that $c_M(x)$ is defined in terms of ellipsoids $E(1,x)$ with $x \geq 1$. 
The case $\ell_2 \leq \ell_1 \leq \ell_2 + \ell_3$ does not occur in nontrivial cases.}

\begin{cor}\label{cor:vis_ell_obs_func}
Let $Q$ be a $T$-polygon as above, with $\ell_1 = \afflen([\vv,\ww]), 
\ell_2 = \afflen([\vv,\pp])$ and  $\atf(Q_\nodal)$ symplectomorphic to $M$, and 
put $\ell_3 := \afflen([\pp,\uu])$. 
We have:
 \begin{enumerate}[label=(\Alph*)]
  \item
$c_M(x) = x/\ell_1$ for all $x \in [\tfrac{\ell_1}{\ell_2+\ell_3},\tfrac{\ell_1}{\ell_2}]$ if $\ell_1 > \ell_2 + \ell_3$;
\item $c_M(x) = \ell_1$ for all $x \in [\tfrac{\ell_2}{\ell_1},\tfrac{\ell_2+\ell_3}{\ell_1}]$ if $\ell_1 < \ell_2$.
\end{enumerate} 
\end{cor}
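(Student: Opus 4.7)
The plan is to pin down $c_M$ on the stated intervals by combining three ingredients that are already in place. First, Corollary~\ref{cor:vis_ell_obs} pins the value of $c_M$ at one endpoint of the interval by means of the visible $(r,a)$-unicuspidal symplectic curve of Proposition~\ref{prop:uni_symp_curve} together with the stable obstruction Theorem~\ref{thm:stab_obs_from_curve}. Second, Proposition~\ref{prop:vis_ell_emb} applied at the Delzant vertex $\vv$, whose adjacent edges have affine lengths $\ell_1$ and $\ell_2+\ell_3$, produces a visible embedding $E(\ell_1/(1+\varepsilon),(\ell_2+\ell_3)/(1+\varepsilon)) \hooksymp M$ for every $\varepsilon>0$, and hence an upper bound for $c_M$ at the opposite endpoint of the interval. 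Third, I will invoke the two general monotonicity properties of the function $c_M$ to interpolate: namely, $c_M(x)$ is nondecreasing, and $c_M(x)/x$ is nonincreasing. Both follow directly from the definition, via the ellipsoid inclusions $E(1/\lambda, y/\lambda) \subset E(1/\lambda, x/\lambda)$ for $y \leq x$ and $E(ax/x', ax) \subset E(a, ax)$ for $x \leq x'$ (after reparametrizing the latter as an ellipsoid of ratio $x'$).

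For case (A), where $\ell_1 > \ell_2 + \ell_3$, Corollary~\ref{cor:vis_ell_obs} gives the sharp value $c_M(\ell_1/\ell_2) = 1/\ell_2$ at the upper endpoint, which lies on the line $y = x/\ell_1$. Proposition~\ref{prop:vis_ell_emb} gives $c_M(\ell_1/(\ell_2+\ell_3)) \leq 1/(\ell_2+\ell_3)$ at the lower endpoint, a value that again lies on (or below) the line $y = x/\ell_1$. For any $x$ in the interior of $[\ell_1/(\ell_2+\ell_3),\ell_1/\ell_2]$, the nonincrease of $c_M(x)/x$ applied at the upper endpoint gives $c_M(x)/x \geq c_M(\ell_1/\ell_2)/(\ell_1/\ell_2) = 1/\ell_1$, while applying it at the lower endpoint gives $c_M(x)/x \leq c_M(\ell_1/(\ell_2+\ell_3))/(\ell_1/(\ell_2+\ell_3)) \leq 1/\ell_1$. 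Hence $c_M(x) = x/\ell_1$ throughout the interval.

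Case (B), with $\ell_1 < \ell_2$, is even simpler since the graph is horizontal. Corollary~\ref{cor:vis_ell_obs} now yields $c_M(\ell_2/\ell_1) = 1/\ell_1$ at the lower endpoint (the role of the two axes of the ellipsoid being swapped so that $x \geq 1$), and Proposition~\ref{prop:vis_ell_emb} yields $c_M((\ell_2+\ell_3)/\ell_1) \leq 1/\ell_1$ at the upper endpoint; the nondecrease of $c_M$ then forces the constant value $1/\ell_1$ on the whole interval $[\ell_2/\ell_1,(\ell_2+\ell_3)/\ell_1]$. I do not anticipate any substantive obstacle: once the obstruction and the construction are combined at the two endpoints, only the elementary monotonicity interpolation remains, and the only thing to check with some care is that the ``$x \geq 1$'' normalization in the definition of $c_M$ is matched correctly to the geometry in each of the two cases.
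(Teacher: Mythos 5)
Your argument is correct and is essentially the paper's own proof: the paper likewise combines the optimality of the visible embedding at $\uu'=\pp$ (Corollary~\ref{cor:vis_ell_obs}) with the family of visible embeddings obtained by sliding $\uu'$ along $[\pp,\uu]$ toward $\uu$, and the nesting of the resulting ellipsoids plays exactly the role of your two monotonicity properties of $c_M$ (using only the extreme triangle at $\uu'=\uu$, as you do, is equivalent). One remark: in case (B) your computation (correctly) yields $c_M(x)=1/\ell_1$, whereas the statement as printed reads $c_M(x)=\ell_1$; the stated value is a typo, as one already sees at the endpoint $x=\ell_2/\ell_1$, where the optimal embedding $E(\ell_1/c',\ell_2/c')\hooksymp M$ for $c'>1$ gives $c_M(\ell_2/\ell_1)=1/\ell_1$, consistent with the outer-corner values $\tfrac{p}{p+q}<1$ of Remark~\ref{rmk:y_val_of_oc_is_aut}.
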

\begin{proof} Both of these claims
 hold by considering the family of embeddings represented by the visible triangles in $Q_\nodal$ with vertices $\ww, \vv, $ and $\uu'$ where $\uu'$ lies on the line between $\pp$ and $\uu$. The fact that the embedding with $\uu'=\pp$ is optimal (by Corollary~\ref{cor:vis_ell_obs}) implies that all the embeddings with $\uu'$ on the line $[\pp, \uu]$ are also optimal.
 If $\ell_1 < \ell_2 $ and $\ell_1 x \in [\ell_2, \ell_2+\ell_3]$, these give rise to optimal embeddings of $E(\ell_1,\ell_1x)$ into
 $\atf(Q_\nodal)$, which implies that $c_M(x)$ is constant over the corresponding interval as in (B).
On the other hand,  if  $\ell_2 \le z \le \ell_2 + \ell_3< \ell_1$ we obtain optimal embeddings   of $E(z,\ell_1)$ into       
  $\atf(Q_\nodal)$.      Setting $x = \frac{\ell_1}{z}> 1$ this translates to an optimal embedding of $E(\frac{\ell_1}x, \ell_1)$
  into       
  $\atf(Q_\nodal)$, which implies (A).
 \end{proof}

We will see in Proposition~\ref{prop:out_cor_vis} below that each of the rigid del Pezzo infinite staircases may be entirely described in this way, with (A) accounting for the sloped line preceding an outer corner and (B) accounting for the horizontal line following an outer corner.

\begin{rmk}
   Let us explain briefly why the obstruction in Corollary~\ref{cor:vis_ell_obs} holds from the perpsective of exceptional homology classes. 
By Proposition~\ref{prop:uni_symp_curve} there is a visible $(p,q)$-unicuspidal symplectic curve $C_{p,q}$ in $\atf(Q_\nodal)$.
After a sequence of blowups at the toric fixed point corresponding to the origin, we arrive at the normal crossing resolution $\wt{C}_{p,q}$, which is an exceptional curve. 
Recall that an exceptional class in a closed symplectic four-manifold $(M,\om)$ has a symplectic representative for every symplectic form $\om'$ that is deformation equivalent to $\om$, and must always have positive symplectic area.
Now consider the visible ellipsoid embedding \eqref{eq:vis_ell_emb_eray} for $c' > 1$, which corresponds to the subtriangle $T' \subset T$ which is a $\tfrac{1}{c'}$-scaling of the shaded triangle in Figure~\ref{fig:ATF_3_pics} right.
The symplectic area of the exceptional class $[\wt{C}_{p,q}]$ is a function of the distance between this slant edge of $T'$ and the eigenray emanating from $\ww$. 
Since this tends to zero as $c' \ra 1$, it follows that the embedding 
\eqref{eq:vis_ell_emb_eray} is optimal.
\end{rmk}

\subsubsection{Visible unicuspidal algebraic curves and outer corner curves}

The primary goal of this section is to show that visible ellipsoid obstructions in fact come from {\em algebraic} curves.  We prove the following Theorem in \S\ref{subsec:expl_uni_alg}.

\begin{thm}\label{thm:uni_alg_curve}  Let $Q \subset \MM_\R$ be a $T$-polygon which contains consecutive edges pointing in the directions $(-mr^2,mra-1),(0,-1),(1,0)$ for $m,r,a \in \Z_{\geq 1}$ with $\gcd(r,a) = 1$.
Then there is an index zero 
  $(r,a)$-unicuspidal rational symplectic curve $C$ in $\atf(Q_\nodal)$ which is $\Jint$-holomorphic, where $\Jint$ is a tame integrable almost complex structure on $\atf(Q_\nodal)$ such that $(\atf(Q_\nodal),\Jint)$ is biholomorphic to a sufficiently small $\Q$-Gorenstein smoothing $\wt{V}_Q$ of $V_Q$. 
Furthermore, we can assume that $C$ is $(r,a)$-well-placed with respect to a $\Jint$-holomorphic rational nodal anticanonical divisor $\calN$.
\end{thm}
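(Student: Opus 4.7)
The plan is to follow the broad outline of the proof of Theorem~\ref{thm:sesqui_main}: construct an explicit rational algebraic curve $\ovl C$ in the singular toric surface $V_Q$ with prescribed behavior at the $T$-vertex $\vv_T$ and at the Delzant vertex $\vv_\sm$, deform $\ovl C$ into a $\Q$-Gorenstein smoothing $\wt V_Q$ of $V_Q$, and finally identify $\wt V_Q$ symplectically with $\atf(Q_\nodal)$ via Proposition~\ref{prop:QG_def_diff_ATF}. The key new feature relative to Theorem~\ref{thm:sesqui_main} is that the $(r,a)$-cusp will now be produced by the $\Q$-Gorenstein smoothing of the $T$-singularity itself, rather than by a weighted blowup at the Delzant vertex. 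Consequently the resulting curve will have index zero (rather than being Poincar\'e dual to $[\omega]$), and must be additionally well-placed with respect to an anticanonical divisor $\calN$.

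After an integral affine normalization we may assume $\vv_\sm = (0,0)$ and $\vv_T = (0,\ell)$ for some $\ell > 0$, with the edge between them in direction $(0,-1)$; the remaining edge at $\vv_\sm$ points in direction $(1,0)$ and the remaining edge at $\vv_T$ in direction $(-mr^2,mra-1)$. Denote by $D_1,D_2$ the toric boundary divisors adjacent to $\pp_{\vv_\sm}$, with $D_2$ running between $\pp_{\vv_\sm}$ and $\pp_{\vv_T}$. Using Proposition~\ref{prop:curve_in_rat_surf} together with the balancing condition \eqref{eq:rat_curve_prop_bal}, we construct a rational algebraic curve $\ovl C \subset V_Q$ whose only intersections with the toric boundary occur at $\pp_{\vv_\sm}$ and $\pp_{\vv_T}$, with local intersection multiplicities $r$ with $D_1$ and $a$ with $D_2$ at $\pp_{\vv_\sm}$, and whose local branch through $\pp_{\vv_T}$ is arranged so that in the hypersurface presentation $\{xy = z^{mr}\}/\roots_r^{1,-1,a}$ of \eqref{eq:T_sing_miniversal} it lifts to a specific smooth germ compatible with the $\roots_r$-action (coming from a monomial parametrization via Proposition~\ref{prop:curve_in_rat_surf} with appropriate weights).

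The main step is to deform $\ovl C$ into the smoothing $\wt V_Q$ and show the deformed curve acquires an $(r,a)$-cusp. Under the $\Q$-Gorenstein miniversal family \eqref{eq:T_sing_miniversal}, the $T$-singularity smooths locally to $\{xy = \prod_{i=1}^m (z^r - \zeta_i)\}/\roots_r^{1,-1,a}$, and the branch of $\ovl C$ through the singular point deforms correspondingly. The crucial local analysis is to show that the deformed branch becomes a curve locally analytically equivalent to the $(r,a)$-cusp $\{u^r = v^a\}$, independent of $m$, by writing down an explicit parametrization downstairs and exploiting the $\roots_r$-equivariant structure of the cover. Global regularity of $\ovl C$ away from the cusp point (needed to extend the local deformation to a global one) follows from automatic transversality applied to its normal crossing resolution, as in Lemma~\ref{lem:mod_sp_is_reg}. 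Together these produce a flat family $\wt{\calC} \subset \calX \to \Delta$ whose generic fiber is the desired $(r,a)$-unicuspidal rational curve $\wt C \subset \wt V_Q$.

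To produce the anticanonical divisor $\calN$ and verify well-placedness, apply the $\Q$-Gorenstein pencil construction of \S\ref{subsec:QG_pencils} to the toric boundary cycle $D = \sum D_e$: in a generic nearby member of the pencil, all nodes at Delzant vertices other than $\vv_\sm$ get smoothed while the node at $\vv_\sm$ persists, yielding a rational nodal anticanonical divisor $\calN \subset \wt V_Q$ whose unique node lies at (the deformation of) $\pp_{\vv_\sm}$. By construction the contact orders $(\wt C \cdot \calB_\pm)_{\db}$ equal the intersection multiplicities $[\ovl C]\cdot[D_1] = r$ and $[\ovl C]\cdot[D_2] = a$, so $\wt C$ is $(r,a)$-well-placed and has index zero by Remark~\ref{rmk:well-placed_index_zero}. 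Pulling everything back along the diffeomorphism of Proposition~\ref{prop:QG_def_diff_ATF} transfers $\wt C$, $\calN$, and a taming integrable $\Jint$ to $\atf(Q_\nodal)$. The main obstacle is the local cusp analysis at $\vv_T$: verifying that the deformation of $\ovl C$ in the $\Q$-Gorenstein direction of the miniversal base produces precisely an $(r,a)$-cusp and not a milder or different singularity requires careful bookkeeping in the $\roots_r$-cover of the $T$-singularity and tracking which direction of the miniversal base corresponds to the $\Q$-Gorenstein smoothing.
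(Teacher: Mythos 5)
Your high-level skeleton (produce the curve inside a $\Q$-Gorenstein smoothing of $V_Q$, arrange well-placedness with respect to an anticanonical divisor, then transfer to $\atf(Q_\nodal)$ via Proposition~\ref{prop:QG_def_diff_ATF}) matches the paper, but the core of your argument --- construct $\ovl{C}$ in the singular central fiber $V_Q$ and then deform it across the smoothing --- is not what the paper does, and it breaks down exactly at the step you flag as ``the main obstacle.'' The paper never produces an interior curve in $V_Q$ at all: it writes down an explicit $\Q$-Gorenstein pencil $S_t = \{xy = \tfrac{1}{1+t}w^{mr} + \tfrac{t}{1+t}z^{ma}\} \subset \CP(1,mra-1,r,a)$ (equation \eqref{eq:penc_tri}, Propositions~\ref{prop:pencil_tri_case} and~\ref{prop:pencil:gen_case}) and exhibits the curve directly in the general fiber by the explicit equations $C_t^+ = \{y=0,\; w^r = \zeta z^a\}$ with $\zeta^m = -t$ (Lemmas~\ref{lem:C^+_well_placed_tri} and~\ref{lem:well_placed_gen}). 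The flat limit of $C_t^+$ as $t \to 0$ is supported on the toric boundary of $V_Q$, not on an interior rational curve, so the object you propose to deform is not the central fiber of the actual family; and deforming a curve through a $T$-singular point across a $\Q$-Gorenstein smoothing is precisely the unjustified step. Moreover Proposition~\ref{prop:curve_in_rat_surf} only yields curves meeting the toric divisors in their \emph{interiors} (its balancing condition and the parametrization $f_{\Si,\tordeg}$ force this), so it cannot be invoked to prescribe branches through the toric fixed points $\pp_{\vv_\sm}$ and $\pp_{\vv_T}$.

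You have also misplaced the cusp. You assert that the $(r,a)$-cusp is ``produced by the $\Q$-Gorenstein smoothing of the $T$-singularity itself'' and that the deformed branch at $\pp_{\vv_T}$ becomes locally $\{u^r = v^a\}$. In fact the cusp sits at the Delzant vertex: in the smooth chart $U_x$ around $[1:0:0:0]$ one has $C_t^+\cap U_x = \{w^r = \zeta z^a\}$, which is literally the $(r,a)$-cusp, well-placed with respect to $\calN_t = \{z=0\}\cup\{w=0\}$, while near the smoothed $T$-singularity the curve is a smooth affine line capping off the vanishing cycle --- exactly mirroring the visible symplectic curve of Proposition~\ref{prop:uni_symp_curve}, whose cusp lies over the Delzant vertex and whose other end lands smoothly on a base-node. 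Finally, your index computation does not go through: any curve in $V_Q$ genuinely passing through both $\pp_{\vv_\sm}$ (with total boundary contact $r+a$) and $\pp_{\vv_T}$ has anticanonical degree strictly greater than $r+a$ by positivity of intersection, and this degree is a deformation invariant, so the resulting curve could be neither index zero nor well-placed (which requires $C \cap \calN = \{\db\}$). The remedy is to abandon the deform-from-the-central-fiber strategy and work with the explicit pencil and explicit curve as in \S\ref{subsec:QG_pencils}--\S\ref{subsec:expl_uni_alg}.
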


\begin{rmk}
 One can check that the curve $C$ in Theorem~\ref{thm:uni_alg_curve} has the same homology class as that in Remark~\ref{rmk:vis_uni_symp_homol}, and in particular it has symplectic area 
$\lan \ww,(a,r)\ran$, where $\ww$ is the vertex lying on the positive $y$-axis. 
 Thus in the case that the eigenray emanating from $\ww$ intersects the edge of $Q$ in direction $(1,0)$, $C$ carries the same visible ellipsoid obstruction as in Corollary~\ref{cor:vis_ell_obs}.
\end{rmk}

\begin{rmk}
The utility of the last part of Theorem~\ref{thm:uni_alg_curve} is that, at least in the rigid del Pezzo case, we can apply iteratively the generalized Orevkov twist from \S\ref{sec:twist}, i.e. for each such curve $C$ we get a whole sequence of index zero unicuspidal rational algebraic curves $\Phi_M(C),\Phi_M^2(C),\Phi_M^3(C),\dots$.

 Recall that the preimage of $\bdy Q_\nodal$ under the almost toric fibration $\pi: \atf(Q_\nodal) \ra Q_\nodal$ is an anticanonical nodal symplectic divisor. If we assume that $Q_\nodal$ is constructed such that there are base-nodes at every vertex except for the Delzant vertex $\vv$ (c.f. Remark~\ref{rmk:nodal_trade}), then $\calN := \pi^{-1}(\bdy Q_\nodal)$ is rational with a single node at $\vv$. 
The visible symplectic curve $C$ in Proposition~\ref{prop:uni_symp_curve} is by construction $(r,a)$-well-placed with respect to $\calN$, and the last part of Theorem~\ref{thm:uni_alg_curve} is an algebraic analogue of this.
\end{rmk}

\sss

Theorem~\ref{thmlet:nodal} follows quickly from Theorem~\ref{thm:uni_alg_curve}, after a preliminary lemma.

\begin{lemma}\label{lem:pert_non_Fano}
Let $V$ be a smooth complex projective surface which is diffeomorphic to a rigid del Pezzo surface $M$. Then $V$ has an arbitrarily small deformation $\wt{V}$ which is biholomorphic to $M$.
\end{lemma}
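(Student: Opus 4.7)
The strategy is to combine the Kodaira--Enriques classification with explicit deformations of rational surfaces.

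First, since $V$ is smooth projective (hence K\"ahler) and diffeomorphic to a rigid del Pezzo $M$, we have $b^+(V) = 1$, $b_1(V) = 0$, and $p_g(V) = 0$, so by the Kodaira--Enriques classification $V$ is rational. If $V$ is a Hirzebruch surface $F_n$ with $n \geq 2$, the classical flat $1$-parameter family with central fiber $F_n$ and generic fiber $F_{n-2}$ provides an arbitrarily small deformation of $V$ to $F_{n-2}$, and iterating we reach $F_0$ or $F_1 = \bl^1\CP^2$, one of which matches $M$. Otherwise, either $V = \CP^2 = M$ (the trivial case), or $V$ is a non-minimal rational surface, which by a standard fact (iteratively blowing down $(-1)$-curves and applying elementary transformations to Hirzebruch factors) is biholomorphic to an iterated blowup of $\CP^2$ at $k = b_2(V) - 1 \leq 4$ ordered points, possibly infinitely near. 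Thus it suffices to handle this last case.

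Next I would construct the deformation via the universal iterated blowup. Let $\calH$ denote the smooth, connected parameter space of ordered $k$-tuples of blowup data on $\CP^2$, built inductively as a tower of $\CP^2$-bundles over previous stages so as to include infinitely near configurations, and let $\mathcal{X} \to \calH$ be the resulting universal smooth proper family; by construction $V$ appears as one of its fibers. For $k \leq 4$, the locus in $\calH$ where the points are distinct and in general position is Zariski open and dense, and over this locus the fibers are biholomorphic to the rigid del Pezzo $M = \bl^k\CP^2$. An arbitrarily small arc in $\calH$ from the parameters of $V$ to a nearby general-position configuration then yields the desired small deformation $\wt{V}$ biholomorphic to $M$.

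The main obstacle is verifying that $\mathcal{X} \to \calH$ is Kodaira--Spencer complete at $V$, i.e.\ that the tangent map $T_{[V]}\calH \to H^1(V, T_V)$ is surjective, so that the germ of moduli at $V$ is genuinely captured by $\calH$. This reduces to unobstructedness of deformations, $H^2(V, T_V) = 0$, which holds for a del Pezzo via Serre duality and Kodaira vanishing applied to $\Omega^1_V \otimes K_V$, and extends to our general $V$ via the blowup short exact sequence for the tangent bundle. Granted this, Kuranishi theory identifies a neighborhood of $V$ in the moduli space with an open subset of $\calH$, and the claim follows from density of the del Pezzo locus in $\calH$.
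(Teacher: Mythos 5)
Your overall route is the same as the paper's: establish rationality, deform Hirzebruch surfaces $F_n \rightsquigarrow F_{n-2}$, and perturb the blowup centers of an iterated blowup of $\CP^2$ into general position. The extra detail you supply on the universal family of (possibly infinitely near) blowup configurations is a reasonable way to make "generic perturbation of the blowup centers" precise.

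There is, however, a genuine gap in your first step. The claim that $b^+(V)=1$, $b_1(V)=0$, $p_g(V)=0$ together with the Kodaira--Enriques classification forces $V$ to be rational is false: there exist simply connected minimal surfaces of general type with $p_g=q=0$ (the Barlow surface, the Lee--Park and Park--Park--Shin examples, etc.), all of which have exactly these numerical invariants. The classification only tells you that a surface with these invariants is rational \emph{or} of general type (or properly elliptic), and ruling out the latter possibilities requires using the diffeomorphism with a rational surface in an essential, gauge-theoretic way. What you actually need is the resolution of the Van de Ven conjecture: a complex surface diffeomorphic to a rational surface is rational. This is the result of Friedman--Qin that the paper cites (\cite[Cor. 0.2]{Friedman-Qin}); it is a deep input, not a consequence of the classification, and your proof does not go through without it.

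A smaller remark: the "main obstacle" you identify --- Kodaira--Spencer completeness of $\mathcal{X}\ra\calH$ at $V$ --- is not actually needed. The lemma only asks for the \emph{existence} of an arbitrarily small deformation of $V$ biholomorphic to $M$, so it suffices to exhibit one family through $V$ whose nearby fibers are del Pezzo; you do not need that family to capture all deformations of $V$. The arc in $\calH$ from the parameters of $V$ to a general-position configuration already does the job, so the discussion of $H^2(V,T_V)=0$ and Kuranishi theory, while correct, can be omitted.
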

\begin{proof}
Observe that $V$ is necessarily rational (see \cite[Cor. 0.2]{Friedman-Qin}) and hence by the Enriques--Kodaira classification it is an iterated blowup of $\CP^2$ or a Hirzebruch surface $F_{k}$. 
Recall that the Hirzebruch surface $F_k$ deforms (by an arbitrarily small deformation) to $\CP^1 \times \CP^1$ or $F_1$ 
for any $k \in \Z_{\geq 2}$.
Combining this with a generic perturbation of the blowup centers, we arrive at a smooth del Pezzo surface $\wt{V}$ which is diffeomorphic and hence (by the classification of del Pezzo surfaces) biholomorphic to $M$.
\end{proof}

\begin{proof}[Proof of Theorem~\ref{thmlet:nodal}]
Let $C$ be the $(r,a)$-unicuspidal $\Jint$-holomorphic curve in $\atf(Q_\nodal)$ guaranteed by Theorem~\ref{thm:uni_alg_curve}.
Here $(\atf(Q_\nodal),\Jint)$ is biholomorphic to a $\Q$-Gorenstein smoothing $\wt{V}_Q$ of $V_Q$, and by assumption $\atf(Q_\nodal)$ is diffeomorphic to a rigid del Pezzo surface $M$. 
If $\wt{V}_Q$ is Fano then it is necessarily biholomorphic to $M$.
Otherwise, by Lemma~\ref{lem:pert_non_Fano} we can still find an arbitrarily small deformation of $\wt{V}_Q$ which is Fano and hence biholomorphic to $M$, and similar to the proof of Theorem~\ref{thm:sesqui_main} we can push $C$ into this del Pezzo surface.
\end{proof}

We end this subsection by showing that the  staircases in the  rigid del Pezzo surfaces are entirely visible in terms of triangles in certain almost toric base polygons as detailed in Corollaries~\ref{cor:vis_ell_obs} and~\ref{cor:vis_ell_obs_func}.
In particular, we use this to give an alternative proof of Theorem~\ref{thmlet:outer_corner_curves} (recall that our proof in \S\ref{sec:twist} was based on the generalized Orevkov twist).

\begin{prop}\label{prop:out_cor_vis}
For $M$ be a rigid del Pezzo surface, and let $Q$ be the corresponding dual Fano $T$-polygon in Figure~\ref{fig:smooth_Fano_polygons}.
Then, each outer corner point $(x_k, y_{k})$ of the infinite staircase in the ellipsoid embedding function $c_M(x)$
corresponds to a visible ellipsoid obstruction in some polygon $Q'$ obtained from $Q$
by a sequence of full mutations. Moreover,  as in Corollary~\ref{cor:vis_ell_obs_func}, we may choose $Q'$ so that 
 this polygon $Q'$ determines the value of  $c_M(x)$ for all $x\in [x_k, x_{k+1}]$.
\end{prop}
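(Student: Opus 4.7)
The plan mirrors Proposition~\ref{prop:inner_corners_vis_ell}, replacing visible ellipsoid \emph{embeddings} by visible ellipsoid \emph{obstructions} (Corollary~\ref{cor:vis_ell_obs}). For each index $k$, the goal is to exhibit a $T$-polygon $Q_k$ in the full-mutation equivalence class of the canonical $Q$ of Figure~\ref{fig:smooth_Fano_polygons} with a Delzant vertex $\vv$ flanked by $T$-vertices $\ww_k,\uu_k$ whose eigenrays each meet the opposite edge, so that Corollary~\ref{cor:vis_ell_obs} applies simultaneously at both. The two resulting obstructions will be the outer corners at $(x_k,y_k)$ and $(x_{k+1},y_{k+1})$, and Corollary~\ref{cor:vis_ell_obs_func}(B) (read off the configuration at $\ww_k$) together with (A) (read off at $\uu_k$) will determine $c_M$ on the whole of $[x_k,x_{k+1}]$: a horizontal piece at height $y_k$ emanating rightward from $x_k$, followed by a sloped piece terminating at $(x_{k+1},y_{k+1})$, meeting at the intermediate inner corner.

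\textbf{Two-strand (triangular) cases.} For $M \in \{\CP^2,\; \CP^1\times\CP^1,\; \CP^2 \#^{\times j}\ovl{\CP}^2 : j=3,4\}$, $Q$ is a $T$-triangle and by \S\ref{subsubsec:T-sings} its full-mutation class corresponds, via the slice $r_1=1$, to integer solutions of the generalized Markov equation~\eqref{eq:gen_Mark}. A full mutation at a non-Delzant vertex acts by Vieta jumping $(1,r_2,r_3)\mapsto (1,r_3,C\sqrt{m_1 m_2 m_3}\,r_2r_3-r_2)$. Using the data from Table~\ref{table:CG_et_al}, I will verify case by case that this recursion coincides with $g_{k+2J}=Kg_{k+J}-g_k$ under the identifications $r_k=g_k$ and $K=C\sqrt{m_1 m_2 m_3}$. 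Placing $\vv$ at the origin with edges along the positive axes, the $T$-vertex on the $y$-axis of type $\tfrac{1}{r_k^2}(1,r_k a_k-1)$ has eigenray in direction $(r_k,-a_k)$, so its intersection $\pp_k$ with the $x$-axis realizes $\ell_1/\ell_2 = a_k/r_k = g_{k+J}/g_k = x_k$; the dual Fano normalization (Lemma~\ref{lem:delz_Q_dual_Fano_c1_crit}) then forces $1/\ell_2 = y_k$ as required.

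\textbf{Three-strand (quadrilateral) cases.} For $M=\CP^2\#^{\times j}\ovl{\CP}^2$ with $j=1,2$, $Q$ is a $T$-quadrilateral and its full-mutation class is governed by a four-variable analogue of the Markov equation (cf.\ \cite{akhtar2016mirror,kasprzyk2017minimality}); the argument proceeds analogously, with the three outer corners per polygon matching the $J=3$ interleaved recursions of Table~\ref{table:CG_et_al}. In both the triangular and quadrilateral settings, Corollary~\ref{cor:vis_ell_obs_func} then upgrades the pointwise obstruction at each outer corner to the graph of $c_M$ over the full interval $[x_k,x_{k+1}]$.

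\textbf{Main obstacle.} The bulk of the work is two-fold. First is the combinatorial verification that the Vieta-jump orbit of the appropriate seed solution to the Markov-type equation enumerates exactly the sequence $g_k$ of Table~\ref{table:CG_et_al}; this is an algebraic identity relating the quadratic Markov recursion to its linearized form, straightforward in principle but requiring care in the quadrilateral case where two interleaved sequences must be tracked simultaneously. Second, and more delicate, is the convex-geometric check that at each polygon $Q_k$ the two eigenrays actually land on the opposite edges adjacent to $\vv$ (rather than escaping through the far side), equivalently an inequality on $a_k/r_k$ that must be shown to propagate inductively under mutation, starting from the explicit seed polygons of Figure~\ref{fig:smooth_Fano_polygons}.
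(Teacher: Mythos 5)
Your strategy is the same as the paper's: mutate the seed polygon of Figure~\ref{fig:smooth_Fano_polygons} so that each outer corner becomes a visible ellipsoid obstruction at an eigenray adjacent to the Delzant vertex, and then invoke both cases of Corollary~\ref{cor:vis_ell_obs_func} (one of them after reflecting in $x=y$) to recover the sloped and horizontal pieces over a full step. The paper likewise uses a single polygon $Q_k$ carrying two relevant eigenrays at the two non-Delzant vertices adjacent to the origin, which produce two consecutive outer corners (indexed $x_{k-1}$ and $x_k$ in the paper, versus your $x_k$ and $x_{k+1}$ --- an immaterial shift).

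The difference is in how the combinatorial core is discharged. What you isolate as the ``main obstacle'' --- that the eigenray directions follow the linear recursion $g_{k+2J}=Kg_{k+J}-g_k$ and that each eigenray actually meets the horizontal edge rather than escaping --- is exactly what the paper does \emph{not} re-derive: it quotes the explicit description of the vertices and eigenrays of $Q_k$ from \cite[\S 5]{cristofaro2020infinite} (the eigenray at the top-left vertex of $Q_k$ points in direction $(g_k,-g_{k+J})$ and meets the bottom edge), supplemented by Lemma~\ref{lem:rigid_dP_mut_T_sings} for how the singularity types transform. Your plan to re-derive this via the generalized Markov equation and Vieta jumping is viable in the triangular ($J=2$) cases, where the correspondence between full-mutation orbits and solutions of \eqref{eq:gen_Mark} with $r_1=1$ is set up in \S 3.1c; but note that the Markov data only records the singularity \emph{types} $\tfrac{1}{r^2}(1,ra-1)$, in which $a$ is determined only mod $r$, whereas the cusp type and hence the obstruction depend on the actual eigenray direction $(r,-a)$ relative to the Delzant corner (Remark~\ref{rmk:nodal_ray}, Remark~\ref{rmk:cusp_type_depends_on_vs}). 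So the Vieta-jump bookkeeping alone does not pin down $x_k=a_k/r_k$; you still need to track the polygons themselves. The quadrilateral ($J=3$) cases are the weak point of your sketch: there is no clean four-variable Markov analogue playing the role you assign it (full mutations of $T$-quadrilaterals only preserve or decrease the number of sides, Lemma~\ref{lem:full_mut_pres_sides}), and both the paper and \cite{cristofaro2020infinite} handle these by explicit computation of the mutated vertices and eigenrays. So the proposal is correct in outline, but completing it along your route would essentially amount to reproving the polygon computations of \cite[\S 5]{cristofaro2020infinite}.
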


Let $Q$ be one of the polygons in Figure~\ref{fig:smooth_Fano_polygons}, 
and let $J$ denote the number of strands of the corresponding infinite staircase $c_M(x)$, i.e. $J=3$ for $\CP^2 \# \ovl{\CP}^2$ and $\CP^2 \#^{\times 2}\ovl{\CP}^2$ and $J=2$ in the remaining cases (c.f. \S\ref{subsec:staircase_numerics}).  Let $Q_k$ denote the result after $k \in \Z_{\geq 0}$ successive full mutations of $Q$, each time along the eigenray  emanating from its top left vertex.
It is shown in \cite{cristofaro2020infinite} that this eigenray 
always meets the horizontal edge of $Q_k$, so that the non-Delzant vertices cyclically permute under successive full mutations of this kind.
Let $\vv^k_1,\dots,\vv^k_{J+1}$ denote the vertices of $Q_k$ (ordered counterclockwise), where $\vv_1^k$ is the Delzant vertex with edge vectors $(1,0),(0,1)$, $\vv_2^k$ lies on the positive $x$-axis, and $\vv_{J+1}^k$ lies on the positive $y$-axis.
A complete description of the vertices and eigenrays of $Q_k$ may be found in \cite[\S5]{cristofaro2020infinite}.
In particular, for each $J$-tuple of successive staircase steps there is a mutation
$Q_k$ whose vertices have  $T$-singularities of the corresponding types.
 Further, 
putting  $K = [\calN] \cdot [\calN]-2$, where $[\calN] \in H_2(\atf(Q_\nodal))$ is the anticanonical class (c.f. Table~\ref{table:CG_et_al}), we find that the vertex types of $Q_k$ transform under $J$-full mutations
by the same recursion as the generalized Orevkov twist:

\begin{lemma}\label{lem:rigid_dP_mut_T_sings} 
Suppose that $\vv^k_i$ has type $\tfrac{1}{m_ir_i^2}(1,m_ir_ia_i-1)$ for $i=2,\dots,\ell+1$.
Then $\vv^{k+J}_i$ has type $\tfrac{1}{m_i'(r_i')^2}(1,m_i'r_i'a_i'-1)$, where $(m_i',r_i',a_i') = (m_i,Kr_i-a_i,r_i)$.
\end{lemma}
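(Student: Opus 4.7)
The plan is to verify the claim by direct computation using the explicit parametrization of the polygons $Q_k$ developed in \cite[\S5]{cristofaro2020infinite}. I would first set up affine coordinates so that $\vv_1^k = (0,0)$ with Delzant edge directions $(1,0),(0,1)$; the remaining vertices $\vv_2^k, \dots, \vv_{J+1}^k$ then have coordinates expressible in terms of successive entries of the integer sequence $g_0, g_1, g_2, \dots$ from Table~\ref{table:CG_et_al}. The triples $(m_i, r_i, a_i)$ controlling the $T$-singularity at $\vv_i^k$ can be read off from these coordinates, and one finds that $r_i$ and $a_i$ are essentially pairs of consecutive $g$-entries of the form $(g_{k+J-f(i)}, g_{k-f(i)})$ for suitable offsets $f(i)$, while the $m_i$ form a fixed finite cyclic sequence depending only on $M$.

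Next, I would compute the effect of a single full mutation at the top-left vertex $\vv_{J+1}^k$, which is by definition the $m_{J+1}$-fold iterate of the primitive shear along its eigenray (c.f. \eqref{eq:mutation}, \eqref{eq:prim_shear}). Since by the description in \cite[\S5]{cristofaro2020infinite} this eigenray meets the horizontal edge of $Q_k$, only the edges lying below the eigenray get sheared, and the induced transformation on the remaining vertex coordinates is an explicit integral affine map determined by $(m_{J+1}, r_{J+1}, a_{J+1})$. Iterating this construction $J$ times, the polygon returns to the same combinatorial shape and can be renormalized so that $\vv_1^{k+J}$ is again at the origin with standard Delzant directions; the non-Delzant vertices are then cyclically relabelled in the expected way.

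At this point the claimed formula $(m_i', r_i', a_i') = (m_i, Kr_i - a_i, r_i)$ reduces to the recursion $g_{k+2J} = Kg_{k+J} - g_k$ from Table~\ref{table:CG_et_al}, since this recursion translates directly into the substitution $(r, a) \mapsto (Kr - a, r)$ on pairs of consecutive $g$-entries (in complete analogy with the recursion $(p,q) \mapsto (Kp - q, p)$ from the generalized Orevkov twist, see Proposition~\ref{prop:gen_orev_numerics}). The invariance $m_i' = m_i$ follows from the fact (noted in \S\ref{subsubsec:T-sings}) that the set of $m$-parameters is preserved under full mutation, combined with the cyclic labelling convention on the $\vv_i^k$ enforced by the normalization.

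The main technical obstacle will be the $J=3$ (quadrilateral) bookkeeping, where three non-Delzant vertices must be tracked simultaneously and one must verify that the number of sides is genuinely preserved throughout each full mutation (strengthening Lemma~\ref{lem:full_mut_pres_sides}, which only guarantees $k$ or $k-1$ sides); this is implicit in the case-by-case analysis of \cite[\S5]{cristofaro2020infinite} but must be extracted carefully. In the triangular case $J=2$, the verification simplifies substantially and can alternatively be deduced from the invariance of the generalized Markov equation \eqref{eq:gen_Mark} under full mutation together with the constraint $m_1 = r_1 = 1$ at the Delzant vertex, which together pin down the transformation of $(r_2, a_2)$ and $(r_3, a_3)$ uniquely.
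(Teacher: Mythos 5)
The paper offers no written proof of this lemma --- it is asserted on the strength of the explicit description of the vertices and eigenrays of $Q_k$ in \cite[\S5]{cristofaro2020infinite} --- so your plan to verify it by direct computation with that parametrization is the natural (indeed the only available) route, and the structural ingredients you list are the right ones: the cyclic permutation of the non-Delzant vertices under full mutation, the invariance of the $m_i$, the reduction to the recursion $g_{k+2J}=Kg_{k+J}-g_k$, and the need to check that the side count is genuinely preserved in the quadrilateral cases.

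There is, however, a genuine gap at your key identification step. You set $(r_i,a_i)=(g_{k+J-f(i)},g_{k-f(i)})$, i.e.\ you take $r_i$ to be the \emph{later} $g$-entry, precisely so that the substitution $(r,a)\mapsto(Kr-a,r)$ becomes the forward shift $k\mapsto k+J$. But $m_ir_i^2$ is the \emph{order} of the cyclic quotient singularity at $\vv_i^k$, and that order is governed by the \emph{earlier} entry: iterating the generalized Markov mutation $r_j\mapsto \tfrac{C\sqrt{m_1m_2m_3}}{m_j}\,r_{j_1}r_{j_2}-r_j$ from the seed polygons shows that $r(\vv_{J+1}^k)=g_k$ (for instance, for $\CP^1\times\CP^1$ the orders at the top-left vertex run $2,1,2,9,50,\dots=m_ig_k^2$, and the vertex of type $\tfrac{1}{50}(1,\cdot)$, i.e.\ $(m,r)=(2,5)=(2,g_4)$, is the one whose eigenray $(g_4,-g_6)=(5,-29)$ produces the $(29,5)$ cusp). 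This is consistent with the eigenray convention $(r_i,-a_i)$ of Proposition~\ref{prop:uni_symp_curve} and the formula $(g_k,-g_{k+J})$ in the proof of Proposition~\ref{prop:out_cor_vis}, both of which force $r_i=g_{k-f(i)}$ and $a_i=g_{k+J-f(i)}$. With that (correct) identification one has $Kr_i-a_i=g_{k-J-f(i)}$, so the recursion as printed runs \emph{backwards}; the forward recursion is its inverse, $(m_i,r_i,a_i)\mapsto(m_i,a_i,Ka_i-r_i)$. In other words, either the lemma as stated has $r$ and $a$ interchanged relative to the normalization $\tfrac{1}{m_ir_i^2}(1,m_ir_ia_i-1)$, or your identification does; your write-up silently adopts whichever convention makes the formula come out, and a careful execution of your own plan would hit this conflict at the very first step (your claimed type for $\vv_i^k$ already has the wrong order). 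You need to fix the convention unambiguously from the singularity order, state the recursion accordingly, and only then invoke the $g$-recursion; the remainder of your outline (the $m$-invariance and the $J=3$ bookkeeping) is otherwise sound.
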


\begin{proof}[Proof of Proposition~\ref{prop:out_cor_vis}]
Let $(x_k,y_k) = (\tfrac{g_{k+J}}{g_k},\tfrac{g_{k+J}}{g_k + g_{k+J}})$ be the $k$th outer corner point on the graph of $c_M$ (recall \S\ref{subsec:staircase_numerics}). By \cite[\S5]{cristofaro2020infinite},
the eigenray emanating from the top left vertex $\vv^k_{J+1}$ of $Q_k$ points in the direction 
$(g_{k},-g_{k+J})$ and meets the edge between $\vv^k_{1}$ and $\vv^k_{2}$. 
According to Proposition~\ref{prop:uni_symp_curve} there is a visible index zero $(g_{k+J},g_k)$-unicuspidal rational symplectic curve $C$ in $M$, and this gives precisely the outer corner obstruction $c_M(x_k) \geq y_k$. 
Now observe that, because $Q_k$ is obtained by mutation along the nodal ray from the top left vertex $\vv^{k-1}_{J+1}$ of $Q_{k-1}$, this nodal ray must meet the $x$-axis at  the vertex $\vv^k_2$ of $Q_k$ so that the nodal ray of $Q_k$ at 
$\vv^k_2$  points in the direction $(-g_{k-1}, g_{k-1+J})$.
If we now apply Corollary~\ref{cor:vis_ell_obs_func} to $Q_k$, then because $\ell_1 > \ell_2+\ell_3$
we can calculate $c_M$ on the interval that ends on the $k$th outer corner point $x_k = \tfrac{g_{k+J}}{g_k}$.
Similarly, by applying this result to the reflection of $Q_k$ about the line $x=y$, we are in the case $\ell_1 < \ell_2$ and hence can deduce that $c_M$ is constant on the interval between $x_{k-1}$ and the corner point. 
\end{proof}

\begin{proof}[Alternative proof of Theorem~\ref{thmlet:outer_corner_curves}]
By Proposition~\ref{prop:out_cor_vis}, every outer corner in $c_M(x)$ corresponds to a visible ellipsoid obstruction in some polygon $Q$ with $\atf(Q_\nodal)$ symplectomorphic to $M$, and by Theorem~\ref{thmlet:nodal} this comes from an index zero rational algebraic unicuspidal curve.
\end{proof}

\subsection{Visible curves in the Auroux-type model}\label{subsec:aur_vis}

As a preliminary to constructing curves in $\Q$-Gorenstein smoothings of singular toric surfaces, we first discuss the affine case, which is modeled on the Auroux-type system from \cite[\S7.3]{evans2023lectures}.
Recall from \S\ref{subsubsec:T-sings} that the $T$-singularity $\tfrac{1}{mr^2}(1,mra-1) = \{xy=z^{rm}\} / \roots_r^{1,-1,a}$
smooths to $$
B_{m,r,a} \cong \{xy = (z^r-\zeta_1)\cdots(z^r-\zeta_m)\}/\roots_r^{1,-1,a}
$$
 (for any fixed $0 < \zeta_1 < \cdots < \zeta_m$), and we have the Auroux-type almost toric fibration $$
 \pi_\aur: B_{m,r,a} \ra \C, \quad \pi_\aur(x,y,z) = \left(|z|^2,\tfrac{1}{2}|x|^2 - \tfrac{1}{2}|y|^2 \right).
 $$ 
The critical values of $\pi_\aur$ are $(\zeta_1^{2/r},0),\dots,(\zeta_m^{2/r},0)$.
By analogy with the visible unicuspidal symplectic curves appearing in the proof of Proposition~\ref{prop:uni_symp_curve}, we seek algebraic curves in $B_{m,r,a}$ which project via $\pi_\aur$ to vertical rays in $\R_{\geq 0} \times \R$ emanating from a critical value.
To this end, putting $\intC_i^+ := \{y = 0\} \subset B_{m,r,a}$ and $\intC_i^- := \{x = 0\} \subset B_{m,r,a}$, note that we have indeed $$\pi_\aur(\intC_i^\pm) = \{(\zeta_i^{2/r},t) \;|\; \pm t \in \R_{\geq 0}\}.
$$

Now suppose that $Q$ is a polygon with a $T$-vertex $\ww$ of type $\tfrac{1}{mr^2}(1,mra-1)$,
let $\wt{V}_Q$ be a $\Q$-Gorenstein smoothing of the singular toric surface $V_Q$, and assume that we have a holomorphic embedding $\iota: B_{m,r,a} \hookrightarrow \wt{V}_Q$.\footnote{Strictly speaking it will suffice to have an embedding of a suitable open subset of $B_{m,r,a}$ (c.f. the proof of Proposition~\ref{prop:QG_def_diff_ATF}), but for ease of exposition will we usually assume that we have a full embedding of $B_{m,r,a}$, which will indeed be the case in our explicit constructions.}
Roughly speaking, we will show below that the closure of $\iota(\intC_i^+)$ in $\wt{V}_Q$ 
is a $(r,a)$-unicuspidal rational algebraic curve, and this underpins Theorem~\ref{thm:uni_alg_curve}.

\subsection{Pencils from polygon mutations}\label{subsec:QG_pencils}

As we briefly recalled in \S\ref{subsubsec:T-sings}, singular toric surfaces $V_Q$ and $V_{Q'}$ are $\Q$-Gorenstein deformation equivalent if (and conjecturally only if) the corresponding dual Fano polygons are mutation equivalent.
More precisely, we have:

\begin{thm}[{\cite[Lem. 7]{akhtar2016mirror}, following \cite[Thm. 1.3]{ilten2012mutations}}]
Let $Q$ be a dual Fano polygon, and let $Q' = \mut_\ww(Q)$ be its mutation at a vertex $\ww$.
There exists a $\Q$-Gorenstein pencil\footnote{That is, a flat family whose total space is $\Q$-Gorenstein.} $\penc: \calX \ra \CP^1$ such that $\penc^{-1}
(0) \cong V_Q$ and $\penc^{-1}(\infty) \cong V_{Q'}$.
\end{thm}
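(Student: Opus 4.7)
My plan is to construct $\calX$ explicitly as a toric three-fold with a toric morphism to $\CP^1$ whose special fibers recover $V_Q$ and $V_{Q'}$. The underlying observation is that the piecewise-linear mutation $\psi: \MM_\R \to \MM_\R$ of \eqref{eq:psi_mut} is nothing but the ``bending'' induced by the function $u \mapsto \min(\lan f, u\ran, 0)\,\vv_\prim$, and such a bending can be realized globally as a toric birational cobordism between $V_Q$ and $V_{Q'}$.

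Working dually with the Fano polygons $P := Q^o$ and $P' := (\mut_\ww(Q))^o$ in $\NN_\R$, the first step is to define a complete fan $\widetilde{\Sigma}$ in $(\NN \oplus \Z)_\R$ as follows. Choose coordinates so that $\ww$ lies on a coordinate axis and $f$ on the dual one. The rays of the normal fans $\fan_Q$ and $\fan_{Q'}$ agree on the half-space $\{\lan f,-\ran \ge 0\}$ (this is the ``unchanged'' side of the mutation), and they differ by the shear $\shear_{\vv_\prim}$ on the opposite half-space. I take the rays of $\widetilde{\Sigma}$ to consist of (i) $(\vecn, 0)$ for each ray $\vecn$ on the unchanged side, (ii) $(\vecn, 1)$ for each ray $\vecn$ of $\fan_Q$ on the sheared side, (iii) $(\shear_{\vv_\prim}(\vecn), -1)$ for each ray on the corresponding side of $\fan_{Q'}$, together with the two ``polar'' rays $\pm(0,0,1)$. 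The maximal cones are organized so that the slice $\widetilde{\Sigma}\cap (\NN_\R\times\{0\})$ reproduces $\fan_Q = \fan_{Q'}$ on the unchanged side and so that the ``upper'' cones assemble into a cone over $\fan_Q$ while the ``lower'' cones assemble into a cone over $\fan_{Q'}$. Equivalently, one may construct this three-fold as the total space of the deformation associated to the Minkowski decomposition of slices of $P$ along the weight $f$ with factor $\vv_\prim$, as in Ilten's construction \cite{ilten2012mutations}.

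The projection $(\NN \oplus \Z)_\R \twoheadrightarrow \R$ onto the last coordinate is compatible with $\widetilde{\Sigma}$ and the complete fan of $\CP^1$, so it induces a toric morphism $\penc: \calX := V_{\widetilde{\Sigma}} \to \CP^1$. By construction the fibers over $0$ and $\infty$ are identified with the toric surfaces associated to the ``upper'' and ``lower'' subfans, which are $V_Q$ and $V_{Q'}$ respectively, while generic fibers are smoothings obtained by perturbing along the Minkowski direction. It then remains to check that $\calX$ is $\Q$-Gorenstein over $\CP^1$, i.e. that the relative canonical divisor $K_{\calX/\CP^1}$ is $\Q$-Cartier. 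This reduces to a local calculation at each three-dimensional cone of $\widetilde{\Sigma}$: the primitive generators of the cone must lie on a common affine hyperplane of $\widetilde{\MM}_\Q$. The dual Fano hypothesis on $Q$ (and hence on $Q' = \mut_\ww(Q)$, by \cite[Prop.~1]{akhtar2016mirror}) gives exactly this coplanarity, since each edge of $Q$ (and $Q'$) has height one, which forces the corresponding primitive ray generators of $\widetilde{\Sigma}$ to be affinely compatible across the ``bend'' at $\ww$.

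The main obstacle I anticipate is verifying the $\Q$-Gorenstein condition at the transition cones, i.e.\ the three-dimensional cones of $\widetilde{\Sigma}$ bridging the two halves; the subtlety is that while the individual fans $\fan_Q$ and $\fan_{Q'}$ satisfy the Fano coplanarity condition in $\NN_\Q$, one must show that after the Cayley-type lift to $(\NN\oplus\Z)_\Q$ the generators of each such cone remain coplanar in the enlarged lattice. This is where the compatibility of $\psi$ with the $f$-grading — specifically, the fact that $\vv_\prim\in\ker f$ — enters crucially, ensuring that the shear preserves the heights of the edges and hence the $\Q$-Cartier property of $K_{\calX/\CP^1}$.
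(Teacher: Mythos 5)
There is a genuine gap, and it is structural rather than a missing verification. You propose to take $\calX$ itself to be a toric three-fold $V_{\widetilde{\Sigma}}$ and $\penc$ a toric morphism induced by the lattice projection $\NN\oplus\Z\to\Z$. But the general fiber of such a morphism is again a toric surface (it is $V_{\Sigma_0}\times\{t\}$, where $\Sigma_0$ is the equatorial subfan of cones contained in $\NN_\R\times\{0\}$), whereas the whole point of the mutation pencil is that its general member is a \emph{non-toric} $\Q$-Gorenstein deformation of $V_Q$: it replaces the $T$-singularity $\tfrac{1}{mr^2}(1,mra-1)$ at the mutated vertex by (a partial smoothing modeled on) $B_{m,r,a}$, which is not a toric surface singularity. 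Concretely, with the rays you list, the fiber $\penc^{-1}(0)$ is the union of all vertical toric divisors of positive height — the rays of type (ii) together with $(0,0,1)$ — so it is reducible and cannot be isomorphic to $V_Q$; and your appeal to ``height one $\Rightarrow$ coplanarity $\Rightarrow$ $K_{\calX/\CP^1}$ is $\Q$-Cartier'' is a condition on the anticanonical divisor of the two-dimensional \emph{fibers}, not on the canonical divisor of a three-fold total space. The ``equivalently, as in Ilten's construction'' step is also not an equivalence: Ilten's total space is a complexity-one $T$-variety (a two-torus acting on a three-fold), precisely because the generic fiber must fail to be toric; it is not a toric three-fold with a toric fibration.

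The construction that actually works — the one in \cite{akhtar2016mirror,ilten2012mutations} and the one this paper carries out explicitly in \S\ref{subsec:QG_pencils} — realizes $V_Q$ and $V_{Q'}$ as two members of a pencil of \emph{hypersurfaces} inside a fixed toric three-fold built from a Cayley/Minkowski polytope $\wt{Q}$ fibering over an interval with slices interpolating between $Q$ and $Q'$; in the triangle case the paper writes this down as $S_t=\{xy=\tfrac{1}{1+t}w^{mr}+\tfrac{t}{1+t}z^{ma}\}\subset\CP(1,mra-1,r,a)$ (Proposition~\ref{prop:pencil_tri_case}), extended to general polygons by a fiberwise birational modification (Proposition~\ref{prop:pencil:gen_case}). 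The $\Q$-Gorenstein property of the total space is then checked by covering it with charts that are either smooth, trivial families of quotient singularities, or equal to the miniversal $\Q$-Gorenstein deformation \eqref{eq:T_sing_miniversal} of a $T$-singularity — not by a coplanarity computation in a fan. If you want to salvage your approach, you should replace ``toric three-fold fibered torically over $\CP^1$'' by ``hypersurface pencil in a toric three-fold'' (or a complexity-one $T$-variety over $\CP^1$) and verify $\Q$-Gorensteinness chart by chart as above.
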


In order to construct unicuspidal algebraic curves, we will describe an explicit model for (at least a part of) the $\Q$-Gorenstein pencil $\penc$ in the analogous case that $Q$ is a $T$-polygon (not necessarily dual Fano) with a Delzant vertex.
We first discuss the case that $Q$ is a triangle, and then obtain the case of a general polygon by a local birational modification.

Let $Q \subset \MM_\R$ be a triangle with a Delzant vertex $\vv$ adjacent to a $T$-vertex $\ww$.
After an integral affine transformation we can assume that $\MM_\R = \R^2$ and the vertices are $\vv := (0,0),\ww:= (0,mra-1),\uu := (mr^2,0)$, for some $m,r,a \in \Z_{\geq 1}$ with $\gcd(r,a) = 1$. In particular, $V_Q$ is isomorphic to the weighted projective space $\CP(1,mra-1,mr^2)$.
Similar to Remark~\ref{rmk:cusp_type_depends_on_vs}, we do not assume $a < r$, but we can write $a = a' + \vs r$ for some $a' \in \{1,\dots,r-1\}$ and $\vs \in \Z_{\geq 1}$.
Then the vertex $\ww$ has type $\tfrac{1}{mr^2}(1,mra-1) = \tfrac{1}{mr^2}(1,mra'-1)$, and the vertex $\uu$ has type $\tfrac{1}{mra-1}(1,mr^2)$ (this is not necessarily a $T$-singularity).

With respect to the vertex $\ww$, the mutated triangle $Q' := \mut^\full_\ww(Q)$ has vertices $\vv' := (0,0),\ww' := (0,mra),\uu' := (\tfrac{r}{a}\cdot (mra-1),0)$, and we have 
$V_{Q'} \cong \CP(1,mra-1,ma^2)$.
Note that $\vv'$ is smooth, $\ww'$ has type $\tfrac{1}{ma^2}(1,mra-1)$, and $\uu'$ has type 
$\tfrac{1}{mra-1}(1,ma^2)$, which is the same singularity type as $\uu$:

\begin{lemma}\label{lem:T_sings_agree}
For $m,r,a \in \Z_{\geq 1}$ with $\gcd(r,a) = 1$, we have $\tfrac{1}{mra-1}(1,mr^2) = \tfrac{1}{mra-1}(r,a) = \tfrac{1}{mra-1}(1,ma^2)$.  
\end{lemma}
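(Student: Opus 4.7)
The plan is to use the standard identification $\tfrac{1}{\kappa}(w_1,w_2) = \tfrac{1}{\kappa}(\ell w_1, \ell w_2)$ for any $\ell \in (\Z/\kappa)^\times$, together with the obvious symmetry $\tfrac{1}{\kappa}(w_1,w_2) = \tfrac{1}{\kappa}(w_2,w_1)$ coming from swapping the two $\C$ factors. Set $\kappa := mra - 1$.

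First I would check that $r$ and $a$ are both units modulo $\kappa$. This is immediate: any prime $p$ dividing $\gcd(r,\kappa)$ (resp.\ $\gcd(a,\kappa)$) would divide both $mra$ and $mra - 1$, hence divide $1$. So both $r$ and $a$ are valid multipliers.

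Next I would exhibit the two isomorphisms directly. For the first equality $\tfrac{1}{\kappa}(1,mr^2) = \tfrac{1}{\kappa}(r,a)$, take $\ell = a$. Then $\ell \cdot (1,mr^2) = (a,\, amr^2)$, and the defining congruence $mra \equiv 1 \pmod{\kappa}$ gives
\begin{align*}
amr^2 = (mra)\cdot r \equiv r \pmod{\kappa},
\end{align*}
so the pair becomes $(a,r)$, which is $(r,a)$ up to swapping. For the second equality $\tfrac{1}{\kappa}(r,a) = \tfrac{1}{\kappa}(1,ma^2)$, symmetrically take $\ell = r$ applied to $(1,ma^2)$; then $r \cdot ma^2 = (mra)\cdot a \equiv a \pmod{\kappa}$, so $\ell \cdot (1,ma^2) = (r, a)$ on the nose.

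There is no serious obstacle here: the argument is a one-line computation in $\Z/\kappa$ once one notes that the relation $mra \equiv 1 \pmod{\kappa}$ makes $r$ and $a$ mutual inverses (up to $m$). The only very mild care needed is to ensure that the multipliers $a$ and $r$ are units modulo $\kappa$, which is the preliminary check above.
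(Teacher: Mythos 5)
Your proof is correct and is essentially the same one-line computation modulo $\kappa = mra-1$ as in the paper, which establishes the key congruence via $(mra-1)(mra+1) = m^2r^2a^2 - 1 \equiv 0$, i.e.\ $mr^2 \equiv (ma^2)^{-1}$. Your version is if anything slightly more complete, since you explicitly verify that the multipliers $a$ and $r$ are units modulo $\kappa$ and route both equalities through the middle term $(r,a)$, which the paper leaves implicit.
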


\begin{proof}
 Modulo $mra-1$ we have $0 \equiv (mra-1)(mra+1) = m^2r^2a^2 - 1$, and hence $mr^2 = 1/(ma^2)$. 
\end{proof}

 Now consider the weighted projective $3$-space $\CP(1,mra-1,r,a)$ with homogeneous coordinates $[x:y:z:w]$, and consider the hypersurface
\begin{align}\label{eq:penc_tri}
S_t := \{xy = \tfrac{1}{1+t}w^{mr} + \tfrac{t}{1+t} z^{ma}\} \subset \CP(1,mra-1,r,a)
\end{align}
for $t \in \CP^1$.

\begin{prop}\label{prop:pencil_tri_case} Suppose as above that the triangle $Q$ has 
a Delzant vertex $\vv$ adjacent to a $T$-vertex $\ww$. Then
the family $\{S_t\}_{t \in \CP^1}$ defines a $\Q$-Gorenstein 
pencil such that
\begin{itemize}
  \item we have isomorphisms $S_0 \cong \CP(1,mra-1,mr^2)$ and $S_\infty \cong \CP(1,mra-1,ma^2)$
  \item for $t \neq 0,\infty$, $S_t$ has a singularity at the point $[0:1:0:0]$ of type $\tfrac{1}{mra-1}(r,a)  $ and is otherwise nonsingular.
\end{itemize}
\end{prop}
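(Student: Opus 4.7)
The plan is to treat the three assertions of the proposition separately. First I identify the two distinguished fibers $S_0$ and $S_\infty$ by an explicit toric calculation. Second I determine the singular locus of a generic member $S_t$ by checking each of the three singular coordinate points of the ambient weighted projective space. Finally I verify the $\Q$-Gorenstein property of the total space via adjunction.

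For the first step, the key observation is that the affine cone over $S_0 = \{xy = w^{mr}\}$ inside $\C^4$ is covered by the surjection $\phi : \C^3 \to \{xy = w^{mr}\}$ given by $(\alpha,\beta,z) \mapsto (\alpha^{mr},\beta^{mr},z,\alpha\beta)$, identifying $\{xy = w^{mr}\}$ with $\C^3/\roots_{mr}^{1,-1,0}$. Pulling back the weighted $\C^*$-action on $\CP(1,mra-1,r,a)$ along $\phi$ induces a $\C^*$-action on $(\alpha,\beta,z)$ with weights $(1,mra-1,mr^2)$ (after substituting $\lambda \mapsto \wt\lambda^{mr}$). Because $\gcd(r,a)=1$, every $\zeta \in \roots_{mr}$ satisfies $\zeta^{mra} = \zeta^{mr^2} = 1$, so the deck group $\roots_{mr}^{1,-1,0}$ of $\phi$ is already contained in this $\C^*$. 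The two quotients therefore coincide, yielding $S_0 \cong \CP(1,mra-1,mr^2)$. The identification $S_\infty \cong \CP(1,mra-1,ma^2)$ follows by the symmetric argument with the roles of $(z,r)$ and $(w,a)$ interchanged.

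For the second step, I examine each of the three singular points $[0:1:0:0], [0:0:1:0], [0:0:0:1]$ of the ambient $\CP(1,mra-1,r,a)$. Direct substitution into the defining equation shows that for $t \neq 0,\infty$ the latter two points do not lie on $S_t$ (they would require $t=0$ or $t=\infty$ respectively). In the chart $y=1$ modulo $\roots_{mra-1}^{1,r,a}$, the equation for $S_t$ takes the form $x = \tfrac{1}{1+t}w^{mr} + \tfrac{t}{1+t}z^{ma}$, a smooth graph over the $(z,w)$-plane, so locally $S_t \cong \C^2/\roots_{mra-1}$ with action of weights $(r,a)$, realizing the claimed singularity $\tfrac{1}{mra-1}(r,a)$. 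At any other point of $S_t$ the ambient is smooth, and the Jacobian criterion applied to $F := (1+t)xy - w^{mr} - tz^{ma}$ forces smoothness, since $\nabla F$ can vanish only if all four coordinates do, which is excluded in projective space.

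For the third step, I view the total space as the hypersurface $\calX \subset \CP(1,mra-1,r,a) \times \CP^1$ cut out by the bi-homogeneous equation $(1+t)xy = w^{mr} + tz^{ma}$ of bi-degree $(mra,1)$. The ambient is $\Q$-factorial, being a product of $\Q$-factorial toric varieties, so the Weil divisor $\calX$ is automatically $\Q$-Cartier on it; the ambient also has $\Q$-Cartier canonical divisor. Adjunction then yields that $K_\calX = (K_{\CP \times \CP^1} + \calX)|_\calX$ is $\Q$-Cartier, so $\calX$ is $\Q$-Gorenstein. Since $\CP^1$ is smooth, the relative canonical $K_{\calX/\CP^1}$ is likewise $\Q$-Cartier, and the projection $\calX \to \CP^1$ is flat because all fibers have pure dimension two. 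Hence the family is a $\Q$-Gorenstein pencil. The main obstacle I anticipate is the careful bookkeeping in the first step to confirm that the deck group of $\phi$ is absorbed into the weighted $\C^*$-action and that no additional global identifications are introduced; a secondary subtlety is the degenerate fiber at $t=-1$ where $1+t$ vanishes, which must be excluded from the claim concerning generic $t$ but does not affect the pencil structure.
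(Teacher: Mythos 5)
Your first two steps are essentially the paper's own argument in different clothing: the covering $\phi(\alpha,\beta,z)=(\alpha^{mr},\beta^{mr},z,\alpha\beta)$ of the affine cone is precisely the map that descends to the paper's isomorphism $\iota_0([s:t:u])=[s^{mr}:t^{mr}:u:st]$ (and symmetrically for $\iota_\infty$), and your local analysis at $[0:1:0:0]$ reproduces the computation of $S_t\cap U_y$. You are also right --- and more careful than the statement itself --- to exclude $t=-1$, where the member of the pencil degenerates to $\{w^{mr}=z^{ma}\}$. Where you genuinely diverge is the $\Q$-Gorenstein property. The paper never leaves the four affine charts: it observes that $S_t\cap U_z$ is literally the miniversal $\Q$-Gorenstein family \eqref{eq:T_sing_miniversal} of the $T$-singularity $\tfrac{1}{mr^2}(1,mra-1)$ (so $S_t\cap U_z\cong B_{m,r,a}$ for $t\neq 0,\infty$), that $S_t\cap U_w$ is the analogous family for $\tfrac{1}{ma^2}(1,mra-1)$, and that $S_t\cap U_y$ is a trivial family of the quotient singularity $\tfrac{1}{mra-1}(r,a)$, so the total space is covered by $\Q$-Gorenstein germs. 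This chart-level identification buys more than the bare statement: the isomorphism $S_t\cap U_z\cong B_{m,r,a}$ is exactly what \S\ref{subsec:aur_vis} and \S\ref{subsec:expl_uni_alg} use to write down the curves $C_t^+$ and to compare with the Auroux model, so your proof should record it anyway.

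Your global adjunction argument for $\Q$-Gorensteinness has one soft spot as written. The divisor $\calX\subset\CP(1,mra-1,r,a)\times\CP^1$ is only $\Q$-Cartier, not Cartier, along the curve $\{[0:1:0:0]\}\times\CP^1\subset\calX$ (its bidegree is $(mra,1)$ and $(mra-1)\nmid mra$ in general), so naive adjunction does not apply there. To repair it: first check that $\calX$ is normal (it is a hypersurface in a Cohen--Macaulay toric fourfold, hence $S_2$, and your singular-locus computation shows the singular set has codimension at least two, giving $R_1$); then compare the reflexive powers $\omega_\calX^{[N]}$ and $\bigl(\omega_Y^{[N]}(N\calX)\bigr)\big|_\calX$ for $N$ divisible by the weights --- these agree on the locus where $Y$ is smooth, whose complement in $\calX$ has codimension at least two, hence they agree everywhere, and the right-hand side is locally free. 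With that repair your route is correct, if less informative than the paper's for the later sections.
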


 \begin{rmk}
  When $Q$ is a $T$-triangle, the family $\{S_t\}$ is an algebraic counterpart of a family of almost toric fibrations interpolating between the singular toric surfaces $V_Q$ and $V_{Q'}$,
 corresponding to a family of nodal integral affine structures on $Q$ with base-nodes limiting to the vertex $\ww$ as $t \ra 0$ and limiting to $\uu'$ as $t \ra \infty$ (here $\uu'$ is the other
 point where the eigenray emanating from $\ww$ intersects $\bdy Q$).
 \end{rmk}

\begin{proof}[Proof of Proposition~\ref{prop:pencil_tri_case}]
Let $U_x$ denote the (singular) affine chart $\{x=1\} \subset \CP(1,mra-1,r,a)$, with $U_y,U_z,U_w \subset \CP(1,mra-1,r,a)$ defined similarly.
We will analyze the intersection of $S_t$ with each of the charts $U_x,U_y,U_z,U_w$.
To start,  observe that, for any $t \in \CP^1$, $S_t \cap U_x = \{y = \tfrac{1}{1+t}w^{mr} + \tfrac{t}{1+t}z^{ma}\} \cong \C_{z,w}^2$ is smooth.
We have also 
\begin{align}
S_t \cap U_y = \{x = \tfrac{1}{1+t}w^{mr} +\tfrac{t}{1+t}z^{ma}\} / \roots_{mra-1}^{1,r,a} \cong \C^2_{z,w}/\roots_{mra-1}^{r,a},
\end{align}
where the latter is the model $\tfrac{1}{mra-1}(r,a)$ singularity.

Next, we have
\begin{align*}
S_t \cap U_z = \{xy = \tfrac{1}{1+t}w^{mr}+\tfrac{t}{1+t}\} / \roots_{r}^{1,-1,a}.
\end{align*}
Comparing with \eqref{eq:T_sing_miniversal}, we see that the family $\{S_t \cap U_z\}$ is a $\Q$-Gorenstein smoothing of $S_0 \cap U_z \cong \tfrac{1}{mr^2}(1,mra-1)$, and in particular $S_t \cap U_z \cong B_{m,r,a}$ for $t \neq 0,\infty$.
Note that $S_\infty \cap U_z = \{xy = 1\}/\roots_{r}^{1,-1,a}$ is smooth.

Finally, we have 
\begin{align*}
S_t \cap U_w = \{xy = \tfrac{1}{1+t} + \tfrac{t}{1+t}z^{ma}\} / \roots_a^{1,-1,r},
\end{align*}
i.e. the family $\{S_t \cap U_w\}$ is a $\Q$-Gorenstein smoothing of $S_\infty \cap U_w \cong \tfrac{1}{ma^2}(1,mra-1)$, and we have $S_t \cap U_w \cong B_{m,a,r}$ for $t \neq 0,\infty$, with $S_0 \cap U_w = \{xy = 1\}/\roots_a^{1,-1,r}$ smooth.
Note that we have covered the total space of the deformation $\{S_t\}$ by $\Q$-Gorenstein neighborhoods (each is either smooth, equivalent to the miniversal model \eqref{eq:T_sing_miniversal}, or a trivial family of cyclic quotient singularities), and hence $\{S_t\}$ is a $\Q$-Gorenstein deformation.

Let $[s:t:u]$ be homogeneous coordinates on $\CP(1,mra-1,mr^2)$, and define the map 
\begin{align*}
\iota_0: \CP(1,mra-1,mr^2) \ra S_0,\;\;\;\;\; [s:t:u] \mapsto [s^{mr}:t^{mr}:u:st]. 
\end{align*}
Note that this is well-defined since 
$\iota_0([\la s: \la^{mra-1}t:\la^{mr^2}u]) = [\la^{mr}s^{mr}:\la^{mr(mra-1)}t^{mr}:\la^{mr^2}u:\la^{mra}st] = [s^{mr}:t^{mr}:u:st]$, 
and the restriction $\CP(1,mra-1,mr^2) \cap U_u \ra S_0 \cap U_z$ agrees with the isomorphism \eqref{eq:Milnor_quot_iso}.
Similarly, we define the map
\begin{align*}
\iota_\infty: \CP(1,mra-1,ma^2) \ra S_\infty,\;\;\;\;\; [s:t:u] \mapsto [s^{ma}:t^{ma}:st:u]. 
\end{align*}
We leave it to the reader to verify that the maps $\iota_0$ and $\iota_\infty$ are isomorphisms.
\end{proof}

\begin{example}
Let $(p_1,p_2,p_3) \in \Z_{\geq 0}^3$ be a triple which satisfies the Markov equation $p_1^2 + p_2^2 + p_3^2 = 3p_1p_2p_3$, 
and let $(p_1,p_2,p_3' := 3p_1p_2-p_3)$ be its Markov mutation.
Put $\MM := \{(m_1,m_2,m_3) \in \Z^3\;|\; \sum\limits_{i=1}^3 p_i^2m_i = 0\}$.
Then the dual Fano polygon $$
Q = \{(x_1,x_2,x_3) \in \R^3 \;|\; \sum\limits_{i=1}^3 p_i^2x_i = 0, x_1,x_2,x_3 \geq -1\} \subset \MM_\R$$
 satisfies $V_Q \cong \CP(p_1^2,p_2^2,p_3^2)$ and has $T$-vertices of types $\tfrac{1}{p_i^2}(p_{i+1}^2,p_{i+2}^2)$ for $i=1,2,3$ (mod $3$).
Let $\ww$ denote the vertex of type $\tfrac{1}{p_3^2}(p_1^2,p_2^2)$ and put $Q' := \mut^\full_\ww(Q)$, so that we have $V_{Q'} \cong \CP(p_1^2,p_2^2,(p_3')^2)$.
Then the   $\Q$-Gorenstein pencil $\{S_t\}_{t \in \CP^1}$    given by
\begin{align}
S_t := \{xy = \tfrac{1}{1+t}w^{p_3} + \tfrac{t}{1+t}z^{p_3'}\} \subset \CP(p_1^2,p_2^2,p_3,p_3')
\end{align}
satisfies  $S_0 \cong \CP(p_1^2,p_2^2,p_3^2)$, $S_\infty \cong \CP(p_1^2,p_2^2,(p_3')^2)$ and $S_t \cong \CP^2$ for $t \neq 0,\infty$.
This fits into the above framework after specializing to the case $p_1 = 1$, so that $Q$ has a Delzant vertex $\vv$.
\end{example}

Now let $Q \subset \MM_\R$ be any polygon having a Delzant vertex $\vv$ adjacent to a $T$-vertex $\ww$.
As in the triangle case considered above, we will assume that $\MM_\R = \R^2$ with $\vv = (0,0)$ and $\ww = (0,mra-1)$, where the edge vectors at $\vv$ are $(1,0),(0,1)$ and the edge vectors at $\ww$ are $(0,-1),(mr^2,1-mra)$, for some $m,r,a \in \Z_{\geq 1}$ with $\gcd(r,a) = 1$.
We will further assume that the eigenray emanating from $\ww$ hits the edge of $Q$ which lies on the $x$-axis.\footnote{Without this assumption we will still have $\wt{S}_0 \cong V_Q$, which suffices to prove Theorem~\ref{thm:uni_alg_curve}, but we will not generally have $\wt{S}_\infty \cong V_{Q'}$. Indeed, since the complex variety $V_Q$ depends only on the normal fan of $Q$ we can always adjust $Q$ so as to achieve this property without changing its normal fan. However, the mutation $Q'$ depends on $Q$ itself and not just its normal fan.}
Put $Q' := \mut_\ww^\full(Q)$.
By reducing to the triangular case above, we now construct a $\Q$-Gorenstein pencil $\{\wt{S}_t\}_{t \in \CP^1}$ which interpolates between $V_Q$ and $V_{Q'}$ and smooths the toric fixed point $\pp_\ww$ in a general fiber.
Let $Q_\tri$ denote the triangle with vertices $\vv,\ww$, and $\uu = (mr^2,0)$, so that we have $Q \subset Q_\tri$.

Let $\Si_Q,\Si_{Q_\tri}$ denote the normal fans to $Q,Q_\tri$ respectively, and note that $\Si_Q$ is a refinement of $\Si_{Q_\tri}$.
We denote by $\tau_{[\vv,\ww]},\tau_{[\vv,\uu]},\tau_{[\uu,\ww]} \subset \NN_\R$ the rays of $\Si_{Q_\tri}$ spanned by the inward normals to the edges $[\vv,\ww],[\vv,\uu],[\uu,\ww]$ respectively. Let $\si \subset \NN_\R$ denote the two-dimensional cone spanned by $\tau_{[\vv,\uu]}$ and $\tau_{[\uu,\ww]}$, with corresponding distinguished point $\pp_\si \in 
V_{Q_\tri}$ (c.f. \cite[\S3.2]{cox2011toric}).
Note that the rays of $\Si_{Q} \setminus \Si_{Q_\tri}$ all lie in the interior of $\si$.
The induced toric morphism $\pi: V_Q \ra V_{Q_\tri}$ is a proper birational map which restricts to an isomorphism 
$V_Q \setminus \pi^{-1}(\pp_\si) \ra V_{Q_\tri} \setminus \pp_\si$.
We denote by $U_\si \subset V_{Q_\tri}$ the affine toric variety associated to the cone $\si$, which we identify with $\tfrac{1}{mra-1}(r,a)$, and we put $\wt{U}_\si := \pi^{-1}(U_\si) \subset V_Q$.
Thus $V_Q$ is obtained from $V_{Q_\tri}$ by excising $U_\si$ and gluing in $\wt{U}_\si$, which we could view as sequence of generalized blowups at (singular) points.

Let $\{S_t\}_{t \in \CP^1}$ denote the $\Q$-Gorenstein pencil associated to $Q_\tri$ and its mutation at $\ww$ as in Proposition~\ref{prop:pencil_tri_case}.
For $t \in \CP^1$, let $\jmath_t: \C^2_{z,w}/\roots_{mra-1}^{r,a} \xrightarrow{\cong} S_t \cap U_y$ denote the natural isomorphism $(z,w) \mapsto (\tfrac{1}{1+t}w^{mr} +\tfrac{t}{1+t}z^{ma},1,z,w)$.
For each $t \in \CP^1$, let $\wt{S}_t \ra S_t$ be the birational modification corresponding to excising the image of $U_\si$ under $\jmath_t$ and gluing in $\wt{U}_\si$.
We have, essentially by construction, the following generalization of Proposition~\ref{prop:pencil_tri_case}.

\begin{prop}\label{prop:pencil:gen_case} \hfill

    \begin{enumerate}[label=(\roman*)]

      \item
                 Suppose as above
that $Q \subset \MM_\R$ is a polygon having a Delzant vertex $\vv=(0,0)$ adjacent to a $T$-vertex $\ww =(0,mra-1)$ and such that
 the eigenray $(r,-a)$ emanating from $\ww$ hits the edge of $Q$ which lies on the $x$-axis.
Then the  family $\{\wt{S}_t\}_{t \in \CP^1}$ described above defines a $\Q$-Gorenstein pencil such that
\begin{itemize}
  \item we have isomorphisms $\wt{S}_0 \cong V_Q$ and $S_\infty \cong V_{Q'}$
  \item for $t \neq 0,\infty$, the singularities of $\wt{S}_t$ correspond naturally to the singular toric fixed points of $V_Q$ excluding $\pp_\ww$.
\end{itemize}
\item   If in the above situation the eigenray $(r,-a)$ emanating from $\ww$ hits some other edge of $Q$
then the $\Q$-Gorenstein pencil $\wt{S}_t$ with $\wt{S}_0 \cong V_Q$  is defined for $t$ close to $0$ and $\wt{S}_t$ has singularities as described above.
\end{enumerate}
\end{prop}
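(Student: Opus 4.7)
The plan is to reduce both parts of the proposition to Proposition~\ref{prop:pencil_tri_case} by performing a simultaneous (i.e., fiberwise over $\CP^1$) birational modification of the triangular pencil $\{S_t\}_{t \in \CP^1}$. The central observation is that for every $t \in \CP^1$ the isomorphism $\jmath_t$ identifies $S_t \cap U_y$ with the fixed affine variety $U_\si \cong \tfrac{1}{mra-1}(r,a)$, so viewed through these isomorphisms the restriction of the pencil to the chart $U_y$ becomes a trivial family over $\CP^1$. This allows me to perform the toric proper birational modification $\pi_\si \colon \wt{U}_\si \to U_\si$ — defined by the refinement of fans $\fan_Q \to \fan_{Q_\tri}$, whose additional rays all lie inside $\si$ — simultaneously in every fiber, producing the new family $\{\wt{S}_t\}$. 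The $\Q$-Gorenstein property is then automatic: away from $U_y$ the family agrees with $\{S_t\}$, while inside $U_y$ the modification is performed on a trivial family with toric surface fiber $\wt{U}_\si$, which is $\Q$-Gorenstein, so the relative canonical remains $\Q$-Cartier.

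The identification $\wt{S}_0 \cong V_Q$ will follow directly from Proposition~\ref{prop:pencil_tri_case}: the map $\iota_0$ is a toric isomorphism $V_{Q_\tri} \cong S_0$, and under $\iota_0$ the affine toric chart $U_\si \subset V_{Q_\tri}$ corresponds to $S_0 \cap U_y$ via $\jmath_0$. Replacing $U_\si$ by $\wt{U}_\si$ on both sides yields $\wt{S}_0 \cong V_Q$. Analogously the map $\iota_\infty$ identifies $V_{Q'_\tri}$ with $S_\infty$, and so to complete part (i) it remains to check that after the fiberwise modification $S_\infty$ becomes $V_{Q'}$ rather than merely some other birational modification of $V_{Q'_\tri}$.

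This last check is the principal obstacle, and it is precisely here that the eigenray hypothesis of part (i) enters. The hypothesis that the eigenray from $\ww$ meets the edge of $Q$ lying on the $x$-axis forces the piecewise-linear mutation map $\psi$ of \eqref{eq:psi_mut} to act on $Q$ by shearing only the portion below the eigenray line while leaving its portion above the eigenray pointwise fixed; in particular, every edge of $Q$ lying inside the cone $\si$ (i.e., above the eigenray) is preserved. Translating this to fans and using the natural identification between the cones $\si$ for $Q_\tri$ and $Q'_\tri$ provided by Lemma~\ref{lem:T_sings_agree} (both produce the singularity $\tfrac{1}{mra-1}(r,a)$), I conclude that the refinement $\fan_{Q'} \to \fan_{Q'_\tri}$ inside $\si$ coincides with the refinement $\fan_Q \to \fan_{Q_\tri}$. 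Hence the fiberwise modification $U_\si \rightsquigarrow \wt{U}_\si$ applied to $S_\infty \cong V_{Q'_\tri}$ recovers $V_{Q'}$, completing part (i).

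For part (ii), when the eigenray hits some other edge of $Q$, the mutation nontrivially alters the refinement of $\fan_Q$ inside $\si$, so the argument above fails at $t = \infty$. Nonetheless, the local fiberwise construction near $\pp_\si$ remains valid, and it gives a well-defined $\Q$-Gorenstein modification $\wt{S}_t$ for $t$ in a neighborhood of $0 \in \CP^1$ with $\wt{S}_0 \cong V_Q$. Finally, the claim about the singularities of $\wt{S}_t$ for generic $t$ is a direct consequence of the construction: Proposition~\ref{prop:pencil_tri_case} shows that $S_t$ is smooth outside $\pp_\si$ for $t \neq 0, \infty$, so the singularities of $\wt{S}_t$ are exactly those of $\wt{U}_\si$, which correspond naturally via the refinement $\fan_Q \to \fan_{Q_\tri}$ to the singular toric fixed points of $V_Q$ other than $\pp_\ww$.
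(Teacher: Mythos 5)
Your proposal is correct and follows exactly the paper's route: the paper defines $\wt{S}_t$ by the same fiberwise excise-and-glue modification of the triangular pencil over the chart $U_y$ and then asserts the proposition ``essentially by construction,'' offering no further argument. The details you supply — triviality of the family over $U_y$ via the graphs $\jmath_t$ (hence preservation of the $\Q$-Gorenstein condition), the identification $\wt{S}_0 \cong V_Q$ through $\iota_0$, the use of the eigenray hypothesis to match the fan refinements inside $\si$ at $t=\infty$, and the breakdown of only that last step in case (ii) — are precisely the checks the paper leaves implicit, and they are carried out correctly.
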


\begin{rmk}\label{rmk:mutation}
In \cite{akhtar2016mirror} the authors construct a family of hypersurfaces in a toric $3$-fold $V_{\wt{Q}}$, where $\wt{Q}$ is a three-dimensional polytope admitting projections to $Q$ and the mutated polytope $Q'$.  This polytope has a smooth corner that projects
to the smooth corners of $Q$ and $Q'$, and
the dually induced toric morphisms $V_Q,V_{Q'} \hookrightarrow V_{\wt{Q}}$  embed $V_Q$ and $V_{Q'}$ as hypersurfaces that  span a $\Q$-Gorenstein pencil.   This construction applies whether or not
the eigenray from $\ww$ meets the edge of $Q$ on the $x$-axis, and it follows that there is a family of surfaces $(\wt{S}_t)_{ t\in \C P^1}$ with $\wt{S}_0 = V_Q$ and $\wt{S}_\infty = V_{Q'}$.
 In this construction, each additional facet of $Q$ gives rise to an extra facet of $\wt{Q}$, so that the effect of blowing up $Q$ is to blow up $\wt{Q}$.  However this blowup of $\wt{Q}$ may not correspond to a family of blowups of the $\wt{S}_t$.  
 As an example, one can consider the effect of a mutation on the quadrilateral illustrated in \cite[Fig. 1.3]{magill2022unobstructed}.
 Here the original quadrilateral has the property that the nodal ray from $\vv_Y$ hits the edge $\vv_V - \vv_X$.
 One can check that this has the effect that 
 one of the edges of the  mutation of $Q_{\tri}$ is completely excised from the mutation of   $Q$, which means the latter mutation is not a blowup of the former mutation.
Nevertheless, in this case
 one can  with some effort  give explicit formulas in the spirit of \eqref{eq:penc_tri} for the hypersurface $\wt{S}_t \subset V_{\wt{Q}}$ in terms of Cox coordinates on $V_{\wt{Q}}$ thought of as a GIT quotient.
\end{rmk}

\subsection{Explicit unicuspidal algebraic curves}\label{subsec:expl_uni_alg}

Let $Q$ be a polygon as in Proposition~\ref{prop:pencil:gen_case}, with associated $\Q$-Gorenstein pencils $\{\wt{S}_t\}_{t \in \CP^1}$.
We now construct a $(r,a)$-unicuspidal rational algebraic curve $C_t^+$ in $\wt{S}_t$ for all $t \neq 0,\infty$.
Since $\{\wt{S}_t\}_{t \in \CP^1}$ is given by a fiberwise birational modification of the pencil $\{S_t\}_{t \in \CP^1}$ associated to $Q_\tri$, we first consider the case that $Q = Q_\tri$ is a triangle. 
Notice that the smooth toric fixed point of $\CP(1,mra-1,mr^2)$ maps by $\iota_0$ to the point $[1:0:0:0] \in S_0$, which lies in $S_t$ for all $t \in \CP^1$.
The curve $C_t^+$ will be constructed to have a $(r,a)$ cusp at $[1:0:0:0]$, and will furthermore be well-placed with respect to an anticanonical divisor $\calN_t$ having a node at $[1:0:0:0]$. 

Recall that for $t \neq 0,\infty$ we have 
$S_t \cap U_z = \{xy = \tfrac{1}{1+t}w^{mr} + \tfrac{t}{1+t}\} / \roots_r^{1,-1,a} \cong B_{m,r,a}$.
Following \S\ref{subsec:aur_vis}, we consider the curves $\intC^+_t = \{y = 0\} \subset S_t \cap U_z$ and $\intC^-_t = \{x = 0\} \subset S_t \cap U_z$ and their closures $C^+_t,C^-_t$ in $S_t$.
We will focus on $C_t^+$ since $C^-_t$ does not pass through $[1:0:0:0]$.
Strictly speaking $\intC^+_t$ has $m$ components (since we have combined $C_1^+,\dots,C_m^+$ from \S\ref{subsec:aur_vis} into a single curve), so we take one of the components
\begin{align}
C^+_t := \{y = 0, w^r = \zeta z^a\} \subset S_t
\end{align}
for fixed $\zeta \in \C$ satisfying $\zeta^m = -t$.

For $t \in \CP^1$, let $\calN_t \subset S_t$ denote the anticanonical divisor $\{z = 0\} \cup \{w = 0\} \subset S_t$.
Here $\calN_t$ has two components for $t \neq 0,\infty$, while $\calN_0$ (resp. $\calN_\infty$) is the image of the toric divisor under the map $\iota_0: \CP(1,mra-1,mr^2) \ra S_0$ (resp. $\iota_\infty: \CP(1,mra-1,ma^2) \ra S_\infty$).
For $t \neq 0,\infty$, the two components of $\calN_t$ intersect transversally at the nodal point $[1:0:0:0]$ and also meet at the singular 
point $[0:1:0:0]$.

\begin{lemma}\label{lem:C^+_well_placed_tri}   When $Q$ is a triangle,
 $C^+_t \subset S_t$ is $(r,a)$-well-placed with respect to $\calN_t$ for  $t \neq 0,\infty$, and is otherwise nonsingular.
 In particular, $C_t^+$ is $(r,a)$-unicuspidal.
\end{lemma}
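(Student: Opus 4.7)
The plan is a direct chart-by-chart analysis in $\CP(1,mra-1,r,a)$, using the explicit equations for $S_t$ and $C_t^+$. First I would verify that the equations $y=0$ and $w^r = \zeta z^a$ are consistent with the hypersurface equation for $S_t$: substituting $y=0$ and $w^{mr} = \zeta^m z^{ma} = -tz^{ma}$ into $\tfrac{1}{1+t}w^{mr} + \tfrac{t}{1+t}z^{ma}$ gives $0 = xy$. Hence, as a subvariety of $\CP(1,mra-1,r,a)$, $C_t^+$ is exactly the locus of points $[x:0:z:w]$ with $w^r = \zeta z^a$.

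Next I would analyze $C_t^+$ in the four standard affine charts $U_x,U_y,U_z,U_w$. Since $C_t^+$ lies in $\{y=0\}$, it misses $U_y$. In the smooth chart $U_x \cong \C^2_{z,w}$, $C_t^+$ is cut out by $w^r = \zeta z^a$; for $t \neq 0,\infty$ we have $\zeta \neq 0$ and $\gcd(r,a)=1$, so this is the standard local model of an $(r,a)$-cusp singularity at $[1:0:0:0]$, smooth elsewhere, with Puiseux parametrization $(z,w) = (s^r,\zeta^{1/r}s^a)$. In the chart $U_z = \{xy = \tfrac{1}{1+t}w^{mr}+\tfrac{t}{1+t}\}/\roots_r^{1,-1,a} \cong B_{m,r,a}$, the intersection $C_t^+ \cap U_z$ is the image of the $r$ affine lines $\{y=0,\, w^r = \zeta\}$ (parametrized by $x$), on which $\roots_r$ acts freely and transitively via $(x,w) \mapsto (\mu x, \mu^a w)$, so the quotient is a single smooth affine line. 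Since $w \neq 0$ along this line, it avoids the orbifold point of $U_z$. A symmetric analysis in $U_w$, using $\roots_a^{1,-1,r}$ and the condition $z^a = 1/\zeta$, shows $C_t^+ \cap U_w$ is likewise a smooth affine line avoiding its orbifold point. Hence $C_t^+$ is irreducible and has a unique singularity, namely the $(r,a)$-cusp at $[1:0:0:0]$.

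For the well-placedness, $\calN_t$ has smooth local branches $\calB_- := \{z=0\}$ and $\calB_+ := \{w=0\}$ meeting transversely at $[1:0:0:0]$. The equations defining $C_t^+$, combined with $\zeta \neq 0$, force $C_t^+ \cap \{z=0\} = C_t^+ \cap \{w=0\} = \{[1:0:0:0]\}$, so $C_t^+ \cap \calN_t = \{[1:0:0:0]\}$. Plugging the Puiseux parametrization into each branch yields $(C_t^+ \cdot \calB_-)_{[1:0:0:0]} = \ord_s(s^r) = r$ and $(C_t^+ \cdot \calB_+)_{[1:0:0:0]} = \ord_s(\zeta^{1/r}s^a) = a$, completing the verification of $(r,a)$-well-placedness in the sense of Definition~\ref{def:well-placed}. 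The degenerate cases $t=0,\infty$ are handled by direct inspection of the reduced curves: at $t=0$ the equations reduce to $y=w=0$, cutting out the smooth weighted projective line $\CP(1,r) \subset \CP(1,mra-1,r,a)$, and at $t=\infty$, rescaling $w^r = \zeta z^a$ by $1/\zeta$ and sending $\zeta \to \infty$ forces $z=0$, giving $\{y=z=0\} \cong \CP(1,a)$, also smooth. The only real subtleties are the bookkeeping for the weighted-projective coordinates and the orbifold quotients in $U_z,U_w$, together with the verification that $C_t^+$ avoids both orbifold points so that the quotient introduces no extra singularities; both reductions depend on the single fact that $\zeta \neq 0$.
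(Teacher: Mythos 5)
Your proof is correct and takes essentially the same chart-by-chart approach as the paper: the heart of the matter is that in the smooth chart $U_x \cong \C^2_{z,w}$ the curve is $\{w^r = \zeta z^a\}$ with $\zeta \neq 0$ and $\gcd(r,a)=1$, which is exactly an $(r,a)$-cusp well-placed against $\calN_t \cap U_x = \{zw=0\}$. The paper dispenses with your $U_w$ analysis by noting $[0:0:0:1]\notin S_t$, so $C_t^+ \subset U_x \cup U_z$; your additional verifications (consistency with the hypersurface equation, freeness and transitivity of the $\roots_r$-action on the $r$ lines in $U_z$, and the fibers at $t=0,\infty$) are all correct but strictly speaking not needed.
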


\begin{proof}
Note that $C^+_t \subset U_x \cup U_z$ since  $C^+_t \cap U_y = \nil$ and $[0:0:0:1] \notin S_t$, and $C^+_t \cap U_z$ is smooth by construction.
In the chart $S_t \cap U_x \cong \C^2_{z,w}$ we have $C^+_t \cap U_x = \{(z,w) \in \C^2 \;|\; w^r = \zeta z^a\}$, which is $(r,a)$-well-placed with respect to $\calN_t \cap U_x = \{(z,w) \in \C^2 \;|\; z = 0\;\text{or}\; w= 0\}$.
\end{proof}

In the case of a general quadrilateral $Q$, let $\wt{C}_t^+ \subset \wt{S}_t$ denote the proper transform of $C_t^+ \subset S_t$ with respect to the birational map $\wt{S}_t \ra S_t$, and let $\wt{\calN}_t \subset \wt{S}_t$ be the total transform of $\calN_t$.
Since $C_t^+$ is disjoint from $[0:1:0:0]$, we have the following extension of Lemma~\ref{lem:C^+_well_placed_tri}.

\begin{lemma}\label{lem:well_placed_gen} For general $Q$ and 
 all $t\ne 0$ sufficiently close to $0$, the curve $\wt{C}_t^+ \subset \wt{S}_t$ is $(r,a)$-well-placed with respect to $\wt{\calN}_t$ and is otherwise nonsingular. 
\end{lemma}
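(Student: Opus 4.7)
The plan is to reduce the general case to the triangle case already handled in Lemma~\ref{lem:C^+_well_placed_tri} by observing that the entire analysis is local near the cusp point $[1:0:0:0]$, which lies far from the locus where the birational modification $\wt{S}_t \to S_t$ takes place.

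First I would record the key disjointness statement: since $U_y = \{y \neq 0\}$ and $C_t^+ \subset \{y = 0\}$, the curve $C_t^+$ is disjoint from $S_t \cap U_y$, and in particular from the image $\jmath_t(U_\sigma)$ that is excised in forming $\wt{S}_t$ from $S_t$. Consequently, the birational map $\wt{S}_t \to S_t$ restricts to an isomorphism on an open neighborhood of $C_t^+$, and $\wt{C}_t^+$ may be identified with $C_t^+$ via this isomorphism. Moreover, for $t \neq 0$ close to $0$ the pencil $\wt{S}_t$ is well-defined by Proposition~\ref{prop:pencil:gen_case}(ii), and $C_t^+$ continues to make sense because it was defined in the chart $U_z$ away from the modification.

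Next I would describe the intersection with the anticanonical divisor. Since $C_t^+ = \{y=0,\; w^r = \zeta z^a\}$, setting $z=0$ forces $w=0$ and vice versa, so $C_t^+ \cap \calN_t = \{[1:0:0:0]\}$; in particular $C_t^+$ does not pass through the singular point $[0:1:0:0] = \pp_\sigma$, where the modification is concentrated. Hence the total transform $\wt{\calN}_t$, which differs from $\calN_t$ only above $\pp_\sigma$, agrees with $\calN_t$ on the neighborhood of $C_t^+$ identified above, and therefore $\wt{C}_t^+ \cap \wt{\calN}_t = \{[1:0:0:0]\}$.

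Finally, I would invoke Lemma~\ref{lem:C^+_well_placed_tri} directly: the local models for $S_t$, $C_t^+$, and $\calN_t$ in the charts $U_x$ and $U_z$ containing $[1:0:0:0]$ depend only on the data $m,r,a$ and are unchanged when $Q_\tri$ is replaced by a more general $Q$ (the change in $Q$ only refines the fan over the opposite cone $\sigma$). Thus the same computation shows $\wt{C}_t^+$ is $(r,a)$-well-placed with respect to $\wt{\calN}_t$ and smooth away from its cusp. The main thing to be careful about is simply verifying that the auxiliary point $[0:1:0:0]$, where the modification is performed and where additional singularities of $\wt{S}_t$ might lurk, really is disjoint from $C_t^+$; once this is established, no further analysis beyond the triangle case is required.
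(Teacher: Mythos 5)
Your proposal is correct and follows essentially the same route as the paper: the paper's justification is precisely that $C_t^+\subset\{y=0\}$ is disjoint from $[0{:}1{:}0{:}0]$, where the birational modification $\wt{S}_t\to S_t$ is concentrated, so the well-placedness reduces to the local computation at $[1{:}0{:}0{:}0]$ already done in Lemma~\ref{lem:C^+_well_placed_tri}. Your write-up simply makes explicit the details (the identification of $\wt{C}_t^+$ with $C_t^+$, the agreement of $\wt{\calN}_t$ with $\calN_t$ near the curve, and the restriction to $t$ near $0$ via Proposition~\ref{prop:pencil:gen_case}(ii)) that the paper leaves implicit.
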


\sss

We are now ready to complete the proof of Theorem~\ref{thm:uni_alg_curve}.

\begin{proof}[Proof of Theorem~\ref{thm:uni_alg_curve}]  
Let $\{\wt{S}_t\}_{t\approx 0}$
be the $\Q$-Gorenstein deformation of $V_Q$ constructed  in Proposition~\ref{prop:pencil:gen_case}. Using Lemma~\ref{lem:well_placed_gen}, there is a rational unicuspidal curve $\wt{C}^+_t \subset \wt{S}_t$ which is well-placed with respect to the anticanonical divisor $\wt{\calN}_t$.
Let $\ttil{S}_t$ be a further $\Q$-Gorenstein deformation of $\wt{S}_t$ which smooths the remaining singular points.
As in the proof of Theorem~\ref{thm:sesqui_main}, for $|t| > 0$ sufficiently small we can assume that $\wt{\calN}_t$ deforms to a rational nodal anticanonical divisor $\ttil{\calN}_t \subset \ttil{S}_t$ and $\wt{C}_t^+$ deforms to a rational curve $\ttil{C}_t \subset \ttil{S}_t$ which is $(r,a)$-well-placed with respect to $\ttil{\calN}_t$.\footnote{See \cite[Cor.3.5.5]{cusps_and_ellipsoids} for a discussion of the behavior of cuspidal constraints under deformations of the (almost) complex structure $J$.} 
By Proposition~\ref{prop:QG_def_diff_ATF},  there is a diffeomorphism $\Phi: \ttil{S}_t \ra \atf(Q_\nodal)$ such that $\Phi_*(\ttil{J})$ tames the symplectic form on $\atf(Q_\nodal)$, with $\ttil{J}$ the integrable almost complex structure on $\ttil{S}_t$.
Then the curve $\Phi(\ttil{C}^+_t) \subset \atf(Q_\nodal)$ is $\Phi_*(\ttil{J})$-holomorphic and satisfies the requirements of  the theorem. 
\end{proof}

\subsection{Classifying unicuspidal algebraic curves in the first Hirzebruch surface}\label{subsec:unicusp_appl}

Our goal here is to prove Theorem~\ref{thmlet:F_1_classif} on unicuspidal algebraic curves in the first Hirzebruch surface $F_1$.
As a warmup, we start by discussing the analogue for $\CP^2$, giving a new proof based on quantitative symplectic geometry that the list in Theorem~\ref{thm:bob_et_al}(d) is complete:
\begin{lemma}\label{lem:d_is_complete}
The curves constructed in Theorem~\ref{thm:orev_orig} give the complete list of data $(d,p,q)$ for unicuspidal rational algebraic plane curves with one Puiseux pair.
\end{lemma}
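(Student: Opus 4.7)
My plan is to combine the ellipsoid obstruction from Theorem~\ref{thm:stab_obs_from_curve} with the known shape of the Fibonacci staircase to pin down the admissible $(p,q)$ inside the staircase range, and to invoke the adjunction formula to handle the remaining range $p/q > \tau^4$.

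Let $C$ be a $(p,q)$-unicuspidal rational algebraic plane curve of degree $d$ with a single Puiseux pair. Since the relevant family in Theorem~\ref{thm:bob_et_al}(d) consists of index zero curves, I implicitly restrict to that setting, so $p+q = 3d$ and $[\om_{\CP^2}] \cdot [C] = 3d$ in the monotone normalization $\CP^2 = \CP^2(3)$. Viewing $C$ as a sesquicuspidal symplectic curve and applying Theorem~\ref{thm:stab_obs_from_curve} with $N=0$ gives the obstruction
\begin{align*}
c_{\CP^2}(p/q) \;\geq\; \tfrac{p}{p+q}.
\end{align*}

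I would then invoke McDuff--Schlenk's description of $c_{\CP^2}$ on $[1,\tau^4]$ as the Fibonacci staircase. The outer corner points $\bigl(\Fib_{2k+1}/\Fib_{2k-3},\; \Fib_{2k+1}/(\Fib_{2k+1}+\Fib_{2k-3})\bigr)$ are precisely the points where the staircase meets the ``cusp hyperbola'' $y = x/(x+1)$, and elsewhere in $[1,\tau^4]$ the staircase lies strictly below this hyperbola. Compatibility of the obstruction above with the actual staircase value therefore forces $(p,q) = (\Fib_{2k+1}, \Fib_{2k-3})$ for some $k \geq 1$, and then $d = \Fib_{2k-1}$ via the Fibonacci identity $\Fib_{2k+1} + \Fib_{2k-3} = 3\Fib_{2k-1}$ together with $p+q = 3d$.

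The remaining range $p/q > \tau^4$ is the subtle one: the symplectic obstruction is strictly weaker than the volume bound there (one has $c_{\CP^2}(x) = 3\sqrt{x/2} > x/(x+1)$) and yields no contradiction by itself. Here I would instead invoke the adjunction formula $(d-1)(d-2) = (p-1)(q-1)$, which combined with $p+q = 3d$ is equivalent to the requirement that $5d^2 - 4 = (3d-2p)^2$ be a perfect square. A standard analysis of the Pell equation $N^2 - 5d^2 = -4$ shows that the positive integer solutions are exactly $(N,d) = (L_{2k-1}, \Fib_{2k-1})$ with $L_n$ the $n$th Lucas number, which again forces $(d,p,q) = (\Fib_{2k-1}, \Fib_{2k+1}, \Fib_{2k-3})$ and in particular $p/q < \tau^4$. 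Since Theorem~\ref{thm:orev_orig} realizes every such triple by an algebraic curve, the Orevkov list is complete. The main obstacle is precisely this post-staircase range, where the symplectic machinery alone falls short and the classical adjunction input seems unavoidable.
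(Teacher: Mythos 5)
Your proof is correct, but it follows a different route from the paper's. The paper's argument is a two-step citation with no case division: an algebraic unicuspidal curve is in particular symplectic, so by \cite[Thm. G]{cusps_and_ellipsoids} the class $d\ell$ must be $(p,q)$-perfect, and the Diophantine classification of perfect classes in \cite[Cor. 3.1.3]{McDuff-Schlenk_embedding} then forces $p/q$ to be an odd-index Fibonacci ratio, uniformly in $p/q$. You instead use the embedding obstruction of Theorem~\ref{thm:stab_obs_from_curve} against the computed staircase, which (as you correctly note) only bites for $p/q \leq \tau^4$, and you patch the range $p/q > \tau^4$ with adjunction. Two remarks. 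First, your adjunction step is actually stronger than you claim: $p+q=3d$ together with $(d-1)(d-2)=(p-1)(q-1)$ gives $pq=d^2+1$, and the resulting Pell equation $N^2-5d^2=-4$ forces $(d,p,q)=(\Fib_{2k-1},\Fib_{2k+1},\Fib_{2k-3})$ for \emph{all} admissible triples, not just those with $p/q>\tau^4$; so your staircase half is logically redundant, and your proof in fact shows the lemma follows from purely classical arithmetic once index zero and unicuspidality are imposed (whereas the paper's stated aim is to exhibit a proof via quantitative symplectic geometry). Second, a harmless slip: in the normalization $\CP^2=\CP^2(3)$ in which the obstruction reads $c_{\CP^2}(x)\geq x/(x+1)$, the volume bound is $\sqrt{x}/3$, not $3\sqrt{x/2}$; the inequality $\sqrt{x}/3 > x/(x+1)$ for $x>\tau^4$ that your remark relies on is still true, and the remark is only motivational in any case.
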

\begin{proof}
Any $(p,q)$-unicuspidal rational algebraic curve in $\CP^2$ of degree $d$ is in particular a symplectic curve, and hence according to \cite[Thm. G]{cusps_and_ellipsoids} the homology class $A = d\ell$ is $(p,q)$-perfect.
Then by \cite[Cor. 3.1.3]{McDuff-Schlenk_embedding}, $p/q$ must be a ratio of odd index Fibonacci numbers.
\end{proof}

The corresponding classification of perfect exceptional classes in $F_1$ was recently worked out in \cite{magillmcd2021,magill2022staircase} and is much more complicated than for $\CP^2$. Here we give a broad self-contained overview of the classification, refering to loc. cit. for the details.

Let $\perf(F_1)$ denote the set of all quadruples $(p,q,d,m) \in \Z_{\geq 0}^4$ with $p > q$ coprime such that $A = d\ell - me$ is a $(p,q)$-perfect exceptional class in $H_2(F_1)$ (see e.g. \cite[Def. 4.4.2]{cusps_and_ellipsoids}). Equivalently, by \cite[Thm. G]{cusps_and_ellipsoids} this is the set of quadruples $(p,q,d,m)$ such that there exists an index zero $(p,q)$-unicuspidal rational symplectic curve $C$ in $F_1$ with $[C] = d\ell - me \in H_2(F_1)$.\footnote{If such a curve exists then it exists for any symplectic form on $F_1$. However, it gives an interesting obstruction to embedding ellipsoids into $F_1$ with symplectic form in class $\PD(\ell- be)$ only if  $m/d\approx b$.}
Put 
\begin{align*}
\perfbar(F_1) := \{p/q \;|\;(p,q,d,m) \in \perf(F_1)\text{ for some } d,m\}.
\end{align*}
The classes in $\perf(F_1)$ divide naturally into two sets: those in $\perf^+(F_1)$ with $m/d> 1/3$ and those in $\perf^-(F_1)$ with $m/d < 1/3$; none have $m/d = 1/3$.

One can show that the forgetful map $$
\perf(F_1) \ra \perfbar(F_1) = \perfbar^+(F_1)\sqcup \perfbar^-(F_1)
$$
 sending $(p,q,d,m)$ to $p/q$ is injective. Indeed, by definition, for $(p,q,d,m) \in \perf(F_1)$ we have $d^2 - m^2 = pq-1$ and $3d-m = p+q$, and hence
\begin{align}\label{eq:tpq}
d_{p,q} = \tfrac{1}{8}(3p+3q + \eps t_{p,q}),\quad m_{p,q} = \tfrac{1}{8}(p+q + 3\eps t_{p,q})
\end{align} 
where $t_{p,q} := \sqrt{p^2 - 6pq + q^2 + 8}$ and there is a unique choice $\eps \in \{1,-1\}$ such that $d_{p,q}$ and $m_{p,q}$ are integers (see \cite[\S2.2]{magillmcd2021}).

We define the ``shift'' map 
\begin{align}\label{eq:F_1_shift_map}
S: (1,\infty) \ra (5,6), \;\;\;\;\; 
x \mapsto \frac{6x-1}{x},
\end{align}
and one can check that the intervals $S^k([6,\infty))$ are disjoint for $k \in \Z_{\geq 0}$, with union 
$\bigcup_{k \in \Z_{\geq 0}}  S^k([6,\infty)) =(3+2\sqrt{2},\infty)$. 
Note also that $S$ fixes the accumulation point $3+2\sqrt 2$ of the monotone staircase, and acts on 
 the $x$-coordinates $\tfrac{g_{j+3}}{g_j}$ of its steps (outer corner points)
  by $\tfrac{g_{j+3}}{g_j}\mapsto \tfrac{g_{j+6}}{g_{j+3}}$, where $g_1,g_2,g_3,\dots$ is the sequence defined by the recursion $g_{j+6} = 6g_{j+3}-g_j$ with initial values $1,1,1,1,2,4$ as in Table~\ref{table:CG_et_al}.

We also define the ``reflection'' map
\begin{align}\label{eq:F_1_ref_map}
R: (6,\infty) \ra (6,\infty),\;\;\;\;\; 
x  \mapsto \frac{6x-35}{x-6},
\end{align}
which is an involution fixing $7$ and interchanging $(6,7)$ with $(7,\infty)$.
It turns out that both $S$ and $R$ are symmetries of the set $\perfbar(F_1)$. Further, both symmetries take the classes with $m/d> 1/3$ to those with $m/d<1/3$, and vice versa.  Thus they interchange the sets 
$ \perfbar^+(F_1)$ and $ \perfbar^-(F_1)$.  

The following is a rough summary\footnote
{
The proof in \cite{magill2022staircase} that this list is complete uses ideas that are rather different from those presented here.
Indeed an essential input is the staircase accumulation point theorem in \cite{cristofaro2020infinite}. It would be interesting to know if there is another approach
to this classification result.}
 of the set $\perfbar(F_1)$ (see \cite[Fig.2.2]{magill2022staircase} for an illustration).

\begin{thm}[\cite{magill2022staircase,magillmcd2021}]\label{thm:F_1_perfbar_summary}
We have:
\begin{itemize}
  \item $\perfbar(F_1) \cap [1,3+2\sqrt{2}) = \{\tfrac{g_{j+3}}{g_j}\;|\; j \in \Z_{\geq 1}\}$;
  \item 
  for all $k \in \Z_{\geq 0}$ we have $\perfbar^+(F_1) \cap [2k+6, 2k+7] = \{2k+6\}$,  
and there is a homeomorphism $(2k+7,2k+8) \cong (-\tfrac{1}{2},\tfrac{3}{2})$ such that the image of $\perfbar^+(F_1) \cap (2k+7,2k+8)$ is the set of rational numbers in $(0,1)$ whose ternary expansion is finite and ends in $1$;
  \item $\perfbar(F_1) \cap (6,7) = \perfbar^-(F_1) \cap (6,\infty) = \{R(p/q)\;|\; p/q \in \perfbar(F_1)\cap (7,\infty)\}$;
  \item $\perfbar(F_1) \cap (3+2\sqrt{2},6) = \{S^i(p/q)\;|\; i \in \Z_{\geq 1}, p/q \in [6,\infty) \cap \perfbar(F_1)\}$.
\end{itemize}
\end{thm}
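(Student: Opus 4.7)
The plan is to reduce the classification to a diophantine problem by exploiting the rigidity of perfect exceptional classes. By \cite[Thm.~G]{cusps_and_ellipsoids} together with \cite[Def.~4.4.2]{cusps_and_ellipsoids}, a class $A = d\ell - me$ is $(p,q)$-perfect iff it is exceptional, $c_1(A) = 3d-m = p+q$, and $A \cdot A = d^2-m^2 = pq-1$, which via \eqref{eq:tpq} forces $d = d_{p,q}$, $m = m_{p,q}$ with $t_{p,q} = \sqrt{p^2-6pq+q^2+8}$ an integer of appropriate parity.  So the question becomes: for which coprime $p>q$ is $t_{p,q} \in \Z$ \emph{and} does the resulting class $d_{p,q}\ell - m_{p,q}e$ represent an exceptional sphere?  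Integrality of $t_{p,q}$ defines a union of Pell-type orbits, and the exceptionality condition cuts these down further. I would begin by tabulating the integer solutions to $t_{p,q} \in \Z$ on small intervals and use the Li--Liu criterion (or Taubes' SW=Gr) to decide exceptionality.

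Next I would establish that $S$ and $R$ act on $\perfbar(F_1)$.  For $S$, a direct substitution shows $t_{6p-q,p} = t_{p,q}$, so integrality is preserved; exceptionality is preserved by constructing an explicit birational self-map of $F_1$ that is a Hirzebruch-surface analogue of the generalized Orevkov twist of \S\ref{sec:twist}, based on a nodal anticanonical curve in $F_1$, with the property that a $(p,q)$-well-placed unicuspidal curve transforms into a $(6p-q,p)$-well-placed one (this realizes the numerical recursion $g_{k+6} = 6g_{k+3}-g_k$ geometrically).  For $R$ one checks that the substitution $(p,q) \mapsto (6p-35q, p-6q)$ sends $t_{p,q}^2 + \text{const}$ to itself, and realizes $R$ by an explicit isomorphism of $H_2(F_1)$ preserving intersection form and $c_1$.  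Using $S$ and $R$ one reduces the classification to the fundamental domain $[1,3+2\sqrt 2) \cup [6,7] \cup (7,8)$ plus the single points $\{2k+6\}$.

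The portion $\perfbar(F_1) \cap [1,3+2\sqrt 2)$ follows directly from the monotone staircase of Theorem~\ref{thmlet:outer_corner_curves} applied to $\CP^2(3)\#\ovl{\CP}^2(1)$: outer corners correspond bijectively to index zero well-placed unicuspidal curves, whose $(p,q)$ ratios are precisely $g_{j+3}/g_j$.  The intervals $[2k+6, 2k+7]$ contain only the integer endpoint because $q=1$ is the only way to make $t_{p,1} = \sqrt{p^2-6p+9} = |p-3|$ automatically integral in this range, while for $q \geq 2$ the diophantine constraint excludes solutions.  Construction of the perfect classes in $(2k+7, 2k+8)$ is achieved by applying Theorem~\ref{thm:uni_alg_curve} to appropriately chosen $T$-polygons for $F_1$: each ternary-terminating rational in $(-1/2, 3/2)$ corresponds to a mutation orbit of $T$-polygons whose eigenray data realizes the required cusp, and the ternary structure arises because successive ``blocking'' unicuspidal curves cut each sub-interval into three pieces with the middle one further subdivided self-similarly.

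The hard part is proving completeness in $(2k+7, 2k+8)$, i.e.\ that no perfect classes exist beyond those listed.  The cleanest route is indirect: any additional $(p,q,d,m) \in \perf(F_1)$ would produce a nontrivial obstruction in $c_{\calH_b}(x)$ for some $b$, forcing an infinite staircase whose accumulation point violates the classification in \cite{cristofaro2020infinite}.  Alternatively, one can argue directly that between two consecutive blocking-class $x$-coordinates the sole possible intervening perfect ratios are generated by the recursive ternary scheme, using ECH capacities of $\calH_b$ to rule out unlisted classes.  This completeness step is the main obstacle, since constructing classes merely requires producing curves, whereas ruling them out requires controlling the full set of symplectic embedding obstructions across the family $\{\calH_b\}_{b \in [0,1)}$.
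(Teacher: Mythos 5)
You should first note that the paper does not prove Theorem~\ref{thm:F_1_perfbar_summary}: it is quoted verbatim from \cite{magillmcd2021,magill2022staircase} as a ``rough summary'' of their classification, and the accompanying footnote explicitly remarks that the completeness proof in \cite{magill2022staircase} rests on the staircase accumulation point theorem of \cite{cristofaro2020infinite} and that an alternative proof would be interesting. So there is no in-paper argument to compare against; your proposal has to stand on its own as a reconstruction of the cited works, and as such it is a plan with genuine gaps rather than a proof.

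The setup is fine and matches \S\ref{subsec:unicusp_appl}: the constraints $3d-m=p+q$, $d^2-m^2=pq-1$ do force \eqref{eq:tpq}, and $S$, $R$ are indeed symmetries (with $S$ realized geometrically by $\Phi_{F_1}$, since $K=6$ for $F_1$). But two steps fail concretely. First, your argument that $\perfbar^+(F_1)\cap(2k+6,2k+7)=\emptyset$ because ``for $q\geq 2$ the diophantine constraint excludes solutions'' contradicts the third bullet of the very statement you are proving: $(6,7)$ contains \emph{infinitely many} perfect ratios (all of $\perfbar^-(F_1)\cap(6,\infty)$), necessarily with $q\geq 2$. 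Integrality of $t_{p,q}$ cannot see the $\pm$ distinction, which is about $m_{p,q}/d_{p,q}$ versus $1/3$, i.e.\ about the sign $\eps$ in \eqref{eq:tpq}; whatever rules out $\perfbar^+$ points in $(2k+6,2k+7)$ must use that sign, not the Pell-type integrality alone. Second, ``decide exceptionality by the Li--Liu criterion on a tabulation'' understates the problem: one must verify Cremona reducibility for an infinite, recursively generated family and, for completeness, \emph{rule out} every other integral solution of the Pell condition; this is the bulk of \cite{magillmcd2021,magill2022staircase} (blocking classes, quasi-perfect classes, and the structure of generating triples), none of which your outline reproduces.

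Finally, the completeness arguments you sketch do not close. A single additional perfect class does not ``force an infinite staircase'' — it merely contributes one obstruction to $c_{\calH_b}$ for $b$ near $m/d$ — so it cannot be excluded by contradiction with the accumulation point classification in the way you describe. Likewise, completeness on $[1,3+2\sqrt{2})$ does not follow ``directly from the monotone staircase'': a hypothetical perfect class there with $m/d$ far from $1/3$ would be invisible to $c_{\calH_{1/3}}$ (cf.\ the footnote to the definition of $\perf(F_1)$), so one must argue across the whole family $\{\calH_b\}_{b\in[0,1)}$. These are exactly the points where the cited proofs do the real work, and your proposal leaves them open.
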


\begin{proof}[Proof of Theorem~\ref{thmlet:F_1_classif}]

We need to show that for every $p/q \in \perfbar(F_1)$ there is an index zero $(p,q)$-unicuspidal rational algebraic curve in $F_1$. 
Recall that $\perfbar(F_1) \cap (1,3+2\sqrt{2})$ corresponds precisely to the $x$-coordinates of the outer corners of the infinite staircase in $c_{\calH_1}(x)$ (i.e. the monotone symplectic form), which is covered by Theorem~\ref{thmlet:outer_corner_curves}.

When $p/q \in \perfbar^+(F_1) \cap [6,\infty)$, it follows from Magill~\cite{magill2022unobstructed} that 
there is a $T$-quadrilateral $Q \subset \R^2$ with a Delzant vertex $\vv$ that is adjacent to a vertex $\ww$ of type $\tfrac{1}{q^2}(1,pq-1)$. More precisely, we can take the edge directions at $\vv$ to be $(1,0),(0,1)$ and the edge directions at $\ww$ to be $(0,-1), (q^2,1-pq)$. 
  Moreover, because $Q$ is constructed by mutation from a moment image of $F_1$ with suitable symplectic form, it follows that  $\atf(Q_\nodal)$ is diffeomorphic to $F_1$. 
Thus, Theorem~\ref{thm:uni_alg_curve} shows that  for every $p/q\in
 \perfbar^+(F_1)\cap [6,\infty)$ there is a well-placed $(p,q)$-unicuspidal rational algebraic\footnote{As mentioned in footnote \ref{footnote:Chow}, this holomorphic curve is algebraic by  Chow's theorem.} curve in $F_1$.
By Theorem~\ref{thm:F_1_perfbar_summary}, we may therefore apply the reflection symmetry $R$ in Proposition~\ref{prop:two_twists}
 to conclude that there is such a curve
for every $p/q\in  \perfbar(F_1)\cap [6,\infty)$. Finally, Theorem~\ref{thm:F_1_perfbar_summary} implies that we may find such a curve for every $p/q\in  \perfbar(F_1)\cap (3+2\sqrt{2},\infty)$ by repeated applications of the Orevkov twist (or shift) $S$ to the curves already constructed.
\end{proof}

\begin{rmk} In the first version of this paper we proved that the elements $p/q \in \perfbar^+(F_1) \cap [6,7)$ have suitable representatives by extending the constructions in Magill~\cite{magill2022unobstructed} to cover these cases.  We omit these arguments here because they are long and take us rather far afield.  Note that  Magill~\cite{magill_in_prep} shows that a suitable $T$-polygon can be constructed for every element in
$\perfbar(F_1)\cap (3+2\sqrt{2},\infty)$.
\end{rmk}

\begin{rmk}[Relation to $\CP^1\times \CP^1$]\label{rmk:rel_to_CP1xC^1}
The analogue of Theorem~\ref{thm:F_1_perfbar_summary} for $\CP^1\times \CP^1$ has not yet been fully worked out, but the works \cite{usher2011deformed,farley2022four} suggest that a picture similar to that of $F_1$ should hold, in which case it is likely that our proof of Theorem~\ref{thmlet:F_1_classif} also carries over mutatis mutandis to the case of $\CP^1 \times \CP^1$.
One can check that the sequence $\tfrac{a_1}{a_0},\tfrac{a_2}{a_1},\tfrac{a_3}{a_2},\dots$ defined by $a_0 = 1, a_1 = 5, a_{k+1} = 6a_k - a_{k-1}$ lies in both $\perfbar(F_1)$ and $\perfbar(\CP^1 \times \CP^1)$ (in each case giving the $x$-coordinates of the outer corners for one strand of the monotone infinite staircase), and these are likely the only points in common.
An intriguing relation between points in $\perfbar(F_1) \cap (3+2\sqrt{2},\infty)$ and $\perfbar(\CP^1 \times \CP^1) \cap (3+2\sqrt{2},\infty)$ is suggested in \cite[Conj. 1.2.1]{farley2022four}. 
\end{rmk}

\begin{rmk}[Unicuspidal curves in odd Hirzebruch surfaces] \label{rmk:HhnonFano}
For $k \in \Z_{\geq 0}$, let $F_{2k+1}$ denote the $(2k+1)$st Hirzebruch surface \cite{hirzebruch1951klasse}, which we view as a complex structure on $\bl^1\CP^2$ having an irreducible rational holomorphic curve in class $(k+1)e - k\ell \in H_2(\bl^1\CP^2)$ with self-intersection number $-2k-1$.
Note that if $A = d\ell - me \in H_2(\bl^1\CP^2)$ is represented by a holomorphic curve in $F_{2k+1}$ then by positivity of intersection we must have $\tfrac{m}{d} \geq \tfrac{k}{k+1}$.
By \cite[Lem. B.5(i)]{magill2022staircase}, any $p/q \in \perfbar(F_1) \cap (2j+6,2j+8)$ satisfies
$\tfrac{j+1}{j+2} < \tfrac{m_{p,q}}{d_{p,q}} < \tfrac{j+2}{j+3}$. 
Thus $p/q \in \perfbar(F_{2k+1})$ only if $p/q > 2k+4$.
Therefore we conjecture: {\em for all $k \in \Z_{\geq 1}$, 
there exists an index zero algebraic $(p,q)$-unicuspidal rational curve in $F_{2k+1}$ if and only if $p/q \in \perfbar^+(F_1) \cap (2k+4,\infty)$.} 
 \end{rmk}

\section{Sesquicuspidal curves and stable embeddings beyond the accumulation point}\label{sec:stab}

In this section, we prove first Theorem~\ref{thmlet:deg_3_seed} on algebraic rational plane curves corresponding to the ghost stairs in \S\ref{subsec:ghost_stairs}. 
We then discuss stabilized ellipsoid embeddings and obstructions beyond the staircase accumulation points in \S\ref{subsec:folding_curve}.

\subsection{Degree three seed curves and the ghost stairs}\label{subsec:ghost_stairs}

Our proof of Theorem~\ref{thmlet:deg_3_seed} is based on the generalized Orevkov twist from \S\ref{sec:twist}, which boils it down to finding a single degree three seed curve.

\begin{proof}[Proof of Theorem~\ref{thmlet:deg_3_seed}]

The identity $\Fib_{4k+6} = 7 \Fib_{4k+2}  - \Fib_{4k-2}$ shows that the Fibonacci subsequence $\Fib_2,\Fib_6,\Fib_{10},\dots$ obeys the same recursive formula which is achieved by the Orevkov twist $\Phi_{\CP^2}$ (c.f. Proposition~\ref{prop:gen_orev_numerics}).
Therefore it suffices to construct a suitable degree $3$ seed curve, which is the content of the following proposition.
\end{proof}

\begin{prop}\label{prop:deg_3_seeds}
 There exists a degree three rational algebraic curve in $\CP^2$ which is $(8,1)$-well-placed with respect to a nodal cubic $\calN$.
\end{prop}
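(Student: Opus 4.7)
My approach is to analyze the linear system of plane cubics with the prescribed contact at $\db$ and extract an irreducible rational member distinct from $\calN$ itself. Concretely, imposing the constraint $\lll \T^{(8)}_{\calB_-}\db\rrr$ gives $8$ independent linear conditions on the projective space $|3\ell| \cong \CP^9$ of plane cubics, cutting out a pencil $\Pp \cong \CP^1$ which automatically contains $\calN$ (since $\calN$ contains $\calB_-$ as a branch).

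The first key step is to rule out reducible members of $\Pp$. Since $\db$ is not an inflection point of $\calB_-$ on the standard nodal cubic (which may be verified by direct calculation), any line $L \subset \CP^2$ satisfies $L \cdot_\db \calB_- \leq 2$. Moreover, for any smooth conic $Q$ passing through $\db$, Bezout gives $Q \cdot \calN = 6$, and $Q \cdot_\db \calB_+ \geq 1$ since $Q$ passes through the node, so $Q \cdot_\db \calB_- \leq 5$. Summing, any reducible decomposition of a degree-$3$ curve into lines and conics has total contact with $\calB_-$ at $\db$ at most $2 + 5 = 7 < 8$, so no reducible cubic lies in $\Pp$. Analogous bounds rule out triple lines and line-plus-double-line configurations.

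Next, I would apply Bezout to $\Pp \cap \Delta$, where $\Delta \subset \CP^9$ is the degree-$12$ discriminant hypersurface of singular cubics: this yields $12$ singular members in $\Pp$ counted with multiplicity. All are irreducible by the previous step, and hence are rational cubics (nodal or cuspidal). To extract a singular member distinct from $\calN$, the crucial step is to bound the local intersection multiplicity $(\Pp \cdot \Delta)_\calN$ strictly below $12$. This can be verified by a local computation: taking a tangent direction $F_0$ of $\Pp$ at $\calN$ and analyzing the perturbation $\calN + \epsilon F_0$, one checks that the would-be node near $\db$ is destroyed at a finite order in $\epsilon$, so $\Pp$ is not contained in $\Delta$ in any neighborhood of $\calN$. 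Equivalently, one exhibits a smooth elliptic member of the pencil, which forces $\Pp \not\subset \Delta$.

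Once such an irreducible rational $C \in \Pp \setminus \{\calN\}$ is obtained, its well-placedness is automatic: by Bezout $C \cdot \calN = 9$, and the contact constraint forces all nine intersections to concentrate at $\db$, with contact exactly $8$ along $\calB_-$ (since $C \ne \calN$) and the remaining contact $1$ along $\calB_+$ (as $C$ is smooth at $\db$ with tangent direction along $\calB_-$, transverse to $\calB_+$). The main technical obstacle in the argument is the local multiplicity analysis in the third paragraph; an alternative would be to bypass this by running the parametrized moduli space argument of Lemma~\ref{lem:3_2_seed} verbatim, starting from an explicit initial member of $\Pp$ and deforming the pair $(\calN, \db)$.
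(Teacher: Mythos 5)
Your route is genuinely different from the paper's. The paper imports the enumerative count $\#\calM^{J}_{\CP^2,3\ell}\lll \T_{\Ddiv}^{(8)}\db\rrr = 4$ for generic $J$ from \cite{McDuffSiegel_counting}, degenerates $J$ to $J_\std$ through a family fixing $\calB_-$, rules out reducible limits much as you do in your second paragraph (a line cannot satisfy $\lll \T^{(3)}_{\calB_-}\db\rrr$ and a conic cannot satisfy $\lll \T^{(6)}_{\calB_-}\db\rrr$), and then shows that at most one of the four limits can equal $\calN$ because two such limits would have local intersection at $\db$ at least $8+1+1>9$. Your reducible-member bounds and your closing well-placedness argument are essentially correct (for the latter you should note explicitly that $C$ cannot be singular at $\db$, since a second branch of $C$ there would force $(C\cdot\calN)_{\db}\geq 11>9$).

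There is, however, a genuine gap at your crucial third step. Exhibiting a smooth member of $\Pp$, or otherwise verifying $\Pp \not\subset \Delta$, only shows that $(\Pp\cdot\Delta)_{[\calN]}$ is \emph{finite}; it does not show it is strictly less than $12$. A line can meet a degree-$12$ hypersurface in a single point with multiplicity exactly $12$ while containing points off the hypersurface, so your ``equivalently'' is false, and the dichotomy you need is not established. Moreover the multiplicity $(\Pp\cdot\Delta)_{[\calN]}$ really is large here: $\Delta$ is smooth at a one-nodal cubic with tangent hyperplane the cubics through the node, and every member of $\Pp$ not merely passes through $\db$ but meets $\calB_-$ there with contact order at least $8$, so $\Pp$ is highly tangent to $\Delta$ at $[\calN]$. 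Bounding this multiplicity by $11$ is exactly the hard content of the proposition, and the local computation you defer is the whole proof; as written the argument does not close. (Two smaller points: you should either verify that the eight jet conditions are independent or work with a general pencil inside the possibly larger linear system; and your proposed fallback of running the argument of Lemma~\ref{lem:3_2_seed} verbatim does not address the actual difficulty, which is ruling out that the curve you produce coincides with $\calN$ itself --- that is precisely what the paper's count of $4$ together with the $8+1+1>9$ argument accomplishes.)
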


\begin{proof}[Proof of Proposition~\ref{prop:deg_3_seeds}]
As in \S\ref{subsec:orev_orig}, let $\calN$ be a fixed nodal cubic in $\CP^2$ with local branches $\calB_-,\calB_+$ near the double point $\db$.
If $J$ is an almost complex structure on $\CP^2$ which is integrable near $\db$ and $\Ddiv$ is any local $J$-holomorphic divisor through $\db$, we denote by $\calM^{J}_{\CP^2,3\ell}\lll \T_{\Ddiv}^{(8)}\db\rrr$
the moduli space of $J$-holomorphic degree $3$ rational curves which pass through $\db$ with contact order $8$ to $\Ddiv$. 
The count $\# \calM^{J}_{\CP^2,3\ell}\lll \T_{\Ddiv}^{(8)}\db\rrr$ for generic $J$ was computed in \cite{McDuffSiegel_counting} to be $4$.
Note that any curve in $\calM^{J_\std}_{\CP^2,3\ell}\lll \T_{\calB_-}^{(8)}\db\rrr$ excluding $\calN$ itself is by definition $(8,1)$-well-placed, although $J_\std$ is not generic.

Let $\{J_t\}_{t \in [0,1]}$ be a generic family of compatible almost complex structures on $\CP^2$ which are integrable near $\db$ and fix $\calB_-$ (but not $\calN$), with $J_0 = J_\std$.
For $t \in (0,1]$ we have $\calM^{J_t}_{\CP^2,3\ell}\lll \T_{\calB_-}^{(8)}\db\rrr = \{C_t^1,C_t^2,C_t^3,C_t^4\}$, where $C_t^k$ is a family of curves which varies smoothly in $t \in (0,1]$ for $k=1,2,3,4$.
Then each $C_t^k$ converges to some limiting configuration 
$C_0^k \in \ovll{\calM}^{J_\std}_{\CP^2,3\ell}\lll \T_{\calB_-}^{(8)}\db \rrr$ as $t \ra 0$.
Here $\ovll{\calM}^{J_\std}_{\CP^2,3\ell}\lll \T_{\calB_-}^{(8)}\db \rrr$ 
denotes the subset of the standard stable map compactification $\ovl{\calM}_{\CP^2,3\ell}^{J_\std}\lll \db \rrr$ consisting of those configurations such that if the marked point lies on a ghost component then the nearby nonconstant components together ``remember'' the constraint $\lll \T_{\calB_-}^{(8)}\db\rrr$ (see \cite[Def. 2.2.1]{mcduff2021symplectic}).
In fact, since a line cannot satisfy $\lll \T^{(3)}_{\calB_-}\db\rrr$ and a conic cannot satisfy $\lll \T^{(6)}_{\calB_-}\db \rrr$ (due to the presence of the other branch $\calB_+$), we can easily rule out configurations with multiple components, i.e. we have
$\ovll{\calM}_{\CP^2,3\ell}^{J_\std}\lll \T^{(8)}_{\calB_-}\db \rrr = \calM_{\CP^2,3\ell}^{J_\std}\lll \T_{\calB_-}^{(8)}\db \rrr$
and hence $C_0^1,C_0^2,C_0^3,C_0^4 \in \calM_{\CP^2,3\ell}^{J_\std}\lll \T^{(8)}_{\calB_-}\db \rrr$.

It remains to show that at least one of $C_0^1,C_0^2,C_0^3,C_0^4$ is distinct from $\calN$.
To see this, suppose by contradiction that $C_0^k = C_0^{k'} = \calN$ for distinct $k,k' \in \{1,2,3,4\}$.
Then $C_t^k$ and $C_t^{k'}$ both approximate $\calN$ and in particular the transversely intersecting branches $\calB_-,\calB_+ \subset \calN$ for $t$ small, and since they also both satisfy the constraint $\lll \T_{\calB_-}^{(8)}\db\rrr$, their intersection multiplicity satisfies
\begin{align*}
C_t^k \cdot C_t^{k'} \geq 8 + 1 + 1 > 9,
\end{align*}
which is a contradiction.
\end{proof}

\begin{rmk} J. Koll\'ar supplied us with the following proof that
the curve $C$ in Proposition~\ref{prop:deg_3_seeds} (and hence all of its twists) is sesquicuspidal, i.e. it has a single ordinary double point away from its distinguished cusp.  Suppose to the contrary that $C$ is cuspidal. Then $\db$ must be a smooth point on $C$, since otherwise $C\cdot \calN \ge 10$, and, if $H$ denotes the hyperplane class, we have $9[\db] \sim 3H$  in ${\rm Pic}(C)$.  
Because $C$ is cuspidal rather than nodal, ${\rm Pic}(C)$ is torsion-free.  Hence $H\sim 3[\db]$, which implies that $\db$ is a flex point of $C$. If $L$ is the flex tangent, then $\calN$ must be in the linear system  $[C, 3L]$. But in suitable coordinates this consists of the curves $\la(y^2 z - x^3) + \mu z^3 = 0$, and it is easy to check that all of its elements apart from $C$ are smoothly cut out; in particular, they have no double points.  
\end{rmk}

\begin{rmk}\label{rmk:higher_deg_seeds}
We sometimes refer to a $(3d-1,1)$-well-placed rational plane curve as in Proposition~\ref{prop:deg_3_seeds} informally as a ``degree $d$ seed curve''. 
The above argument easily extends to show that the number of degree three seed curves is precisely $3$.
We take up the problem of constructing higher degree seed curves in the forthcoming work \cite{sesqui}. 
In general a sesquicuspidal degree $d$ seed curve and all of its Orevkov twists have $\tfrac{1}{2}(d-1)(d-2)$ double points by the adjunction formula.
Note that the argument given above does not easily generalize to higher degrees, due to the possibility of more complicated configurations in $\ovll{\calM}_{\CP^2,d\ell}^{J_\std}\lll \T^{(3d-1)}_{\calB_-}\db \rrr$ involving one or more copies of $\calN$ and its covers.
\end{rmk}

\subsection{On the stable folding curve}\label{subsec:folding_curve}

In \cite{hind2015some}, it is shown\footnote{Strictly speaking, the passage from $E(1,a,T,\dots,T)$ to $E(1,a) \times \C^N$ for $T$ arbitrarily large relies on results in \cite{Pelayo-Ngoc_hofer_question}.} that for any $a \in \R_{> 1}$ and $N \in \Z_{\geq 1}$ there is a folding-type symplectic embedding 
\begin{align}
\intE(1,a) \times \C^N \hooksymp B^4(\tfrac{3a}{a+1}) \times \C^N
\end{align}
 which strengthens the construction in \cite{Guth_polydisks}.
As discussed e.g. in \cite[\S1.2]{SDEP}, this embedding is conjectured to be sharp for all $a > \tau^4$, and proving this can be reduced to an existence problem for $(p,q)$-sesquicuspidal symplectic curves in $\CP^2$.
 In this subsection we discuss generalizations of this to rigid del Pezzo surfaces and to the convex toric domains considered in \cite{cristofaro2020infinite}.

In the following, given a symplectic manifold $M$, we denote by $c \cdot M$ the same smooth manifold but with symplectic form scaled by $c \in \R_{>0}$.
Recall that $E(1,a)$ denotes a closed ellipsoid, and we denote its interior by $\intE(1,a)$.

\begin{prop}\label{prop:folding1} For any $a \in \R_{>1}$, there are symplectic embeddings:
\begin{align}
\intE(1,a) \times \C \hooksymp \tfrac{a}{a+1} \cdot M \times \C
\end{align}
for $M = \CP^2(3) \#^{\times j} \ovl{\CP}^2(1)$ with $j = 0,1,2,3$ and $M = \CP^1(2) \times \CP^1(2)$.
\end{prop}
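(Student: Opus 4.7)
The plan is to combine Hind's stabilized folding embedding $\intE(1,a) \times \C \hooksymp B^4(\tfrac{3a}{a+1}) \times \C$ from \cite{hind2015some} with explicit symplectic inclusions of balls, polydisks, and convex toric domains into each of the listed $M$. The common scaling factor $\tfrac{a}{a+1}$ comes directly from the Hind formula $\tfrac{3a}{a+1}$ once the line-area-$3$ chart is accounted for.

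First, for $M = \CP^2(3)$ the argument is immediate: the affine chart provides a symplectic inclusion $B^4(3) \hookrightarrow \CP^2(3)$, which upon scaling by $\tfrac{a}{a+1}$ gives $B^4(\tfrac{3a}{a+1}) \hookrightarrow \tfrac{a}{a+1} \CP^2(3)$. Composing with Hind's embedding (and stabilizing the inclusion by $\C$) yields the claim. For $M = \CP^1(2) \times \CP^1(2)$, an analogous argument applies using the polydisk chart $B^2(2) \times B^2(2) \hookrightarrow \CP^1(2) \times \CP^1(2)$ in place of the ball chart, together with the polydisk analog of Hind's folding (a variant of the construction of \cite{Guth_polydisks,hind2015some}) giving $\intE(1,a) \times \C \hooksymp B^2(\tfrac{2a}{a+1}) \times B^2(\tfrac{2a}{a+1}) \times \C$.

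The substantive cases are $M = \CP^2(3) \#^{\times j} \ovl{\CP}^2(1)$ for $j = 1, 2, 3$. Here $\tfrac{a}{a+1} M$ has volume $\tfrac{(9-j)\,a^2}{2(a+1)^2}$, which for $j \geq 1$ is strictly less than $\vol(B^4(\tfrac{3a}{a+1})) = \tfrac{9 a^2}{2(a+1)^2}$, so there is no room to embed Hind's ball directly. Instead, I would invoke the convex toric domain $X_{(3;1,\ldots,1)}$ (with $j$ ones) of \cite{cristofaro2020infinite}, which, as noted in Remark~\ref{rmk:dp_versus_ctd}, shares its ellipsoid embedding function with $M$ and which sits inside $M$ as the open submanifold lying over the interior of the corresponding monotone Delzant polygon. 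Scaling by $\tfrac{a}{a+1}$, it then suffices to construct a folding embedding $\intE(1,a) \times \C \hooksymp \tfrac{a}{a+1} X_{(3;1,\ldots,1)} \times \C$ and compose with this open inclusion.

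The main obstacle is precisely this last folding embedding into the convex toric domain. The idea is to adapt Hind's construction so that its image in $B^4(\tfrac{3a}{a+1}) \times \C$ avoids the $j$ corner regions of capacity $\tfrac{a}{a+1}$ that are removed when passing from $B^4(\tfrac{3a}{a+1})$ to $X_{(3a/(a+1);\,a/(a+1),\ldots,a/(a+1))}$, or equivalently to run Hind's folding argument with the convex toric domain as target from the outset. The cutoff $j \leq 3$ in the proposition reflects that this adaptation remains feasible only until the toric corner cuts have eaten up too much of the room required by the folding; the excluded case $j = 4$ corresponds to $\CP^2(3) \#^{\times 4} \ovl{\CP}^2(1)$, whose convex toric domain $X_{(3;1,1,1,1)}$ no longer admits the folding, while $\CP^1(2) \times \CP^1(2)$ sidesteps the issue through its different polydisk combinatorics.
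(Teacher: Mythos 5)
Your overall strategy (a stabilized folding embedding composed with inclusions of toric domains into the targets) is the same as the paper's, but at the decisive step your argument has a genuine gap. For $j=1,2,3$ you correctly identify that Hind's ball folding cannot be composed with an inclusion for volume reasons, and you then say the proof should "adapt Hind's construction so that its image avoids the corner regions" or "run Hind's folding argument with the convex toric domain as target from the outset" --- but you never carry this out, and you explicitly flag it as "the main obstacle." That obstacle is the entire content of the proposition in these cases. The paper fills it by quoting a specific higher-dimensional folding result (Prop.~3.1 of \cite{cristofaro2022higher}), which with the parameter choice $\mu=\la=\tfrac{a}{a+1}$ produces a symplectic embedding $\intE(1,a)\times\C \hooksymp \tfrac{a}{a+1}\cdot X_{\Om_H}\times\C$, where $\Om_H$ is the quadrilateral with vertices $(0,0),(0,2),(2,1),(2,0)$ (after an interior-versus-closed upgrade via \cite{Pelayo-Ngoc_hofer_question}). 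Once this single domain $X_{\Om_H}$ is in hand, all cases reduce to the combinatorial observation that $\Om_H$ (or its reflection) sits inside the relevant moment polygon. Without an explicit replacement for that folding result, your proof of the $j=1,2,3$ cases is incomplete; likewise your polydisk folding $\intE(1,a)\times\C\hooksymp B^2(\tfrac{2a}{a+1})\times B^2(\tfrac{2a}{a+1})\times\C$ for $\CP^1(2)\times\CP^1(2)$ is asserted rather than proved (it does follow from the $X_{\Om_H}$ statement, since $\Om_H\subset[0,2]^2$, but not obviously from \cite{Guth_polydisks,hind2015some} alone).

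Two further points. First, for $j=3$ your plan of including the convex toric domain $X_{(3;1,1,1)}$ into $M$ does not suffice even granting the folding into a truncated domain: the corresponding moment polygon $\Om_{10}$ does \emph{not} contain $\Om_H$, and the paper instead uses an almost toric fibration of $\CP^2(3)\#^{\times 3}\ovl{\CP}^2(1)$ whose base polygon is exactly $\Om_H$. Second, your explanation of the cutoff at $j=3$ --- that the folding provably fails for $j=4$ --- overstates what is known: the paper poses the $j=4$ case as an open question, so the exclusion reflects the limits of the construction, not a demonstrated obstruction.
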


The paper \cite{cristofaro2020infinite} also establishes infinite staircases for the ellipsoid embedding functions of twelve convex toric domains $X_1,\dots,X_{12}$, whose moment polygons are pictured in Figure~\ref{fig:rational_staircase_targets}. 
More precisely, for each $j=1,\dots,12$ we put $X_j := \mu_{\C^2}^{-1}(\Om_j) \subset \C^2$, where $\mu_{\C^2}: \C^2 \ra \R_{\geq 0}^2, (z_1,z_2) \mapsto (\pi |z_1|^2,\pi |z_2|^2)$ is the moment map for the standard $\mathbb{T}^2$-action on $\C^2$.
Note that, in contrast to closed toric symplectic manifolds, these are compact domains with piecewise smooth boundary in $\C^2$.
As we recalled in Remark~\ref{rmk:dp_versus_ctd}, for $j=1,\dots,12$ the (unstabilized) ellipsoid embedding function $c_{X_j}$ coincides with $c_M$, where $M$ is the rigid del Pezzo surface having the same negative weight expansion as $X_j$.

Similar to Proposition~\ref{prop:folding1}, we have:
\begin{prop}\label{prop:folding2} For any $a \in \R_{>1}$, there are symplectic embeddings
\begin{align}
\intE(1,a) \times \C \hooksymp \tfrac{a}{a+1} \cdot X_k \times \C
\end{align}
 for $k = 1,2,3,4,5,6,8$.
\end{prop}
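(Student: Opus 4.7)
The plan is to deduce Proposition~\ref{prop:folding2} from Proposition~\ref{prop:folding1} via the correspondence between convex toric domains and monotone rigid del Pezzo surfaces in Remark~\ref{rmk:dp_versus_ctd}. For each $k \in \{1,\dots,6,8\}$, the negative weight expansion of $X_k$ matches that of exactly one of the five monotone rigid del Pezzo surfaces $M_k$ appearing in Proposition~\ref{prop:folding1}. The excluded cases $k \in \{7,9,10,11,12\}$ are precisely those whose matching del Pezzo is $\CP^2(3)\#^{\times 4}\ovl{\CP}^2(1)$, which is absent from the list in Proposition~\ref{prop:folding1}. The normalization matching negative weight expansions yields a symplectic inclusion $X_k \hookrightarrow M_k$ realizing $X_k$ as the toric open complement of a union $D_k \subset M_k$ of codimension-two toric spheres; this inclusion is preserved under the rescaling $\tfrac{a}{a+1}$, so that $\tfrac{a}{a+1}\cdot X_k\times\C \subset \tfrac{a}{a+1}\cdot M_k\times\C$.

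It therefore suffices to show that the folding-type embedding $\Phi: \intE(1,a)\times\C \hooksymp \tfrac{a}{a+1}\cdot M_k\times\C$ of Proposition~\ref{prop:folding1} can be arranged to have image contained in $\tfrac{a}{a+1}\cdot X_k\times\C$. The approach I would take exploits the locality of the underlying Hind--Kerman-style folding construction: the folding operation takes place in a Darboux chart centered at a Delzant vertex of the moment polygon of $M_k$, and can be made to have image in an arbitrarily small toric neighborhood of such a vertex (at the cost of a vanishingly small shift in the scaling constant that can be absorbed). Since $X_k$ and $M_k$ share a common Delzant vertex (the negative weight expansions match), one can choose the folding chart to lie inside the toric open subdomain $X_k \subset M_k$. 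This forces $\Phi(\intE(1,a)\times\C) \subset \tfrac{a}{a+1}\cdot X_k\times\C$, which gives Proposition~\ref{prop:folding2}.

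The main obstacle is verifying this localization property for the specific folding-type embedding produced in the proof of Proposition~\ref{prop:folding1}. The fallback, if locality is not transparent from the construction, is a displacement argument: one exhausts $\intE(1,a)\times \C$ by compact subsets $K_n$, and for each $n$ the image $\Phi(K_n)$ is a compact subset of $\tfrac{a}{a+1}\cdot M_k\times\C$ which, since $D_k\times \C$ has codimension two and the $\C$-factor provides arbitrarily large room for displacement at zero symplectic cost, can be Hamiltonian-isotoped away from any prescribed neighborhood of $D_k\times\C$. Patching these isotopies into a single ambient isotopy and passing to the limit yields an embedding of $\intE(1,a)\times\C$ into $\tfrac{a}{a+1}\cdot X_k\times\C$, with any loss in the scaling constant absorbed by starting from $\intE(1,a-\eps)\times \C \hooksymp \tfrac{a-\eps}{a-\eps+1}\cdot M_k\times\C$ for $\eps>0$ small. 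The delicate point in this second approach is ensuring that the telescoping isotopies can be glued coherently on the noncompact domain $\intE(1,a)\times\C$.
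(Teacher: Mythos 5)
Your strategy runs in the opposite direction from the paper's and hinges on a localization step that is unjustified and, in the strong form you state it, false. The paper proves Propositions~\ref{prop:folding1} and~\ref{prop:folding2} simultaneously from a single local model: applying \cite[Prop. 3.1]{cristofaro2022higher} with $\mu=\la=\tfrac{a}{a+1}$ yields an embedding $\intE(1,a)\times\C\hooksymp \tfrac{a}{a+1}\cdot X_{\Om_H}\times\C$, where $\Om_H$ is the \emph{fixed} quadrilateral with vertices $(0,0),(0,2),(2,1),(2,0)$, and one then checks combinatorially that $\Om_H$ (or its reflection about the diagonal) is contained in $\Om_k$ precisely for $k=1,\dots,6,8$; Proposition~\ref{prop:folding1} is likewise deduced from this toric-domain embedding by polygon inclusion (or via an almost toric base equal to $\Om_H$ in the $\#^{\times 3}$ case), not the other way around. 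Your claim that the folding image "can be made to have image in an arbitrarily small toric neighborhood of a Delzant vertex" cannot hold: the construction genuinely occupies the region over $\tfrac{a}{a+1}\cdot\Om_H$, of four-dimensional volume $3\cdot\tfrac{a^2}{(a+1)^2}$, and the whole reason the statement is restricted to $k=1,\dots,6,8$ is that $\Om_H$ fits inside exactly those $\Om_k$.

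A concrete way to see that your reduction proves too much: $X_7$, $X_9$, $X_{10}$ have area $3$ and share their negative weight expansion with $\CP^2(3)\#^{\times 3}\ovl{\CP}^2(1)$ (not with $\CP^2(3)\#^{\times 4}\ovl{\CP}^2(1)$, as you assert), and that del Pezzo surface \emph{is} covered by Proposition~\ref{prop:folding1}; yet $k=7,9,10$ are excluded from Proposition~\ref{prop:folding2} and are explicitly posed as open in the question at the end of \S\ref{subsec:folding_curve}. If either your localization argument or your displacement fallback were valid, it would settle those cases. The fallback fails for a structural reason: $D_k\times\C$ is invariant under translations in the $\C$-factor, so the "arbitrarily large room at zero cost" in $\C$ does not help separate a compact set from it; one would instead need to push the (possibly surjective) projection of the image to $M_k$ off the non-displaceable symplectic divisor $D_k$ while staying inside $\tfrac{a}{a+1}\cdot M_k$, and no mechanism is offered for this.
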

\begin{rmk}
In contrast to the four-dimensional (unstabilized) situation, it is not known whether the stabilized ellipsoid embedding functions of target spaces with the same negative weight expansions necessarily coincide (e.g. $c_{X_3 \times \C}$ versus $c_{X_4 \times \C}$ versus $C_{\CP^1(2) \times \CP^1(2) \times \C})$.
\end{rmk}
\begin{rmk} The polygons $\Om_1,\dots,\Om_{12}$ in Figure~\ref{fig:rational_staircase_targets} correspond to 
twelve of the famous sixteen reflexive polygons.
The remaining four reflexive polygons have the same negative weight expansion as $\CP^2(3) \#^{\times 5} \ovl{\CP}^2(1)$,
whose ellipsoid embedding function does not contain an infinite staircase (see \cite[Rmk. 5.21]{cristofaro2020infinite}).

It is interesting to note that $M = \CP^2(3) \#^{\times 5} \ovl{\CP}^2(1)$ admits an almost toric fibration $\pi: M \ra Q_\nodal$ where $Q$ is a dual Fano $T$-quadrilateral with a Delzant vertex (see \cite[Fig. 18($C_1$)]{vianna2017infinitely}).
In particular, most of the results in \S\ref{sec:singI} and \S\ref{sec:singII} still apply in this case.
Similarly, $M = \CP^2(3) \#^{\times 6} \ovl{\CP}^2(1)$ admits an almost toric fibration $\pi: M \ra Q_\nodal$ where $Q$ is a dual Fano $T$-pentagon with a Delzant vertex (see \cite[Fig. 19($A_1$)]{vianna2017infinitely})
\end{rmk}

\begin{figure}[h]
\caption{The convex toric domains considered in \cite{cristofaro2020infinite}, along with their negative weight expansions.}
\label{fig:rational_staircase_targets}
\centering
\includegraphics[width=1\textwidth]{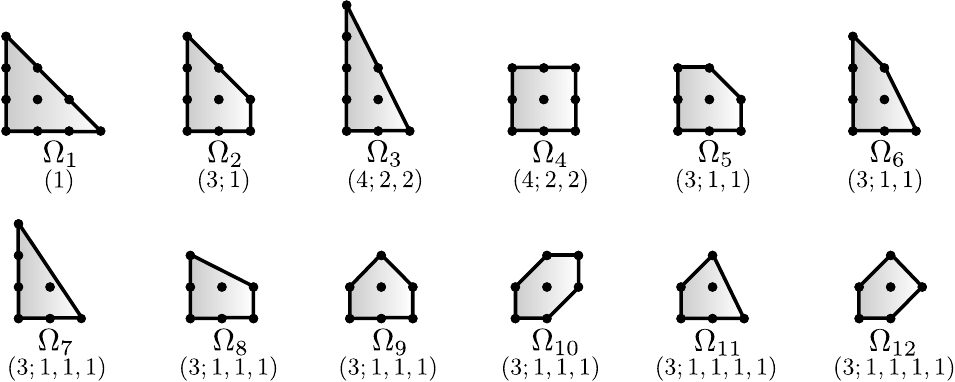}
\end{figure}

\begin{proof}[Proofs of Proposition~\ref{prop:folding1} and Proposition~\ref{prop:folding2}]
  We apply \cite[Prop. 3.1]{cristofaro2022higher}. Specializing to the case $\mu = \tfrac{a}{a+1}$, we get $\la = 1 - \tfrac{\mu}{a} = \tfrac{a}{a+1}$, so $\mu = \la$.
Note that $2\mu\tfrac{2\la-1}{\la+\mu-1} = 2\mu = \la + \mu$.
Then $f$ is the linear function satisfying $f(0) = 2\la = \tfrac{2a}{a+1}$ and $f(2a/(a+1)) = \la = \tfrac{a}{a+1}$. This means that there is a symplectic embedding of $(1-\delta) \cdot E(1,a)\times \C$ into $\tfrac{a}{a+1} \cdot X_{\Omega_H} \times \C$ for all $\delta > 0$, where $\Om_H \subset \R_{\geq 0}^2$ is the quadrilateral having vertices $(0,0),(0,2), (2,1), (2,0)$, and $X_{\Omega_H} \subset \C^2$ is the corresponding four-dimensional convex toric domain. Using \cite[Thm. 4.4]{Pelayo-Ngoc_hofer_question}, we can upgrade this to an embedding $\intE(1,a) \times \C \hooksymp \tfrac{a}{a+1} \cdot X_{\Om_H} \times \C$.

Inspecting Figure~\ref{fig:rational_staircase_targets}, we see that $\Omega_H$ (or its reflection about the diagonal) is a subset of $\Om_i$ for $i = 1,2,3,4,5,6,8$.
Similarly, the moment polygons corresponding to $M = \CP^2(3), \CP^2(3) \# \ovl{\CP}^2(1),\CP^1(2) \times \CP^1(2),\CP^2(3) \#^{\times 2} \ovl{\CP}^2(1)$ are $\Om_1,\Om_2,\Om_4,\Om_5$ respectively, each of which directly contains $\Om_H$ as a subset.

As for $\CP^2(3) \#^{\times 3} \ovl{\CP}^2(1)$, the moment polygon is (up to an integral affine transformation) given by $\Om_{10}$. This does not contain $\Om_H$ as a subset, but there is an almost toric fibration for $\CP^2(3) \#^{\times 3} \ovl{\CP}^2(1)$ whose base polygon is precisely $\Om_H$ -- see \cite[Fig. 16]{vianna2017infinitely}.
\end{proof}
\begin{rmk}
  Note that $\Omega_H$ has area $3$, as do the polygons $\Om_7,\Om_9,\Om_{10}$, while the polygons $\Om_{11},\Om_{12}$ have area $5/2$. In particular, by volume considerations there cannot be any four-dimensional embedding $\intX_{\Om_H} \hooksymp X_k$ for $k = 11,12$.
\end{rmk}

\sss

It is natural to ask what happens in the remaining cases not covered by Propositions \ref{prop:folding1} and \ref{prop:folding2}:
\begin{question}
 Is there a stabilized symplectic embedding $$
 \intE(1,a) \times \C \hooksymp \tfrac{a}{a+1} \cdot M \times \C\;\;\mbox{ where } \;\;
M= \CP^2(3) \#^{\times 4}\ovl{\CP}^2(1)?
$$ What about $\intE(1,a) \times \C \hooksymp \tfrac{a}{a+1} \cdot X_{k}
 $ for $k = 7,9,10,11,12$?
\end{question}
\NI Also, extending the aforementioned conjecture for stabilized embeddings of ellipsoids into the four-ball, we posit:
\begin{conjecture}\label{conj:folding_sharp}
The symplectic embeddings in Proposition~\ref{prop:folding1} and Proposition~\ref{prop:folding2} are all optimal for all $a > a_\acc$, where $a_\acc$ is the accumulation point of the corresponding staircase (c.f. Table~\ref{table:CG_et_al}).
\end{conjecture}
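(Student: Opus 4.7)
The plan is to establish the matching lower bound to the upper bounds in Propositions~\ref{prop:folding1} and~\ref{prop:folding2} using stabilized obstructions from sesquicuspidal curves, thereby reducing Conjecture~\ref{conj:folding_sharp} to a concrete existence problem in algebraic geometry. By Theorem~\ref{thm:stab_obs_from_curve}, an index zero $(p,q)$-sesquicuspidal rational symplectic curve $C$ in a monotone rigid del Pezzo surface $M$ with $[\om_M]\cdot [C] = p+q$ yields a stable obstruction $c_{M\times \C}(p/q) \geq p/(p+q)$, and this matches exactly the folding value $a/(a+1)$ when evaluated at $a = p/q$ (cf.\ Remark~\ref{rmk:y_val_of_oc_is_aut}). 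Since $c_{M\times \C}$ is continuous, it suffices to construct such curves for a collection of ratios $\{p/q\}$ which is dense in $(a_\acc, \infty)$. For the convex toric domain targets $X_k$ the argument is the same after inflating visible curves across the corresponding rigid del Pezzo, using the identification of unstabilized ellipsoid functions from \cite{cristofaro2020infinite}.

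The first step would be to develop for each rigid del Pezzo $M$ a systematic library of \emph{higher degree seed curves}, extending the degree three construction of Proposition~\ref{prop:deg_3_seeds} and the low-degree seeds used in the proof of Theorem~\ref{thmlet:outer_corner_curves}. Concretely, for every sufficiently large degree $d$ one seeks an index zero sesquicuspidal rational algebraic curve in $M$ which is $(p_d,q_d)$-well-placed with respect to a rational nodal anticanonical divisor $\calN$, for some $(p_d,q_d)$ with $p_d+q_d = c_1([C])$. This is precisely the input postponed to the forthcoming work \cite{sesqui} (see Remark~\ref{rmk:higher_deg_seeds}); the natural approach is a deformation argument in the spirit of Lemma~\ref{lem:3_2_seed}, moving points constrained to lie on $\calN_0$ and carefully analyzing the Gromov compactification $\overline{\calM}_{M,A}\lll \calT_{\calB_-}^{(p_d)}\db\rrr$ to exclude degenerations that would prevent the limit curve from being irreducible and well-placed.

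The second step is to generate an infinite family from each seed using the generalized Orevkov twist $\Phi_M$ of Construction~\ref{constr:gen_Orev_twist}. By Proposition~\ref{prop:gen_orev_numerics}, iteration produces index zero sesquicuspidal curves whose cusp ratios $p/q$ trace out a discrete orbit under the M\"obius transformation $p/q \mapsto (Kp-q)/p$ with unique attracting fixed point $a_\acc$. A single orbit only accumulates at $a_\acc$, so to fill $(a_\acc, \infty)$ densely one must take the union of Orevkov orbits over seeds of unboundedly many degrees. A number-theoretic density argument, analogous to the interlacing of Markov-type trees, should show that the union of these orbits is dense in $(a_\acc, \infty)$ once seeds of sufficiently high degree are available; combined with continuity of $c_{M\times \C}$ this yields the conjectural lower bound.

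The hard part will be the algebro-geometric existence of seed curves of arbitrarily large degree with the required high-codimension cusp and tangency constraints. The principal difficulty is that, as the degree grows, limiting configurations in $\overline{\calM}_{M,A}\lll \calT^{(p_d)}_{\calB_-} \db\rrr$ can involve complicated nodal curves containing multiple copies (and covers) of $\calN$, carrying the cuspidal constraint on ghost components in non-obvious ways. This obstructs a direct extension of the proof of Proposition~\ref{prop:deg_3_seeds}, and a genuinely new input --- either a refined compactness analysis, a Gromov--Witten type enumerative calculation in the spirit of the count $\#\calM_{\CP^2,3\ell}^J\lll \calT_{\Ddiv}^{(8)}\db\rrr = 4$ from \cite{McDuffSiegel_counting}, or a direct algebraic construction --- will be needed. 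It is precisely this step that prevents Conjecture~\ref{conj:folding_sharp} from being established by the present paper's techniques alone.
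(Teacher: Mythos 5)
The statement you are asked to prove is labeled a \emph{conjecture} in the paper, and the paper does not prove it: the only thing established there is Proposition~\ref{prop:ses_obs_on_folding_curve}, the consistency check that an index zero $(p,q)$-sesquicuspidal curve in a monotone target yields exactly the obstruction $\la \geq \tfrac{p/q}{(p/q)+1}$, which is the first observation in your proposal. So your write-up should be judged as a reduction strategy, not a proof, and you are candid about this. The reduction itself is sound: by Theorem~\ref{thm:stab_obs_from_curve} and monotonicity of $a \mapsto c_{M\times\C^N}(a)$ (continuity is not even needed --- approximate $a$ from below by ratios $p/q$ in your family), it would suffice to produce index zero sesquicuspidal rational curves for a set of ratios $p/q$ that is dense in $(a_\acc,\infty)$.

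The genuine gap is exactly where you place it, but it is worth being precise about how far the paper's techniques fall short. First, the existence of seed curves of arbitrarily high degree is open; the compactness argument of Proposition~\ref{prop:deg_3_seeds} breaks down already at degree four because limiting configurations in $\ovll{\calM}^{J_\std}_{\CP^2,d\ell}\lll \T^{(3d-1)}_{\calB_-}\db\rrr$ can contain copies and covers of $\calN$, and the intersection-multiplicity trick used to rule out two curves degenerating onto $\calN$ does not exclude these. Second, even granting all seeds, your density claim is asserted rather than argued: each Orevkov orbit accumulates only at $a_\acc$, so density of the union in $(a_\acc,\infty)$ requires knowing that the seed ratios themselves (one per degree $d$, with $p_d/q_d = 3d-1$ in the plane case) together with their twists fill out a dense set --- the ghost stairs of Theorem~\ref{thmlet:deg_3_seed}, for instance, give only a sequence converging to $\tau^4$, and nothing in the paper controls which $(p,q)$ arise as one varies $d$. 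Third, for the sporadic targets $X_k$ one cannot simply ``inflate across the corresponding rigid del Pezzo'': the identification $c_{X_k} = c_M$ is only known unstabilized, and the paper explicitly flags that the stabilized functions of targets with the same negative weight expansion are not known to agree. None of these points makes your outline wrong, but each is a substantive missing ingredient, and the conjecture remains open.
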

\NI As evidence, we observe that the obstruction coming from any index zero sesquicuspidal curves is consistent with this conjecture:

\begin{prop}\label{prop:ses_obs_on_folding_curve}
Let $(M,\omega_M)$ be a closed symplectic manifold with $[\omega_M] = c_1 \in H^2(M;\R)$, and let $C$ be an index zero $(p,q)$-sesquicuspidal rational symplectic curve in $M$. 
Then the corresponding obstruction for a symplectic embedding $E(1,p/q) \times \C^N \hooksymp \la \cdot M \times \C^N$ coming from Theorem~\ref{thm:stab_obs_from_curve} is $\la \geq \tfrac{(p/q)}{(p/q)+1}$.
\end{prop}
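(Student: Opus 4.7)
The plan is to unpack Theorem~\ref{thm:stab_obs_from_curve} applied to the rescaled target $\la \cdot M$, rewrite the ellipsoid $E(1, p/q)$ in the $E(cq, cp)$ normalization used there, and then use the hypothesis that $[\omega_M] = c_1$ together with the index zero condition to evaluate the resulting bound explicitly.

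First, let $A := [C] \in H_2(M)$. Since $\la \cdot M$ denotes $M$ with symplectic form $\la \omega_M$, and $C$ is unchanged as a subset, $C$ remains an index zero $(p,q)$-sesquicuspidal rational symplectic curve in $\la \cdot M$ (the index condition is topological). Applying Theorem~\ref{thm:stab_obs_from_curve} to $\la \cdot M$, any stabilized symplectic embedding $E(cq, cp) \times \C^N \hooksymp \la \cdot M \times \C^N$ must satisfy
\begin{align*}
c \leq \frac{[\la \omega_M] \cdot A}{pq} = \frac{\la\,[\omega_M] \cdot A}{pq}.
\end{align*}
Since $E(1, p/q) = E(q/q, p/q) = \tfrac{1}{q} \cdot E(q, p)$, the embedding $E(1, p/q) \times \C^N \hooksymp \la \cdot M \times \C^N$ corresponds to taking $c = 1/q$ above, so the obstruction reads
\begin{align*}
\frac{1}{q} \leq \frac{\la\,[\omega_M] \cdot A}{pq},\qquad\text{i.e.}\qquad \la \geq \frac{p}{[\omega_M] \cdot A}.
\end{align*}

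Next, evaluate $[\omega_M] \cdot A$. The monotonicity assumption $[\omega_M] = c_1 \in H^2(M;\R)$ gives $[\omega_M] \cdot A = c_1(A)$. The index zero condition $\ind_\R(C) = 2c_1(A) - 2p - 2q = 0$ yields $c_1(A) = p+q$. Substituting back,
\begin{align*}
\la \geq \frac{p}{p+q} = \frac{p/q}{p/q + 1},
\end{align*}
which is the claimed inequality. There is no real obstacle here; the content of the proposition is simply that the monotonicity and index zero hypotheses collapse the abstract bound of Theorem~\ref{thm:stab_obs_from_curve} to exactly the value $\tfrac{p/q}{p/q+1}$ matching the folding function $\tfrac{3x}{x+1}$ (up to the overall factor of $3$ in the $\CP^2$ case, which arises because $[\omega_{\CP^2(3)}] = 3[\omega_{\CP^2(1)}]$ gives $[\omega_M]\cdot A = 3d$ rather than $p+q$ under the normalization used in \cite{Ghost}).
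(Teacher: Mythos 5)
Your proof is correct and follows the same route as the paper: rescale the target, apply Theorem~\ref{thm:stab_obs_from_curve} with $c = 1/q$, and use monotonicity plus the index zero condition to get $[\omega_M]\cdot[C] = c_1([C]) = p+q$. The paper's own proof is just a more compressed version of this same computation.
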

\begin{proof}
Since $C$ has index zero and  $c_1([C]) = p+q$,  the obstruction is 
\begin{align*}
\la \geq \tfrac{p}{[\omega_M] \cdot [C]} = \tfrac{p}{c_1([C])} = \tfrac{p}{p+q} = \tfrac{(p/q)}{(p/q)+1}.  
\end{align*}
\end{proof}

\begin{rmk}
According to \cite[Thm. G]{cusps_and_ellipsoids}, for any closed symplectic four-manifold $M$, the perfect exceptional homology classes in $H_2(M)$ are in bijective correspondence with index zero unicuspidal symplectic curves in $M$. For example, \cite{magillmcd2021,magill2022staircase} describes all perfect exceptional classes for the first Hirzebruch surface $\bl^1\CP^2$ (c.f. \S\ref{subsec:unicusp_appl}), and by Proposition~\ref{prop:ses_obs_on_folding_curve} these all give obstructions consistent with Conjecture~\ref{conj:folding_sharp}.
\end{rmk}

\renewbibmacro{in:}{}
\printbibliography
\end{document}